\newcommand{\bd}{\mathrm{BD}}
\newcommand{\sis}{{\sigma_{\mathbf{S}}}}
\newcommand{\siv}{{\sigma_{\mathbf{V}}}}
\newcommand{\sie}{{\sigma_{\mathbf{E}}}}
\newcommand{\sif}{{\sigma_{\mathbf{F}}}}
\newcommand{\sS}{\mathbf{S}}
\newcommand{\sV}{\mathbf{V}}
\newcommand{\sE}{\mathbf{E}}
\newcommand{\sF}{\mathbf{F}}
\newcommand{\A}{\mathcal{E}}
\newcommand{\ee}{\mathbf{e}}
\newcommand{\CM}{\mathcal{M}}
\newcommand{\CZ}{\mathcal{Z}}
\newcommand{\SSL}{\mathscr{S}^\sS_L}
\newcommand{\bq}{\mathbf{q}}
 \newcommand{\bp}{\mathbf{p}}
\newcommand{\bm}{\mathbf{m}}
\newcommand{\bt}{\mathbf{t}}
\newcommand{\bff}{\mathbf{f}}
\newcommand{\bQ}{{\bf Q}}
\newcommand{\bB}{\mathbf{B}}
\newcommand{\bQns}{{\bf Q}_{l,n}}
\newcommand{\bQnsn}{{\bf Q}_{l_n,n}}
\newcommand{\Qn}{Q_n}
\newcommand{\ton}{\xrightarrow[n\to\infty]{}}
\newcommand{\R}{\mathbb{R}}
\newcommand{\N}{\mathbb{N}} 
\newcommand{\M}{\mathbb{M}}
\newcommand{\E}{\mathbb{E}}
\newcommand{\Z}{\mathbb{Z}}
\def\build#1_#2^#3{\mathrel{ \mathop{\kern 0pt#1}\limits_{#2}^{#3}}}
\def\un{\underline}
\def\d{\mathrm{d}}
\def\W{\mathbb{W}}
\newcommand{\WW}{\mathcal{W}}
\def\eps{\varepsilon}
\def\cov{{\rm Cov\,}}
\renewcommand{\P}{\mathbb{P}}
\newcommand{\ind}{\mathbf{1}}
\newcommand{\Img}{\mathrm{Im}}
\newcommand{\Zo}{Z^{\mathbf 0}}
\newtheorem{clm}{Claim}
\newtheorem{thm}{Theorem}
\newtheorem{lmm}[thm]{Lemma}
\newtheorem{prp}[thm]{Proposition}
\newtheorem{defn}[thm]{Definition}
\newtheorem{crl}[thm]{Corollary}
\newenvironment{proofc}{\begin{proof}}{\end{proof}}
\DeclareMathAlphabet{\mathpzc}{OT1}{pzc}{m}{it}
\theoremstyle{definition}
\newtheorem*{note}{Note}
\newtheorem{rem}{Remark}
\title{Compact {B}rownian surfaces {I}. {B}rownian disks}
\author{J\'er\'emie Bettinelli\thanks{CNRS \& Institut {\'E}lie Cartan de Lorraine}
\and
Gr\'egory Miermont\thanks{ENS de Lyon \& Institut Universitaire de France}}
\renewcommand{\d}{\mathrm{d}}
\begin{document}

\selectlanguage{english}

\maketitle

\begin{abstract}
  We show that, under certain natural assumptions, large
  random plane bipartite maps with a boundary converge after rescaling to a
  one-parameter family $(\bd_L,\, 0<L<\infty)$ of random
  metric spaces homeomorphic to the closed unit disk of~$\R^2$, the
  space~$\bd_L$ being called the \emph{Brownian disk of
    perimeter~$L$} and unit area. These results can be seen as an extension of
  the convergence of uniform plane quadrangulations to the Brownian
  map, which intuitively corresponds to the limit case where
  $L=0$. Similar results are obtained for maps following a Boltzmann
  distribution, in which the perimeter is fixed but the area is
  random. 
\end{abstract}

\tableofcontents

\section{Introduction}\label{sec:introduction}

\subsection{Motivation}\label{sec:main-results}

Random maps are a natural discrete version of random surfaces. It has
been shown in recent years that their scaling limits can provide
``canonical'' models of random metric spaces homeomorphic to a surface
of a given topology. More precisely, given a random map $M$,
one can consider it as a random finite metric space by endowing its
vertex set with the usual graph metric, and multiply this graph metric
by a suitable renormalizing factor that converges to $0$ as the size
of the map $M$ is sent to infinity. One is then interested in the
convergence in distribution of the resulting sequence of rescaled
maps, in the Gromov--Hausdorff topology~\cite{gromov99} (or pointed
Gromov--Hausdorff topology if one is interested in non-compact
topologies), to some limiting random metric space. 

Until now, the topology for which this program has been carried out
completely is that of the sphere, for a large (and still growing)
family of different random maps models, see
\cite{legall11,miermont11,BeLG,addarioalbenque2013simple,BeJaMi14,abr14},
including for instance the case of uniform triangulations of the
sphere with $n$ faces, or uniform random maps of the sphere with $n$
edges. The limiting metric space, called the Brownian map, turns out
not only to have the topology of the sphere \cite{lgp,miermontsph}, as
can be expected, but also to be independent (up to a scale constant)
of the model of random maps that one chooses, provided it is, in some
sense, ``reasonable.'' See however \cite{AlMa,LGMi09} for natural
models of random maps that converge to qualitatively different
metric spaces.  These two facts indeed qualify the Brownian map as
being a canonical random geometry on the sphere.  Note that a
non-compact variant of the Brownian map, called the \emph{Brownian
  plane}, has been introduced in~\cite{CuLG12Bplane} and shown to be
the scaling limit of some natural models of random quadrangulations.

However, for other topologies allowing higher genera and
boundary components, only partial results are known
\cite{bettinelli10,bettinelli11,bettinelli11b,bettinelli14gbs}. Although
subsequential convergence results have been obtained for rescaled
random maps in general topologies, it has not been shown that the limit is
uniquely defined and independent of the choice of the extraction. The
goal of this paper and its companion~\cite{BeMi15b} is to fill in this gap
by showing convergence of a natural model of random maps on a given
compact surface $S$ to a random metric space with same topology, which
one naturally can call the ``Brownian~$S$.''

This paper will focus exclusively on the particular case of the disk
topology, which requires quite specific arguments, and indeed serves
as a building block to construct the boundaries of general compact
Brownian surfaces in~\cite{BeMi15b}.

\subsection{Maps}\label{sec:results}

To state our results, let us recall some important definitions and set
some notation. We first define the objects that will serve as discrete
models for a metric space with the disk topology. 

A \emph{plane map} is an embedding of a finite connected multigraph
into the $2$-dimensional sphere, and considered up to
orientation-preserving homeomorphisms of the latter. The \emph{faces}
of the map are the connected components of the complement of edges,
and can be then shown to be homeomorphic to $2$-dimensional open
disks. For every oriented edge $e$, with origin vertex $v$, we can
consider the oriented edge $e'$ that follows $e$ in counterclockwise
order around $v$, and define the {\em corner} incident to $e$ as a
small open angular sector between~$e$ and~$e'$. It does not matter how
we choose these regions as long as they are pairwise disjoint.  The
number of corners contained in a given face~$f$ is called the
\emph{degree} of that face; equivalently, it is the number of oriented
edges to the left of which~$f$ lies --- we say that $f$ is {\em
  incident} to these oriented edges, or to the corresponding corners.
We let $\sV(\bm)$, $\sE(\bm)$, $\sF(\bm)$ denote the sets of vertices, edges
and faces of a map $\bm$, or simply $\sV$, $\sE$, $\sF$ when the mention of
$\bm$ is clear from the context. 

If $\bm$ is a map, we can view it as a metric space
$(\sV(\bm),d_{\bm})$, where $d_\bm$ is the graph metric on the set
$\sV(\bm)$ of vertices of~$\bm$. For simplicity, we will sometimes
denote this metric space by $\bm$ as well and, if $a>0$, we denote by
$a\bm$ the metric space $(\sV(\bm),ad_{\bm})$.

For technical reasons, the maps we consider will always implicitly be
\emph{rooted}, which means that one of the corners (equivalently, one
of the oriented edges) is distinguished and called the \emph{root}. The face $f_*$ incident to the root is
called the {\em root face}. Since we want to consider objects with the
topology of a disk, we insist that the root face is an \emph{external
face} to the map, whose incident edges forms the {\em boundary} of
the map, and call its degree the {\em perimeter} of the map.  By
contrast, the non-root faces are called \emph{internal faces}. Note that
the boundary of the external face is in general not a simple curve
(see Figure~\ref{exdisk}). As a result, the topological space obtained
by removing the external face from the surface in which the map is
embedded is not necessarily a surface with a boundary, in the sense
that every point does not have a neighborhood homeomorphic to some
open set of $\R\times \R_+$. However, removing any Jordan domain from
the external face does of course result in a surface with a boundary,
which is homeomorphic to the $2$-dimensional disk.

\subsection{The case of quadrangulations}\label{sec:case-quadr}

The first part of the paper is concerned exclusively with a particular
family of maps, for which the results are the simplest to obtain and
to state.  A \emph{quadrangulation with a boundary} is a rooted plane
map whose internal faces all have degree~$4$. It is a simple
exercise to see that this implies in fact that the perimeter is
necessarily an even number.  For $l$, $n\in \N$, we let $\bQns$ be the
set of quadrangulations with a boundary having~$n$ internal faces and
perimeter~$2l$.

\begin{figure}[htb!]
	\centering\includegraphics[width=88mm]{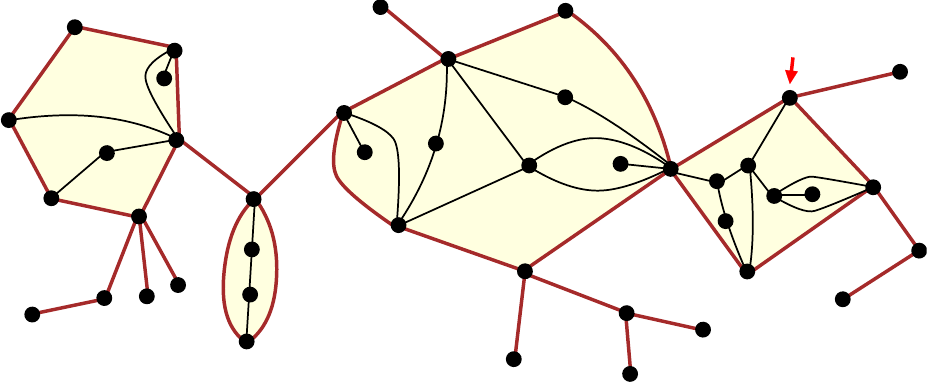}
	\caption{A quadrangulation from $\bQ_{23,19}$. The root is the corner indicated by the red arrow.}
	\label{exdisk}
\end{figure}

Our main result in the context of random quadrangulations is the following. 
\begin{thm}\label{THMDISK}
  Let $L\in [0,\infty)$ be fixed, and $(l_n,n\geq 1)$ be a
  sequence of integers such that $l_n/{\sqrt{2n}} \to L$
  as $n\to\infty$. Let $Q_n$ be uniformly distributed
  over~$\bQnsn$. There exists a random compact metric space
  $\bd_L$ such that
$$\Big(\frac{9}{8n}\Big)^{1/4}Q_n\build\longrightarrow_{n\to\infty}^{(d)}\bd_L$$
where the convergence holds in distribution for the Gromov--Hausdorff topology.
\end{thm}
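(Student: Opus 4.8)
The plan is to follow the classical strategy used to construct the Brownian map as a scaling limit of quadrangulations, adapted to the disk setting via a bijective encoding of quadrangulations with a boundary by labelled forests (in the spirit of the Bouttier--Di Francesco--Guitter and Schaeffer bijections). First I would set up the bijection: a quadrangulation with $n$ internal faces and perimeter $2l$ corresponds to a forest of $l$ labelled mobiles (or a labelled tree with a distinguished ``spine'' of length related to $l$) together with $\pm1$ label increments along edges; under this correspondence the graph metric on $\bQns$ is controlled, as in the sphere case, by the label function through the Cori--Vauquelin--Schaeffer-type formula, with the extra subtlety that the boundary vertices play the role of the ``floor'' of the forest. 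I would then identify the scaling limit of the encoding objects: under the regime $l_n/\sqrt{2n}\to L$, the contour and label processes of the random labelled forest converge (after suitable normalization, with space scaled by $(9/8n)^{1/4}$ and the contour parameter by $1/n$) to a pair $(\bff,Z)$ consisting of a first-passage bridge / forest of Brownian excursions of total duration normalized to encode unit area, together with a Brownian-type label process indexed by it, conditioned appropriately on the boundary structure. This is where one invokes Bettinelli's earlier subsequential results and the convergence of discrete snakes.

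Next I would define the candidate limit $\bd_L$ directly from the limiting encoding pair $(\bff,Z)$: introduce the pseudo-distance $D^\circ$ built from $Z$ via the usual formula $D^\circ(s,t)=Z_s+Z_t-2\max(\min_{[s,t]}Z,\min_{[t,s]}Z)$ (with the cyclic/forest conventions appropriate to the disk), then let $D$ be the largest pseudo-distance below $D^\circ$ that is compatible with the tree/forest identifications, and set $\bd_L$ to be the quotient metric space. The tightness of $((9/8n)^{1/4}Q_n)$ in the Gromov--Hausdorff topology is essentially already available from the convergence of the encoding processes together with the distance bound $d_{Q_n}\le$ (label-based quantity), which gives an equicontinuity/compactness statement for the rescaled metrics; so any subsequential limit is of the form (quotient of $[0,1]$-type space by some pseudometric $\tilde D$) with $\tilde D\le D^\circ$ and $\tilde D$ compatible with the identifications. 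The crux is then to show that every such subsequential limit equals $\bd_L$, i.e. that $\tilde D=D$, which amounts to proving the lower bound $\tilde D\ge D^\circ$ is \emph{not} needed but rather that $\tilde D$ coincides with the maximal admissible pseudmetric $D$ --- this is the uniqueness of the limit.

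The main obstacle, exactly as in Le Gall's and Miermont's proofs of the Brownian map, is this identification of the limiting pseudo-distance: proving that the subsequential limit $\tilde D$ is maximal, equivalently that $\tilde D$ and $D$ induce the same topology and the same metric, so that $\tilde D(a,b)=0 \iff D^\circ(a,b)=0$ for the relevant pairs and the ``re-routing'' arguments force equality. In the disk case there is the additional difficulty that the boundary has to be handled: one must show that the boundary of $\bQns$ rescales to a genuine boundary circle of length (proportional to) $L$ in $\bd_L$, that no extra identifications collapse it, and that the limit is well-defined independently of whether one approaches through the forest encoding or through other decompositions. I would handle uniqueness by adapting the ``two-point function'' / Gromov--Hausdorff rigidity arguments: use the re-rooting invariance of the Brownian forest, a uniqueness statement for geodesics to a uniformly chosen point, and an argument showing that $D=D^\circ$ exactly on the pairs that are identified, concluding that all subsequential limits agree, hence the full sequence converges. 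A separate, more routine step is to verify that $\bd_L$ is almost surely a compact metric space (finite diameter follows from the a.s.\ boundedness of $Z$ and compactness of the encoding support) and to record that the case $L=0$ recovers the Brownian map, consistent with the statement's range $L\in[0,\infty)$.
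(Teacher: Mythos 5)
Your overall architecture matches the paper's: encode quadrangulations with a boundary by labeled forests, pass to subsequential scaling limits of the contour/label/distance processes (relying on Bettinelli's earlier work for tightness), write down the candidate distance $D^*$ as the maximal pseudo-metric dominated by $d_Z$ and vanishing on $\{d_X=0\}$, and then prove that every subsequential limiting distance equals $D^*$. The definition of $\bd_L$ you give and the claim that full convergence follows once uniqueness is established are exactly right.

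The gap is in the uniqueness step, which is where essentially all the work of the paper lives. You say you would ``handle uniqueness by adapting the two-point function / Gromov--Hausdorff rigidity arguments: re-rooting invariance, a uniqueness statement for geodesics to a uniformly chosen point, and an argument showing that $D=D^\circ$ exactly on the pairs that are identified.'' Those ingredients appear in the paper, but by themselves they are not enough, and the phrase ``$D=D^\circ$ on identified pairs'' is vacuous (both sides vanish there). What you are missing is the central surgical idea: \emph{decompose the Brownian disk into slices} along the excursion intervals of the first-passage bridge $X$ above its infimum. Each excursion interval $[a_i,b_i]$ encodes a Brownian slice (a Brownian map cut open along a distinguished geodesic), and the paper first proves, in a separate section, that for these slices the analogous identity $\tilde D=\tilde D^*$ holds, by a local-isometry argument that transfers the already-known Brownian map formula $D=D^*$ of Le Gall and Miermont into the interior of a slice and then globalizes via path lengths. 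The proof of Theorem~\ref{THMDISK} then shows, via three claims about the unique geodesic $\gamma$ between $\bp(U)$ and $\bp(V)$ (that $\gamma$ meets only finitely many slices, that it enters each slice boundary at most once, and that the topological boundary of a slice in $\mathbf{D}$ is contained in the two geodesic boundary curves), that $D(U,V)$ decomposes as a finite sum of slice-internal distances $\tilde D(s_j,t_j)$ glued along $\{d_X=0\}$ identifications, which is precisely the formula defining $D^*$. The re-rooting lemma and the uniqueness of geodesics from $x_*$ are used to guarantee that $\gamma$ avoids $\partial\mathbf{D}$ and is unique, so that the decomposition is possible; they do not by themselves pin down the metric. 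Without the slice decomposition, your sketch would essentially have to re-derive the Brownian map uniqueness result from scratch in the disk setting, which is not what the paper does and is a much harder road.

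A smaller point: the case $L=0$ in the statement is not actually proved by the argument sketched for $L>0$; the paper sets $L\in(0,\infty)$ at the start of the proof and refers to earlier work for $L=0$ (where the boundary vanishes and $\bd_0$ is the Brownian map). Your remark that $L=0$ recovers the Brownian map is correct, but it is handled by citation, not by the construction.
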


The random metric space $\bd_L$ is called the {\em Brownian disk with
  perimeter $L$} and unit area. We will give in
Section~\ref{sec:defin-main-prop} an explicit description of $\bd_L$
(as well as versions with general areas, see also Section
\ref{sec:conv-boltzm-maps}) in terms of certain stochastic processes,
and the convention for the scaling constant $(9/8)^{1/4}$ is here to
make the description of these processes simpler. The main properties
of $\bd_L$ are the following; they follow from
\cite[Theorems~1--3]{bettinelli11b}.

\begin{prp}
  \label{sec:results-1}
  Let $L>0$ be fixed. Almost surely, the space $\bd_L$ is
  homeomorphic to the closed unit disk of~$\R^2$. Moreover, almost surely, the Hausdorff
  dimension of $\bd_L$ is~$4$, while that of its boundary $\partial \bd_L$ is~$2$.
\end{prp}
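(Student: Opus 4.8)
The plan is to deduce the three assertions from the corresponding results of~\cite{bettinelli11b}, the only real work being to identify $\bd_L$, up to a fixed deterministic scaling constant, with the random metric space studied there. First I would recall the explicit description of $\bd_L$ to be given in Section~\ref{sec:defin-main-prop} — as a quotient of a random real forest obtained by grafting a family of continuum-random-tree-like components onto a Brownian-bridge-type process encoding the boundary, the quotient being decorated by Brownian labels and subjected to the usual label identification — and check that, modulo the normalization of the area and of the perimeter and modulo an overall multiplicative constant on the metric, this is the ``Brownian map with a boundary'' of~\cite{bettinelli11b}. Alternatively, and perhaps more cleanly, I would invoke Theorem~\ref{THMDISK} together with the fact that \cite{bettinelli11b} already establishes \emph{subsequential} Gromov--Hausdorff convergence of uniform quadrangulations with a boundary in exactly the perimeter regime $l_n/\sqrt{2n}\to L$: since Theorem~\ref{THMDISK} upgrades this to convergence of the whole sequence towards $\bd_L$, the space $\bd_L$ must have, up to the fixed scalar relating the two normalizations, the same law as the subsequential limits appearing in~\cite{bettinelli11b}.

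Once this identification is in place, \cite[Theorems~1--3]{bettinelli11b} give that, almost surely, the space in question is homeomorphic to the closed unit disk of~$\R^2$, has Hausdorff dimension~$4$, and has a boundary — which, the space being a topological disk, is unambiguously its topological boundary, and is there realized as the image of the boundary process under the canonical projection — of Hausdorff dimension~$2$. It then remains only to observe that each of these three statements is invariant under replacing the metric by a positive constant multiple of itself: the homeomorphism type is unchanged, and so are the Hausdorff dimension of the whole space and that of any prescribed subset, in particular of the boundary. Consequently the assertions pass to $\bd_L$ as normalized in Theorem~\ref{THMDISK}, i.e.\ with unit area and perimeter~$L$; the precise value of the perimeter parameter and the normalization to unit area play no further role.

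I expect the only delicate point to be the identification of laws itself — the bookkeeping needed to reconcile the conventions of the present paper (the scaling constant $(9/8)^{1/4}$ and the calibration $l_n/\sqrt{2n}\to L$) with those of~\cite{bettinelli11b}, and, if one argues through the explicit construction rather than through Theorem~\ref{THMDISK}, to match the normalization of the volume measure entering the definition of $\bd_L$ (total mass~$1$) with the one carried by the subsequential limits of~\cite{bettinelli11b} (automatically of total mass~$1$, being the limit of the $n^{-1}$-rescaled counting measure on the vertices of an $n$-face quadrangulation). Everything else is soft: no new estimate is required, merely a careful comparison of two objects that have already been constructed and analyzed elsewhere.
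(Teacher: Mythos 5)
Your proposal matches the paper's treatment, which simply cites \cite[Theorems~1--3]{bettinelli11b} for all three claims; the identification of $\bd_L$ with the subsequential limit of~\cite{bettinelli11b} that you single out as the one substantive step is indeed what is needed, and the paper carries it out only later, via Theorem~\ref{sec:conv-brown-disk-1} (showing $D=D^*$ a.s., so that the space $\mathbf{D}=[0,1]/\{D=0\}$ whose topology and Hausdorff dimensions are computed in \cite{bettinelli11b} coincides with~$\bd_L$). Your observation that homeomorphism type and Hausdorff dimension are invariant under replacing the metric by a positive constant multiple is also the right way to dispose of any mismatch of normalization conventions, though in fact the conventions (the $(9/8n)^{1/4}$ scaling and the calibration $l_n/\sqrt{2n}\to L$) are already the same here as in~\cite{bettinelli11b}.
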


We stress that the case $L=0$, corresponding to the situation
where $l_n=o(\sqrt{n})$, is the statement of \cite[Theorem~4]{bettinelli11b}, which says that $\bd_0$ is the so-called Brownian
map. Since the Brownian map is a.s.\ homeomorphic to the sphere
\cite{lgp}, this means that the boundaries of the approximating random
maps are too small to be seen in the limit. This particular case
generalizes the convergence of uniform random quadrangulations,
obtained in \cite{legall11,miermont11}, corresponding to the case
where $l_n=2$ for every $n\geq 1$. 

The case where $l_n/\sqrt{n}\to\infty$ is also of interest, and
is the object of \cite[Theorem~5]{bettinelli11b}, showing that, in
this case, $(2l_n)^{-1/2}Q_n$ converges to the so-called Brownian
Continuum Random Tree~\cite{aldouscrt91,aldouscrt93}. This means that the boundary takes over the
planar geometry and folds the map into a tree-shaped object.

We will prove our result by using the already studied case of plane
maps without boundary, together with some surgical
methods. Heuristically, we will cut $\Qn$ along certain geodesics into
elementary pieces of planar topology, to which we can apply a variant
of the convergence of random spherical quadrangulations to the
Brownian map. The idea of cutting into {\em slices} quadrangulations
with a boundary along geodesics appears in Bouttier and Guitter
\cite{BoGu09,BoGu12}. The use of these slices (also called {\em maps
  with a piecewise geodesic boundary}) plays an important role in Le
Gall's approach \cite{legall11} to the uniqueness of the Brownian map
in the planar case, which requires to introduce the scaling limits of
these slices. The previously cited works are influential to our
approach.  It however requires to glue an infinite number of metric
spaces along geodesic boundaries, which could create potential
problems when passing to the limit.

\subsection{Universal aspects of the limit}\label{sec:universality}

Another important aspect is that of {\em universality} of the spaces
$\bd_L$. Indeed, we expect these spaces to be the scaling limit of
many other models of random maps with a boundary, as in the case of
the Brownian map, which corresponds to $L=0$.  In the latter case, it
has indeed been proved, starting in Le~Gall's work \cite{legall11},
that the Brownian map is the unique scaling limit for a large family
of natural models of discrete random maps, see
\cite{BeLG,addarioalbenque2013simple,BeJaMi14,abr14}. The now
classical approach to universality developed in \cite{legall11} can be
generalized to our context, as we illustrate in the case of {\em
  critical bipartite Boltzmann maps.}

\subsubsection{Boltzmann random maps}\label{secboltz}

Let $\bB$ be the set of bipartite rooted plane maps, that is, the set
of rooted plane maps with faces all having even degrees (equivalently,
this is the set of maps whose internal faces all have even degrees).
For $l\in \Z_+$, let~$\bB_l$ be the set of bipartite maps $\bm\in \bB$
with perimeter\footnote{By convention, the vertex map $\circ$
  consisting of no edges and only one vertex, ``bounding'' a face of
  degree~$0$, is considered as an element of $\bB$, so that
  $\bB_0=\{\circ\}$. It will only appear incidentally in the analysis.}~$2l$. 
Note that when $l=1$, meaning that the root face has degree $2$,
there is a natural bijection between $\bB_1$ and $\bB\setminus \bB_0$,
consisting in gluing together the two edges of the root face into one
edge.

Let $q=(q_1,q_2,\ldots)$ be a sequence of non-negative {\em
  weights}. We assume throughout that $q_i>0$ for at least one index
$i\geq 2$.  The Boltzmann measure associated with the sequence $q$ is
the measure $W(q;\cdot)$ on~$\bB$ defined by
$$W(q;\bm)=\prod_{f\in \sF(\bm)\setminus\{f_*\}}q_{\deg(f)/2}\, .$$
This defines a non-negative, $\sigma$-finite measure, and by
convention the vertex-map receives a weight $W(q,\circ)=1$. In what 
follows, the weight sequence $q$ is considered fixed and its mention 
will be implicit, so that we denote for example $W(\bm)=W(q;\bm)$,
and likewise for the variants of~$W$ to be defined below.

We aim at understanding various probability measures obtained by
conditioning $W$ with respect to certain specific subsets of $\bB$.
It is a simple exercise to check that $W(\bB_l)$ is non-zero for every
$l\in \N$, and that $W(\bB_l)$ is finite for one value of $l>0$ if and
only if it is finite for all values of $l>0$. In this case, it makes
sense to define the Boltzmann probability measures
$$\W_l=W(\cdot\, | \, \bB_l)=\frac{W(\cdot \cap \bB_l)}{W(\bB_l)}\, ,\qquad l\geq 0\, .$$
A random map with distribution~$\W_l$ has a
root face of fixed degree $2l$, but a random number of vertices, edges
and faces. 

Likewise, we can consider conditioned versions of~$W$ given both the perimeter and the
``size'' of the map, where the size can be alternatively the number of
vertices, edges or internal faces\footnote{We could also consider
other ways to measure the size of a map $\bm$, e.g.\ considering combinations of
the form $x_\sV|\sV(\bm)|+x_\sE|\sE(\bm)|+x_\sF|\sF(\bm)|$ for some
$x_\sV$, $x_\sE$, $x_\sF\geq 0$ with sum~$1$ as is done for instance
in~\cite{stephenson14} (in fact, due to the Euler formula, there is
really only one degree of freedom rather than two). We will not
address this here but we expect our results to hold in this context as
well.}. We let
$\bB^{\sV}_{l,n}$, $\bB^{\sE}_{l,n}$, $\bB^{\sF}_{l,n}$ be the subsets of
$\bB_l$ consisting of maps with respectively $n+1$ vertices, $n$ edges
and~$n$ internal faces. (The choice of $n+1$ vertices instead of a more natural choice of~$n$ vertices is technical and will make the statements simpler.)

In all the statements involving a given weight sequence $q$ and a
symbol $\sS\in \{\sV,\sE,\sF\}$ (for ``size''), it will always be tacitly imposed
that $(l,n)$ belongs to the set
$$\A^\sS(q)=\{(l,n)\in \Z_+^2:W(\bB^\sS_{l,n})>0\}\, .$$
Note that for $(l,n)\in \A^\sS(q)$, it holds that
$W(\bB^{\sS}_{l,n})<\infty$ since $W(\bB_l)<\infty$. 
In this way, we can define the distribution
$$\W^{\sS}_{l,n}(\cdot)=W(\cdot\, |\,
\bB^{\sS}_{l,n})\, .$$ 
It will be useful in the following to know what the set $\A^\sS(q)$ looks
like. More precisely, let
\begin{equation}\label{Asl}
\A^\sS_l(q)=\{n\geq
  0:(l,n)\in \A^\sS(q)\}\, .
\end{equation}
As above, when the weight sequence~$q$ is unequivocally fixed, we will drop the mention of it from the notation and write~$\A^\sS$ and~$\A^\sS_l$.

Define three numbers $h^\sV$, $h^\sE$, $h^\sF$ by 
\begin{equation}\label{hS}
h^\sV=\mathrm{gcd}(\{n\geq 2:q_{2n}>0\})\, ,\qquad 
h^\sE=\mathrm{gcd}(\{n\geq 1:q_{2n}>0\})\,,\qquad h^\sF=1\,.
\end{equation}
Then we have the following lemma, which is a slight generalization of
\cite[Section 6.3.1]{stephenson14}. 
\begin{lmm}
  \label{sec:boltzm-rand-maps}
  Let $q$ be a weight sequence, and let $\sS$ be one of the three
  symbols $\sV$, $\sE$, $\sF$. There exists an integer
  $\beta^\sS\geq 0$ such that for every $l\geq 1$, there exists a 
  set $R^\sS_l\subseteq \{0,1,\ldots,l\beta^\sS-1\}$ such that 
$$\A^\sS_l(q)=R^\sS_l\cup (l\beta^\sS+h^\sS\Z_+)\, .$$ 
\end{lmm}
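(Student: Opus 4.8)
The plan is to understand the structure of $\A^\sS_l(q)$ by encoding bipartite maps with a boundary of perimeter $2l$ via a ``slicing'' or surgery operation that reduces them to maps with smaller perimeter, ultimately to maps with perimeter $2$. The key combinatorial input is the observation, already flagged in the excerpt, that $\bB_1$ is in bijection with $\bB\setminus\bB_0$ by gluing the two root-face edges. So I would first treat the case $l=1$: here $\A^\sS_1(q)$ records the sizes (in the chosen gauge $\sS$) attainable by arbitrary bipartite maps with at least one edge. Using that $q_i>0$ for some $i\geq 2$, one sees that one can freely ``stack'' faces of degree $2i$; each such insertion changes $|\sV|$, $|\sE|$, $|\sF|$ by a fixed amount, and the set of attainable sizes is eventually a full arithmetic progression with common difference $h^\sS$ (this is exactly where $h^\sV=\mathrm{gcd}\{n\geq 2:q_{2n}>0\}$ etc.\ come from, via a Chicken McNugget / numerical-semigroup argument), possibly after an initial exceptional set $R^\sS_1\subseteq\{0,\ldots,\beta^\sS-1\}$. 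This pins down $\beta^\sS$ as the relevant ``Frobenius-type'' threshold for $l=1$.

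Next I would bootstrap to general $l\geq 1$. The idea is that a bipartite map of perimeter $2l$ can be decomposed (e.g.\ by cutting along a geodesic from the root to another boundary corner, or by the more elementary operation of splitting the root face) into a map of perimeter $2(l-1)$ plus a controlled ``strip'' piece, or conversely built up from $l$ copies of perimeter-$2$ building blocks glued along their boundaries. Under such a decomposition the size functionals add up, so $\A^\sS_l$ contains $\A^\sS_1+\A^\sS_1+\cdots+\A^\sS_1$ ($l$ times), and conversely every element of $\A^\sS_l$ arises this way. Since $\A^\sS_1 = R^\sS_1\cup(\beta^\sS+h^\sS\Z_+)$, the $l$-fold sumset is easily checked to be of the form $R^\sS_l\cup(l\beta^\sS+h^\sS\Z_+)$ for a suitable $R^\sS_l\subseteq\{0,\ldots,l\beta^\sS-1\}$: the ``tail'' $l\beta^\sS+h^\sS\Z_+$ comes from taking each summand in its periodic part, and adding finitely many small exceptional contributions only affects the part below $l\beta^\sS$. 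One must also verify the edge case involving $\bB_0=\{\circ\}$ does not cause trouble, which is handled by the convention that the vertex map contributes weight $1$ and only appears incidentally.

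The main obstacle, I expect, is making the surgical decomposition \emph{exact} — i.e.\ establishing the precise additivity $\A^\sS_l = \underbrace{\A^\sS_1 + \cdots + \A^\sS_1}_{l}$ rather than just an inclusion, and doing so in a way that keeps careful track of the $+1$ convention on vertices (which is why $\beta^\sV$, and the precise form of the tail, require attention: concatenating $l$ maps identifies boundary vertices, so one must correct for over/under-counting vertices at the gluing seams). A clean way around this is to work not with geometric surgery but with generating functions: the bivariate generating function of $\W_l$ in the size variable factorizes (up to an explicit polynomial correction encoding the boundary) as a power of the perimeter-$2$ (or perimeter-$1$) generating function, and the support of a product of power series in one variable is governed exactly by the sumset of the supports, which yields the stated eventual-arithmetic-progression structure with the explicit bound $l\beta^\sS$ on the exceptional range. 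I would therefore organize the proof around such a factorization, deferring the combinatorial surgery to a bijective lemma whose only role is to justify the factorization at the level of supports.
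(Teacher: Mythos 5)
Your strategy is sound and reaches the correct conclusion, but the route you sketch is not quite the paper's, and your own worry about ``gluing seams'' points exactly at the place where the paper takes an easier path. The paper does not decompose the \emph{map}: it passes through the Bouttier--Di Francesco--Guitter bijection to forests of mobiles, under which the conditioned law $\W^{\bullet,\sS}_{l,n}$ corresponds to $\M_l(\,\cdot\,|\,N^\sS=n)$ and, crucially, $\M_l=(\M_1)^{\otimes l}$ is a \emph{product} measure on $l$ independent labeled mobiles. Hence $N^\sS$ under $\M_l$ is literally a sum of $l$ i.i.d.\ copies of its value under $\M_1$, and there are no seams, no boundary-vertex over-counting, and no correction term: the support of $\A^\sS_l$ is tautologically the $l$-fold sumset of $\A^\sS_1$ (up to the fixed shift $\ind_{\{\sS=\sV\}}$ between $N^\sS$ and $|\sS|$). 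For the $l=1$ structure $R\cup(\beta+h\Z_+)$ the paper does not run a face-stacking argument but cites Proposition~2.2 of Stephenson, which is exactly the statement that the support of a size functional of a (multitype) Bienaym\'e--Galton--Watson tree is eventually an arithmetic progression; the explicit value of $h^\sS$ is likewise imported from \cite[Section~6.3.1]{stephenson14}.

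So the comparison is: you propose to prove additivity $\A^\sS_l=\A^\sS_1+\cdots+\A^\sS_1$ by cutting the map (hard, for exactly the reasons you flag) or by a generating-function factorization; the paper gets the same additivity for free on the mobile side because the measure itself is a tensor power. Your ``generating-function workaround'' is in fact morally identical to the paper's argument --- the factorization of the generating function is nothing but the independence of the $l$ trees in the forest --- but the probabilistic phrasing avoids having to set up the GF machinery and the bijective lemma you would need to justify the factorization at the level of supports. If you were to write your version out, the cleanest fix to your main obstacle is precisely to replace ``cut the map along a geodesic'' by ``apply BDG and observe that the encoding forest has $l$ independent components''; at that point your proof and the paper's coincide, modulo whether the $l=1$ case is proved directly by a numerical-semigroup argument (as you sketch, which is fine but requires more bookkeeping to nail down $\beta^\sS$ and the exact value of $h^\sS$) or cited from Stephenson.
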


In fact, note that $\A^\sF_l(q)=\Z_+$,
which amounts to the fact that, for any~$q$ and any $n\geq 0$, $l\geq 1$, there is at least one
map $\bm$ with $n$ internal faces and perimeter $2l$ such that $W(q;\bm)>0$. As a consequence, we can always take $\beta^\sF=0$.

\subsubsection{Admissible, regular critical weight sequences}\label{secZq}

Let us introduce some terminology taken from
\cite{MaMi07}. Let 
$$f_q(x)=\sum_{k\geq 0}x^k\binom{2k+1}{k}q_{k+1}\, ,\qquad x\geq
0\, .$$ This defines a totally monotone function with values in
$[0,\infty]$. 

\begin{defn}
  We say that $q$ is {\em admissible} if the equation 
\begin{equation}
  \label{eq:2} 
f_q(z)=1-\frac{1}{z}
\end{equation}
admits a solution $z>1$. We
also say that $q$ is {\em regular critical} if moreover this
solution satisfies 
$$z^2 f_q'(z)=1$$ and if there exists $\eps>0$ such
that $f_q(z+\eps)<\infty$. 
\end{defn}

Note that $q$ being regular critical means that the graphs of~$f_q$
and of $x\mapsto 1-1/x$ are tangent at the point of abscissa $z$, and
in particular, by convexity of~$f_q$, the solution~$z$ to~\eqref{eq:2}
is unique. We denote by
$$\CZ_q=z$$
this solution, which will play an important role in the discussion to
come.

To give a little more insight into this definition, let us introduce
at this point a measure on maps that looks less natural at first sight
than the Boltzmann measure~$\W_l$, but which will turn out to be
better-behaved from the bijective point of view on which this work
relies. Let $\bB^\bullet$ be the set of pairs $(\bm,v_*)$ where $\bm\in
\bB$ is a rooted bipartite map and $v_*\in \sV(\bm)$ is a
distinguished vertex. We also let $\bB^\bullet_l$ be the subset of
$\bB^\bullet$ consisting of the maps having perimeter~$2l$.  We let
$W^{\bullet}(q;\cdot)$ be the measure on $\bB^\bullet$ defined by
\begin{equation}\label{Wpoint}
W^{\bullet}(q;\{(\bm,v_*)\})=W(\bm)\,,\qquad (\bm,v_*)\in \bB^\bullet \, ,
\end{equation}
as well as the probability measures $\W^{\bullet}$ and
$\W^{\bullet}_l$, defined by conditioning $W^{\bullet}$ respectively
on $\bB^{\bullet}$ and $\bB^{\bullet}_l$.  Note that, if
$\phi(\bm,v_*)=\bm$ denotes the map from $\bB^\bullet$ to $\bB$ that
forgets the marked point, then $\W_l$ is absolutely continuous with
respect to $\phi_*\W^\bullet_l$, with density function given by
\begin{equation}
  \label{eq:16}
  \d \W_l(\bm)=\frac{K_l}{|\sV|} \d (\phi_*\W^{\bullet}_l)(\bm)\, , 
\end{equation}
where $|\sV|$ should be understood as the random variable $\bm\mapsto
|\sV(\bm)|$ giving the number of vertices of the map, and
$K_l=\W^\bullet_l[1/|\sV|]^{-1}$. This fact will be useful later. 

Proposition~1 in \cite{MaMi07} shows that the sequence~$q$ of
non-negative weights is admissible if and only if
$W^\bullet(q;\bB^\bullet_1)<\infty$ (this is in fact the defining
condition of admissibility in \cite{MaMi07}). We see that this clearly
implies that $W(q;\bB_1)<\infty$, and even that $W(q;\bB_l)<\infty$
for every $l\geq 1$.  Moreover, in this case, the constant $\CZ_q$ has
a nice interpretation in terms of the pointed measures. Namely, it
holds that
\begin{equation}
  \label{eq:1}
  \CZ_q=1+W^\bullet(\bB^{\bullet}_1)/2\, .
\end{equation}

From now on, our attention will be exclusively focused on regular
critical weight sequences. It is not obvious at this point how to
interpret the definition, which will become clearer when we see how to
code maps with decorated trees. However, let us explain now in which
context this property typically intervenes, and refer the reader to
the upcoming Subsection~\ref{sec:applications} for two applications. For
instance, if one wants to study uniform random quadrangulations with a
boundary and with $n$ faces as we did in the first part of this paper,
it is natural to consider the sequence $q^\circ=\delta_{2}=(0,1,0,0,\ldots)$ and to
note that $\W^{\sF}_{l,n}(q^\circ;\cdot)$ is the uniform distribution
on $\bQ_{l,n}$. Here, note that the sequence $q^\circ$ is not
admissible, but the probability measure
$\W^{\sF}_{l,n}(q^\circ;\cdot)$ does make sense because
$0<W(\bB^{\sF}_{l,n})<\infty$, due to the fact that there are finitely
many quadrangulations with a boundary of perimeter $2l$, and with~$n$
internal faces. Now, it can be checked that
$q=12^{-1}q^\circ$ is admissible and regular
critical, and that
$\W^{\sF}_{l,n}(q;\cdot)=\W^{\sF}_{l,n}(q^\circ;\cdot)$ is still the
uniform distribution on $\bQ_{l,n}$. This way of transforming a
``naturally given'' weight sequence $q^\circ$ into a regular weight
sequence while leaving $\W^{\sS}_{l,n}$ invariant is common and very
useful.

The main result is the following. Let~$q$ be a regular critical
weight sequence. Define
$\rho_q=2+\CZ_q^3f''_q(\CZ_q)$ and let
$\sie$, $\siv$, $\sif$ be the non-negative numbers with squares
\begin{equation}
  \label{eq:3}
  \sie^2=\frac{\rho_q}{\CZ_q}\, ,\quad
\siv^2=\rho_q \, ,\quad \sif^2=\frac{\rho_q}{\CZ_q-1}\,.
\end{equation}
For $L>0$, we denote by~$\SSL$ the set of sequences $(l_k,n_k)_{k\geq 0}\in (\A^\sS)^\N$ such that $l_k$, $n_k\to \infty$ with $l_k\sim L\sis\sqrt{n_k}$ as $k\to\infty$.

\begin{thm}\label{sec:admiss-regul-crit}
  Let~$\sS$ denote one of the symbols $\sV$, $\sE$, $\sF$, and $(l_k,n_k)_{k\geq 0}\in \SSL$ for some $L>0$. For $k\ge 0$, denote by~$M_k$ a random map with distribution $\W^{\sS}_{l_k,n_k}$. Then
$$\left(\frac{4\sis^2}{9}\, n_k\right)^{-1/4}M_k\build\longrightarrow_{k\to\infty}^{(d)}\bd_{L}$$
in distribution for the Gromov--Hausdorff topology. 
\end{thm}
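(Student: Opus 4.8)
The strategy is to reduce Theorem~\ref{sec:admiss-regul-crit} to the already-established Theorem~\ref{THMDISK} by a change-of-measure argument relating Boltzmann-distributed maps to uniform quadrangulations, following the general philosophy of \cite{legall11,BeLG}. The starting point is the Bouttier--Di Francesco--Guitter type bijection: a Boltzmann bipartite map with a boundary is encoded by a labelled mobile (a two-type tree carrying integer labels), and the regular criticality of~$q$ is precisely the condition that makes the underlying Galton--Watson-type process critical with finite variance, so that the coding mobile, suitably rescaled, converges to a Brownian-type limit (a Brownian forest with Brownian labels). The constants $\rho_q$, $\sie$, $\siv$, $\sif$ defined in~\eqref{eq:3} are exactly the variance normalizations coming out of this convergence; this is why the scaling in the theorem involves $\sis$ through $\SSL$ and through the factor $(4\sis^2 n_k/9)^{-1/4}$.

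The key steps, in order, are as follows. First, I would record the bijective encoding of maps in $\bB^\sS_{l,n}$ by decorated mobiles, and identify the law of the mobile under $\W^\bullet_{l,n}$ (the \emph{pointed} Boltzmann measure) as that of a conditioned two-type Galton--Watson tree with an explicit offspring distribution depending on $q$ and $\CZ_q$, carrying i.i.d.\ label increments. Second, I would pass to the pointed measure throughout: by~\eqref{eq:16}, $\W_l$ is absolutely continuous with respect to $\phi_*\W^\bullet_l$ with density $K_l/|\sV|$, and an analogous relation holds at fixed size; since $|\sV(M_k)|$ is deterministic (for $\sS=\sV$) or concentrates after rescaling (for $\sS=\sE,\sF$, using Lemma~\ref{sec:boltzm-rand-maps} and a local limit theorem to control $\A^\sS_l(q)$), the Radon--Nikodym derivative is asymptotically constant, so it suffices to prove the convergence under the pointed measure $\W^{\bullet,\sS}_{l_k,n_k}$. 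Third --- and this is the technical heart --- I would prove an invariance principle for the rescaled labelled mobile: under $\W^{\bullet,\sS}_{l_k,n_k}$ with $(l_k,n_k)\in\SSL$, the contour/label processes of the mobile converge (after rescaling space by $n_k^{1/2}$ and labels by $n_k^{1/4}$, with the precise constants from~\eqref{eq:3}) to the Brownian-disk driving processes described in Section~\ref{sec:defin-main-prop}. Here one separates the ``boundary part'' of the mobile (a spine of length $\sim L\sis\sqrt{n_k}$ coding the $2l_k$ boundary edges) from the ``bulk part'', and shows these rescale respectively to the boundary process and the bulk Brownian snake driving $\bd_L$. Fourth, I would upgrade the convergence of coding processes to Gromov--Hausdorff convergence of the metric spaces: the map metric $d_{M_k}$ is a maximum over a family of functionals of the labels (the Bouttier--Di Francesco--Guitter formula for distances), and one needs the standard two-sided bound --- the easy direction from continuity of the label functionals, the hard direction (lower bound on distances) from a re-rooting / cactus-type argument or from the already-proved uniform-quadrangulation case via a surgical comparison. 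Fifth, I would match the limit: the limiting space obtained this way has the same description in terms of Brownian processes as $\bd_L$ from Theorem~\ref{THMDISK}, because the constants have been chosen so; alternatively, and more robustly, one identifies both limits as the unique metric space produced by the abstract gluing construction of Section~\ref{sec:defin-main-prop}, so no independent uniqueness argument is needed beyond what is already in the paper.

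The main obstacle I expect is the third step combined with the lower bound in the fourth: controlling the invariance principle \emph{uniformly} over the range of admissible pairs $(l_k,n_k)$ --- in particular when $\sS\in\{\sV,\sE\}$, where $\A^\sS_l(q)$ is a proper sublattice of $\Z_+$ (Lemma~\ref{sec:boltzm-rand-maps}) and one must invoke arithmetic/aperiodicity arguments and a conditional local limit theorem to ensure the conditioning on both perimeter and size does not distort the scaling limit --- and then proving the distance lower bound for the Boltzmann model directly, rather than importing it. A cleaner route for the lower bound, which I would pursue first, is to avoid re-proving it: establish that the rescaled Boltzmann map and an appropriate uniform quadrangulation can be coupled (or compared via their common mobile encoding) so that the GH-distance between them tends to~$0$, thereby transferring the full conclusion of Theorem~\ref{THMDISK} --- including the hard lower bound --- to the Boltzmann setting. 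Whether such a coupling is clean enough, or whether one must instead redo the lower-bound estimates of \cite{legall11} in the mobile language, is the crux of the argument.
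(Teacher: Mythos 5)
Your overall architecture is correct and matches the paper's: BDG encoding of Boltzmann maps by labeled mobiles, an invariance principle for the rescaled contour/label processes under the conditioned mobile law (which is indeed the technical heart, occupying Section~\ref{sec:conv-encod-proc}, and does involve a local limit theorem on the sublattice $\A^\sS_l(q)$ and an absolute-continuity argument against $\mathbb{F}^1_L$), passage through the pointed law $\W^{\bullet,\sS}_{l,n}$, and a de-pointing step (the paper actually does this last, Section~\ref{sec:de-pointing}, via a total-variation estimate for $\W^\sS_{l_k,n_k}$ versus $\phi_*\W^{\bullet,\sS}_{l_k,n_k}$; the order is a cosmetic difference).

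Where you go wrong is in the identification step, which you correctly flag as the crux but for which your preferred route would not work. There is no coupling between the rescaled Boltzmann map and a uniform quadrangulation making their Gromov--Hausdorff distance tend to zero: they are genuinely different random objects, and the universality being proved is distributional, not pathwise. The paper does not do this, nor does it redo the lower-bound estimates of \cite{legall11}. Instead it uses the re-rooting argument that you mention only in passing. Concretely: one shows tightness of the rescaled distance array $D'_{(k)}$, extracts a subsequential limit $D'$ which is a pseudo-metric satisfying $D'\in\mathcal D$ (hence $D'\leq D^*$) and $D'(s_*,\cdot)=Z_\cdot-\inf Z$; then Lemma~\ref{sec:proof-invar-princ} establishes by a discrete re-rooting computation that $D'(U,V)$ and $D'(s_*,U)$ have the same law for $U,V$ independent uniform; taking expectations and using that $D^*$ enjoys the \emph{same} re-rooting invariance --- this is the only point where Theorem~\ref{THMDISK} and Section~\ref{sec:conv-brown-disk} are used, via Theorem~\ref{sec:conv-brown-disk-1} and Lemma~\ref{sec:conv-brown-disk-3} --- one gets $\E[D'(U,V)]=\E[D^*(U,V)]$ while $D'\leq D^*$, forcing $D'=D^*$ a.s. This sidesteps any lower-bound estimate entirely. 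So the quadrangulation case does feed into the Boltzmann case, but only through the re-rooting invariance of the limit $D^*$, not through a surgical or coupling comparison of the discrete objects. You should develop the re-rooting route and drop the coupling idea.
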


\begin{rem}
The intuitive meaning for these
renormalization constants is the following: in a large random map with
Boltzmann distribution, it can be checked that the numbers $|\sV|$ and $|\sF|$ of
vertices and faces are of order $|\sE|/\CZ_q$ and $|\sE|(1-1/\CZ_q)$
respectively, where $|\sE|$ is the number of edges, and that
conditioning on having $n$ edges is asymptotically the same as
conditioning on having (approximately) $n/\CZ_q$ vertices, or
$n\,(1-1/\CZ_q)$ faces. 
\end{rem}

\begin{rem}
  In fact, the above result is also valid in the case where $L=0$,
  with the interpretation that $\bd_0$ is the Brownian map. The proof
  of this claim can be obtained by following ideas similar
  to~\cite[Section 6.1]{bettinelli11b}. However, a full proof requires the
  convergence of a map with law $\W_{1,n}^\sS$, rescaled by
  $(4\sis^2n/9)^{1/4}$, to the Brownian map, and this has been
  explicitly done only in the case where $\sS=\sV$ in \cite[Section
  9]{legall11}. In fact, building on the existing literature
  \cite{MaMi07,miergwmulti}, it is easy to adapt the argument to work
  for $\sS=\sF$ in the same way, while the case $\sS=\sE$, which is
  slightly different, can be tackled by the methods of
  \cite{abr14}. Writing all the details would add a consequent
  number of pages to this already lengthy paper, so we will omit the
  proof. 
\end{rem}

\subsubsection{Applications}\label{sec:applications}

Let us give two interesting specializations of Theorem~\ref{sec:admiss-regul-crit}. 
If $p\geq 2$ is an integer, a
$2p$-angulation with a boundary is a map whose internal faces all have
degree $2p$. The
computations of the various constants appearing in the statement of
Theorem \ref{sec:admiss-regul-crit}
have been performed in Section~1.5.1 of \cite{MaMi07}. These show that
the weight sequence 
$$q=\frac{(p-1)^{p-1}}{p^p\binom{2p-1}{p}}\delta_p$$
is regular critical, that $\W^{\sF}_{l,n}$ is the uniform law on
the set of $2p$-angulations with~$n$ faces and perimeter~$2l$ in this
case, and that the constants are
$$\CZ_q=\frac{p}{p-1}\, ,\quad \rho_q=p\, ,\quad \sie^2=p-1\,
,\quad \siv^2=p\, ,\quad \sif^2=p(p-1)\, .$$ Therefore, in
this situation, Theorem \ref{sec:admiss-regul-crit} for $\sS=\sF$
gives the following result, that clearly generalizes Theorem
\ref{THMDISK}. 

\begin{crl}\label{thmuniv}
  Let $L\in (0,\infty)$ be fixed, $(l_n,n\geq 1)$ be a sequence of
  integers such that $l_n\sim L\sqrt{p(p-1)n}$ as $n\to\infty$, and
  $M_n$ be uniformly distributed over the set of $2p$-angulations with
  $n$ internal faces and with perimeter $2l_n$. Then the following convergence
  holds in distribution for the Gromov--Hausdorff topology:
$$\Big(\frac{9}{4p(p-1)\,n}\Big)^{1/4}M_n\build\longrightarrow_{n\to\infty}^{(d)}\bd_L.$$
\end{crl}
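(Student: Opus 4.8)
The plan is to deduce Corollary~\ref{thmuniv} directly from Theorem~\ref{sec:admiss-regul-crit} by a bookkeeping computation: we only need to verify that the specific weight sequence $q=\frac{(p-1)^{p-1}}{p^p\binom{2p-1}{p}}\delta_p$ satisfies the hypotheses of that theorem, identify the relevant constants, and match the rescaling factors. First I would recall from Section~1.5.1 of~\cite{MaMi07} (which the excerpt already cites and permits us to use) that this $q$ is admissible and regular critical, with $\CZ_q=p/(p-1)$, and that under $\W^{\sF}_{l,n}(q;\cdot)$ a random map is uniformly distributed over the set of $2p$-angulations with a boundary having $n$ internal faces and perimeter $2l$. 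The key point is that $q$ is concentrated on a single weight $q_p$, so that the only internal faces allowed have degree $2p$, and the normalization constant is exactly the one that makes the criticality equations $f_q(z)=1-1/z$ and $z^2 f_q'(z)=1$ hold simultaneously at $z=p/(p-1)$; since $f_q$ is a polynomial, the condition $f_q(z+\eps)<\infty$ is automatic, so regular criticality holds.

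Next I would compute the auxiliary constants. With $f_q(x)=\binom{2p-1}{p}q_p x^{p-1}$ a monomial, one gets $f_q''(x)=(p-1)(p-2)\binom{2p-1}{p}q_p x^{p-3}$, and plugging in $x=\CZ_q=p/(p-1)$ together with the value of $q_p$ yields, after simplification, $\CZ_q^3 f_q''(\CZ_q)=p-2$, hence $\rho_q=2+\CZ_q^3 f_q''(\CZ_q)=p$. Then from~\eqref{eq:3} we read off $\sie^2=\rho_q/\CZ_q=p\cdot(p-1)/p=p-1$, $\siv^2=\rho_q=p$, and $\sif^2=\rho_q/(\CZ_q-1)=p\cdot(p-1)=p(p-1)$. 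These are exactly the values quoted in the excerpt, so the computation is a routine check of the formulas in Section~1.5.1 of~\cite{MaMi07}.

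Now I would apply Theorem~\ref{sec:admiss-regul-crit} with $\sS=\sF$. For this we need $(l_n,n)\in\SSL$, i.e.\ $l_n\sim L\sif\sqrt{n}$; since $\sif=\sqrt{p(p-1)}$, the hypothesis $l_n\sim L\sqrt{p(p-1)\,n}$ in Corollary~\ref{thmuniv} is precisely this condition. (We should also note that $\A^\sF_l(q)=\Z_+$ as recorded after Lemma~\ref{sec:boltzm-rand-maps}, provided $n$ is large enough that a $2p$-angulation of perimeter $2l_n$ with $n$ faces exists; for the fixed weight $q=c\,\delta_p$ the set of admissible $n$ is indeed cofinite — more precisely it is $\beta^\sF=0$ so $\A^\sF_l=\Z_+$ — so there is nothing to worry about.) Theorem~\ref{sec:admiss-regul-crit} then gives
$$\left(\frac{4\sif^2}{9}\,n\right)^{-1/4}M_n\build\longrightarrow_{n\to\infty}^{(d)}\bd_L$$
in the Gromov--Hausdorff topology. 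Substituting $\sif^2=p(p-1)$, the prefactor is $\big(\tfrac{4p(p-1)}{9}n\big)^{-1/4}=\big(\tfrac{9}{4p(p-1)n}\big)^{1/4}$, which is exactly the constant in the statement, completing the proof.

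There is essentially no obstacle here: the content of the corollary is entirely contained in Theorem~\ref{sec:admiss-regul-crit}, and the only work is the explicit evaluation of $\CZ_q$, $\rho_q$, and the $\sigma$-constants for the monomial weight sequence, together with the trivial algebraic rearrangement of the scaling factor. The mild care needed is (i) to confirm that $\W^{\sF}_{l,n}(q;\cdot)$ really is the uniform measure on $2p$-angulations with a boundary — this follows because $W(q;\bm)=q_p^{|\sF(\bm)\setminus\{f_*\}|}$ is constant on maps with a fixed number of internal faces, all of degree $2p$ — and (ii) to observe that the hypothesis $l_n\in(0,\infty)$ fixed with $L>0$ keeps us in the regime covered by the theorem rather than the degenerate $L=0$ (Brownian map) or $l_n/\sqrt n\to\infty$ (CRT) regimes. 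Neither of these requires any new argument.
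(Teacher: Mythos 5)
Your proof is correct and follows exactly the paper's own route: specialize Theorem~\ref{sec:admiss-regul-crit} with $\sS=\sF$ to the regular critical weight $q=\frac{(p-1)^{p-1}}{p^p\binom{2p-1}{p}}\delta_p$, compute $\CZ_q=p/(p-1)$, $\rho_q=p$, $\sif^2=p(p-1)$, and rearrange the rescaling factor — which is precisely the argument the paper invokes (citing Section~1.5.1 of \cite{MaMi07} for the constants). Your explicit evaluation of $\CZ_q^3 f_q''(\CZ_q)=p-2$ is a welcome spelled-out check of what the paper only references.
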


Next, consider the case where $q_k=a^{-k}$, $k\geq 1$ for some $a>0$. In this case, for
every $\bm\in \bB$, a simple computation shows that
$$W(\bm)=a^{-|\sE(\bm)|+l}$$
so that $\W^{\sE}_{l,n}$ is the uniform distribution over bipartite
maps with~$n$ edges and a perimeter~$2l$. It was shown in
\cite[Section~1.5.2]{MaMi07} (and implicitly recovered in
\cite[Proposition~2]{abr14}) that choosing $a=1/8$ makes $q$ regular
critical and that, in this case, 
$$\CZ_q=\frac{3}{2}\, ,\qquad \rho_q=\frac{27}{4}\, ,\qquad
\sie^2=\frac{9}{2}\, .$$
Thus, one deduces the following statement, that should be compared to
\cite[Theorem 1]{abr14}.

\begin{crl}
  \label{sec:applications-1}
Let $M_n$ be a uniform random bipartite map with~$n$ edges and with
perimeter~$2l_n$, where $l_n\sim 3L\sqrt{n/2}$ for some $L>0$. Then  the following
  convergence holds in distribution for the Gromov--Hausdorff
  topology:
$$(2n)^{-1/4}M_n\build\longrightarrow_{n\to\infty}^{(d)}\bd_L.$$
\end{crl}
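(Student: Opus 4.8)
The plan is to obtain Corollary~\ref{sec:applications-1} as a direct specialization of Theorem~\ref{sec:admiss-regul-crit}, taken with the symbol $\sS=\sE$ and the weight sequence $q=(q_k)_{k\ge1}$ defined by $q_k=8^{-k}$ (the choice $a=1/8$ discussed just above the statement). The first point is to observe that a random map with law $\W^\sE_{l,n}$ is precisely a uniform bipartite map with $n$ edges and perimeter $2l$: this is immediate from the identity $W(\bm)=a^{-|\sE(\bm)|+l}$ recorded in the text, since on the finite set $\bB^\sE_{l,n}$ both $l$ and $|\sE(\bm)|=n$ are fixed, so $W$ is constant there and $\W^\sE_{l,n}=W(\cdot\mid\bB^\sE_{l,n})$ is the uniform probability measure. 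Hence the object $M_n$ of the statement has law $\W^\sE_{l_n,n}$ whenever $(l_n,n)\in\A^\sE$.

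Next I would import from \cite[Section~1.5.2]{MaMi07} (recovered in \cite[Proposition~2]{abr14}) that $q$ is \emph{admissible} and \emph{regular critical}, with $\CZ_q=3/2$ and $\rho_q=2+\CZ_q^3 f_q''(\CZ_q)=27/4$. I would not reprove this: it amounts to solving the tangency system $f_q(z)=1-1/z$, $z^2 f_q'(z)=1$ for the explicit generating function $f_q$, and is the one genuinely computational input, already carried out in the cited references (admissibility moreover guarantees $W(\bB_l)<\infty$, so all the conditioned measures above are well-defined). From $\rho_q$ and~\eqref{eq:3} one then reads off $\sie^2=\rho_q/\CZ_q=(27/4)/(3/2)=9/2$, so $\sie=3/\sqrt2$.

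It then remains to match the hypotheses and the scaling constant, which is pure bookkeeping and where I foresee no obstacle. Since $\sS=\sE$ we have $\sis=\sie=3/\sqrt2$, so the condition defining $\SSL$, namely $l_n,n\to\infty$ with $l_n\sim L\sis\sqrt n$, reads $l_n\sim\tfrac{3}{\sqrt2}L\sqrt n=3L\sqrt{n/2}$, which is exactly the assumption on $(l_n)$; and the renormalization $(4\sis^2n/9)^{-1/4}=(4\cdot\tfrac{9}{2}\cdot n/9)^{-1/4}=(2n)^{-1/4}$ is exactly the one in the corollary. Finally, to apply Theorem~\ref{sec:admiss-regul-crit} one needs $(l_n,n)\in\A^\sE$ for all large $n$: since $q_k>0$ for every $k$ we have $h^\sE=1$, so by Lemma~\ref{sec:boltzm-rand-maps} the set $\A^\sE_l$ contains $l\beta^\sE+\Z_+$, and since $l_n=o(n)$ with $l_n,n\to\infty$ the pair $(l_n,n)$ indeed lies in $\A^\sE$ for $n$ large and diverges in both coordinates. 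Theorem~\ref{sec:admiss-regul-crit} then yields the announced Gromov--Hausdorff convergence, so the only substantive ingredient beyond elementary arithmetic is the regular criticality of $q$, which is quoted rather than proved here.
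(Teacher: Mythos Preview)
Your proposal is correct and follows essentially the same route as the paper: identify the weight sequence making $\W^\sE_{l,n}$ uniform on bipartite maps with $n$ edges and perimeter $2l$, quote the regular criticality and the values $\CZ_q=3/2$, $\rho_q=27/4$, $\sie^2=9/2$ from \cite{MaMi07,abr14}, and specialize Theorem~\ref{sec:admiss-regul-crit} with $\sS=\sE$. Your explicit checks that $l_n\sim L\sie\sqrt{n}$, that $(4\sie^2 n/9)^{-1/4}=(2n)^{-1/4}$, and that $(l_n,n)\in\A^\sE$ for large $n$ via $h^\sE=1$ are all sound and simply spell out what the paper leaves implicit.
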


\subsection{Convergence of Boltzmann maps}\label{sec:conv-boltzm-maps}

The models we have presented so far consist in taking a random map
with a fixed size and perimeter and letting both these quantities go
to infinity in an appropriate regime. However, it is legitimate to ask
about the behavior of a typical random map with law $\W_l$ or
$\W^\bullet_l$ when $l\to\infty$, so that the perimeter is fixed and
large, while the total size is left free.

For every $L\geq 0$ and $A>0$, we define a random metric space
$\bd_{L,A}$, which we interpret as the Brownian disk with area~$A$ and
perimeter~$L$. For concreteness, the space $\bd_{L,A}$ has same
distribution as $A^{1/4}\,\bd_{A^{-1/2}L}$. To motivate the
definition, note that $\bd_{L,1}$ has same distribution as $\bd_L$ and
that if $Q_n$ is a uniform random element in $\bQ_{\lfloor
  L\sqrt{2n}\rfloor , \lfloor An\rfloor }$, then $(8n/9)^{-1/4}Q_n$
converges in distribution for the Gromov--Hausdorff topology to
$\bd_{L,A}$ by virtue of Theorem \ref{THMDISK}. See also
Remark~\ref{sec:brownian-disks} in Section~\ref{sec:br-disks} below.

Let $\mathcal{A}^\bullet$ be a stable random variable with index $1/2$, with distribution
given by 
$$\P(\mathcal{A}^\bullet\in \d A)=\frac{1}{\sqrt{2\pi
    A^3}}\exp\left(-\frac{1}{2 A}\right)\d A\, \ind_{\{A>0\}}\, .$$
Note that $\E[1/\mathcal{A}^\bullet]=1$, so that the formula
$$\frac{\P(\mathcal{A}^\bullet\in
  \d A)}{ A}=\frac{1}{\sqrt{2\pi A^5}}\exp\left(-\frac{1}{2A}\right)\d
A\, \ind_{\{A>0\}}$$ also defines a probability distribution, and we
let $\mathcal{A}$ be a random variable with this distribution. We
define the \emph{free Brownian disk} with perimeter~$1$ to be a space with
same law as $\bd_{1,\mathcal{A}}$, where this notation means that
conditionally given $\mathcal{A}=A$, it has same distribution as
$\bd_{1,A}$.  Likewise, the \emph{free pointed Brownian disk} with perimeter $1$ has
same distribution as $\bd_{1,\mathcal{A}^\bullet}$. 

For future reference, for $L>0$, it is natural to define 
the law of the free Brownian disk (resp.\ free pointed Brownian disk)
with perimeter~$L$ by scaling, setting it to be the law of
$\sqrt{L}\,\bd_{1,\mathcal{A}}$ or equivalently of
$\bd_{L,L^2\mathcal{A}}$ (resp.\ 
$\sqrt{L}\,\bd_{1,\mathcal{A}^\bullet}=^{(d)}\bd_{L,L^2\mathcal{A}^\bullet}$).
We let $\mathrm{FBD}_L$ (resp.\ $\mathrm{FBD}^\bullet_L$) stand for
the free Brownian disk (resp.\ free pointed Brownian disk) with
perimeter $L$.

\begin{thm}\label{thmboltz_nfree}
  Let $q$ be a regular critical weight sequence. For $l\in\N$, let
  $B_l$ (resp.\ $B^\bullet_l$) be distributed according to~$\W_l(q;\cdot)$
  (resp.\ $\W^\bullet_l(q;\cdot)$). Then
$$\left(\frac{2l}{3}\right)^{-1/2}B_l\build\longrightarrow_{l\to\infty}^{(d)}\mathrm{FBD}_1$$
and respectively
$$\left(\frac{2l}{3}\right)^{-1/2}B^\bullet_l\build\longrightarrow_{l\to\infty}^{(d)}\mathrm{FBD}^\bullet_1$$
in distribution for the the Gromov--Hausdorff topology.
\end{thm}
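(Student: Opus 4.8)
The plan is to deduce Theorem~\ref{thmboltz_nfree} from the already-established Theorem~\ref{sec:admiss-regul-crit} by a disintegration (mixture) argument: a Boltzmann map $B_l$ with fixed perimeter but free size is, conditionally on its size, uniform among maps of that size and perimeter, so its scaling limit should be a mixture of the $\bd_{L,A}$'s over the limiting distribution of the (rescaled) size. First I would fix $\sS=\sE$ (say) and describe the law of $|\sE(B_l)|$ under $\W_l(q;\cdot)$; for a regular critical weight sequence, this is a size-biased-type law whose tail is governed by a spectrally positive stable-$1/2$ behaviour, and the classical one-dimensional estimates (from e.g.\ \cite{MaMi07} together with the coding of Boltzmann maps by labelled mobiles) give that, as $l\to\infty$,
$$\frac{|\sE(B_l)|}{(2l/(3\sis))^{2}}\build\longrightarrow_{l\to\infty}^{(d)}\mathcal{A}\, ,$$
with the analogous statement for $B^\bullet_l$ producing $\mathcal{A}^\bullet$ (the extra factor $1/A$ being exactly the effect of the marked vertex via \eqref{eq:16}). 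I would normalize the stable law so that the exponents and constants match the definitions of $\mathrm{FBD}_1$ and $\mathrm{FBD}^\bullet_1$; getting the constant $2l/3$ right amounts to bookkeeping with $\sie^2=\rho_q/\CZ_q$ and the relation $l_k\sim L\sis\sqrt{n_k}$.

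Next I would set up the conditional convergence. Conditionally on $|\sE(B_l)|=n$, the map $B_l$ has law $\W^{\sE}_{l,n}(q;\cdot)$, so by Theorem~\ref{sec:admiss-regul-crit}, whenever $l=l_k$ and $n=n_k$ run through a sequence in $\SSL$, the rescaled map converges to $\bd_L$; rescaling, conditionally on $|\sE(B_l)|\approx A(2l/(3\sis))^2$ the map $(2l/3)^{-1/2}B_l$ converges to $\bd_{1,A}$. Combining this with the above convergence of the rescaled size, a standard argument (integrate the conditional Gromov--Hausdorff convergence against the size distribution, using the fact that $A\mapsto$ law of $\bd_{1,A}$ is weakly continuous, which follows from $\bd_{1,A}=^{(d)}A^{1/4}\bd_{A^{-1/2}}$ and continuity in $L$ of $L\mapsto\bd_L$) yields convergence of $(2l/3)^{-1/2}B_l$ to the mixture $\bd_{1,\mathcal{A}}$, i.e.\ to $\mathrm{FBD}_1$. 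The pointed case is identical, replacing $\W^\sE_{l,n}$ by its pointed analogue and $\mathcal{A}$ by $\mathcal{A}^\bullet$; note one must check that the location of the marked point in $B^\bullet_l$ converges jointly, which is automatic since in the limit a uniform point of $\bd_{1,A}$ is part of the description and the marked vertex of a pointed Boltzmann map is, conditionally on the map, uniform.

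There are two genuine technical points. The minor one is a uniformity/tightness issue at the extremes: one must control the contributions of the events $\{|\sE(B_l)|$ very small$\}$ and $\{|\sE(B_l)|$ very large$\}$ so that the mixture argument is not spoiled by a vanishing or escaping mass of area; this is handled by the tail bounds on the stable limit together with a uniform diameter estimate for $\W^\sE_{l,n}$-maps (of the type already used to prove tightness for Theorem~\ref{sec:admiss-regul-crit}) showing that, after rescaling by $(2l/3)^{-1/2}$, the family is tight and no mass escapes. The main obstacle, however, is the first step: proving the scaling limit of the \emph{size} $|\sE(B_l)|$ (resp.\ $|\sV|$, $|\sF|$) under the fixed-perimeter Boltzmann law, with the precise stable-$1/2$ limit and the correct constant. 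This requires going back to the mobile encoding of Boltzmann maps with a boundary: the number of edges is, up to lower-order terms, a sum over the $l$ ``boundary spine'' increments of i.i.d.\ subtrees whose total size has a heavy ($3/2$-) tail coming from the criticality condition $\CZ_q^2 f_q'(\CZ_q)=1$, so the sum of $l$ of them, rescaled by $l^2$, converges to a one-sided stable-$1/2$ variable; identifying the scale constant as $(2l/(3\sis))^2$ is the computation that ties everything to $\rho_q$ and $\CZ_q$. Here I would lean on \cite{MaMi07} and, for the $\sS=\sE$ normalization, on \cite{abr14}, which already contain essentially this one-dimensional analysis. Once this input is in place, the rest is the soft mixture argument sketched above.
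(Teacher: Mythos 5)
Your plan is the same disintegration argument the paper uses: write the free-size Boltzmann law as a mixture over sizes, apply the conditional (fixed-size) invariance principle to each slice, and integrate against the stable-$1/2$ limit of the rescaled size; the paper's Section~\ref{sec:conv-boltzm-maps-1} proceeds exactly this way. Two remarks on your execution. First, the paper conditions on $\sS=\sV$ rather than your choice $\sS=\sE$. This is not purely cosmetic: with $\sS=\sV$ the conditioning interacts trivially with pointing, because the de-pointing density $1/|\sV|$ is constant on $\{|\sV|=n+1\}$, giving $\W^\sV_{l,n}=\phi_*\W^{\bullet,\sV}_{l,n}$ outright; with $\sS=\sE$ you would need to re-run a de-pointing argument (of the flavor of Lemma~\ref{sec:de-pointing-2}) inside each mixture component. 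Not fatal, but avoidable friction.

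The genuine gap is in the passage to the unpointed law $\W_l$. You assert that the ``extra factor $1/A$'' in the limiting mixture falls out of the pointed result via the density tilt \eqref{eq:16}. But carrying out that tilt needs strictly more than the weak convergence of $|\sV|/l^2$ towards $\mathcal{A}^\bullet/\siv^2$ under $\W^\bullet_l$: since $A\mapsto 1/A$ is unbounded near $0$, you must show $l^2\,\W^\bullet_l[1/|\sV|]\to\siv^2$, i.e.\ you need the precise asymptotics of the normalizing constant $K_l$ in \eqref{eq:16}. The paper isolates this as a separate one-dimensional estimate (Lemma~\ref{sec:proof-conv-boltzm}): a stretched-exponential lower-tail bound, uniform in $l$, of the form $\mathbb{P}(X_1+\cdots+X_l\le x\,l^{2})\le C\exp(-c\,x^{-1/2})$ for i.i.d.\ positive $X_i$ in the stable-$1/2$ domain of attraction, which yields the uniform integrability of $l^2/|\sV|$ near $0$. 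Your proposal neither identifies nor replaces this input, and without it the unpointed case does not close. Conversely, the tightness and escaping-area concerns you raise are not needed in the form you suggest: since you test against a bounded continuous $\varphi$ on the Gromov--Hausdorff space, the mixture step reduces to a scalar convergence of expectations; the paper controls the mixing weights via the local limit theorem of Lemma~\ref{sec:conv-encod-proc-1} combined with Scheff\'e's lemma, and absorbs the small-size exceptional range into a remainder bounded by $\|\varphi\|_\infty\,\W^\bullet_l(|\sV|\le\beta l)\to 0$.
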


It is remarkable that the renormalization in this theorem does not
involve $q$ whatsoever!

\subsection{Further comments and organization of the paper}\label{sec:furth-comm-organ}

The very recent preprint \cite{MiSh15charac} by Miller and Sheffield aims at
providing an axiomatic characterization of the Brownian map in terms
of elementary properties. In this work, certain measures on random
disks play a central role. We expect that these measures, denoted by
$\mu_{\mathrm{DISK}}^{k,L}$ for $k\in \{0,1\}$ and $L>0$, are
respectively the laws of the free Brownian disk ($k=0$) and the pointed
free Brownian disk ($k=1$) with perimeter $L>0$. Miller and Sheffield
define these measures directly in terms of the metric balls in certain
versions of the Brownian map, and it is not immediate, though it is
arguably very likely, that this definition matches the one given in
the present paper. Establishing such a connection would be interesting
from the perspective of \cite{MiSh15charac} since, for example, it is not
established that $\mu_{\mathrm{DISK}}^{k,L}$ is
supported on compact metric spaces, due to the possibly wild behavior
of the boundary from a metric point of view. We hope to address such
questions in future work. 

Note also that \cite{MiSh15charac} introduces
another measure on metric spaces, called $\mu_{\mathrm{MET}}^L$, which
intuitively corresponds to the law of a variant of a metric ball in
the Brownian map, with a given boundary length. A description of this
measure in terms of slices is given in \cite{MiSh15charac}, which is very
much similar to the one we describe in the current work. However,
there is a fundamental difference, which is that
$\mu_{\mathrm{MET}}^L$ does not satisfy the invariance under
re-rooting that is essential to our study of random disks. In a few words,
in a random disk with distribution $\mu_{\mathrm{MET}}^L$, all points
of the boundary are equidistant from some special point (the center of
the ball), while it is very likely that no such point exists a.s.\ in
$\bd_{L,A}$, or under the law $\mu_{\mathrm{DISK}}^{k,L}$.

It would be natural to consider the operation that consists in gluing
Brownian disks, say with same perimeter, along their boundaries, hence
constructing what should intuitively be a random sphere with a
self-avoiding loop. However, this operation is in general badly
behaved from a metric point of view (in the sense of \cite[Chapter~3]{burago01} say), and it is not clear that the resulting space has
the same topology as the topological gluing. The reason for this
difficulty is that we require to glue along curves that are not
Lipschitz, since the boundaries of the spaces $\bd_L$ have Hausdorff
dimension~$2$ (by contrast, the gluings considered in Section~\ref{sec:conv-brown-disk} of the present paper are all along
geodesics.)  At present, such questions remain to be investigated.

The rest of the paper is organized as follows. In
Section~\ref{sec:defin-main-prop}, we give a self-standing definition
of the limiting objects. As in many papers on random maps, we rely on
bijective tools, and Section~\ref{sec:scha-biject-first} introduces
these tools. Section~\ref{sec:known-scaling-limit} gives a technical
result of convergence of slices, which are the elementary pieces from which the
Brownian disks are constructed.
Section~\ref{sec:conv-brown-disk} is dedicated to the proof of
Theorem~\ref{THMDISK}. In
Sections~\ref{sec:univ}--\ref{sec:conv-brown-disk-4}, we address the
question of universality and prove
Theorems~\ref{sec:admiss-regul-crit} and~\ref{thmboltz_nfree}.

\bigskip

\noindent{\bf Acknowledgments. }
This work is partly supported by the GRAAL grant ANR-14-CE25-0014.  We
also acknowledge partial support from the Isaac Newton Institute for
Mathematical Sciences where part of this work was conducted, and where
G.M.\ benefited from a Rothschild Visiting Professor position during
January 2015.

We thank Erich Baur, Timothy Budd, Guillaume Chapuy, Nicolas Curien,
Igor Kortchemski, Jean-Fran\c{c}ois Le Gall, Jason Miller, Gourab Ray
and Scott Sheffield, for useful remarks and conversations during the
elaboration of this work.

\section{Definition of Brownian disks}\label{sec:defin-main-prop}

Recall that the Brownian map $\bd_0$ is defined (\cite{legall06}, see
also \cite{MM05} and Section~\ref{sec:subs-conv} below) in terms of a certain stochastic process called the
normalized Brownian snake. Likewise, the spaces $\bd_L$, $L>0$
of Theorem~\ref{THMDISK} are defined in terms of stochastic processes,
as we now discuss.

\subsection{First-passage bridges and random continuum forests}\label{sec:first-pass-bridg}

The first building blocks of the Brownian disks are first-passage
bridges of Brownian motion. Informally, given~$A$, $L>0$, the
first-passage bridge at level~$-L$ and time~$A$ is a Brownian
motion conditioned to first hit~$-L$ at time~$A$. 
To be more precise, let us introduce some notation. We let~$X$ be the
canonical continuous process, and $\mathcal{G}_s=\sigma(X_u,u\leq s)$
be the associated canonical filtration. Denote by~$\mathbb{P}$ the law
of standard Brownian motion, and by $\mathbb{P}^A$ the law of standard
Brownian motion killed at time $A>0$.  For $L\geq 0$, let
$T_L=\inf\{s\geq 0:X_s=-L\}$ be the first hitting time of $-L$. We
denote the density function of its law by
\begin{equation}\label{jLA}
j_L(A)=\frac{\mathbb{P}(T_L\in \d A)}{\d A}=\frac{L}{\sqrt{2\pi
    A^3}}\exp\left(-\frac{L^2}{2A}\right)\, .
\end{equation}

With this notation, the law $\mathbb{F}^A_L$ of the first-passage
bridge at level $-L$ and at time $A$ can informally be seen as
$\mathbb{P}^A(\,\cdot\, |\, T_L=A)$. It is best defined by an absolute
continuity relation with respect to $\mathbb{P}^A$. Namely, for every
$s\in (0,A)$ and every non-negative random variable~$G$ that is
measurable with respect to $\mathcal{G}_s$, we let
\begin{equation}
  \label{eq:14}
  \mathbb{F}^A_L(G)=\mathbb{P}^A\left[G\,\ind_{\{T_L>s\}}\frac{j_{L+X_s}(A-s)}{j_L(A)}\right]\,
  .
\end{equation}
It can be seen~\cite{ChPM} that this definition is consistent and uniquely extends
to a law $\mathbb{F}^A_L$ on $\mathcal{G}_A$, supported on continuous
processes, and for which $\mathbb{F}^A_L(T_L=A)=1$. 

An alternative description of first-passage bridges, which will be
useful to us later, is the following. 

\begin{prp}
  \label{sec:first-pass-bridg-1}
  Let $A$, $L>0$. Then for every $y\in (0,L)$ and for every non-negative
  random variable $G$ that is measurable with respect to
  $\mathcal{G}_{T_y}$, we have
  \begin{equation}
    \label{eq:15}
    \mathbb{F}^A_L[G]=\mathbb{P}^A\left[G\,\ind_{\{T_y<A\}}\frac{j_{L-y}(A-T_y)}{j_L(A)}\right]\,
.
\end{equation}
Moreover, this property characterizes $\mathbb{F}^A_L$ among all
measures on $\mathcal{G}_A$ supported on continuous functions. 
\end{prp}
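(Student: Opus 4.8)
The plan is to derive \eqref{eq:15} from the defining absolute-continuity relation \eqref{eq:14} by an optional-stopping / strong Markov argument at the stopping time $T_y$, and then to argue that \eqref{eq:15} conversely pins down $\mathbb{F}^A_L$ uniquely. First I would fix $y\in(0,L)$ and a bounded nonnegative $\mathcal{G}_{T_y}$-measurable $G$. The natural first move is to approximate: write $T_y$ as an increasing limit of the discretized stopping times $T_y^{(m)}=2^{-m}\lceil 2^m T_y\rceil$ (capped below $A$), so that $G\ind_{\{T_y<A\}}$ is approximated by $\mathcal{G}_s$-measurable random variables on the events $\{T_y^{(m)}=s\}$ for dyadic $s\in(0,A)$. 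On each such event one applies \eqref{eq:14} with that fixed $s$, then uses the strong Markov property of $\mathbb{P}^A$ at time $T_y^{(m)}$ together with the fact that on $\{T_y^{(m)}=s, T_y<A\}$ the post-$s$ process is (a killed Brownian motion) started from $X_s$, for which $\mathbb{P}^A[\,j_{L+X_s}(A-s)\mid \mathcal{G}_s] $-type computations reduce to the density $j_L(A)$ evaluated at the right residual level and time. The key algebraic identity underlying the whole computation is the Chapman–Kolmogorov relation for hitting densities, namely
\[
  j_L(A)=\int_0^A j_y(t)\, j_{L-y}(A-t)\,\d t\, ,
\]
i.e.\ $T_L=T_y+(T_L-T_y)$ in law with independent increments by the strong Markov property at $T_y$; this is exactly what converts the weight $j_{L+X_s}(A-s)/j_L(A)$ in \eqref{eq:14}, after integrating out the path between $T_y$ and $s$, into the weight $j_{L-y}(A-T_y)/j_L(A)$ of \eqref{eq:15}. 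Passing to the limit $m\to\infty$ by monotone (or dominated) convergence, using continuity of $(t,X_t)\mapsto j_{L-X_t}(A-t)$ on $\{T_y<A\}$ and the fact proved in \cite{ChPM} that $\mathbb{F}^A_L$ is supported on continuous paths with $T_L=A$ a.s., yields \eqref{eq:15} for all such $G$, and then for all nonnegative $\mathcal{G}_{T_y}$-measurable $G$ by monotone convergence.

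For the uniqueness claim, suppose $\nu$ is any measure on $\mathcal{G}_A$ supported on continuous functions satisfying the analogue of \eqref{eq:15}. Applying \eqref{eq:15} with $G\equiv 1$ and using $j_L(A)=\mathbb{P}^A[\,j_y(T_y)\cdots]$—more directly, integrating the displayed Chapman–Kolmogorov identity—shows $\nu$ is a probability measure with $\nu(T_y<A)=1$ for every $y<L$, hence $\nu(T_L\le A)=1$ by continuity, and in fact $\nu(T_L=A)=1$. The point is that \eqref{eq:15} prescribes, for every $y\in(0,L)$, the full law of the path up to $T_y$ as an explicit reweighting of $\mathbb{P}^A$, and these laws are consistent and exhaust the path on $[0,A]$ as $y\uparrow L$: indeed $T_y\uparrow T_L=A$ $\nu$-a.s., so $\mathcal{G}_{T_y}\uparrow \mathcal{G}_{A^-}=\mathcal{G}_A$ (up to $\nu$-negligible sets, using path-continuity and $T_L=A$), and a measure on $\mathcal{G}_A$ supported on continuous paths is determined by its restrictions to an increasing sequence of $\sigma$-fields generating $\mathcal{G}_A$. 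Thus $\nu=\mathbb{F}^A_L$.

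The main obstacle, I expect, is the rigorous handling of the stopping time $T_y$ in combination with the killing at time $A$: one must be careful that $T_y$ may exceed $A$ (whence the indicator $\ind_{\{T_y<A\}}$), that the approximation $T_y^{(m)}\downarrow$... actually $\uparrow$ behaves well under the change of measure, and that the weight $j_{L+X_s}(A-s)$ in \eqref{eq:14} is only defined for $s<A$ and degenerates as $s\uparrow A$. The cleanest route around this is probably not raw discretization but rather: first establish \eqref{eq:15} for $G$ replaced by $G\ind_{\{T_y\le s_0\}}$ for fixed $s_0<A$ via a direct strong-Markov computation under $\mathbb{P}^A$ (where everything is finite and continuous), check the resulting identity is $\mathbb{F}^A_L(G\ind_{\{T_y\le s_0\}})$ using \eqref{eq:14}, and then let $s_0\uparrow A$, invoking $\mathbb{F}^A_L(T_L=A)=1$ and $y<L$ to get $\mathbb{F}^A_L(T_y<A)=1$ so that the truncation disappears in the limit. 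Everything else is a routine application of the strong Markov property and the hitting-time convolution identity.
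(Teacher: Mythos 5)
Your proposal is correct and follows essentially the same route as the paper. Both proofs amount to optional stopping at $T_y$ for the density in \eqref{eq:14}, followed by a "$T_y\uparrow A$" argument for uniqueness. The paper's presentation is slicker on the first point: rather than invoking the strong Markov property and the hitting-time convolution $j_L(A)=\int_0^A j_y(t)\,j_{L-y}(A-t)\,\d t$ directly, it observes that the consistency of the definition \eqref{eq:14} already means $\CM_s=\ind_{\{T_L>s\}}\,j_{L+X_s}(A-s)/j_L(A)$ is a $(\mathcal{G}_s)$-martingale under $\mathbb{P}^A$, so that $\mathbb{P}^A[\ind_E\CM_T]=\mathbb{F}^A_L(E)$ follows from a routine optional-stopping/limit argument for any stopping time $T<A$ a.s.\ under $\mathbb{F}^A_L$, with no need for the dyadic discretization of $T_y$ that you flag as delicate. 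Your Chapman--Kolmogorov identity is of course exactly this martingale property in disguise, so there is no real difference in content. For uniqueness, the paper is a bit more careful than your sketch: instead of asserting $\mathcal{G}_{T_y}\uparrow\mathcal{G}_A$ up to null sets, it fixes $s<A$ and $E\in\mathcal{G}_s$, writes $\nu(E)=\lim_{y\uparrow L}\nu(E\cap\{T_y>s\})$ using $\nu(T_y\le s)\to 0$ (read off from the explicit density $j_{L-y}(A-T_y)/j_L(A)$), and then identifies the limit with $\mathbb{F}^A_L(E)$ by the same martingale property. You should make the step "$\nu(T_L=A)=1$" explicit the same way — it follows from $\nu(T_y<s)\to 0$ for every fixed $s<A$, which is exactly what the density degenerating as $y\uparrow L$ gives you; as stated, this is asserted without justification.
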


\begin{proof}
  The definition of $\mathbb{F}^A_L$ implies that the process
  $\CM=(\mathbf{1}_{\{T_L>s\}}j_{L+X_s}(A-s)/j_L(A),\,0\leq s<A)$ is a
  $(\mathcal{G}_s,\,0\leq s<A)$-martingale. Therefore, for every
  stopping time $T$ such that $T<A$ a.s.\ under $\mathbb{F}^A_L$, and
  for every $E\in \mathcal{G}_T$, we have
$$\mathbb{F}_L^A(E)=\lim_{s\uparrow A}\mathbb{P}^A[\ind_{E\cap\{T\leq
  s\}}\CM_s]= \lim_{s\uparrow A}\mathbb{P}^A[\ind_{E\cap\{T\leq
  s\}}\mathbb{P}^A[\CM_s\,|\,\mathcal{G}_T]]=\lim_{s\uparrow
  A}\mathbb{P}^A[\ind_{E\cap \{T\leq s\}}\CM_T]\, ,$$ and this is equal
to $\mathbb{P}^A[\ind_{E}\CM_T]$. The formula is obtained by
applying this result to $T=T_y$, and by a standard approximation
procedure of a general measurable function by weighted sums of
indicator functions. 

The fact that $\mathbb{F}^A_L$ is characterized by these formulas
comes from the following observation. Define $\tilde{\mathbb{F}}^A_L$
on $\mathcal{G}_{T_y}$ as being absolutely continuous with respect to
$\mathbb{P}^A|_{\mathcal{G}_{T_y}}$, with density~$\CM_{T_y}$. Then for
every $s<A$,
$\tilde{\mathbb{F}}^A_L(T_y<s)=\mathbb{P}^A[\ind_{\{T_y<s\}}j_{L-y}(A-T_y)/j_L(A)]$,
and this clearly converges to $0$ as $y\uparrow L$. Therefore, $T_y$
converges $\tilde{\mathbb{F}}^A_L$-a.s.\ to $A$ as $y\uparrow L$. Then
for every $s<A$ and $E\in \mathcal{G}_s$, similar manipulations to the
above ones show that
$$\tilde{\mathbb{F}}^A_L(E)=\lim_{y\uparrow
  L}\tilde{\mathbb{F}}^A_L(E\cap\{T_y>s\})=\lim_{y\uparrow
  L}\mathbb{P}^A\left[\ind_{E\cap\{T_y>s\}}\CM_{T_y}\right]=\lim_{y\uparrow
  L}\mathbb{P}^A\left[\ind_{E\cap\{T_y>s\}}\CM_{s}\right]$$
and this is $\lim_{y\uparrow
  L}\mathbb{F}^A_L(E\cap\{T_y>s\})=\mathbb{F}^A_L(E)$. 
\end{proof}

It is convenient to view a first-passage bridge as encoding a random
continuum forest. This is a classical construction that can be
summarized as follows, see for instance \cite{pitmancsp02}. Here we
work under~$\mathbb F^A_L$. For $0\leq s\leq s'\leq A$, define
$\underline{X}_{s,s'}=\inf\{X_u:s\leq u\leq s'\}$ and let
\begin{equation}\label{dX}
d_X(s,s')=X_s+X_{s'}-2\underline{X}_{s\wedge s',s\vee s'}\qquad s,s'\in[0,A].
\end{equation}
The function $d_X$ on $[0,A]^2$ is a pseudo-metric, to which one can
associate a random metric space $\mathcal{F}^A_L=[0,A]/\{d_X=0\}$,
endowed with the quotient metric induced from~$d_X$. This metric space
is a.s.\ a compact $\R$-tree, that is, a compact geodesic metric space
into which $\mathbb{S}^1$ cannot be embedded. It comes with a
distinguished geodesic of length $L$, which is the image of the first
hitting times $\{T_y,0\leq y\leq L\}$ under the canonical projection
$p_X:[0,A]\to \mathcal{F}^A_L$. It is convenient to view this segment
as the \emph{floor} of a forest of $\R$-trees, these trees being
exactly of the form $\mathcal{T}_y=p_X((T_{y-},T_y])$, corresponding
to the excursions of $X$ above its past infimum.  One should imagine
that the $\R$-tree $\mathcal{T}_y$ is grafted at the point $p_X(T_y)$
of the floor lying at distance $y$ from
$p_X(0)$. 

\subsection{Snakes}\label{sec:snakes}

We now enrich the random ``real forest'' described above by assigning
labels to it. Informally speaking, the trees of the forest are labeled
by independent Brownian snakes \cite{legall99,duqleg02}, while the
floor of the forest is labeled by a Brownian bridge with variance
factor $3$.

More precisely, let $X$ be a first-passage bridge with law
$\mathbb{F}^A_L$. 
Conditionally given $X$, we let $(\Zo_s,0\leq s\leq A)$ be a
centered Gaussian process with covariance function
\begin{equation}
  \label{eq:24}
  \cov(\Zo_s,\Zo_{s'})=\inf_{u\in [s\wedge s',s\vee
  s']}(X_u-\underline{X}_u)\qquad s,s'\in[0,A]\,,
\end{equation}
where $\underline{X}_u=\inf_{0\leq
  v\leq u}X_v$ is the past infimum of $X$. Note in particular that
$\Zo_s$ and $\Zo_{s'}$ are independent if $s$, $s'$ belong to two
different excursion intervals of $X$ above $\underline{X}$. It is
classical \cite{legall99} that $\Zo$ admits a continuous modification,
see also \cite{bettinelli10} for a discussion in the current
context. For this modification, we a.s.\ have $\Zo_{T_y}=0$ for every $y\in
[0,L]$ (for a given $y$, this comes directly from the variance
formula). The process $\Zo$ is sometimes called the {\em head of the
  Brownian snake} driven by the process $X-\underline{X}$, the reason
being that it can be obtained as a specialization of a path-valued
Markov process called the Brownian snake \cite{legall99} driven by
$X-\underline{X}$. The process $\Zo$ itself is not Markov. 

Let also $\mathrm{b}$ be a standard Brownian bridge of
duration $L$, so that
$$\cov(\mathrm{b}_y,\mathrm{b}_{y'})=\frac{y(L-y')}{L}\, ,\qquad 0\leq y\leq y'\leq L\, .$$
We define the process $Z$ to be 
\begin{equation}\label{defZ}
Z_s=\Zo_s+\sqrt{3}\,\mathrm{b}_{T^{-1}(s)}\, ,\qquad 0\leq s\leq A\, ,
\end{equation}
where $T^{-1}(s)=\sup\{y\geq 0:T_y\leq s\}$.  We abuse notation and
still denote by $\mathbb{F}^A_L$ the law of the pair $(X,Z)$ thus
defined, so that $\mathbb{F}^A_L$ is seen as a probability
distribution on the space $\mathcal{C}([0,A],\R)^2$. In the same
spirit, we will still denote by $\mathcal{G}_t$ the natural filtration
$\sigma(\{(X_s,Z_s),\,0\leq s\leq t\})$.  Note that the absolute
continuity relations \eqref{eq:14} and \eqref{eq:15} are still valid
{\it verbatim} with these extended notation and, in particular, the
density function involves only~$X$ and not~$Z$.

It is classical that a.s.\ under $\mathbb{F}^A_L$, $Z$ is a class function
on $[0,A]$ for the equivalence relation $\{d_X=0\}$, so that $Z$ can
also be seen as a function on the forest~$\mathcal{F}^A_L$. Note that
$T^{-1}(T_y)=y$ for every $0\leq y\leq L$, which corresponds to the
fact that, in the above depiction of the random forest, the point
$p_X(T_y)$ receives label~$\sqrt{3}\,\mathrm{b}_y$.

It is a simple exercise to check that the above definition of~$Z$ is equivalent to the following quicker (but more
obscure) one. Conditionally given $X$,
we have that $Z$ is Gaussian, centered, with covariance function
$$\cov(Z_s,Z_{s'})=\underline{X}_{s,s'}-\underline{X}_{s'}-3
\underline{X}_{s}(L+\underline{X}_{s'})/L\qquad s,s'\in[0,A]\,.$$

Similarly as~\eqref{dX}, we define a pseudo-metric using the process~$Z$
instead of~$X$, but with an extra twist. As above, let 
$\underline{Z}_{s,s'}=\inf\{Z_u:u\in [s,s']\}$ for $0\leq s\leq s' \leq
A$, and this time we extend the definition to $0\leq s'<s\leq A$ by
setting
$$\underline{Z}_{s,s'}=\inf\{Z_u:u\in [s,A]\cup
[0,s']\}=\underline{Z}_{s,A}\wedge \underline{Z}_{0,s'}\, ,$$
so if we see $[0,A]$ as a circle by identifying $0$ with $A$,
$\underline{Z}_{s,s'}$ is the minimum of $Z$ on the directed arc from
$s$ to $s'$. We let
\begin{equation}\label{dZ}
d_Z(s,s')=Z_s+Z_{s'}-2\max(\underline{Z}_{s,s'},\underline{Z}_{s',s})\qquad s,s'\in[0,A]\,.
\end{equation}

\subsection{Brownian disks}\label{sec:br-disks}

We are now ready to give the definition of Brownian disks.  Consider
the set $\mathcal{D}$ of all pseudo-metrics $d$ on $[0,A]$
satisfying the two properties
$$\left\{\begin{array}{c}
\{d_X=0\}\subseteq \{d=0\}\\
d\leq d_Z\, .
  \end{array}
\right. $$ The set $\mathcal{D}$ is nonempty (it contains the zero
pseudo-metric) and contains a maximal element~$D^*$ defined by
\begin{equation}\label{dstar}
D^*(s,s')=\inf\left\{\sum_{i=1}^kd_Z(s_i,t_i):\begin{array}{l}k\geq
    1\, ,\quad t_1,s_2,\ldots,s_k\in[0,A],\, s_1=s,\,t_k=s' ,\\
d_X(t_i,s_{i+1})=0\, \mbox{ for every }  i\in \{1,\ldots, k-1\}\,
  \end{array}
\right\}\, ,
\end{equation}
see \cite[Chapter~3]{burago01}.  The Brownian disk $\bd_{L,A}$ with
area~$A$ and perimeter~$L$ is the quotient set $[0,A]/\{D^*=0\}$,
endowed with the quotient metric induced from $D^*$ (which we still
denote by $D^*$ for simplicity), and considered under the law
$\mathbb{F}^A_L$. In the case $A=1$, we drop the second subscript and
write $\bd_L=\bd_{L,1}$.

\begin{rem}\label{sec:brownian-disks}
  Observe that, by usual scaling properties of Gaussian random
  variables, under the law $\mathbb{F}^A_L$, the scaled pair
  $((\lambda^{1/2}X_{s/\lambda},\, 0\leq s\leq \lambda A
  ),(\lambda^{1/4}Z_{s/\lambda},\, 0\leq s\leq \lambda A))$ has law
  $\mathbb{F}^{\lambda A}_{\lambda^{1/2} L}$, from which we deduce that
  the random metric space $\lambda^{-1/4}
  \bd_{\lambda^{1/2}\,L,\lambda A}$ has the same distribution as
  $\bd_{L,A}$.
\end{rem}

The reason why we say that $\bd_{L,A}$ has ``area'' $A$ is that it
naturally comes with a non-negative measure of total mass $A$, which
is the image of the Lebesgue measure on $[0,A]$ by the canonical
projection $\mathbf{p}:[0,A]\to \bd_{L,A}$. It will be justified later
that $\bd_{L,A}$ is a.s.\ homeomorphic to the closed unit disk, so
that the term \emph{area} makes more sense in this
context. Furthermore, the boundary $\partial \bd_{L,A}$ will be shown
to be equal to $\mathbf{p}(\{T_y:0\leq y\leq L\})$, so that it can be
endowed with a natural non-negative measure with total mass~$L$, which
is the image of the Lebesgue measure on $[0,L]$ by $y\mapsto
\mathbf{p}(T_y)$. This justifies the term ``perimeter''.

\section{The Schaeffer bijection and two variants}\label{sec:scha-biject-first}

This work strongly relies on powerful encodings of discrete maps by trees and related objects. In this section we present the encodings we will need: the original Cori--Vauquelin--Schaeffer bijection \cite{CoVa, schaeffer98}, a variant for so-called \emph{slices}~\cite{legall11} and a variant for plane quadrangulations with a boundary (particular case of~\cite{BdFGmobiles}). We only give the constructions from the encoding objects to the considered maps and refer the reader to the aforementioned works for converse constructions and proofs.

\subsection{The original Cori--Vauquelin--Schaeffer
  bijection}\label{sec:orig-cori-vauq}

Let $(\bt,\ell)$ be a \emph{well-labeled tree} with $n$ edges. Recall that
this means that $\bt$ is a rooted plane tree with $n$ edges, and
$\ell:\sV(\bt)\to \Z$ is a labeling function such that
$\ell(u)-\ell(v)\in \{-1,0,1\}$ whenever $u$ and $v$ are neighboring
vertices in $\bt$. It is usual to ``normalize'' $\ell$ in such a way
	that the root vertex of $\bt$ gets label $0$, but we will also
consider different conventions: in fact, all our discussion really
deals with the function $\ell$ up to addition of a constant. For
simplicity, in the following, we let
$\ell_*=\min\{\ell(v):v\in \sV(\bt)\}-1$. 
\begin{note}
  Throughout this paper, whenever a function~$f$ is defined at a
  vertex~$v$, we extend its definition to any corner~$c$ incident
  to~$v$ by setting $f(c)=f(v)$. In particular, the label $\ell(c)$ of
  a corner is understood as the label of the incident vertex. 
\end{note}

Let $c_0$, $c_1$, \ldots, $c_{2n-1}$ be the sequence of corners of $\bt$ in
contour order, starting from the root corner. We extend the list of
corners by periodicity, setting $c_{2n+i}=c_i$ for every $i\geq 0$,
and adding one corner $c_\infty$ incident to a vertex $v_*$ not
belonging to $\bt$, with label
$\ell(c_\infty)=\ell(v_*)=\ell_*$. Once this is done, we define the \emph{successor} functions by setting
$$s(i)=\inf\{j>i:\ell(c_j)=\ell(c_i)-1\}\in \Z_+\cup\{\infty\}\, ,\qquad i\in
\{0,1,\ldots,2n-1\}\, ,$$
and $s(c_i)=c_{s(i)}$.
The Cori--Vauquelin--Schaeffer construction consists in linking $c_i$ with
$s(c_i)$ by an arc, in a non-crossing fashion, for every $i\in
\{0,1,\ldots,2n-1\}$. The embedded graph~$\bq$ with vertex set
$\sV(\bt)\cup\{v_*\}$ and edge set the set of arcs (excluding the edges
of $\bt$) is then a quadrangulation, which is rooted according to some
convention (we omit details here as this point is not important for
our purposes), and is naturally pointed at~$v_*$. Moreover, the
labels on $\sV(\bq)$ inherited from those on~$\bt$ (and still denoted by $\ell$) are exactly
the relative distances to~$v_*$ in~$\bq$:
$$d_\bq(v,v_*)=\ell(v)-\ell_*\, ,\qquad v\in \sV(\bq).$$
(This entirely determines~$\ell$ as soon as the value $\ell(v_0)$
is known for some specific~$v_0$, but recall that in general we do not
want to fix the normalization of $\ell$.) See Figure~\ref{fig:schaeffer1} for an example of the construction. 

For every corner $c$ of $\bt$, there is an associated path in $\bq$
that follows the arcs between the consecutive successors $c$, $s(c)$,
$s(s(c))$, \ldots, $c_\infty$. This path is a geodesic path between
the vertex incident to $c$ and $v_*$, it is called the {\em maximal
  geodesic} from $c$ to $v_*$, it can be seen as the geodesic path to
$v_*$, with first step the arc from $c$ to $s(c)$, and that turns as
much as possible to the left.  

Following these paths provides a very
useful upper-bound for distances in $\bq$. Let us denote by $v_i$ the
vertex incident to the corner $c_i$, and let $\ell(i)=\ell(v_i)$ to
simplify notation.  Let $\check{\ell}(i,j)$ is the minimal value of
$\ell(r)$ for $r$ between $i$ and $j$ in cyclic order modulo
$2n$, that is
$$\check{\ell}(i,j)=\left\{\begin{array}{ll}
\min\{\ell(r),r\in[i,j]\} & \mbox{ if }i\leq j\\
\min\{\ell(r),r\in [i,2n]\cup [0,j]\} & \mbox{ otherwise. }
  \end{array}\right.
  $$
Then it holds that 
\begin{equation}
  \label{eq:20}
  d_\bq(v_i,v_j)\leq \ell(i)+\ell(j)-2\max\{\check{\ell}(i, j),\check{\ell}(
j, i)\}+2\, .
\end{equation}
The interpretation of this is as follows.  Consider
the maximal geodesics from the corners $c_{i}$ and $c_{j}$ to
$v_*$. These two geodesics coalesce at a first corner $c_k$, and the
upper bound is given by the length of the concatenation of the
geodesic from $c_i$ to $c_k$ with the segment of the geodesic from
$c_k$ to $c_j$.  This path will be called the {\em maximal wedge path}
from $c_i$ to $c_j$.

\begin{figure}[htb!]
  \centering
  \includegraphics{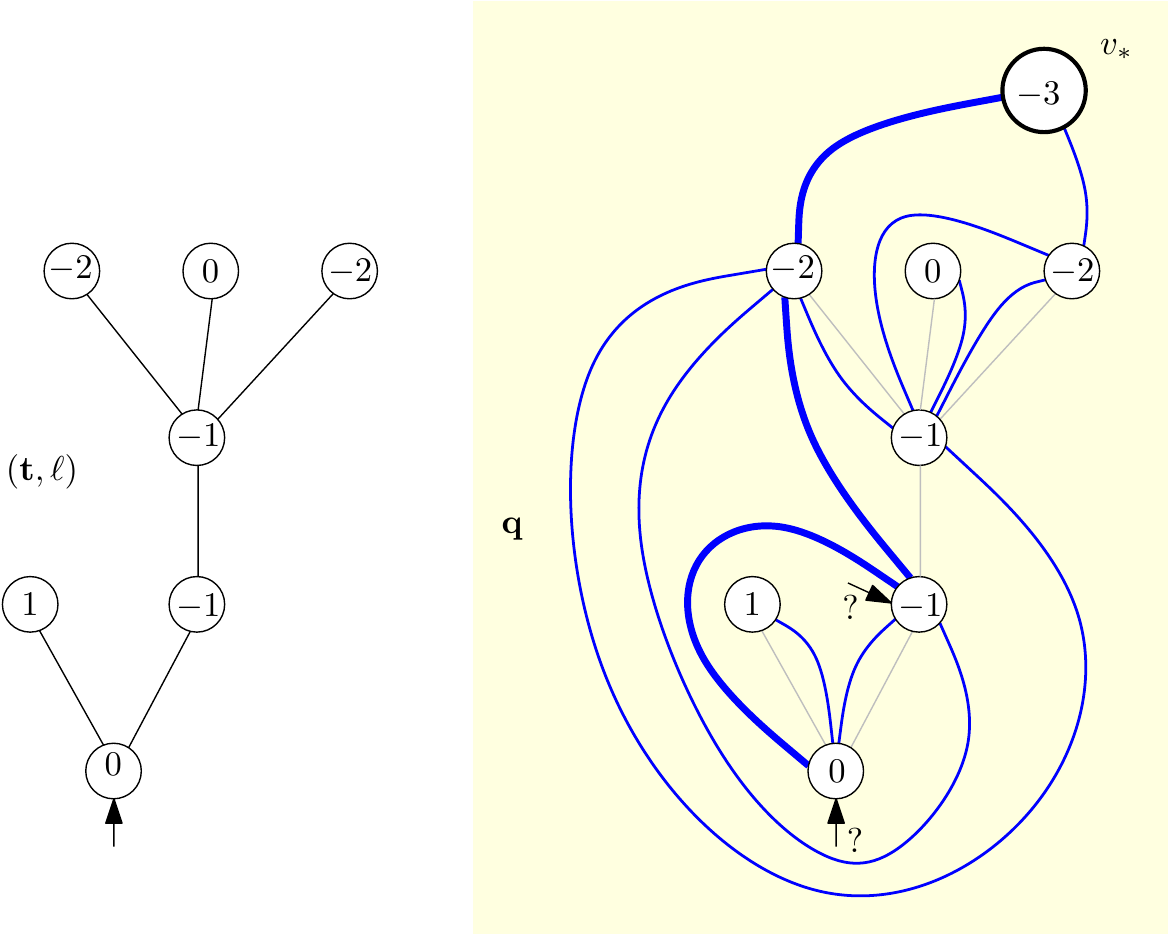}
  \caption{The Cori--Vauquelin--Schaeffer bijection. There are two possible
    rootings of~$\bq$: they are indicated with question marks. The
    maximal geodesic from the corner $c_0$ to $v_*$ has been
    magnified. }
  \label{fig:schaeffer1}
\end{figure}

\subsection{Slices}\label{sec:slices}

We now follow~\cite{legall11} and describe a modification of the
previous construction that, roughly speaking, cuts open the maximal
geodesic of~$\bq$ from~$c_0$ to~$v_*$.  See
Figure~\ref{fig:schaeffer2} for an example, and compare with
Figure~\ref{fig:schaeffer1}.

\begin{figure}[htb!]
  \centering
 \includegraphics{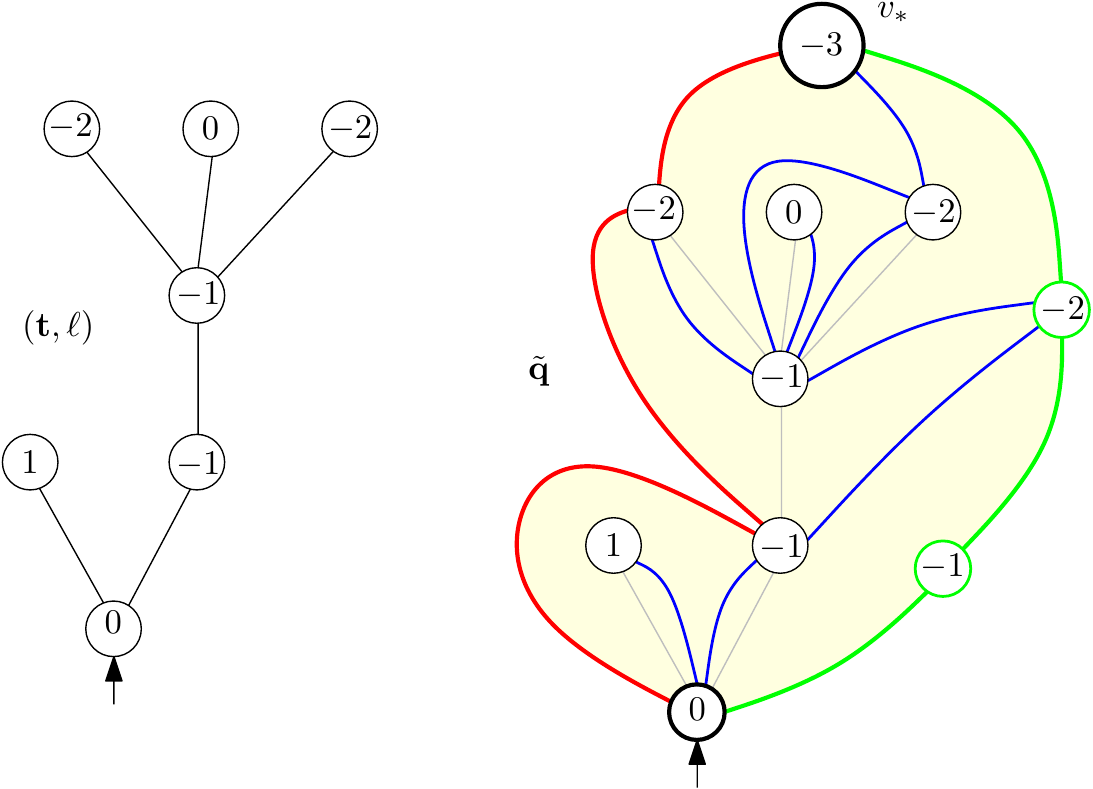} 
  \caption{ A map with geodesic boundary is associated with a
    well-labeled tree via the modified Schaeffer bijection. The
    maximal geodesic is represented in red on the left and the shuttle is
    the green chain on the right.}
  \label{fig:schaeffer2}
\end{figure}

Rather than appending to $\bt$ a single
corner $c_\infty$ incident to a vertex $v_*$, we add a sequence
of corners $c'_1$, $c'_2$, \ldots, $c'_{\ell(c_0)-\ell_*-1}$, $c'_{\ell(c_0)-\ell_*}=c_\infty$, and set labels $\ell(c'_i)=\ell(c_0)-i$, so
in particular this is consistent with the label we already set for $c_\infty$. Also,
instead of extending the sequence $c_0$, $c_1$, \ldots, $c_{2n-1}$ by
periodicity, we add an extra corner $c_{2n}$ to the right of $c_0$ and we let $c_{2n+i}=c'_i$ for $i\in \{1,2,\ldots,
\ell(c_0)-\ell_*\}$. The definition of the successor
\begin{equation}\label{eq:succ}
s(i)=\inf\{j>i:\ell(c_j)=\ell(c_i)-1\}\, ,\qquad s(c_i)=c_{s(i)}
\end{equation}
then makes sense for $i\in \{0,1,\ldots,2n+\ell(c_0)-\ell_*-1\}$, and we
can draw the arcs from~$c_i$ to~$s(c_i)$ for every $i\in
\{0,1,\ldots,2n+\ell(c_0)-\ell_*-1\}$. In particular, note that the arcs
link $c_{2n}$ with $c'_1$, $c'_2$, \ldots, $c'_{\ell(c_0)-\ell_*-1}$, $c_\infty$
into a chain, which we call \emph{shuttle}, and to which are connected the
arcs $c_i\to s(c_i)$ with $i\leq 2n-1$ and $s(i)>2n-1$. Let~$\tilde{\bq}$ be
the map obtained by this construction. It is called the {\em slice}
coded by $(\bt,\ell)$. 

This map contains two distinguished geodesic chains, which are, on the
one hand, the maximal geodesic from $c_0$ to $v_*$ made of arcs
between consecutive successors $c_0$, $s(c_0)$, $s(s(c_0))$, \ldots,
$c_\infty$ and, on the other hand, the shuttle linking~$c_{2n}$,
$c'_1$, $c'_2$, \ldots, $c'_{\ell(c_0)-\ell_*-1}$, $c_\infty$. Note
that both chains indeed have the same length (number of edges), equal
to $\ell(c_0)-\ell_*$. In
particular, we have
$d_{\tilde{\bq}}(c_0,c_\infty)=d_\bq(c_0,c_\infty)=\ell(c_0)-\ell_*$,
where~$\bq$ is the quadrangulation from the previous section,
constructed from the same well-labeled tree $(\bt,\ell)$. These two
chains are incident to a face of~$\tilde{\bq}$ of degree
$2d_\bq(c_0,c_\infty)$, and all other faces have degree~$4$. Observe
that the maximal geodesic and the shuttle only intersect at the root
vertex of the tree and~$v_*$; as a result, the boundary of the degree
$2d_\bq(c_0,c_\infty)$-face is a simple curve.

Finally, the quadrangulation~$\bq$ can then be obtained
from~$\tilde{\bq}$ by identifying one by one the edges of the maximal
geodesic with the edges of the shuttle, in the same order. More
precisely, we note that there is a natural projection $p$ from
$E(\tilde{\bq})$ to $E(\bq)$ defined by $p(e)=e$ for every edge $e$
that is not an edge of the shuttle, and $p(e'_i)=e_i$ if $e_i$ is the
$i$-th edge on the maximal geodesic, and $e'_i$ is the $i$-th edge of
the shuttle, starting from $c_0$. In particular, $p^{-1}(e)$ contains
two edges of $\tilde{\bq}$ if and only if $e$ is a vertex of the
maximal geodesic of $\bq$. The projection $p$ induces also a
projection, still denoted by $p$, from $V(\tilde{\bq})$ onto $V(\bq)$
such that, if $u$, $v$ are the extremities of $e$, then $p(u)$, $p(v)$
are the extremities of $p(e)$. For this reason, any path in
$\tilde{\bq}$ projects into a path in $\bq$ via $p$, and the graph
distances satisfy the inequality
$$d_{\bq}(p(u),p(v))\leq
d_{\tilde{\bq}}(u,v)\, ,\qquad u,v\in V(\tilde{\bq})\, .$$

Using the same idea as in the preceding section, we obtain another
useful bound for distances in $\tilde{\bq}$, as follows. Again, let
$v_i$ be the vertex incident to the corner $c_i$, and
$\ell(i)=\ell(v_i)$. Then 
\begin{equation}
  \label{eq:23}
  d_{\tilde{\bq}}(v_i,v_j)\leq \ell(i)+\ell(j)-2\check{\ell}(i\wedge
j,i\vee j)+2\, ,
\end{equation}
where $\check{\ell}(i,j)$ is again defined as the
minimal value of $\ell$ between $i$ and $j$. Again, this upper bound
corresponds to the length of a concatenation of maximal geodesics from
$c_i$, $c_j$ to $v_*$ up to the point where they coalesce. In words, the
difference is that by taking systematically $\check{\ell}(i\wedge
j,i\vee j)$ in the definition rather than the maximum of
$\{\check{\ell}(i,j),\check{\ell}(j,i)\}$, we do not allow to ``jump'' from
the shuttle to the maximal geodesic boundary (or vice-versa), which
would result in a path present in~$\bq$ but not in~$\tilde{\bq}$.

\subsection{Plane quadrangulations with a boundary}\label{sec:bij_forests}

We now present the variant for plane quadrangulation with a boundary, which is a particular case of the Bouttier--Di~Francesco--Guitter bijection~\cite{BdFGmobiles}. We rather use the presentation of~\cite{bettinelli14gbs}, better fitted to our situation.

The encoding object of a plane quadrangulation with a boundary having~$n$ internal faces and perimeter~$2l$ is a forest $\bff=(\bt_1, \dots, \bt_l)$ of~$l$ trees with~$n$ edges in total, together with a labeling function $\ell:\sV(\bff)=\bigsqcup_i \sV(\bt_i)\to\Z$ satisfying the following:
\begin{itemize}
	\item for $1\le i \le l$, the tree $\bt_i$ equipped with the restriction of~$\ell$ to $V(\bt_i)$ is a well-labeled tree;
	\item for $1\le i \le l$, we have $\ell(\rho_{i+1})\ge
          \ell(\rho_{i})-1$, where~$\rho_i$ denotes the root vertex
          of~$\bt_i$ and setting $\ell(\rho_{l+1})=\ell(\rho_1)$ by
          convention.
\end{itemize}
Note that the condition on the labels of the root vertices is different from the condition on the labels of neighboring vertices of a given tree. The reader familiar with the Bouttier--Di~Francesco--Guitter bijection may recognize the label condition for faces of even degree more than~$4$. We will come back to this during Section~\ref{sec:univ}.

Here and later, it will be convenient to normalize~$\ell$ by asking
that $\ell(\rho_1)=0$. As before, we define $\ell_*=\min\{\ell(v):v\in
\sV(\bff)\}-1$. We identify~$\bff$ with the map obtained by adding~$l$
edges linking the roots $\rho_1$, $\rho_2$, \ldots, $\rho_l$ of the
successive trees in a cycle. This map has two faces, one of degree $2n
+l$ (the bounded one on Figure~\ref{fig:schaeffer3}) and one of
degree~$l$ (the unbounded one on Figure~\ref{fig:schaeffer3}). We then
follow a procedure similar to that of
Section~\ref{sec:orig-cori-vauq}. We let $c_0$, $c_1$, \ldots,
$c_{2n+l-1}$ be the sequence of corners of the face of degree $2n+l$
in contour order, starting from the root corner of~$\bt_1$. We extend
this list by periodicity and add one corner $c_\infty$ incident to a
vertex~$v_*$ lying inside the face of degree $2n+l$, with label
$\ell(c_\infty)=\ell(v_*)=\ell_*$. We define the successor functions
by~\eqref{eq:succ} and draw an arc from~$c_i$ to~$s(c_i)$ for every
$i\in\{0,1,\ldots,2n+l-1\}$, in such a way that this arc does not
cross the edges of~$\bff$, or other arcs. 

The embedded graph~$\bq$ with vertex set $\sV(\bff)\cup\{v_*\}$ and
edge set given by the added arcs is a plane quadrangulation with a
boundary, whose external face is the degree-$2l$ face corresponding to
the face of degree~$l$. It is rooted at the corner of the unbounded
face that is incident to the root vertex of~$\bt_1$, and it is
naturally pointed at~$v_*$. See Figure~\ref{fig:schaeffer3}.

\begin{figure}[htb!]
  \centering
  \includegraphics[width=.98\linewidth]{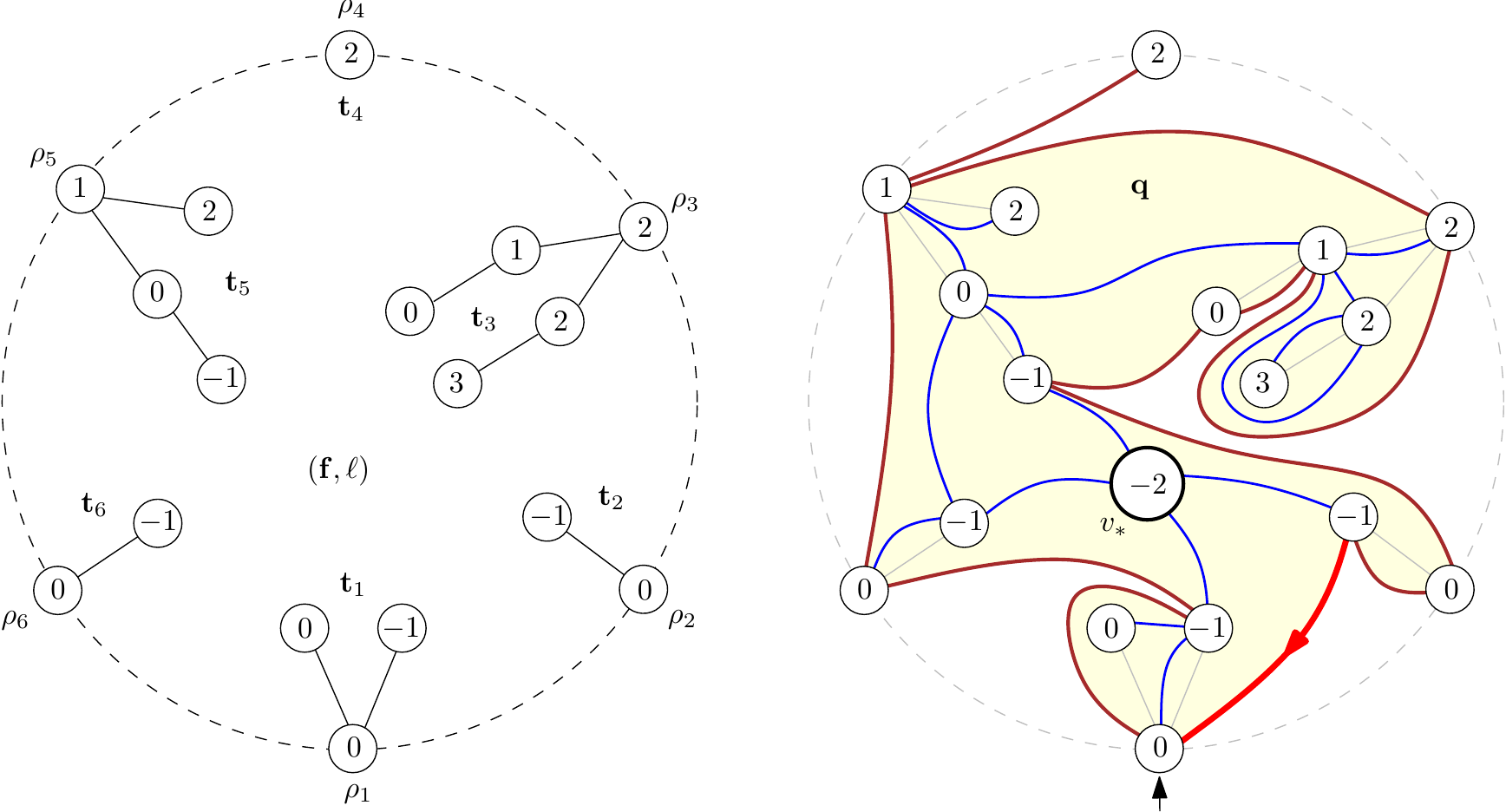}
  \caption{The bijection for a plane quadrangulation with a
    boundary. The~$l$ edges we added to~$\bff$ are represented by dotted lines and the root edge of~$\bq$ is represented in red. Note that we used for~$\ell$ the normalization given by $\ell(\rho_1)=0$.}
  \label{fig:schaeffer3}
\end{figure}

The above mapping is a bijection between previously described labeled
forests and the set of pointed plane quadrangulations $(\bq,v_*)$ with
a boundary having~$n$ internal faces and perimeter~$2l$ that further
satisfy the property that $d_\bq(e_*^+,v_*)=d_\bq(e_*^-,v_*)+1$,
where~$e_*$ denotes the root edge of~$\bq$, that is, the oriented edge
incident to the root face that directly precedes the root corner in
the contour order (see Figure~\ref{fig:schaeffer3}). In words, the
pointed quadrangulations that are in the image of the above mapping
are those whose root edge \emph{points away from} the distinguished
vertex~$v_*$.

The requirement that the root edge is directed away from the
distinguished vertex is not a serious issue, as we can dispose of this
constraint simply by re-rooting along the boundary:

\begin{lmm}
  \label{awayfrom}
  Let $(Q,v_*)$ be uniformly distributed in the set $\bQns^{\bullet,+}$ of rooted and pointed
  quadrangulations such that $Q\in \bQns$ and such that the root edge $e_*$
  points away from $v_*$. Let $c'$ be a uniformly chosen random corner
  incident to the root face of $Q$, and let $Q'$ be the map $Q$
  re-rooted at~$c'$. Then $Q'$ is a uniform random element of $\bQns$.   
%
\end{lmm}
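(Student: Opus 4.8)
The plan is to argue by double counting. For $\bq_0\in\bQns$, write $\bq^{(c)}$ for the map obtained by re-rooting a map $\bq$ at one of the corners $c$ of its root face, and set
$$N(\bq_0)=\#\big\{(\bq,v_*,c)\ :\ (\bq,v_*)\in\bQns^{\bullet,+},\ c\text{ a corner of the root face of }\bq,\ \bq^{(c)}=\bq_0\big\}\, .$$
Since the root face of $Q$ has exactly $2l$ corners and $(Q,v_*)$ is uniform on $\bQns^{\bullet,+}$, we have $\P(Q'=\bq_0)=N(\bq_0)/(2l\,|\bQns^{\bullet,+}|)$, so it suffices to show that $N(\bq_0)$ does not depend on $\bq_0$.

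I would compute $N(\bq_0)$ via an explicit bijection between the triples it counts and the pairs $(c^*,w)$, where $c^*$ is a corner of the root face of $\bq_0$ and $w\in\sV(\bq_0)$, subject to the single constraint that the root edge of $\bq_0^{(c^*)}$ --- that is, the boundary oriented edge of $\bq_0$ directly preceding $c^*$ in contour order --- points away from $w$ in $\bq_0$. In one direction, given $(\bq,v_*,c)$ as above, let $\psi$ be the unique isomorphism from the underlying (unrooted) map of $\bq$ to that of $\bq_0$ with $\psi(c)$ equal to the root corner of $\bq_0$, and send $(\bq,v_*,c)$ to $(\psi(c_0(\bq)),\psi(v_*))$, where $c_0(\bq)$ is the root corner of $\bq$; the inverse sends $(c^*,w)$ to $(\bq_0^{(c^*)},w,c_0(\bq_0))$. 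The main point to be careful with is the well-definedness of this correspondence: it relies on the rigidity of maps (an automorphism of a connected map fixing one corner is the identity), which makes $\psi$ unique, and on the fact that $\psi$ is an orientation- and distance-preserving isomorphism, so that the constraint $(\bq,v_*)\in\bQns^{\bullet,+}$ translates exactly into the displayed constraint on $(c^*,w)$.

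It then remains to count, for fixed $w\in\sV(\bq_0)$, the corners $c^*$ of the root face satisfying the constraint. Here I would use that a quadrangulation with a boundary is bipartite (all faces, internal and external, have even degree), so that adjacent vertices are always at distances differing by exactly $1$ from $w$, and exactly one orientation of each edge points away from $w$. Listing the boundary oriented edges $e^{(0)},\dots,e^{(2l-1)}$ of the root face in contour order, with $e^{(i)+}=e^{(i+1)-}$ cyclically, the sequence $d_{\bq_0}(e^{(i)-},w)$, $i\in\Z/2l\Z$, has all increments in $\{+1,-1\}$, and $e^{(i)}$ points away from $w$ precisely when the increment at $i$ equals $+1$; a cyclic $\{\pm1\}$-sequence of length $2l$ has exactly $l$ increments equal to $+1$, hence exactly $l$ of the $2l$ corners $c^*$ work. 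Therefore $N(\bq_0)=l\,|\sV(\bq_0)|$, and Euler's formula (with $|\sE(\bq_0)|=2n+l$ and $|\sF(\bq_0)|=n+1$) gives $|\sV(\bq_0)|=n+l+1$ for every $\bq_0\in\bQns$. Thus $N(\bq_0)$ is constant, which concludes the proof; equivalently, the computation shows that the law of $Q'$ has density proportional to $|\sV|$, which is constant on $\bQns$.
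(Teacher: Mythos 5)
Your proof is correct and takes essentially the same approach as the paper: both compute $\P(Q'=\bq_0)$ by counting, for each fixed vertex $w$, the corners of the root face whose preceding boundary edge points away from $w$, use the bipartite parity argument to get exactly $l$ such corners, and finish with Euler's formula $|\sV(\bq_0)|=n+l+1$. You phrase the counting as an explicit bijection with a remark on map rigidity, whereas the paper computes the probability directly, but the substance is identical.
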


\begin{proof}
The probability that $Q'$ is a given rooted map $\bq'$ is 
equal 
$$\P(Q'=\bq')=\frac{1}{2l}\sum_{v\in
  \sV(\bq')}\sum^+_{c}\P((Q,v_*)=(\bq,v))=\frac{1}{2l}\sum_{v\in
  \sV(\bq')}\sum^+_{c}\frac{1}{|\bQns^{\bullet,+}|}\, ,$$
where the factor $1/2l$ is the probability that $c'$ is chosen
to be the
root corner of $\bq'$, the symbol $\sum^+_c$ stands for the sum over
all corners incident to the root face of $\bq'$ that point away from
$v$, and $\bq$ is the map $\bq'$ re-rooted at the corner $c$.  

Now fix the vertex $v\in \sV(\bq')$. Due to the bipartite nature
of~$\bq'$, among the~$2l$ oriented edges incident to the root face,
$l$ are pointing away from~$v$, and~$l$ are pointing
toward~$v$. Indeed, let $\tilde c_0$ be the root corner of~$Q$, and
$\tilde c_1$, $\tilde c_2$, \ldots, $\tilde c_{2l-1}$, $\tilde
c_{2l}=\tilde c_0$ be the corners incident to the root face in cyclic
order. The sequence $(d_{\bq'}(\tilde c_i,v_*),\, 0\leq i\leq 2l)$
takes integer values, varies by~$\pm 1$ at every step as~$\bq'$ is
bipartite, and takes the same value at times~$0$ and~$2l$. This means
that~$l$ of its increments are equal to~$+1$ and~$l$ are equal
to~$-1$, respectively corresponding to edges that point away
from~$v$ and toward~$v$.

Therefore, the sum $\sum^+_c$ contains exactly $l$ elements. Noting
that every map in~$\bQns$ has $n+l+1$ vertices, by the Euler
characteristic formula, this gives
$$\P(Q'=\bq')=\frac{n+l+1}{2\,|\bQns^{\bullet,+}|}\, ,$$
which depends only on $n$, $l$ and not on the particular choice of~$\bq'$. 
\end{proof}

\section{Scaling limit of slices}\label{sec:known-scaling-limit}

In this section, we elaborate on Proposition~3.3 and Proposition~9.2 in
\cite{legall11}, by showing that uniform random slices converge after
rescaling to a limiting metric space, which can be called the \emph{Brownian
map with a geodesic boundary}. Such a property was indeed shown in
\cite{legall11}, but with a description of the limit that is different from
the one we will need. 

\subsection{Subsequential convergence}\label{sec:subs-conv}

Let $(T_n,\ell_n)$ be a random variable that is uniformly distributed
over the set of well-labeled trees with $n$ edges. With this random
variable, we can associate two pointed and rooted random maps
$(Q_n,v_*)$ and $(\tilde{Q}_n,v_*)$ by the constructions of Sections
\ref{sec:orig-cori-vauq} and \ref{sec:slices} respectively. We use the
same notation for the distinguished vertex $v_*$ since $Q_n$ and
$\tilde{Q}_n$ share naturally the same vertex set, except for the
extra vertices on the shuttle of $\tilde{Q}_n$.  

Let $c_0$, $c_1$, \ldots, $c_{2n-1}$, $c_{2n}=c_0$ be the sequence of corners of
$T_n$ starting from the root corner, and let $v_i$ be the vertex
incident to~$c_i$ in~$T_n$. We let $C_n(i)$ be the distance in $T_n$
between the vertices $v_0$ and $v_i$, so that $C_n(i)$ can be seen as
the height of $v_i$ in the tree $T_n$ rooted at $c_0$. The process
$(C_n(i),0\leq i\leq 2n)$, extended to a continuous random function on
$[0,2n]$ by linear interpolation between integer values, is called the
\emph{contour process} of~$T_n$. Similarly, we let $\ell_n(i)=\ell_n(v_i)$ and
call the process $(\ell_n(i),0\leq i\leq 2n)$, which we also extend to
$[0,2n]$ in a similar fashion, the \emph{label process} of
$(T_n,\ell_n)$. 

For $0\leq i,j\leq 2n$, let $D_n(i,j)=d_{Q_n}(v_i,v_j)$ and
$\tilde{D}_n(i,j)=d_{\tilde{Q}_n}(i,j)$. 
 We extend $D_n$, $\tilde{D}_n$ to continuous functions on
$[0,2n]^2$ by ``bilinear interpolation,'' writing
$\{s\}=s-\lfloor s\rfloor$ for the fractional part of $s$ and then setting
\begin{equation}\label{eq:6}\begin{split}
    D_n(s,t)&=(1-\{s\}) (1-\{t\}) D_n(\lfloor s\rfloor,\lfloor
    t\rfloor)+\{s\}(1-\{t\}) D_n(\lfloor s\rfloor+1,\lfloor t\rfloor) \\
    &\qquad\qquad + (1-\{s\})\{t\}D_n(\lfloor s\rfloor,\lfloor
    t\rfloor+1)+\{s\}\{t\} D_n(\lfloor s\rfloor+1,\lfloor t\rfloor+1),
\end{split}\end{equation}
and similarly for $\tilde{D}_n$. 
We define the renormalized versions of $C_n$, $\ell_n$, $D_n$ and
$\tilde{D_n}$ by
$$
C_{(n)}(s)=\frac{C_n(2ns)}{\sqrt{2n}}\, ,\quad
\ell_{(n)}(s)=\left(\frac{9}{8n}\right)^{1/4}\ell_n(2ns)\, ,
$$
and 
$$
  D_{(n)}(s,t)=\left(\frac{9}{8n}\right)^{1/4}D_n\big(
  2ns, 2nt\big)\, ,\qquad \tilde{D}_{(n)}(s,t)=\left(\frac{9}{8n}\right)^{1/4}\tilde{D}_n\big(
  2ns, 2nt\big)
$$
for every $s$, $t\in [0,1]$. 

From \cite[Proposition~3.1]{legall11}, it holds that up to extraction,
one has the joint convergence
\begin{equation}
  \label{eq:21}
  (C_{(n)},\ell_{(n)},D_{(n)},\tilde{D}_{(n)})\build\longrightarrow_{n\to\infty}^{(d)}(\ee,Z,D,\tilde{D})\,
,
\end{equation}
where $\ee$ is the normalized Brownian excursion, $Z$ is the head of
the snake driven by $\ee$ (which is defined as the process $\Zo$
around \eqref{eq:24}, with $\ee$ in place of $X$) and $D$, $\tilde{D}$
are two random pseudo-metrics on $[0,1]$ such that $D\leq
\tilde{D}$. In the rest of this section, we are going to fix one
extraction along which this convergence holds, and always assume that
the values of $n$ that we consider belong to this particular
extraction. Moreover, by a use of the Skorokhod representation
theorem, we may and will assume that the convergence holds in fact in
the a.s.\ sense.

For $s$, $s'\in[0,1]$, define
$d_\ee(s,s')=\ee_s+\ee_{s'}-2\inf_{s\wedge s'\leq u\leq s\vee
  s'}\ee_u$, $d_Z(s,s')$ as in formula~\eqref{dZ} (with $A=1$), and let
$$\tilde{d}_Z(s,s')=Z_s+Z_{s'}-2\underline{Z}_{s\wedge s',s\vee s'}\,
,$$
so that clearly one has $d_Z\leq \tilde{d}_Z$.
The quotient space $S=[0,1]/\{D=0\}$ endowed with the
distance induced by $D$ (and still denoted by $D$), is the so-called {\em
  Brownian map}. 
Likewise, we set $\tilde{S}=[0,1]/\{\tilde{D}=0\}$ and endow it with
the induced distance still denoted by $\tilde{D}$.  We let
$\bp:[0,1]\to S$, $\tilde{\bp}:[0,1]\to \tilde{S}$ denote the
canonical projections, which are continuous since $D$, $\tilde{D}$ are
continuous functions on $[0,1]^2$. Note that, since $D\leq \tilde{D}$,
there exists a unique continuous (even $1$-Lipschitz) projection
$\pi:\tilde{S}\to S$ such that $\bp=\pi\circ\tilde{\bp}$.
 

 The main result of
\cite{legall11,miermont11} states that a.s., for every
$s$, $t\in [0,1]$, $D(s,t)$ is given by the
explicit formula
\begin{equation}\label{DDstar}
D(s,t)= \inf\left\{\sum_{j=1}^kd_Z(s_j,t_j):\begin{array}{l}k\geq
    1\, ,\quad t_1,s_2,t_2,\ldots,s_k\in[0,1],\, s_1=s,\,t_k=t ,\\
d_\ee(t_j,s_{j+1})=0\, \mbox{ for every }  j\in \{1,\ldots, k-1\}\,
  \end{array}
\right\}\, .
\end{equation}
The main goal of this section is to show that the following analog
formula holds for $\tilde{D}$. First, we recall from \cite{legall11}
that $\tilde{D}\leq \tilde{d}_Z$ and that $\{d_\ee=0\}\subseteq
\{\tilde{D}=0\}$. Note that the first of these two properties results
from a simple passage to the limit in the bound \eqref{eq:23}.  We let
$\tilde{D}^*$ be the largest pseudo-metric on $[0,1]$ such that these
two facts are verified, that is,
$$\tilde{D}^*(s,t)=\inf\left\{\sum_{j=1}^k\tilde{d}_Z(s_j,t_j):\begin{array}{l}k\geq
    1\, ,\quad t_1,s_2,t_2,\ldots,s_k\in[0,1],\, s_1=s,\,t_k=t ,\\
d_\ee(t_j,s_{j+1})=0\, \mbox{ for every }  j\in \{1,\ldots, k-1\}\,
  \end{array}
\right\}\, .$$ In particular, $\tilde{D}\leq \tilde{D}^*$.  We will
show that $\tilde{D}=\tilde{D}^*$ a.s., and in particular, the
convergence in \eqref{eq:21} holds without having to extract a
subsequence. Results by Le Gall \cite[Propositions~3.3
and~9.2]{legall11} provide yet another formula for $\tilde{D}$, which
is expressed in terms of cutting the space $(S,D)$ along a certain
distinguished geodesic. However, it is not clear that this formula is
equivalent to $\tilde{D}=\tilde{D}^*$.

\begin{thm}
  \label{sec:scaling-limit-slices-3}
Almost surely, it holds that for every $s$, $t\in [0,1]$,
$\tilde{D}(s,t)=\tilde{D}^*(s,t)$. 

Moreover, we have for every $x$, $y\in \tilde{S}$,
$$\tilde{D}(x,y)=\inf\left\{\mathrm{length}_D 
    (\pi\circ\gamma):\begin{array}{c}
\gamma:[0,1]\to \tilde{S} \mbox{ continuous}\\
\gamma(0)=x,\gamma(1)=y 
  \end{array}
  \right\}
$$
where $\tilde{S}$ is endowed with the quotient topology of
$[0,1]/\{\tilde{D}=0\}$.
\end{thm}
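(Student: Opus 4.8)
The plan is to prove the two assertions in sequence, with the first (the formula $\tilde D = \tilde D^*$) doing most of the work, and the second being a relatively soft consequence.

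\textbf{Step 1: The easy inequality and the setup.} We already know $\tilde D \le \tilde D^*$, so the crux is $\tilde D^* \le \tilde D$, or equivalently a suitable lower bound on $\tilde D$ matching $\tilde D^*$. The standard strategy, following Le Gall's treatment of the Brownian map, is to obtain the lower bound $\tilde D(s,t) \ge \tilde d_Z(s,t)$ \emph{along pairs realizing the minimum of $Z$ on an arc}, and then to propagate this to arbitrary pairs via the ``chaining'' structure built into the definition of $\tilde D^*$. More precisely, it suffices to show that for all $s,t$ with $d_\ee(s,t)$ not necessarily zero, $\tilde D(s,t)\ge |Z_s - Z_t|$ is too weak; what one actually wants is the full analogue of Le Gall's key lemma: if $t$ is a point where $Z$ attains its minimum over the arc from $s$ to some $t'$ (in the relevant cyclic sense used to define $\tilde d_Z$), then $\tilde D(s,t) \ge Z_s + Z_t - 2\underline Z_{s\wedge t, s\vee t} = \tilde d_Z(s,t)$. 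Combined with $\tilde D \le \tilde d_Z$ and the pseudo-metric (triangle-inequality) property, this forces $\tilde D$ to coincide with the largest pseudo-metric below $\tilde d_Z$ that kills $\{d_\ee = 0\}$, i.e.\ with $\tilde D^*$.

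\textbf{Step 2: Proving the lower bound.} Here I would transfer Le~Gall's argument from $Q_n$ to $\tilde Q_n$. The point is that in $\tilde Q_n$ the shuttle replaces the single maximal geodesic, and distances only go \emph{up} relative to $Q_n$ (we have $d_{Q_n}(p(u),p(v)) \le d_{\tilde Q_n}(u,v)$). The combinatorial input is that any path in $\tilde Q_n$ between $v_i$ and $v_j$ must, at some corner along it, drop to a label close to $\check\ell(i\wedge j, i\vee j)$ — precisely because in $\tilde Q_n$ one cannot ``cheat'' by jumping between the shuttle and the maximal geodesic, which is exactly the phenomenon that \eqref{eq:23} (with the non-cyclic $\check\ell$) encodes, in contrast to \eqref{eq:20}. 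Passing to the scaling limit along the fixed (a.s.) convergent subsequence, one gets that any ``discrete path witness'' forces the continuum bound $\tilde D(s,t) \ge Z_s + Z_t - 2\underline Z_{s\wedge t,\, s\vee t}$ at the relevant pairs. This is the step I expect to be the \textbf{main obstacle}: one has to be careful that the cyclic structure of $[0,1]$ (the identification of $0$ and $A$) and the asymmetry in the definition of $\underline Z_{s,s'}$ for $s'<s$ are correctly reflected in the discrete picture, and that the minimum-attaining corners behave well under the limit (this uses a.s.\ properties of $Z$ such as the fact that its minimum on an arc is attained at a unique point, and non-degeneracy of local minima). Once the lower bound holds on the relevant pairs, the conclusion $\tilde D = \tilde D^*$ is a purely formal maximality argument as in the Brownian-map case, and since $\tilde D^*$ is a deterministic functional of $(\ee, Z)$, the subsequential convergence \eqref{eq:21} automatically upgrades to a genuine convergence.

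\textbf{Step 3: The ``length'' formula.} For the second displayed identity, note first that $\pi:\tilde S \to S$ is $1$-Lipschitz, so for any continuous $\gamma$ in $\tilde S$ from $x$ to $y$ we have $\mathrm{length}_D(\pi\circ\gamma) \ge D(\pi(x),\pi(y))$, but more relevantly we want to compare with $\tilde D(x,y)$ directly. The inequality $\tilde D(x,y) \le \inf \mathrm{length}_D(\pi\circ\gamma)$ should follow because $\tilde D$ is itself a length metric (being a quotient of the length metric $[0,1]$ with the $d_X$-identifications, or by the explicit infimum-of-sums formula $\tilde D^*$, which is manifestly the intrinsic metric associated with $\tilde d_Z$ and $d_\ee$), so one can take $\gamma$ to be a near-geodesic for $\tilde D$ and check $\mathrm{length}_D(\pi\circ\gamma) \le \mathrm{length}_{\tilde D}(\gamma)$, using that $\pi$ shrinks $\tilde D$-lengths to $D$-lengths — but in fact the $\tilde d_Z$-increments along a chain project, under $\pi$, to $d_Z$-increments, and the $d_\ee=0$ gluings are preserved, so the chain defining $\tilde D^*(s,t)$ maps to a chain bounding $D$-length; conversely, given any continuous $\gamma$ in $\tilde S$, approximate it by a chain whose consecutive points are linked alternately by $d_\ee$-gluings and by short arcs, apply the formula \eqref{DDstar} for $D$ and the formula for $\tilde D^*$, and match the two sums. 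The remaining inequality $\tilde D(x,y) \ge \inf\mathrm{length}_D(\pi\circ\gamma)$ comes from exhibiting, for $\tilde D$-close points, an actual path in $\tilde S$ whose $\pi$-image has controlled $D$-length, which one reads off from the definition of $\tilde D^*$ as an infimum over chains: each term $\tilde d_Z(s_j,t_j)$ is realized (up to $\eps$) by a path in $\tilde S$ projecting to a $D$-geodesic-like path. Assembling these, and invoking compactness/continuity to pass from chains to genuine continuous paths, gives the identity. This step is mostly bookkeeping with the two explicit formulas and the $1$-Lipschitz projection $\pi$, so I do not expect it to be hard once Step 2 is in place.
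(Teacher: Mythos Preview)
Your Step~2 contains a genuine gap, and the paper's route is quite different from what you propose. You attempt to prove directly that $\tilde D(s,t)\ge \tilde d_Z(s,t)$ at ``minimum-attaining'' pairs by transferring Le~Gall's lower-bound argument from $Q_n$ to $\tilde Q_n$. But Le~Gall's argument for $D=D^*$ on the Brownian map hinges on invariance under uniform re-rooting (the marked point $v_*$ is exchangeable with a uniformly chosen vertex), and no such re-rooting invariance is available for slices. Your ``combinatorial input'' --- that every path in $\tilde Q_n$ from $v_i$ to $v_j$ must at some corner drop to a label close to $\check\ell(i\wedge j,i\vee j)$ --- is not a known fact: the bound~\eqref{eq:23} is an \emph{upper} bound on $d_{\tilde Q_n}$, and the corresponding lower bound would itself be a result of the same depth as the theorem you are trying to prove. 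So Step~2 as written is circular or at best a restatement of the difficulty.

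The paper instead leverages the \emph{already established} formula~\eqref{DDstar} for $D$ (the Brownian map distance, from \cite{legall11,miermont11}), and proves a local isometry lemma: for $s,t$ with $\tilde D(s,\Gamma_i)\wedge\tilde D(t,\Gamma_i)>\eps$ and $\tilde D(s,t)<\eps/2$ (for some $i\in\{0,1\}$), one has $\tilde D(s,t)=D(s,t)=\tilde D^*(s,t)$. The proof goes back to the discrete picture: take a chain $s_1,t_1,\ldots,s_k,t_k$ nearly realizing $D(s,t)$ via~\eqref{DDstar}, approximate it by wedge paths $g_{n,m}$ in $Q_n$, and show that their concatenation $g_n$ can be lifted (or rerouted along the shuttle) to a path in $\tilde Q_n$ of the same length, precisely because $v_{i_n}$ and $v_{j_n}$ stay at distance $\gtrsim(8n/9)^{1/4}\eps$ from the maximal geodesic. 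Passing to the limit gives $\tilde D^*(s,t)\le D(s,t)\le\tilde D(s,t)$ locally. Then the global statement $\tilde D=\tilde D^*$ and the length formula both follow simultaneously: for any injective continuous path $f$ in $\tilde S$ avoiding $\tilde{\bp}(0),\tilde{\bp}(s_*)$, a finite-cover argument gives $\mathrm{length}_{\tilde D}(f)=\mathrm{length}_{\tilde D^*}(f)=\mathrm{length}_D(\pi\circ f)$, and one extends to all paths by additivity and left-continuity of length. Your Step~3 gropes toward this but misses that both assertions of the theorem come out of the \emph{same} identity on lengths, rather than requiring separate arguments.
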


Here, the length function
is defined as follows. If $(M,d)$ is a metric space (or a pseudo-metric space), and
$\gamma:[0,1]\to M$ is a continuous path, we let
$$\mathrm{length}_d(\gamma)=\sup\sum_{i=1}^kd\big(\gamma(r_{i-1}),\gamma(r_i)\big)\, ,
$$
where the supremum is taken over all partitions
$0=r_0<r_1<\ldots<r_{k-1}<r_k=1$ of $[0,1]$. 

\subsection{Basic properties of the limit spaces}\label{sec:basic-prop-limit}

We will need some more properties of
the distances $D$, $\tilde{D}$, $\tilde{D}^*$. An important fact that we
will need is the following identification of the sets $\{D=0\}$, $\{\tilde{D}=0\}$, $\{\tilde{D}^*=0\}$, which is a reformulation of
\cite[Theorem 4.2]{legall06}, \cite[Lemma 3.2]{lgp} and
\cite[Proposition 3.1]{legall11} in our setting. Point~\eqref{lemeniii} comes from \cite[Proposition~2.5]{legweill} and \cite[Proposition 3.2]{legall11}.

\newcounter{lemen}
\setcounter{lemen}{0}
\renewcommand{\thelemen}{\roman{lemen}}
\begin{lmm}
  \label{sec:scaling-limit-slices-1}
\noindent{\em (i)}\refstepcounter{lemen}\label{lemeni} Almost surely, for every $s$, $t\in [0,1]$ such that $s\neq t$, it holds
that $D(s,t)=0$ if and only if either $d_\ee(s,t)=0$ or $d_Z(s,t)=0$,
these two cases being mutually exclusive, with the only exception of
$\{s,t\}=\{0,1\}$. 

\medskip
\noindent{\em (ii)}\refstepcounter{lemen}\label{lemenii} Likewise, almost surely, for every $s$, $t\in [0,1]$ such that $s\neq t$,
it holds that $\tilde{D}(s,t)=0$ if and only if either $d_\ee(s,t)=0$
or $\tilde{d}_Z(s,t)=0$, and these two cases are mutually
exclusive. 

\medskip
\noindent{\em (iii)}\refstepcounter{lemen}\label{lemeniii} There is only one time $s_*\in [0,1]$ such that $Z_{s_*}=\inf_{[0,1]}Z$. Moreover, $D(0,s_*)=\tilde D(0,s_*)=-Z_{s_*}$.
\end{lmm}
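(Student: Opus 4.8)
The plan is to observe that all three points are restatements, in the notation of \eqref{dZ} and of Section~\ref{sec:subs-conv}, of facts already available in the literature, and to isolate the elementary reductions, pointing to the one genuinely nontrivial input. Throughout I will use the a.s.\ convergence \eqref{eq:21}, the comparisons $D\le\tilde D$, $D\le d_Z$ and $\tilde D\le\tilde d_Z$ (the last obtained by passing to the limit in \eqref{eq:23}), and the normalization $Z_0=0$.

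\emph{Point \eqref{lemeni}.} That $d_\ee(s,t)=0$ or $d_Z(s,t)=0$ implies $D(s,t)=0$ is immediate, respectively because $D$ is a class function for $\{d_\ee=0\}$ and because $D\le d_Z$. The converse implication, together with the mutual exclusivity of the two cases off the pair $\{0,1\}$, is \cite[Lemma~3.2]{lgp} (building on \cite[Theorem~4.2]{legall06}, and restated as \cite[Proposition~3.1]{legall11}); the only thing to check is that the circular convention for $\underline Z$ built into \eqref{dZ}, together with $Z_0=0$, makes $d_Z$ coincide with the pseudo-metric used there, which is immediate once $[0,1]$ is viewed as a circle identifying $0$ and $1$.

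\emph{Point \eqref{lemeniii}.} Uniqueness of the point $s_*$ achieving $\inf_{[0,1]}Z$ is \cite[Proposition~2.5]{legweill} (see also \cite[Proposition~3.2]{legall11}), and the identity $D(s,s_*)=Z_s-\inf_{[0,1]}Z$, valid for all $s$, is the standard description of distances to the distinguished point $\bp(s_*)$ of the Brownian map; at $s=0$ it gives $D(0,s_*)=-Z_{s_*}$. For $\tilde D$ I sandwich: from $D\le\tilde D$ one gets $\tilde D(0,s_*)\ge-Z_{s_*}$, while from $\tilde D\le\tilde d_Z$ together with $\underline Z_{0,s_*}=\inf_{[0,s_*]}Z=Z_{s_*}$ (the global infimum of $Z$ being attained at $s_*\in[0,s_*]$) one gets $\tilde D(0,s_*)\le\tilde d_Z(0,s_*)=Z_0+Z_{s_*}-2Z_{s_*}=-Z_{s_*}$, whence equality.

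\emph{Point \eqref{lemenii}.} The ``if'' direction and the exclusivity are formal: $d_\ee(s,t)=0$ implies $\tilde D(s,t)=0$ since $\{d_\ee=0\}\subseteq\{\tilde D=0\}$, and $\tilde d_Z(s,t)=0$ implies $\tilde D(s,t)=0$ since $\tilde D\le\tilde d_Z$; conversely, if $s\ne t$ and $d_\ee(s,t)=\tilde d_Z(s,t)=0$ then $d_Z(s,t)=0$ (as $d_Z\le\tilde d_Z$), so by \eqref{lemeni} one would need $\{s,t\}=\{0,1\}$, which is impossible because $\tilde d_Z(0,1)=Z_0+Z_1-2\inf_{[0,1]}Z=-2Z_{s_*}>0$ almost surely. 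For the ``only if'' direction, suppose $\tilde D(s,t)=0$ with $s<t$. Then $D(s,t)=0$ by $D\le\tilde D$, so by \eqref{lemeni} either $d_\ee(s,t)=0$, and we are done, or $d_Z(s,t)=0$, i.e.\ $Z_s=Z_t=\max(\underline Z_{s,t},\underline Z_{t,s})$; if the maximum equals $\underline Z_{s,t}=\inf_{[s,t]}Z$ then $\tilde d_Z(s,t)=Z_s+Z_t-2\underline Z_{s,t}=0$, as wanted. The remaining task, which I expect to be the main obstacle, is to show that the configuration $Z_s=Z_t=\underline Z_{t,s}>\underline Z_{s,t}$ is incompatible with $\tilde D(s,t)=0$. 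Heuristically this is clear: in that case the vertex identification in $Q_n$ responsible for $D(s,t)=0$ is carried by a maximal wedge path that crosses the maximal geodesic of $Q_n$, which in $\tilde Q_n$ is unfolded into the shuttle, so the corresponding vertices of $\tilde Q_n$ stay distinct; turning this into the quantitative statement $\tilde D(s,t)>0$ requires the lower-bound estimates for the slice metric underlying \cite[Propositions~3.1--3.3]{legall11}, transposed to the present description of $\tilde D$, and is the one step that does not follow from soft comparison of pseudo-metrics.
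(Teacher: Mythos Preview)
Your proposal is correct and matches the paper's treatment: the lemma is stated there without proof, as a reformulation of \cite[Theorem~4.2]{legall06}, \cite[Lemma~3.2]{lgp}, \cite[Proposition~3.1]{legall11} for points \eqref{lemeni}--\eqref{lemenii} and of \cite[Proposition~2.5]{legweill}, \cite[Proposition~3.2]{legall11} for point \eqref{lemeniii}. You go a bit further by isolating the elementary ``if'' directions and the sandwich argument for $\tilde D(0,s_*)$, and you correctly identify the one genuinely nontrivial input (the case $Z_s=Z_t=\underline Z_{t,s}>\underline Z_{s,t}$ with $\tilde D(s,t)=0$) as the content of \cite[Proposition~3.1]{legall11}.
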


This implies that the equivalence relations $\{\tilde{D}=0\}$ and
$\{\tilde{D}^*=0\}$ coincide, since $\tilde{D}\leq \tilde{D}^*\leq
\tilde{d}_Z$ and $\{d_\ee=0\}\subseteq \{\tilde{D}^*=0\}$ by
definition. In particular, we see that
$\tilde{S}=[0,1]/\{\tilde{D}^*=0\}$ endowed with the
induced metric $\tilde{D}^*$ is homeomorphic to
$(\tilde{S},\tilde{D})$. 

Note that \eqref{lemeni} and \eqref{lemenii} in the last statement are very closely related. One sees that
the points $s<t$ such that $D(s,t)=0$ but $\tilde{D}(s,t)\neq
0$ are exactly the points such that
$$Z_s=\inf_{[0,s]}Z=\inf_{[t,1]}Z=Z_t\qquad\text{ and }(s,t)\neq (0,1)\, .$$
Indeed, the previous equalities imply that $d_Z(s,t)=0$ and
that $\inf_{[s,t]}Z<Z_s$, by~\eqref{lemeniii}, so that $\tilde d_Z(s,t)>0$; furthermore, $d_Z(s,t)=0$ and $(s,t)\neq (0,1)$ imply by~\eqref{lemeni} that $d_\ee(s,t)>0$. 
This entails that, for $s<t$ of this form,
one has that $x=\bp(s)=\bp(t)\in S$ has two preimages
$\pi^{-1}(x)=\{\tilde{\bp}(s),\tilde{\bp}(t)\}\in \tilde{S}$, while
for any other point $x\in S$, $\pi^{-1}(x)$ is a singleton. 

More precisely, let $\Delta=D(0,s_*)=\tilde D(0,s_*)$ and, for $r\in [0,\Delta]$, let
$$\Gamma_0(r)=\inf\{s\geq 0:Z_s=-r\}\, ,\qquad \mbox{ and }\qquad 
\Gamma_1(r)=\sup\{s\geq 0:Z_s=-r\}\, .$$ We also let
$\gamma_i(r)=\tilde{\bp}(\Gamma_i(r))$ for $i\in \{0,1\}$ and $r\in
[0,\Delta]$, and $\gamma(r)=\bp(\Gamma_0(r))=\bp(\Gamma_1(r))$. We let
$\overset{\circ}{\gamma}_0=\gamma_0((0,\Delta))$ and we define 
$\overset{\circ}{\gamma}_1$ and
$\overset{\circ}{\gamma}$ in a similar fashion.

\begin{crl}
  \label{sec:basic-prop-limit-2}
  It holds that
  $\overset{\circ}{\gamma}_0\cap\overset{\circ}{\gamma}_1=\varnothing$.
  Moreover, the projection $\pi$ is one-to-one from
  $\tilde{S}\setminus(\overset{\circ}{\gamma}_0\cup
  \overset{\circ}{\gamma}_1)$ onto $S\setminus
  \overset{\circ}{\gamma}$, while
  $\pi^{-1}(\gamma(r))=\{\gamma_0(r),\gamma_1(r)\}$ for every $r\in
  [0,\Delta]$, and the latter is a singleton if and only if $r\in\{0,\Delta\}$.
\end{crl}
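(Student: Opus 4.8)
The plan is to deduce this corollary from Lemma~\ref{sec:scaling-limit-slices-1} together with the discussion immediately preceding it, which already identifies precisely which points of $S$ have two preimages under $\pi$. First I would record the key fact extracted from that discussion: for $s<t$, one has $\pi^{-1}(\bp(s))$ not a singleton exactly when $Z_s=\inf_{[0,s]}Z=\inf_{[t,1]}Z=Z_t$ and $(s,t)\neq(0,1)$, and in that case $\pi^{-1}(\bp(s))=\{\tilde\bp(s),\tilde\bp(t)\}$. The corollary is then essentially a matter of checking that the pairs $(s,t)$ of this form are exactly the pairs $(\Gamma_0(r),\Gamma_1(r))$ for $r\in(0,\Delta)$, and of bookkeeping the endpoint cases $r\in\{0,\Delta\}$.

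The main steps, in order, would be as follows. (1) Fix $r\in(0,\Delta)$ and set $s=\Gamma_0(r)$, $t=\Gamma_1(r)$. By definition $Z_s=Z_t=-r$, and since $s$ (resp.\ $t$) is the first (resp.\ last) hitting time of $-r$ by $Z$, we have $\inf_{[0,s]}Z>-r$ is impossible — rather $Z_u>-r$ for $u<s$, so $\inf_{[0,s]}Z=Z_s=-r$; similarly $\inf_{[t,1]}Z=Z_t=-r$. Also $s<t$ because $-r\in(-\Delta,0)$ and $s_*$ is the unique time where $Z$ attains its infimum $-\Delta$ (Lemma~\ref{sec:scaling-limit-slices-1}\eqref{lemeniii}), so $s<s_*<t$ and in particular $(s,t)\neq(0,1)$ since $Z_0=0>-r$ forces $s>0$. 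Hence $\bp(s)=\bp(t)$, i.e.\ $\gamma(r)=\bp(\Gamma_0(r))=\bp(\Gamma_1(r))$ is well-defined, and $\pi^{-1}(\gamma(r))=\{\gamma_0(r),\gamma_1(r)\}$ by the cited discussion; moreover $\gamma_0(r)\neq\gamma_1(r)$ precisely because this preimage is genuinely a two-point set. (2) Conversely, if $x\in S$ has $\pi^{-1}(x)$ not a singleton, then by the discussion $x=\bp(s)=\bp(t)$ with $s<t$, $Z_s=Z_t=:-r$, $\inf_{[0,s]}Z=\inf_{[t,1]}Z=-r$ and $(s,t)\neq(0,1)$; then $r\in(0,\Delta)$ (we have $r>0$ since otherwise $Z_s=0$ with $\inf_{[0,s]}Z=0$ forces $s=0$, and then $Z_t=0=\inf_{[t,1]}Z$ forces $t=1$, contradicting $(s,t)\neq(0,1)$; and $r<\Delta$ by uniqueness of $s_*$, since $r=\Delta$ would give $s=t=s_*$). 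Moreover $\inf_{[0,s]}Z=-r$ with $Z_s=-r$ means $s$ is the first hitting time of $-r$, i.e.\ $s=\Gamma_0(r)$, and symmetrically $t=\Gamma_1(r)$. This shows $\pi$ is injective off $\overset{\circ}{\gamma}_0\cup\overset{\circ}{\gamma}_1$ and that $\overset{\circ}{\gamma}=\gamma((0,\Delta))$ is exactly the non-injectivity locus in $S$. (3) The disjointness $\overset{\circ}{\gamma}_0\cap\overset{\circ}{\gamma}_1=\varnothing$ then follows: if $\gamma_0(r)=\gamma_1(r')$ for $r,r'\in(0,\Delta)$, apply $\pi$ to get $\gamma(r)=\gamma(r')$, and since $r\mapsto Z_{\Gamma_0(r)}=-r$ is injective we get $r=r'$, contradicting $\gamma_0(r)\neq\gamma_1(r)$ from step (1). (4) Finally the endpoint cases: for $r=0$, $\Gamma_0(0)=0$ and $\Gamma_1(0)=\sup\{s:Z_s=0\}$; one checks $\tilde\bp(\Gamma_0(0))=\tilde\bp(\Gamma_1(0))$ using $\tilde D(0,\Gamma_1(0))=0$, which holds because $Z_0=Z_{\Gamma_1(0)}=0$ and $\inf$ of $Z$ on the arc from $\Gamma_1(0)$ to $0$ (going through $A=1$) is $0$, so $\tilde d_Z(\Gamma_1(0),0)=0$ hence $\tilde D=0$; thus $\gamma_0(0)=\gamma_1(0)$. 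For $r=\Delta$, $\Gamma_0(\Delta)=\Gamma_1(\Delta)=s_*$ by uniqueness, so trivially $\gamma_0(\Delta)=\gamma_1(\Delta)$. This gives the ``singleton iff $r\in\{0,\Delta\}$'' clause, the ``only if'' direction being step (1).

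The step I expect to require the most care is step (2) and its interaction with step (4): namely, being completely rigorous that $\inf_{[0,s]}Z=Z_s$ forces $s=\Gamma_0(r)$ (one must rule out $s$ being a later hitting time of $-r$ that still realizes the infimum on $[0,s]$ — but that cannot happen, since if $Z_u=-r$ for some $u<s$ then $\inf_{[0,s]}Z\le -r=Z_s$ with equality, which is consistent, so actually $s$ need only satisfy $Z_u\ge -r$ on $[0,s]$, and the first hitting time of $-r$ is the \emph{smallest} such; one has to argue that replacing $s$ by $\Gamma_0(r)$ gives the \emph{same} point of $\tilde S$, which follows because between $\Gamma_0(r)$ and $s$ the process $Z$ stays $\ge -r$ and equals $-r$ at both ends, so $\tilde d_Z(\Gamma_0(r),s)=0$, hence $\tilde D(\Gamma_0(r),s)=0$ and they project to the same point). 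Once this normalization-to-first/last-hitting-time observation is in place, the rest is routine bookkeeping with the description of $\{\tilde D=0\}$ from Lemma~\ref{sec:scaling-limit-slices-1}\eqref{lemenii} and the uniqueness of $s_*$.
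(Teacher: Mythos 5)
Your plan --- read off the non-trivial fibres of $\pi$ from the discussion preceding the corollary, normalize a pair $(s,t)$ with $D(s,t)=0<\tilde D(s,t)$ to $(\Gamma_0(r),\Gamma_1(r))$ via $\tilde d_Z$-identifications, and check the endpoints --- is precisely the paper's (implicit) argument, and steps (1), (3) and the $r=\Delta$ part of (4) are fine. However, the $r=0$ case of step (4) contains a concrete error: you evaluate $\tilde d_Z(\Gamma_1(0),0)$ using the infimum over ``the arc from $\Gamma_1(0)$ to $0$ going through $A=1$''. That circular infimum is part of $d_Z$, not of $\tilde d_Z$, whose definition $\tilde d_Z(s,s')=Z_s+Z_{s'}-2\underline{Z}_{s\wedge s',s\vee s'}$ only sees the linear arc. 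Moreover $\Gamma_1(0)=1$ (since $Z_1=0$), so in fact $\tilde d_Z(\Gamma_1(0),0)=\tilde d_Z(0,1)=-2\inf_{[0,1]}Z=2\Delta>0$, and the identity you assert is false. The conclusion $\gamma_0(0)=\gamma_1(0)$ is still correct, but the right reason is $\Gamma_1(0)=1$ together with $d_\ee(0,1)=0$ and $\{d_\ee=0\}\subseteq\{\tilde D=0\}$, which gives $\tilde D(0,1)=0$.

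A second gap is in step (2): the assertions that ``$Z_s=\inf_{[0,s]}Z=0$ forces $s=0$'' (and the analogous statement at $1$) are made with no justification; they encode the nontrivial a.s.\ property that $Z$ takes strictly negative values in every neighbourhood of $0$ and of $1$. This can be extracted from what is already available: if some $s\in(0,1)$ had $Z_s=\inf_{[0,s]}Z=0$, then $\tilde d_Z(0,s)=0$ hence $\tilde D(0,s)=0$; combined with $\tilde D(0,1)=0$ and the triangle inequality this would give $\tilde D(s,1)=0$, whereas the discussion (via $\tilde d_Z(s,1)>0$, $d_\ee(s,1)>0$ and Lemma~\ref{sec:scaling-limit-slices-1}\eqref{lemenii}) yields $\tilde D(s,1)>0$ for the pair $(s,1)$, a contradiction. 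Until these two points are repaired, the ``singleton if and only if $r\in\{0,\Delta\}$'' clause and the injectivity of $\pi$ off $\overset{\circ}{\gamma}_0\cup\overset{\circ}{\gamma}_1$ are not fully established.
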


Next, we say that a metric space $(M,d)$ is a \emph{length space} if for
every $x$, $y\in M$, $d(x,y)=\inf\mathrm{length}_d(\gamma)$ where the
infimum is taken over all continuous paths $\gamma:[0,1]\to M$ with
$\gamma(0)=x$ and $\gamma(1)=y$. A path $\gamma$ for which the infimum
is attained is called a \emph{geodesic}, and a \emph{geodesic metric space} is a
length space $(M,d)$ such that every pair of points is joined by a
geodesic. A compact length space is a geodesic space by \cite[Theorem
2.5.23]{burago01}.

\begin{lmm}
  \label{sec:scaling-limit-slices-4}
The spaces $(S,D)$, $(\tilde{S},\tilde{D})$ and $(\tilde{S},\tilde{D}^*)$
are compact geodesic metric spaces. 
\end{lmm}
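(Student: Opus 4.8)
The plan is to establish compactness first, then the length-space property, from which the geodesic property follows by \cite[Theorem 2.5.23]{burago01}.

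\textbf{Compactness.} The three spaces $(S,D)$, $(\tilde S,\tilde D)$ and $(\tilde S,\tilde D^*)$ are all continuous images of the compact interval $[0,1]$ under the canonical projections $\bp$, $\tilde\bp$, $\tilde\bp$ respectively; indeed $D$, $\tilde D$, $\tilde D^*$ are continuous on $[0,1]^2$ (for $\tilde D^*$ this follows from $\tilde D^*\le\tilde d_Z$, which is continuous, together with the triangle-inequality/monotonicity bound $|\tilde D^*(s,t)-\tilde D^*(s',t')|\le \tilde d_Z(s,s')+\tilde d_Z(t,t')$). Hence each quotient is compact as the continuous image of a compact space, and metrizable since the quotient pseudo-metric separates points by construction. (One should note that by Lemma~\ref{sec:scaling-limit-slices-1}, $\{\tilde D=0\}=\{\tilde D^*=0\}$, so $(\tilde S,\tilde D)$ and $(\tilde S,\tilde D^*)$ have the same underlying set with homeomorphic topologies.)

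\textbf{Length-space property.} For $(S,D)$ this is already part of the known picture: the cactus/quotient construction of the Brownian map from \cite{legall06,legall11} shows $(S,D)$ is a geodesic space, the maximal geodesics descending to the limit. For $(\tilde S,\tilde D^*)$ I would argue directly from the defining infimum formula: given $x=\tilde\bp(s)$ and $y=\tilde\bp(t)$ and $\varepsilon>0$, pick a near-optimal chain $s_1=s,t_1,s_2,\ldots,s_k,t_k=t$ with $\sum_j\tilde d_Z(s_j,t_j)\le\tilde D^*(s,t)+\varepsilon$ and $d_\ee(t_j,s_{j+1})=0$. Each ``jump'' $\tilde\bp(t_j)=\tilde\bp(s_{j+1})$ costs nothing in $\tilde D^*$ (since $d_\ee=0$ implies $\tilde D^*=0$), and each segment $\tilde d_Z(s_j,t_j)$ can be realized approximately by a continuous path in $\tilde S$ of $\tilde D^*$-length at most $\tilde d_Z(s_j,t_j)$: concretely, follow the image under $\tilde\bp$ of the maximal-geodesic-type route that produced the bound $\tilde D\le\tilde d_Z$ (this route is built from hitting times of $Z$ and projects continuously). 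Concatenating these pieces yields a continuous path from $x$ to $y$ of $\tilde D^*$-length $\le\tilde D^*(s,t)+2\varepsilon$, say. Since $\varepsilon$ is arbitrary and the reverse inequality is trivial, $(\tilde S,\tilde D^*)$ is a length space. Finally, once Theorem~\ref{sec:scaling-limit-slices-3} is invoked — $\tilde D=\tilde D^*$ — the same statement holds for $(\tilde S,\tilde D)$; but note we cannot yet use Theorem~\ref{sec:scaling-limit-slices-3} here, so for $(\tilde S,\tilde D)$ one argues instead from the approximation \eqref{eq:21}: each discrete space $\tilde Q_n$ (rescaled) is a geodesic graph, hence a length space up to $O(n^{-1/4})$, and the Gromov--Hausdorff limit of length spaces is a length space \cite[Theorem 7.5.1]{burago01}; compactness then upgrades this to geodesic.

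\textbf{Geodesic property and main obstacle.} Given that each space is a compact length space, \cite[Theorem 2.5.23]{burago01} immediately gives that every pair of points is joined by a geodesic, completing the proof. The step I expect to be the main obstacle is the explicit construction of the near-geodesic continuous path realizing $\tilde d_Z(s_j,t_j)$ inside $\tilde S$ with controlled $\tilde D^*$-length: one must check that the limiting object obtained from the discrete maximal wedge paths is genuinely continuous for the quotient topology and that its length is controlled by $\tilde d_Z$, which requires a careful passage to the limit in the bound \eqref{eq:23} (uniformly, not just pointwise). For $(\tilde S,\tilde D)$ specifically, the cleanest route is really the Gromov--Hausdorff-limit-of-length-spaces argument, which sidesteps this construction entirely and only uses the already-established a.s.\ convergence \eqref{eq:21} together with the elementary fact that $(\sqrt 2n)$-rescaled graph metrics of connected graphs are $\varepsilon$-length spaces.
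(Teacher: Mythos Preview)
Your proposal is correct, and for $(S,D)$ and $(\tilde S,\tilde D)$ you land on exactly the paper's argument: both are Gromov--Hausdorff limits of rescaled graph metrics (hence of geodesic spaces up to $O(n^{-1/4})$), and this property passes to the limit by \cite[Theorem~7.5.1]{burago01}.

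For $(\tilde S,\tilde D^*)$ your route diverges from the paper's, and you are making it harder than it needs to be. The paper observes that $\tilde D^*$ is, by its very definition, the quotient pseudo-metric of the space $([0,1]/\{\tilde d_Z=0\},\tilde d_Z)$ with respect to the equivalence relation induced by $\{d_\ee=0\}$. Since $([0,1]/\{\tilde d_Z=0\},\tilde d_Z)$ is an $\R$-tree, it is a length space, and the quotient pseudo-metric of a length space is again a length space (see the discussion after Exercise~3.1.13 in \cite{burago01}); compactness then upgrades this to geodesic. This buys you the result in one line. Your explicit chain-and-path construction is essentially reproving this general fact by hand, and the ``main obstacle'' you flag --- building a continuous path in $\tilde S$ of $\tilde D^*$-length at most $\tilde d_Z(s_j,t_j)$ via a limit of discrete wedge paths --- evaporates once you notice that the $\R$-tree $([0,1]/\{\tilde d_Z=0\},\tilde d_Z)$ already contains a genuine geodesic between any two of its points, whose image under $\tilde\bp$ is the desired path (no discrete approximation required, and its $\tilde D^*$-length is at most its $\tilde d_Z$-length simply because $\tilde D^*\le\tilde d_Z$).
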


\begin{proof}
We only sketch the proof of this lemma. Recall that the property 
of being a compact geodesic metric space is preserved by taking 
Gromov--Hausdorff limits, by \cite[Theorem 7.5.1]{burago01}. Now, we 
use the fact that $(S,D)$, $(\tilde{S},\tilde{D})$ are Gromov--Hausdorff 
limits of the metric spaces $(V(Q_n),(9/8n)^{1/4}d_{Q_n})$ and 
$(V(\tilde{Q}_n),(9/8n)^{1/4}d_{\tilde{Q}_n})$, which in turn are at 
distance less than $(9/8n)^{1/4}$ from metric graphs obtained by 
linking any two adjacent vertices by an edge of length $(9/8n)^{1/4}$,
the latter being geodesic metric spaces. For 
$(\tilde{S},\tilde{D}^*)$, this comes from the fact that $\tilde{D}^*$
is a quotient pseudo-metric of the space 
$([0,1]/\{\tilde{d}_Z=0\},\tilde{d}_Z)$ with respect to the 
equivalence relation induced on $[0,1]/\{\tilde{d}_Z=0\}$ by 
$\{d_\ee=0\}$. Since $([0,1]/\{\tilde{d}_Z=0\},\tilde{d}_Z)$ is a 
length space (it is indeed an $\R$-tree), the quotient pseudo-metric 
$(\tilde{S},\tilde{D}^*)$ is also a length space, hence a geodesic space 
since it is compact. See the discussion after Exercise 3.1.13 in 
\cite{burago01}. 
\end{proof}

We conclude by mentioning that the mappings $r\mapsto
\gamma_0(r)$ and $r\mapsto\gamma_1(r)$ are geodesics
in $(\tilde{S},\tilde{D})$ and that $r\mapsto \gamma(r)$ is a geodesic
in $(S,D)$. This follows
easily from approximations ($\gamma_0$ is the continuum counterpart to
the maximal geodesic in Section \ref{sec:slices}, and $\gamma_1$ to
the shuttle) and is discussed in \cite{legall11}.

\subsection{Local isometries between \texorpdfstring{$\tilde{S}$}{S tilde} and
  \texorpdfstring{$S$}{S}}\label{sec:local-isom-betw}

In the following, if $(M,d)$ is a metric space or a pseudo-metric
space, and if $x\in M$, $A\subseteq M$, we let
$d(x,A)=\inf\{d(x,y):y\in A\}$. For $i\in \{0,1\}$, we also use the
shorthand $\Gamma_i$, $\gamma_i$ to designate the image sets $\{\Gamma_i(r),0\leq r\leq
\Delta\}$ and $\{\gamma_i(r),0\leq r\leq
\Delta\}$.

\begin{lmm}
  \label{sec:scaling-limit-slices-2}
  The following holds almost surely. Fix $\eps>0$, $i\in \{0,1\}$, and
  let $s$, $t\in [0,1]$ be such that $\tilde{D}(s,\Gamma_i)\wedge
  \tilde{D}(t,\Gamma_i)> \eps$ and $\tilde{D}(s,t)<\eps/2$. Then,
  it holds that $\tilde{D}(s,t)=D(s,t)=\tilde{D}^*(s,t)$. 
\end{lmm}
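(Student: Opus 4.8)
The plan is to show the three pseudo-distances agree locally away from the distinguished geodesics $\Gamma_0,\Gamma_1$, by a sandwiching argument. We always have the chain of inequalities $D\le \tilde D\le \tilde D^*\le \tilde d_Z$, so it suffices to prove the reverse inequality $\tilde D^*(s,t)\le D(s,t)$ under the stated hypotheses. First I would recall that $D(s,t)$ is given by the explicit formula \eqref{DDstar}, as an infimum over chains $s_1=s,t_1,s_2,t_2,\dots,s_k,t_k=t$ with $d_\ee(t_j,s_{j+1})=0$, of $\sum_j d_Z(s_j,t_j)$. Correspondingly $\tilde D^*(s,t)$ is the same infimum but with $\tilde d_Z$ in place of $d_Z$. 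Since $\{d_\ee=0\}\subseteq\{\tilde D^*=0\}$ and $\tilde D^*\le\tilde d_Z$, it is enough to show that, for an \emph{almost-optimal} chain realizing $D(s,t)$ up to some small error, each individual link satisfies $\tilde d_Z(s_j,t_j)=d_Z(s_j,t_j)$, or at least that the sum of $\tilde d_Z(s_j,t_j)$ over the chain does not exceed $D(s,t)$ by more than the allowed error.

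The key geometric input is the following: the difference between $d_Z(s',t')$ and $\tilde d_Z(s',t')$ is controlled by whether the minimum of $Z$ on the ``short'' arc from $s'$ to $t'$ is attained by going the ``wrong'' way around the circle, i.e.\ by how close $s'$ or $t'$ is to the point $s_*$ realizing $\inf_{[0,1]}Z$, equivalently to the geodesics $\Gamma_0,\Gamma_1$. More precisely, $d_Z$ and $\tilde d_Z$ differ on $(s',t')$ only when $\max(\underline Z_{s',t'},\underline Z_{t',s'})$ is attained on the arc not containing $s_*$ while the true infimum over $[s'\wedge t', s'\vee t']$ sees a lower value; and in that regime one has $\tilde d_Z(s',t')\le d_Z(s',t') + 2(\text{something like }\tilde D(s',\Gamma_i))$ for the relevant $i$. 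So the strategy is: take a near-geodesic chain for $D(s,t)$; note that its total $d_Z$-length is $<\eps/2 + o(1)$; hence (since each $d_Z(s_j,t_j)\ge 0$ and the $t_j,s_{j+1}$ are $d_\ee$-identified, so $D(s_j,t_j)$-distances are controlled) every point $s_j,t_j$ appearing in the chain stays within $D$-distance roughly $\eps/2$ of $s$ or $t$, hence within $\tilde D$-distance more than $\eps/2$ of $\Gamma_i$; then on each link $d_Z$ and $\tilde d_Z$ coincide because the competing arc around $s_*$ would have to pass within distance $<\eps/2$ of $\Gamma_i$, contradicting the separation. Summing gives $\tilde D^*(s,t)\le \sum_j \tilde d_Z(s_j,t_j)=\sum_j d_Z(s_j,t_j)\le D(s,t)+(\text{error})$, and letting the error go to $0$ finishes the proof; combined with $\tilde D(s,t)=\tilde D^*(s,t)$ from Theorem~\ref{sec:scaling-limit-slices-3} (whose proof presumably does not circularly depend on this lemma — if it does, one argues directly with $\tilde D\le\tilde D^*$ and the reverse bound proved here) one gets all three equal.

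The main obstacle I expect is making precise and rigorous the claim that the minimum in $d_Z(s_j,t_j)$ versus $\tilde d_Z(s_j,t_j)$ ``sees the same value'' when the endpoints are far from $\Gamma_i$. One must argue that if $\underline Z_{s_j,t_j}$ (min over the arc wrapping around $0\equiv A$, i.e.\ through $s_*$) were strictly less than $\underline Z_{t_j,s_j}$ (min over the complementary arc), then there is a point $u$ on the wrapping arc with $Z_u$ small, and $u$ lies on one of the two geodesics $\Gamma_0$ or $\Gamma_1$ (or close to it in the appropriate sense via Corollary~\ref{sec:basic-prop-limit-2} and the identification of $\{\tilde D=0\}$), at $\tilde D$-distance from $s_j$ bounded by $d_Z(s_j,u)\le$ the relevant partial sum $<\eps/2$; this would force $\tilde D(s_j,\Gamma_i)<\eps$, contradiction. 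One has to handle the two cases $i=0$ and $i=1$ symmetrically and be careful about which arc is which, and about the degenerate endpoints $r\in\{0,\Delta\}$ where $\gamma_0,\gamma_1$ meet; I would also need a uniform control so that all this holds simultaneously for every such $s,t$, which is why the statement is phrased ``the following holds almost surely'' with the quantifiers inside — this should follow from the a.s.\ statements in Lemma~\ref{sec:scaling-limit-slices-1} and Corollary~\ref{sec:basic-prop-limit-2} together with continuity of $Z$, $D$, $\tilde D$.
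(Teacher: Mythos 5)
Your route is genuinely different from the paper's: you argue entirely in the continuum, trying to show directly that on any near-optimal chain realizing $D(s,t)$ via~\eqref{DDstar}, each link satisfies $d_Z(s_j,t_j)=\tilde d_Z(s_j,t_j)$, whereas the paper goes back to the discrete maps $Q_n,\tilde Q_n$, builds a concatenation of maximal wedge paths $g_n$, lifts it through the projection $p:\tilde Q_n\to Q_n$, and, when $g_n$ meets the maximal geodesic of $Q_n$, replaces the portion of $g_n$ between the first and last meeting points by the corresponding segment of the shuttle (which is no longer, being the projection of a geodesic). This ``cut-and-reroute'' step is the heart of the paper's argument, and it is precisely the thing your argument is missing; the claim it replaces --- that no link of the chain ``wraps around'' --- is actually false under the lemma's hypotheses.

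Concretely, there are three gaps. First, you propagate the separation hypothesis along the chain in the wrong metric: knowing $D(s,s_j)<\eps/2$ does \emph{not} give $\tilde D(s,s_j)<\eps/2$ (we have $D\le\tilde D$, so the inequality goes the wrong way), and so does not give $\tilde D(s_j,\Gamma_i)>\eps/2$ by the triangle inequality in $\tilde D$. This can be repaired by looking at the \emph{first} wrapping link $j_0$ (so that the earlier links are non-wrapping, hence $\tilde D(s,s_{j_0})\le\sum_{m<j_0}d_Z(s_m,t_m)$), but the repair immediately exposes the second, more serious, gap. Second, a wrapping link does not force closeness to the \emph{chosen} $\Gamma_i$: if $s_j<t_j$ and $u'$ realizes the infimum over the wrap-around arc $[0,s_j]\cup[t_j,1]$ (which, incidentally, is the arc \emph{not} containing $s_*$, opposite to what your sketch asserts), then the rightmost such $u'$ in $[0,s_j]$ satisfies $\tilde d_Z(s_j,u')=Z_{s_j}-Z_{u'}\le d_Z(s_j,t_j)$ and $\tilde\bp(u')\in\gamma_0$, while the leftmost minimizer $u''$ in $[t_j,1]$ similarly gives $\tilde d_Z(t_j,u'')\le d_Z(s_j,t_j)$ with $\tilde\bp(u'')\in\gamma_1$. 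So a wrap-around makes $s_j\wedge t_j$ close to $\Gamma_0$ \emph{and} $s_j\vee t_j$ close to $\Gamma_1$; for $i=0$ the second fact is harmless, and depending on whether $s_{j_0}$ or $t_{j_0}$ is the smaller time, you may only get the harmless conclusion for the endpoint whose $\tilde D$-distance to $s$ you control. When both $s$ and $t$ are near $\Gamma_{1-i}$, wrap-around links are genuinely possible, so the strategy ``show $d_Z=\tilde d_Z$ on every link'' cannot work; one must instead reroute the portion of the path between the first and last contacts with the boundary along $\gamma_{1-i}$, as the paper does discretely. Third, and more minor, the bound ``$\tilde D(s_j,u)\le d_Z(s_j,u)$'' is backwards (one only has $\tilde D\le\tilde d_Z$, and $d_Z\le\tilde d_Z$); the useful bound requires choosing $u$ as the extremal minimizer as above so that $\tilde d_Z(s_j,u)$ itself collapses.
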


\begin{proof}
  Assume that $i=0$.  Let $i_n$, $j_n\in \{0,1,\ldots,2n\}$ be such that $i_n/2n\to s$ and $j_n/2n\to t$ as
  $n\to\infty$. Recall that, throughout this section, we have fixed an extraction along which~\eqref{eq:21} holds and that $n\to\infty$ is understood along this extraction. Then,
$$\left(\frac{9}{8n}\right)^{1/4}d_{Q_n}(v_{i_n},v_{j_n})\build\longrightarrow_{n\to\infty}^{}D(s,t)\,
,\qquad
\left(\frac{9}{8n}\right)^{1/4}d_{\tilde{Q}_n}(v_{i_n},v_{j_n})\build\longrightarrow_{n\to\infty}^{}\tilde{D}(s,t)\,
.$$ From the fact that $\tilde{D}(s,\Gamma_0)>\eps$ we deduce that for
every $n$ large enough, the vertex $v_{i_n}$ is at
$d_{\tilde{Q}_n}$-distance at least $(8n/9)^{1/4}\eps$ from the
maximal geodesic in $\tilde{Q}_n$. Indeed, if this were not the case,
then for infinitely many values of $n$, we could find a vertex
$v_{k_n}$ of the maximal geodesic with
$d_{\tilde{Q}_n}(v_{k_n},v_{i_n})\leq (8n/9)^{1/4}\eps$. By definition
of the maximal geodesic, it must hold that $\ell_n(k_n)=\inf
\{\ell_n(i):0\leq i\leq k_n\}$, and by passing to the limit up to
further extraction, we may assume that $k_n/2n$ converges to some $u$
such that $Z_u=\inf\{Z_s,0\leq s\leq u\}$, so that $\tilde{\bp}(u)\in
\gamma_0$ and $\tilde{D}(u,s)\leq \eps$, a contradiction with the fact
that $\tilde{D}(s,\Gamma_0)>\eps$.

Fix $\eta>0$. By~\eqref{DDstar}, there exist $s=s_1$, $t_1$, \ldots, $s_k$, $t_k=t$ such that
$d_\ee(t_m,s_{m+1})=0$ for $1\leq m\leq k-1$, and 
$$D(s,t)\geq \sum_{m=1}^k d_Z(s_m,t_m)-\eta\, .$$
Then, we can choose integers $i_n(m)$, $j_n(m)$, $1\leq m\leq k$ such that
$i_n(m)/2n\to s_m$ and $j_n(m)/2n\to t_m$ as $n\to\infty$, and we can
also require that $v_{j_n(m)}=v_{i_n(m+1)}$ for all $m\in
\{1,\ldots,k-1\}$. Indeed, this last property amounts to the fact that
$C_n(j_n(m))=C_n(i_n(m+1))$ and that $C_n$ is greater than or equal to
this common value on $[j_n(m)\wedge i_n(m+1),j_n(m)\vee i_n(m+1)]$; 
we can require this as a simple consequence of the
fact that $d_\ee(t_m,s_{m+1})=0$ and of the convergence of $C_{(n)}$
to~$\ee$.  For every $m\in \{1,\ldots,k\}$, let $g_{n,m}$ be the
maximal wedge path in $Q_n$ from $c_{i_n(m)}$ to $c_{j_n(m)}$, as
defined at the end of Section~\ref{sec:orig-cori-vauq}. The length of this path
is given by the upper-bound of~\eqref{eq:20} for $\ell=\ell_n$, $i=i_n(m)$ and
$j=j_n(m)$ and, after renormalization by $(8n/9)^{1/4}$, this length
converges to $d_Z(s_m,t_m)$. Therefore, if we let $g_n$ be the
concatenation of the paths $g_{n,1}$, $g_{n,2}$, \ldots, $g_{n,k}$, then the
length of $g_n$ is asymptotically $(8n/9)^{1/4}\sum_{1\leq m\leq
  k}d_Z(s_m,t_m)\leq (8n/9)^{1/4}(D(s,t)+\eta)$.

If $g_n$ does not intersect the maximal geodesic from the root $c_0$
to $v_*$ in $Q_n$, then $g_n$ is also a path in $\tilde{Q}_n$ (meaning that it can
be lifted via the projection $p$ from $\tilde{Q}_n$ to $Q_n$, as
defined in Section \ref{sec:slices}). In this case, this also means
that the maximal wedge paths $g_{n,m}$ are also paths in
$\tilde{Q}_n$, entailing that their lengths are given by the
upper-bounds in \eqref{eq:23}. If, for infinitely many $n$'s, $g_n$ does not intersect the maximal geodesic from the root~$c_0$ to~$v_*$ in~$Q_n$ then, by passing to the limit, we obtain
$d_Z(s_m,t_m)=\tilde{d}_Z(s_m,t_m)$. 
We immediately get
$$\tilde{D}(s,t)\leq \tilde{D}^*(s,t)\leq \sum_{m=1}^k \tilde{d}_Z(s_m,t_m)=\sum_{m=1}^k
d_Z(s_m,t_m)\leq D(s,t)+\eta\, .$$
Since $\eta$ was arbitrary we obtain $\tilde{D}(s,t)\leq
\tilde{D}^*(s,t)\leq D(s,t)$, but since $D\leq \tilde{D}$, we conclude that this must be
an equality all along. 

Suppose now that, for infinitely many $n$'s, the path $g_n$ does intersect the maximal geodesic
from $c_0$ to $v_*$ in $Q_n$. For such an~$n$ fixed, let $a$, $b$ be the minimal and maximal
integers such that $g_n(a)$, $g_n(b)$ belong to this path. Clearly, we
can modify the path $g_n$ by replacing it if necessary by the arc of
the maximal geodesic between $g_n(a)$ and $g_n(b)$ without increasing
its length. Now, the vertices $g_n(0),g_n(1),\ldots,g_n(a-1)$ are
vertices of $Q_n$ that are not in the maximal geodesic, so they lift
via the projection $p$ to a path in $\tilde{Q}_n$, with same
length. The edge between $g_n(a-1)$ and $g_n(a)$ also lifts into an
edge of $\tilde{Q}_n$, and it arrives at a point $g'_n(a)\in
p^{-1}(g_n(a))$ which is either on the maximal geodesic or on the
shuttle of $\tilde{Q}_n$. However, the first case is impossible for large~$n$'s, since
the maximal geodesic is at $d_{\tilde{Q}_n}$-distance at least
$(8n/9)^{1/4}\eps$ from $v_{i_n}=g_n(0)$ and $d_{\tilde{Q}_n}(g_n(0),g'_n(a))\le d_{\tilde{Q}_n}(v_{i_n},v_{j_n})\le (8n/9)^{1/4}\eps/2$. The same argument applies to
the path $g_n(b)$, $g_n(b+1)$, \ldots, $v_{j_n}$, which can be viewed as a
path in $\tilde{Q}_n$ leaving the vertex $g'_n(b)$ of the shuttle and
going to $v_{j_n}$. Moreover, since the shuttle projects to a geodesic
path in $Q_n$, the length $b-a$ of
$g_n(a)$, $g_n(a+1)$, \ldots, $g_n(b-1)$, $g_n(b)$ is not smaller than the
length of the segment of the shuttle between the vertices $g_n'(a)$
and $g_n'(b)$. 

Therefore, we see that, if $g_n$ intersects the maximal geodesic
from~$c_0$ to~$v_*$ in~$Q_n$, we can construct from it a path in
$\tilde{Q}_n$ with same length, going from $v_{i_n}$ to $g'_n(a)$,
then taking the segment of the shuttle from $g'_n(a)$ to $g'_n(b)$,
then going from $g'_n(b)$ to $v_{j_n}$. This path is still a
concatenation of maximal wedge paths that are now in $\tilde{Q}_n$, so
by a new passage to the limit (possibly up to a new extraction), we
can find $k'\leq k$ and $s=s'_1$, $t'_1$, \ldots, $s'_{k'}$, $t'_{k'}=t$ such
that, for every $m$,
$$\tilde{d}_Z(s'_m,t'_m)=d_Z(s'_m,t'_m)\, ,\qquad
d_\ee(t'_m,s'_{m+1})=0\, ,$$
and such that 
$$ D(s,t)+\eta\geq
\sum_{m=1}^kd_Z(s_m,t_m)\geq  
\sum_{m=1}^{k'}d_Z(s'_m,t'_m) =\sum_{m=1}^{k'}\tilde{d}_Z(s'_m,t'_m)\geq \tilde{D}^*(s,t)\geq \tilde{D}(s,t)\, .$$ 
Again, since $\eta$ was arbitrary, this yields
$D(s,t)=\tilde{D}^*(s,t)=\tilde{D}(s,t)$. 

We obtain the same result with $\Gamma_1$ replaced by $\Gamma_0$ by a
similar reasoning. 
\end{proof}

\subsection{Proof of Theorem \ref{sec:scaling-limit-slices-3}}

We now turn the ``local'' lemma that we just proved into a
``global'' result, which is the content of Theorem
\ref{sec:scaling-limit-slices-3}.

\begin{proof}[Proof of Theorem \ref{sec:scaling-limit-slices-3}]
  Fix two points $x$, $y\in \tilde{S}$, and a continuous, injective path
  $f:[0,1]\to \tilde{S}$ going from $x$ to $y$.

  For every $r\in [0,1]$, let $F(r)\in
  [0,1]$ be an arbitrary point such that
  $\tilde{\bp}(F(r))=f(r)$.  Suppose first that $f$
  does not visit the points $\tilde{\bp}(0)$ and $\tilde{\bp}(s_*)$.
  Then for every $r\in [0,1]$, $\tilde{\bp}(F(r))$ is either not in
  $\gamma_0$, or not in $\gamma_1$. Assume for the
  moment that we are in the first case. It means that we can find a
  neighborhood $V_r$ of $r$ in $[0,1]$ and $\eps_r>0$ such that
  $\tilde{D}(F(r'),F(r))<\eps_r/2$ and $\tilde{D}(F(r'),\Gamma_0)>\eps_r$ for
  every $r'\in V_r$. In the second case, a similar property holds
  with $\Gamma_1$ instead of $\Gamma_0$. By taking a finite subcover,
  and applying Lemma \ref{sec:scaling-limit-slices-2}, we obtain the
  existence of $\eps>0$ depending on $f$ such that for every $r$, $r'\in
  [0,1]$, $|r-r'|\leq \eps$ implies 
$$\tilde{D}\big(F(r),F(r')\big)=\tilde{D}^*\big(F(r),F(r')\big)=D\big(F(r),F(r')\big)\, .$$
Hence, for every partition $0=r_0<r_1<\ldots<r_k=1$ such that
$|r_{i+1}-r_i|<\eps$ for every $i\in \{0,\ldots,k-1\}$,
$$ \sum_{i=0}^{k-1}\tilde{D}\big(F(r_i),F(r_{i+1})\big)
=\sum_{i=0}^{k-1}\tilde{D}^*\big(F(r_i),F(r_{i+1})\big)
=\sum_{i=0}^{k-1}D\big(F(r_i),F(r_{i+1})\big)\, , $$
which implies that 
\begin{equation}
  \label{eq:22}
  \mathrm{length}_{\tilde{D}}(f)=\mathrm{length}_{\tilde{D}^*}(f)=\mathrm{length}_{D}(\pi\circ
  f)\,
  .
\end{equation}
We now use the easy fact that for any metric space $(M,d)$ and every
continuous path $\gamma:[0,1]\to M$, the function $r\mapsto
\mathrm{length}_d(\gamma|_{[0,r]})$ is a non-decreasing,
left-continuous function from $[0,1]$ to $[0,\infty]$. Moreover, the
length function is additive in the sense that
$\mathrm{length}_d(\gamma)=\mathrm{length}_d(\gamma|_{[0,r]})+\mathrm{length}_d(\gamma|_{[r,1]})$
for every $r\in [0,1]$. These two properties together clearly imply
that \eqref{eq:22} is still valid if the injective, continuous path
$f$ is allowed to visit $\tilde{\bp}(0)$, $\tilde{\bp}(s_*)$, or
both. Taking the infimum over all such functions from a point $x$ to
$y$, and using Lemma \ref{sec:scaling-limit-slices-4}, we finally get
$\tilde{D}(x,y)=\tilde{D}^*(x,y)$, and that this quantity is the
infimum of $\mathrm{length}_D(\pi\circ f)$ over all injective continuous paths
from $x$ to $y$ in $\tilde{S}$, hence over all continuous paths from
$x$ to $y$ in $\tilde{S}$, not necessarily injective. 
\end{proof}

\section{Proof of Theorem~\ref{THMDISK}}\label{sec:conv-brown-disk}

\subsection{Subsequential convergence}\label{sec:subs}

We now move to quadrangulations with boundaries, which are our main
object of interest. Recall the construction of
Section~\ref{sec:bij_forests} and consider an encoding labeled forest
$(\bff,\ell)$ for a quadrangulation with a boundary. As in the
preceding section, we will further encode it by a pair of real-valued
functions. Before we proceed, it will be convenient to add an extra
vertex-tree $\rho_{l+1}$ with label $\ell(\rho_{l+1})=\ell(\rho_1)$ to
the forest. This extra vertex does not really play a part but its
introduction will make the presentation simpler. We also add~$l$ edges
between~$\rho_i$ and~$\rho_{i+1}$, for $1\le i \le l$. See
Figure~\ref{coding}.

We let $c_0$, $c_1$, \ldots, $c_{2n+l-1}$ be as in Section~\ref{sec:bij_forests} and we add to this list the corner~$c_{2n+l}$ incident to the extra vertex-tree~$\rho_{l+1}$. We define the contour and label processes on $[0,2n+l]$ by
$$C(j)=d_{\bff}(c_j,\rho_{l+1})-l\quad\text{ and }\quad \ell(j)=\ell(c_j),\qquad 0\leq j\leq 2n+l$$
and by linear interpolation between integer values.

\begin{figure}[htb!]
	\centering\includegraphics[width=.95\linewidth]{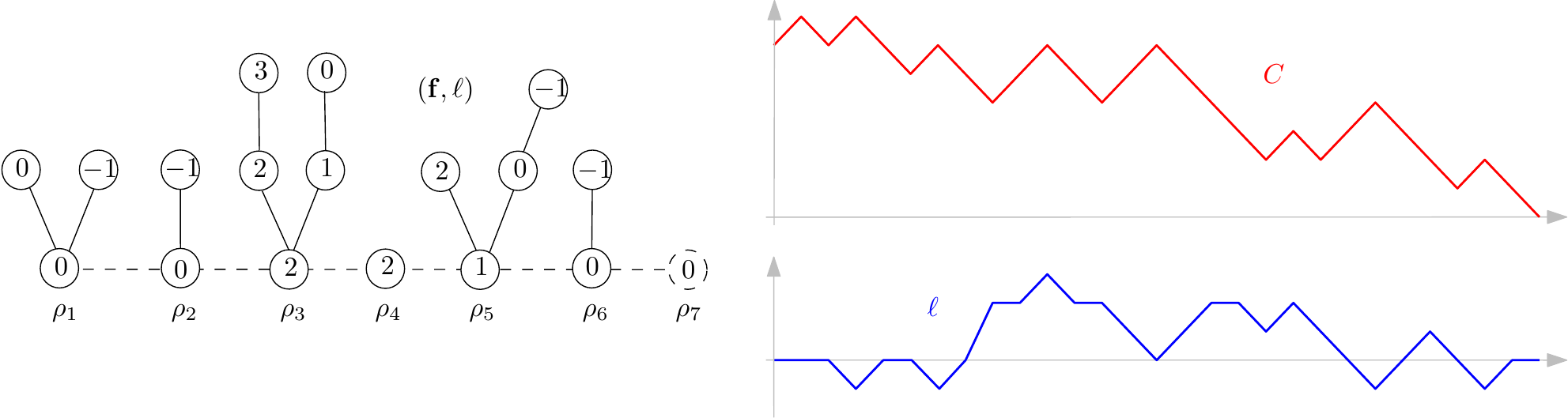}
	\caption{The contour and label processes associated with the labeled forest of Figure~\ref{fig:schaeffer3}. The extra vertex-tree~$\rho_7$ and the edges linking the roots are represented with a dashed line. Note that the normalization we chose for the labels is equivalent to imposing $\ell(0)=0$.}
	\label{coding}
\end{figure}

Let us fix $L\in (0,\infty)$ and a sequence $(l_n,n\geq 1)$ such that $l_n\sim L\sqrt{2n}$ as $n\to\infty$. 
We let $(F_n,\ell_n)$ be uniformly distributed over the set of labeled
forests of $l_n$ trees with $n$ edges in total, and let $(Q_n,v_*)$ be
the random pointed quadrangulation\footnote{We will use notation like
  $Q_n$, $C_n$, $\ell_n$, $D_n$, $D$ with a different meaning from the preceding section
  in order to keep exposition lighter.} associated with $F_n$ via the 
bijection of Section~\ref{sec:bij_forests}. Note that up to re-rooting
$Q_n$ at a uniform corner incident to the root face, we may assume
that $Q_n$ is uniform in $\bQnsn$ by Lemma \ref{awayfrom}. 

We let~$C_n$, $\ell_n$ be the associated contour and label processes, and we define their
renormalized versions
$$C_{(n)}(s)=\frac{C_n\big((2n+{l_n})s\big)}{\sqrt{2n}}\, ,\qquad
\ell_{(n)}(s)=\left(\frac{9}{8n}\right)^{1/4}\ell_n\big((2n+{l_n})s\big)\,
,\qquad 0\leq s\leq 1\, .$$ We let $D_n(i,j)$ be the distance in~$Q_n$
between the vertices incident to the $i$-th and $j$-th corner
of~$F_n$, for $i,j\in \{0,1,\ldots,2n+{l_n}\}$. We extend $D_n$ to a
continuous function on $[0,2n+{l_n}]^2$ by the exact same formula as
\eqref{eq:6}, and we finally define its renormalized version
\begin{equation}\label{eq:Dn}
D_{(n)}(s,t)=\left(\frac{9}{8n}\right)^{1/4}D_n\big(
  (2n+{l_n})\,s, (2n+{l_n})\,t\big)\, ,\qquad 0\leq s,t\leq 1\, .
\end{equation}
It is shown in \cite{bettinelli11b} that, from every increasing family
of positive numbers, one can extract a further subsequence along which
$$(C_{(n)},\ell_{(n)},D_{(n)})\build\longrightarrow_{n\to\infty}^{(d)}(X,Z,D)$$
in distribution in the space
$\mathcal{C}([0,1])\times\mathcal{C}([0,1])\times
\mathcal{C}([0,1]^2)$. (At this moment, the need of extracting a
subsequence is caused by the last coordinate~$D_{(n)}$ and the
convergence without extraction holds if one drops this coordinate.)
Here, $D$ is a random pseudo-metric on $[0,1]$ and $(X,Z)$
law\footnote{This is of course an abuse of notation since $(X,Z)$
  previously denoted the canonical process, however we did not want to
  introduce a further specific notation at this point.}
$\mathbb{F}^1_L$ defined in Section~\ref{sec:defin-main-prop}, so that
$X$ is a first-passage bridge, attaining level $-L$ for the first time
at time~$1$, and~$Z$ is the associated snake process.

Moreover, the pointed random metric space
$(\sV(Q_n),(9/8n)^{1/4}d_{Q_n},v_*)$ converges in distribution, still
along the same subsequence, to the random metric space
$([0,1]/\{D=0\},D,x_*)$, in the sense of the pointed Gromov--Hausdorff topology. Here, we let $x_*=\bp(s_*)$, where
$\bp:[0,1]\to [0,1]/\{D=0\}$ is the canonical projection, and $s_*$ is
the (a.s.\ unique \cite[Lemma~11]{bettinelli11b}) point in $[0,1]$ at which $Z$ reaches its global
minimum.

\begin{prp}[\cite{bettinelli11b}]
  Almost surely, the space $\mathbf{D}=[0,1]/\{D=0\}$ is a topological
  disk whose boundary $\partial \mathbf{D}$ satisfies
\begin{equation}\label{eq:bdry}
\bp^{-1}(\partial \mathbf{D})=\{s\in
  [0,1]:X_s=\underline{X}_s\}\, .
\end{equation}
Almost surely, the Hausdorff dimension of $\mathbf{D}$ is 4, and that
of $\partial\mathbf{D}$ is $2$. 
\end{prp}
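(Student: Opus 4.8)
This is the content of \cite[Theorems~1--3]{bettinelli11b}; the plan is to adapt to the present setting the arguments developed for the Brownian map in \cite{legall06,lgp,legall11}, the first-passage bridge $X$ playing the role of the normalized Brownian excursion and the floor of the real forest $\mathcal{F}^1_L$ giving rise to the boundary of $\mathbf{D}$.

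\emph{Topology of $\mathbf{D}$.} The space $\mathbf{D}$ is automatically a compact metric space, being the quotient of $[0,1]$ by the closed equivalence relation $\{D=0\}$ associated with a continuous pseudo-metric. The heart of the argument is a description of $\{D=0\}$ in the spirit of Lemma~\ref{sec:scaling-limit-slices-1}: one shows that, almost surely, for $s\neq t$ one has $D(s,t)=0$ if and only if $d_X(s,t)=0$, or else $d_Z(s,t)=0$ with the additional requirement that the identification realizing $d_Z(s,t)=0$ does not ``pass through'' the floor $\{s:X_s=\underline X_s\}$ --- with the usual finite list of exceptional identifications of floor points. This exhibits $\mathbf{D}$ as a quotient of $\mathcal{F}^1_L$; recording the cyclic order produced by the contour exploration and filling in the root face, $\mathcal{F}^1_L$ is a planar object homeomorphic to the closed disk $\overline{\mathbb{D}}$, and the composite $g:\overline{\mathbb{D}}\to\mathbf{D}$ is continuous and onto. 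One then verifies that the decomposition of $\overline{\mathbb{D}}$ into the fibres $g^{-1}(x)$ is monotone (connected fibres), upper semicontinuous, and non-separating (no fibre disconnects $\overline{\mathbb{D}}$), whereupon R.\,L.\ Moore's theorem for the disk --- the analogue of the sphere version used in \cite{lgp} --- gives $\mathbf{D}\cong\overline{\mathbb{D}}$. Under this identification the image of $\partial\overline{\mathbb{D}}$ is a Jordan curve whose $\bp$-preimage is precisely $\{s:X_s=\underline X_s\}$, which is \eqref{eq:bdry}; in particular $\partial\mathbf{D}$ is a topological circle.

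\emph{Main obstacle.} The delicate part is checking the three hypotheses of Moore's theorem, i.e.\ establishing the precise description of $\{D=0\}$ and ruling out unexpected pinch points. This requires the fine Brownian-snake estimates of \cite{lgp,legall11} --- the lower ``cactus'' bound on $D$ in terms of $Z_s+Z_t-2\inf_{[s,t]}Z$ up to boundary corrections, and the structure of the level sets $\{Z=c\}$ --- transported to the case where the driving process is a first-passage bridge and where the floor carries an extra label given by $\sqrt3$ times a Brownian bridge. By contrast, tightness, compactness, and the upper bound $D\le d_Z$ obtained by passing to the limit in the label-based distance bounds of the Schaeffer-type constructions (compare \eqref{eq:20} and \eqref{eq:23}) are comparatively routine.

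\emph{Hausdorff dimensions.} Since $Z$ is almost surely $(\tfrac14-\eta)$-H\"older continuous for every $\eta>0$ and $D\le d_Z$, the projection $\bp:[0,1]\to\mathbf{D}$ is a $(\tfrac14-\eta)$-H\"older surjection, so $\dimh\mathbf{D}\le(\tfrac14-\eta)^{-1}$, and letting $\eta\to0$ gives $\dimh\mathbf{D}\le4$. For the matching lower bound, let $\mu$ be the pushforward of Lebesgue measure on $[0,1]$ by $\bp$ (the area measure of $\mathbf{D}$); arguing as in \cite{legall06}, using re-rooting invariance of the limit object together with snake estimates, one shows that almost surely $\mu(B_D(x,r))\le C(\omega)\,r^{4-\eta}$ uniformly in $x$ for all small $r$, whence $\dimh\mathbf{D}\ge4-\eta$ for every $\eta$. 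For the boundary, \eqref{eq:bdry} identifies $\bp^{-1}(\partial\mathbf{D})$ with the closed range of $y\mapsto T_y$, $0\le y\le L$, a regenerative set of Hausdorff dimension $1/2$ (the zero set of the reflected process $X-\underline X$); composing with the $(\tfrac14-\eta)$-H\"older map $\bp$ yields $\dimh\partial\mathbf{D}\le\tfrac{1/2}{1/4-\eta}\to2$. The reverse inequality follows by pushing Lebesgue measure on $[0,L]$ forward by $y\mapsto\bp(T_y)$ --- the natural length measure of $\partial\mathbf{D}$, of total mass $L$ --- and checking, again by Le~Gall-type ball-volume estimates and the fact that the floor labels are $\sqrt3$ times a Brownian bridge, that its balls have mass at most $r^{2-\eta}$; this gives $\dimh\partial\mathbf{D}\ge2-\eta$ for all $\eta$, and combining the two bounds completes the proof.
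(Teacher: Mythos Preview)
The paper does not give its own proof of this proposition: it is stated with the attribution \texttt{[\textbackslash cite\{bettinelli11b\}]} and simply imported as a result from that reference (as was already announced after Proposition~\ref{sec:results-1}, where it is said to follow from \cite[Theorems~1--3]{bettinelli11b}). Your proposal, which opens with the same citation and then sketches the strategy of that reference, is therefore consistent with the paper's treatment and in fact goes further than the paper does. As a summary of the approach in \cite{bettinelli11b} --- Moore-type decomposition theorem for the disk topology, H\"older regularity of~$Z$ and ball-volume estimates for the Hausdorff dimensions, and the identification of the boundary with the floor of the forest --- your outline is accurate, though of course it remains a sketch rather than a self-contained proof.
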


Recall that $v_*$ is a uniform random vertex in $Q_n$, conditionally
given the latter. From this observation, we obtain an invariance under
re-rooting property of $(\mathbf{D},D,x_*)$, along the same lines as
\cite{legall08}. 

\begin{lmm}
\label{sec:conv-brown-disk-3}
Let $U$ be a uniform random variable in $[0,1]$, independent of
$(X,Z,D)$. Then the two pointed spaces $(\mathbf{D},D,\bp(U))$ and
$(\mathbf{D},D,x_*)$ have the same distribution.
\end{lmm}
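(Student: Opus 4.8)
The plan is to follow the re-rooting argument of Le~Gall~\cite{legall08}, pushing to the limit the obvious exchangeability of two uniformly chosen marked vertices. Recall that, after the re-rooting of Lemma~\ref{awayfrom}, the pointed map $(Q_n,v_*)$ has the law of a uniform $Q_n\in\bQnsn$ carrying a vertex $v_*$ that is, conditionally on $Q_n$, uniform on $\sV(Q_n)$, and that in the convergence recalled in Section~\ref{sec:subs} the distinguished vertex $v_*$ converges to $x_*$. Let $v_{**}$ be a second vertex which, conditionally on $Q_n$, is uniform on $\sV(Q_n)$ and independent of $v_*$. Then the doubly-marked map $(Q_n;v_*,v_{**})$ obviously has the same law as $(Q_n;v_{**},v_*)$, and hence so do the rescaled doubly-pointed spaces $(\sV(Q_n),(9/8n)^{1/4}d_{Q_n};v_*,v_{**})$ and $(\sV(Q_n),(9/8n)^{1/4}d_{Q_n};v_{**},v_*)$.

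I would then pass to the limit along the subsequence fixed in Section~\ref{sec:subs}, extracting further if necessary so that $(\sV(Q_n),(9/8n)^{1/4}d_{Q_n};v_*,v_{**})$ converges in distribution, for the doubly-pointed Gromov--Hausdorff topology, to a doubly-pointed space $(\mathbf{D},D;x_*,Y)$. The convergence of the space together with the first marked point is the one already recalled from~\cite{bettinelli11b}. For the second one, the input I would use is that the normalized counting measure $|\sV(Q_n)|^{-1}\sum_{v\in\sV(Q_n)}\delta_v$ converges, jointly with everything else, to the mass measure $\lambda=\bp_*\mathrm{Leb}_{[0,1]}$ of $\mathbf{D}$; since $v_{**}$ is, conditionally on $Q_n$, a sample from this counting measure independent of $v_*$, its limit $Y$ is, conditionally on $(\mathbf{D},D,x_*)$, a $\lambda$-random point.

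Finally, since $(\sV(Q_n),(9/8n)^{1/4}d_{Q_n};v_{**},v_*)$ is literally the previous doubly-pointed space with its two marked points interchanged, its limit is $(\mathbf{D},D;Y,x_*)$, so the discrete exchangeability yields $(\mathbf{D},D;x_*,Y)\law(\mathbf{D},D;Y,x_*)$. Projecting onto the metric space and the \emph{first} marked point, the left-hand side gives $(\mathbf{D},D,x_*)$, while the right-hand side gives $(\mathbf{D},D,Y)$; since $\lambda$ is a measurable function of $(\mathbf{D},D)$ alone, after integrating out $x_*$ the point $Y$ remains $\lambda$-distributed, so that $(\mathbf{D},D,Y)$ has the law of $(\mathbf{D},D,\bp(U))$ with $U$ uniform on $[0,1]$ independent of $(X,Z,D)$. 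This gives $(\mathbf{D},D,\bp(U))\law(\mathbf{D},D,x_*)$, as desired. The step I expect to be delicate is the identification of $Y$ in the second paragraph: one must upgrade the Gromov--Hausdorff convergence to a measured (hence doubly-pointed) statement and check that the scaling limit of the extra uniform vertex is a $\lambda$-random point conditionally independent of the rest of the structure, which relies on the convergence of the empirical vertex measure to the mass measure on $\mathbf{D}$ established in~\cite{bettinelli11b}. The discrete exchangeability and the concluding bookkeeping are then routine.
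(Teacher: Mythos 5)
The paper gives no proof of this lemma, only noting just before its statement that $v_*$ is a uniformly chosen vertex of $Q_n$ and referring to Le Gall's re-rooting argument in \cite{legall08}; your proof is a fleshed-out version of exactly that pointer, via the exchangeability of two conditionally uniform marked vertices and passage to the limit for doubly-pointed (measured) Gromov--Hausdorff convergence. You correctly isolate the one step that carries the actual technical content, namely that the second uniform vertex $v_{**}$ converges, jointly with the rest, to a sample from the mass measure $\mu=\bp_*\mathrm{Leb}_{[0,1]}$ on $\mathbf{D}$; this measured upgrade of the pointed Gromov--Hausdorff convergence recalled from \cite{bettinelli11b} is standard but is indeed what makes the discrete exchangeability bite in the limit.
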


The following lemma is an easy consequence of the study of geodesics done in~\cite{bettinelli14gbs}. 
\begin{lmm}
\label{sec:conv-brown-disk-2}
  Almost surely, for every $x\in
 \mathbf{D}\setminus \partial\mathbf{D}$, there exists a geodesic
 from $x$ to $x_*$ that does not intersect $\partial
 \mathbf{D}$. Moreover, this is the only geodesic from $x$ to $x_*$ for $\mu$-almost every
 $x\in \mathbf{D}$, where $\mu=\bp_*(\mathrm{Leb}_{[0,1]})$. 
\end{lmm}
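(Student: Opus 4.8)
The plan is to transfer the known facts about geodesics towards the distinguished point in the \emph{Brownian map} (i.e.\ the $L=0$ case, and more generally the results of \cite{bettinelli14gbs} on geodesic behaviour in the scaling limit of quadrangulations with a boundary) to the space $\mathbf{D}$, using the invariance under re-rooting provided by Lemma~\ref{sec:conv-brown-disk-3}, exactly as in the analogous argument of \cite{legall08} for the Brownian map. First I would recall the explicit description of $(\mathbf{D},D,x_*)$ coming from the subsequential limit: $D$ is a quotient pseudo-metric of the process $Z$ (the snake over the first-passage bridge $X$), and by \cite{bettinelli14gbs} one has an explicit formula for $D$ of the same flavour as \eqref{DDstar}, together with the description of the cut locus and of simple geodesics to $x_*$ in terms of the ``maximal geodesics'' $r\mapsto$ (image of successive successors). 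Concretely, for $s\in[0,1]$ with $\bp(s)=x$, the continuum analogue of the maximal geodesic from $c_s$ to $v_*$ is a path $\Phi_s:[0,-Z_s+\min Z]\to \mathbf{D}$ of length $D(x,x_*)=Z_s-\min_{[0,1]}Z$, and the key structural input from \cite{bettinelli14gbs} is that for $x\notin\partial\mathbf{D}$ this path stays in the interior $\mathbf{D}\setminus\partial\mathbf{D}$, since, by \eqref{eq:bdry}, the boundary corresponds to the running-infimum times of $X$, while the maximal geodesic descends through label-record times of $Z$, and the two families are a.s.\ disjoint away from the boundary. This gives the existence statement: through any interior $x$ there is a geodesic to $x_*$ avoiding $\partial\mathbf D$.

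For the uniqueness part, I would argue that for $\mu$-a.e.\ $x$ the simple geodesic just constructed is the unique geodesic from $x$ to $x_*$. Here the re-rooting invariance is the crucial tool. By Lemma~\ref{sec:conv-brown-disk-3}, for $U$ uniform in $[0,1]$ independent of everything, $(\mathbf D,D,\bp(U))$ has the law of $(\mathbf D,D,x_*)$; hence the $\mu$-probability that there exist (at least) two distinct geodesics from a $\mu$-random point $\bp(U)$ to the marked point $x_*$ equals the $\mu$-probability that there exist two distinct geodesics from $x_*$ to a $\mu$-random point $\bp(U)$. Now the set of points $y$ that can be joined to a \emph{fixed} point by more than one geodesic is, in the Brownian-map-type setting, $\mu$-negligible: this is precisely the analogue of \cite[Theorem~7.6 / Corollary~7.7]{legall08}, and the proof in \cite{bettinelli14gbs} (relying on the Hausdorff-dimension count for the cut locus of a fixed point, which has dimension at most $2<4=\dim\mathbf D$) carries over verbatim. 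Applying this with the fixed point $x_*$ and the $\mu$-random point gives that $\mu$-a.e.\ $x$ is joined to $x_*$ by a unique geodesic, and that unique geodesic must then coincide with the interior-avoiding simple geodesic produced above.

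The main obstacle is making the first half fully rigorous: one must be sure that the geodesics in $\mathbf{D}$ toward $x_*$ admit the ``maximal geodesic'' description, i.e.\ that the discrete maximal wedge paths/maximal geodesics of Section~\ref{sec:orig-cori-vauq} converge, along the fixed extraction, to genuine geodesics of $(\mathbf D,D)$, and that the interior/boundary dichotomy passes to the limit. This requires the now-standard equicontinuity and ``no short-cut'' arguments (the image of a maximal geodesic cannot be shortened because any competing path would have to traverse a large label gap), together with the identification \eqref{eq:bdry} of $\partial\mathbf D$ with the running-infimum set of $X$; these are exactly the ingredients assembled in \cite{bettinelli14gbs}, which is why the lemma is stated there as ``an easy consequence.'' The remaining points---compactness and the geodesic property of $\mathbf D$, measurability of the relevant events, and the reduction of uniqueness to a negligible cut-locus---are routine once this convergence of maximal geodesics is in hand. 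I would therefore organise the write-up as: (1) quote the convergence and structure of maximal geodesics from \cite{bettinelli14gbs}; (2) deduce existence of an interior-avoiding geodesic for every interior $x$; (3) invoke re-rooting plus the negligibility of the cut locus of a fixed point to get $\mu$-a.e.\ uniqueness.
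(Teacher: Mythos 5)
Your proposal for the existence part coincides with the paper's argument: both rest on the simple geodesic $\Phi_s(w)=\bp\bigl(\sup\{r:\underline{Z}_{r,s}=Z_{s_*}+w\}\bigr)$ from \cite{bettinelli14gbs}, and both exploit the increase-point dichotomy: $\bp^{-1}(\partial\mathbf D)$ consists of increase points of $X$ (via \eqref{eq:bdry}), $\Phi_s$ passes only through increase points of $Z$, and these two sets are a.s.\ disjoint (cited as \cite[Lemma~18]{bettinelli14gbs} in the paper). So that half is fine.

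For the uniqueness part you diverge from the paper's route, and two things deserve comment. First, your re-rooting step is vacuous as stated: geodesics are unordered objects, so ``geodesics from $\bp(U)$ to $x_*$'' and ``geodesics from $x_*$ to $\bp(U)$'' are the same set, and the identity in law of $(\mathbf D,D,\bp(U))$ and $(\mathbf D,D,x_*)$ gives nothing new here. (The paper does invoke Lemma~\ref{sec:conv-brown-disk-3}, but \emph{after} the present lemma, precisely to replace $x_*$ by $\bp(U)$ in the conclusion, not inside the proof.) Second, you call on the Hausdorff-dimension count for the cut locus of a fixed point, which is substantially more than what is needed. The paper's argument is more elementary: since \cite[Proposition~23]{bettinelli14gbs} classifies \emph{all} geodesics from $x_*$ as the paths $\Phi_s$, $s\in\bp^{-1}(x)$, uniqueness can fail only when $|\bp^{-1}(x)|\ge 2$; the identification criterion \cite[Proposition~17]{bettinelli14gbs} (conditions (\ref{identification}\ref{idX}) and (\ref{identification}\ref{idZ}) in the paper) shows the set of such $s$ is Lebesgue-null, whence $\mu$-a.e.\ $x$ has a singleton preimage and a unique geodesic $\Phi_{\bp^{-1}(x)}$ to $x_*$. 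Your dimension-count argument would, if carried through, also need the same classification of geodesics from $x_*$, so it does not save any input; it just adds an unnecessary layer. I'd drop the re-rooting and the cut-locus dimension estimate, and instead close with the $\mu$-a.e.\ injectivity of $\bp$ combined with the classification of geodesics from $x_*$.
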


\begin{proof}
For $s\in[0,1]$, we define the path $\Phi_s: [0, D(s_*,s)]\to\mathbf{D}$ by
$$\Phi_s(w)= \bp\left(\sup\left\{ r\, :\, \underline{Z}_{r,s} = Z_{s_*}+ w\right\} \right),\qquad 0\le w\le D(s_*,s)=Z_s-Z_{s_*}.$$
It is shown in \cite[Proposition~23]{bettinelli14gbs} that the
path~$\Phi_s$ is a geodesic from~$x_*$ to~$\bp(s)$ in~$\mathbf{D}$ and
that a.s.\ all the geodesics from~$x_*$ are of this form. We call \emph{increase point} of a function a point~$t$ such that the function is greater than its value at~$t$ on a small interval of the form $[t-\eps,t]$ or $[t, t+\eps]$ for some $\eps>0$. Clearly, for $0\le w < D(s_*,s)$, the point $\sup\left\{ r\, :\, \underline{Z}_{r,s} = Z_{s_*}+ w\right\}$ is an increase point of the process~$Z$, which is furthermore different from~$0$. On the other hand, the expression~\eqref{eq:bdry} shows that $\bp^{-1}(\partial \mathbf{D})$ is made only of increase points of~$X$, together with the point~$0$. Moreover, \cite[Lemma~18]{bettinelli14gbs} states that, a.s., the processes~$X$ and~$Z$ do not share any increase points. As a consequence, $\Phi_s$ may only intersect $\partial \mathbf{D}$ at its endpoint~$\bp(s)$ and the first statement follows.

In addition, \cite[Proposition~17]{bettinelli14gbs} entails that, for $0\le s\le t\le 1$, $D(s,t)=0$ if and only if one of the following occurs:
\refstepcounter{equation}\label{identification}
\begin{enumerate}[(\theequation a)]
	\item $X_s=X_t=\underline{X}_{s,t}$;\label{idX}
	\item $Z_s=Z_t=\underline{Z}_{s,t}$ or $Z_s=Z_t=\underline{Z}_{t,s}$.\label{idZ}
\end{enumerate}
Moreover, for $s\neq t$, only one of the previous situations can happen. In some sense, this can be thought of as a continuous version of the bijection from Section~\ref{sec:bij_forests}: point~(\ref{identification}\ref{idX}) constructs the continuous random forest and drawing an arc between a corner and its successor becomes, in the limit, identifying points with the same label and such that the labels visited in between in the contour order are all larger (point~(\ref{identification}\ref{idZ})). Standard properties of the process~$Z$ then allow us to conclude that $\mathrm{Leb}_{[0,1]}(\{s:\,\exists t\neq s:\, D(s,t)=0\})=0$, so that, for $\mu$-almost every $x\in \mathbf{D}$, the set $\bp^{-1}(x)$ is a singleton and the only geodesic from~$x_*$ to~$x$ is thus~$\Phi_{\bp^{-1}(x)}$.
\end{proof}

Combining Lemmas \ref{sec:conv-brown-disk-3} and
\ref{sec:conv-brown-disk-2}, we see that the conclusion of the latter
is still valid if~$x_*$ is replaced by a uniformly chosen point in
$\mathbf{D}$, that is, a random point of the form $\bp(U)$ as in the
first lemma. Finally, we will use the following result.

\begin{lmm}[\cite{bettinelli11b}]\label{lemD}
The following properties hold almost surely.
\begin{itemize}
	\item $D\in\mathcal D$.
	\item $D(s,s_*)=Z_s-Z_{s_*}$ for every $s\in[0,1]$.
\end{itemize}
\end{lmm}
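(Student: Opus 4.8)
The two assertions of Lemma~\ref{lemD} are limiting versions of exact or approximate properties of the discrete objects, so the plan is to pass each one to the limit along the fixed extraction. For the second bullet, recall that in the discrete setting $v_*$ is the vertex of minimal label and $d_{Q_n}(v_i,v_*)=\ell_n(i)-\ell_{n,*}$, where $\ell_{n,*}=\min\ell_n-1$. Writing $D_n(i,j_*)$ for a corner $j_*$ incident to $v_*$ — or rather, since $v_*$ is not a vertex of the forest, arguing directly that $d_{Q_n}(v_i,v_*)=\ell_n(i)-\min_k\ell_n(k)+1$ — and renormalizing by $(9/8n)^{1/4}$, the left-hand side converges to $D(s,s_*)$ (using that the point $s_{*}$ realizing $\min Z$ is a.s.\ unique, by \cite[Lemma~11]{bettinelli11b}, so that the rescaled positions of the minimal-label corners concentrate near $s_*$), while the right-hand side converges to $Z_s-\inf_{[0,1]}Z=Z_s-Z_{s_*}$. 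This gives $D(s,s_*)=Z_s-Z_{s_*}$ for every fixed $s$, and then for all $s$ simultaneously by continuity of both sides.

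For the first bullet, $D\in\mathcal D$ amounts to the two inequalities $\{d_X=0\}\subseteq\{D=0\}$ and $D\le d_Z$. The inequality $D\le d_Z$ is obtained by passing to the limit in the discrete bound~\eqref{eq:20}: for $i,j\in\{0,\ldots,2n+l_n\}$ one has $D_n(i,j)\le \ell_n(i)+\ell_n(j)-2\max\{\check\ell_n(i,j),\check\ell_n(j,i)\}+2$, and after renormalization the right-hand side converges (using the joint convergence $(C_{(n)},\ell_{(n)},D_{(n)})\to(X,Z,D)$, and the convention that $\underline Z_{s,s'}$ is the cyclic minimum) to $Z_s+Z_{s'}-2\max(\underline Z_{s,s'},\underline Z_{s',s})=d_Z(s,s')$; hence $D(s,s')\le d_Z(s,s')$. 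The inclusion $\{d_X=0\}\subseteq\{D=0\}$ is the continuous counterpart of the fact that two corners $c_i,c_j$ of the forest incident to the \emph{same} vertex satisfy $D_n(i,j)=0$: if $d_X(s,s')=0$ then one can choose integer approximants $i_n/(2n+l_n)\to s$, $j_n/(2n+l_n)\to s'$ with $C_n(i_n)=C_n(j_n)=\min_{[i_n,j_n]}C_n$, which forces $v_{i_n}=v_{j_n}$ in $F_n$ and hence $D_n(i_n,j_n)=0$; letting $n\to\infty$ gives $D(s,s')=0$.

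The main obstacle is not any single inequality but making the approximation-of-indices arguments clean: one must check that the discrete constraints ``$C_n(i_n)=C_n(j_n)$ and $C_n\ge$ this value on the intervening arc'' can indeed be arranged given $d_X(s,s')=0$, which is exactly the kind of elementary but slightly delicate excursion-theoretic bookkeeping used in the planar case (see \cite[Lemma~11, Corollary~12]{bettinelli11b} and the proof of Lemma~\ref{sec:scaling-limit-slices-2} above). Since all of this is already carried out in \cite{bettinelli11b} — indeed the lemma is attributed there — the proof here can be kept to a few lines, simply recalling the discrete identities $d_{Q_n}(v_i,v_*)=\ell_n(i)-\ell_{n,*}$ and~\eqref{eq:20}, invoking the uniqueness of $s_*$, and passing to the limit; I would not reproduce the excursion-theory details.
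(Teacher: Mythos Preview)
The paper does not give a proof of this lemma at all: it is stated with the attribution \cite{bettinelli11b} and then used directly, so there is nothing in the paper to compare against beyond the citation. Your sketch is correct and is precisely the natural argument one expects to find in the cited reference: pass the discrete identity $d_{Q_n}(v_i,v_*)=\ell_n(i)-\min\ell_n+1$ and the cactus-type bound to the limit, using uniqueness of~$s_*$ and the fact that corners achieving $\min\ell_n$ are adjacent to~$v_*$ so that $D_{(n)}$ evaluated at such a corner approximates $D(\cdot,s_*)$.

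Two small remarks. First, the reference you give for the discrete distance bound, \eqref{eq:20}, is literally the formula for quadrangulations without boundary; the bound you need is the analogous one for the forest encoding of Section~\ref{sec:bij_forests}, which holds by exactly the same successor-path argument (and whose BDG analog is recorded later as~\eqref{eq:7}). Second, for the inclusion $\{d_X=0\}\subseteq\{D=0\}$, the condition ``$C_n(i_n)=C_n(j_n)=\min_{[i_n,j_n]}C_n$'' is indeed the right criterion for $c_{i_n}$ and $c_{j_n}$ to be incident to the same forest vertex, because the contour process~$C_n$ of Section~\ref{sec:subs} includes a down-step between consecutive trees, so the minimum condition automatically forces $i_n$ and $j_n$ into the same tree. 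With these cosmetic adjustments your argument goes through.
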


\subsection{Identification of the limit}\label{sec:identification-limit}

Recall the notation $D^*$ from Section \ref{sec:br-disks}. In this
section, we show the following analog to the first part of Theorem~\ref{sec:scaling-limit-slices-3}.

\begin{thm}
  \label{sec:conv-brown-disk-1}
Almost surely, it holds that $D=D^*$. 
\end{thm}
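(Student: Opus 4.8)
The plan is to mimic the structure used for Theorem~\ref{sec:scaling-limit-slices-3}: first establish that $D=D^*$ holds ``locally'' away from the boundary, using the explicit distance formula already known for the Brownian map, and then upgrade this to a global identity by a path-surgery argument. Since $D\le D^*$ is already contained in Lemma~\ref{lemD} (as $D\in\mathcal D$ and $D^*$ is the maximal element of $\mathcal D$), the whole task is to prove the reverse inequality $D^*\le D$, equivalently $D(s,t)\ge D^*(s,t)$ for all $s$, $t$.

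\medskip

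\textbf{Step 1: a local comparison lemma.} I would first prove an analog of Lemma~\ref{sec:scaling-limit-slices-2} in the present setting: almost surely, if $s$, $t\in[0,1]$ satisfy $D(s,\bp^{-1}(\partial\mathbf D))\wedge D(t,\bp^{-1}(\partial\mathbf D))>\eps$ and $D(s,t)<\eps/2$, then any geodesic between $\bp(s)$ and $\bp(t)$ stays away from $\partial\mathbf D$. This is where Lemma~\ref{sec:conv-brown-disk-2} enters: a geodesic from an interior point to $x_*$ avoids the boundary, and combined with the re-rooting invariance (Lemma~\ref{sec:conv-brown-disk-3}) one gets that, for $\mu\otimes\mu$-almost every pair, geodesics between interior points avoid the boundary. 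The point of this step is that in a neighborhood of such a pair, the quadrangulation $Q_n$ and its ``unfolded'' counterpart coincide: the maximal wedge paths realizing the bound~\eqref{eq:20} can be taken not to touch the boundary of $Q_n$, hence they lift to the forest-coded map, and the bound~\eqref{eq:20} becomes the bound coming from the continuous forest, namely $d_Z$. Passing to the limit along the fixed extraction, as in the proof of Lemma~\ref{sec:scaling-limit-slices-2}, yields $D(s,t)=D^*(s,t)=D_0(s,t)$ where $D_0$ is the ``local'' quantity one would get by the Brownian-map-type formula; concretely one shows $D(s,t)\ge d_Z(s,t)$ whenever $s$, $t$ are close and away from the boundary, which is exactly the infinitesimal form of $D\ge D^*$.

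\medskip

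\textbf{Step 2: globalization via length spaces.} By Lemma~\ref{sec:scaling-limit-slices-4}-type reasoning (already invoked in the excerpt, $(\mathbf D,D)$ is a compact geodesic space), given $x$, $y\in\mathbf D$ I take a $D$-geodesic $\gamma$ from $x$ to $y$. Using Step~1 together with a compactness/finite-subcover argument on $[0,1]$ (exactly as in the proof of Theorem~\ref{sec:scaling-limit-slices-3}), I cover $\gamma$, minus the finitely many times it meets $\partial\mathbf D\cup\{x_*\}$, by small arcs on which $D=D^*$; additivity and left-continuity of the length functional then give $\mathrm{length}_{D}(\gamma)=\mathrm{length}_{D^*}(\gamma)$, hence $D^*(x,y)\le\mathrm{length}_{D^*}(\gamma)=\mathrm{length}_{D}(\gamma)=D(x,y)$. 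Combined with $D\le D^*$ this closes the argument. The handling of the finitely many ``bad'' points (the ones on the boundary, where Step~1 fails) is done precisely as in the excerpt: a continuous path can be split around them and the length is additive, so they contribute nothing.

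\medskip

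\textbf{Main obstacle.} The delicate point, compared with the slice case, is controlling how the approximating maximal wedge paths $g_n$ in $Q_n$ interact with the \emph{boundary} of the quadrangulation with a boundary (the external face), which plays the role that the maximal geodesic / shuttle played in Section~\ref{sec:slices}. Here the boundary is a genuine boundary rather than a pair of identified geodesics, so the ``surgery'' replacing an excursion of $g_n$ that hits the boundary by an arc along the boundary is not available for free: one must instead use that interior-to-interior geodesics avoid the boundary with probability one (Lemmas~\ref{sec:conv-brown-disk-3}--\ref{sec:conv-brown-disk-2}) to guarantee that, for $s$, $t$ as in Step~1, the relevant $g_n$ do \emph{not} approach the boundary for large $n$, so no surgery is needed and~\eqref{eq:20} lifts directly to the forest-coded map. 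Making this ``staying away from the boundary in the limit implies staying away at finite $n$'' statement quantitative — i.e.\ turning the continuous bound $D(s,\bp^{-1}(\partial\mathbf D))>\eps$ into the discrete statement that $g_n$ stays at $d_{Q_n}$-distance $\gtrsim (8n/9)^{1/4}\eps$ from the boundary — is the technical heart of the proof, but it is exactly parallel to the maximal-geodesic estimate in the proof of Lemma~\ref{sec:scaling-limit-slices-2}, using~\eqref{eq:bdry} in place of the characterization of the maximal geodesic via running infima of the label process.
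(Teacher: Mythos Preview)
Your proposal has a genuine gap in Step~1. The analogy with Lemma~\ref{sec:scaling-limit-slices-2} breaks down because that lemma had three pseudo-metrics $D\le\tilde D\le\tilde D^*$ in play, and its engine was the \emph{already proved} formula~\eqref{DDstar} for the bottom one~$D$ (the Brownian map distance): starting from a chain realizing $D(s,t)$ up to~$\eta$, one lifted the associated wedge paths to the slice and concluded $\tilde D^*\le D+\eta$. In the disk there is no such reference metric with a known chain formula sitting below~$D$; the map~$Q_n$ \emph{is} the quadrangulation with boundary, there is no ``unfolded counterpart'', and the wedge-path bound~\eqref{eq:20} only ever produces \emph{upper} bounds on distances. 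Your sentence ``one shows $D(s,t)\ge d_Z(s,t)$'' is simply false (one always has $D\le d_Z$), and this is not a slip of the pen but precisely the symptom of the missing input: you have nothing from which to manufacture a lower bound on~$D$.

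The paper fills this gap by a structurally different argument. It decomposes $[0,1]$ into the excursion intervals $[a_i,b_i]$ of~$X$ above~$\underline X$, each of which carries a rescaled Brownian slice with its own distance~$\tilde D^i$; Theorem~\ref{sec:scaling-limit-slices-3} supplies the explicit chain formula for \emph{these}. The key Lemma~\ref{lem:Dfinite} then shows that for Lebesgue-a.e.\ $(s,t)$ the $D$-geodesic from $\bp(s)$ to $\bp(t)$ crosses only finitely many slices, and that its length can be written as a finite sum $\sum_j\tilde D^{i_j}(s_j,t_j)$ with $d_Z(t_j,s_{j+1})=0$ at the junctions; since $d_Z^i\ge d_Z$ and chains within a slice are in particular chains in $[0,1]$, this yields $D\ge D^*$ a.e., and continuity finishes. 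The role of Lemmas~\ref{sec:conv-brown-disk-3}--\ref{sec:conv-brown-disk-2} is not to keep discrete wedge paths off the external face, but to ensure (via the compactness argument of Claim~1) that the limiting geodesic between two uniform points meets only finitely many slices, and meets them only along their interior geodesic boundaries $\gamma^i_l\cup\gamma^i_r$ (Claims~2--3). In short, the slice decomposition is the device that imports a known distance formula into the problem; without it your Step~1 has no starting point.
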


Theorem~\ref{THMDISK} is an
immediate consequence of this. Indeed, since $D^*$ is a
measurable function of $(X,Z)$, this shows that $D^*$ is the only
possible subsequential limit of $D_{(n)}$. This, combined with the
tightness of the sequence $(D_{(n)},n\geq 1)$ that we alluded to
above, implies that $D_{(n)}$ converges in distribution to $D^*$. 

In turn, this convergence implies that of $(9/8n)^{1/4}Q_n$ to
$\bd_{L}=(\mathbf{D},D^*)$ in the Gromov--Hausdorff sense and
even that of the pointed space $((9/8n)^{1/4}Q_n,v_*)$ to
$(\bd_L,x_*)$, where we recall that $x_*=\bp(s_*)$. Let us
recall how to prove this fact. First, one can assume that the convergence
of $(C_{(n)},\ell_{(n)},D_{(n)})$ to $(X,Z,D^*)$ is almost-sure, by using
Skorokhod's representation theorem. Then we define a correspondence
$\mathcal{R}_n$ between $Q_n$ and $\bd_L$ by
$$\mathcal{R}_n=\{(v_{\lfloor(2n+l_n)s\rfloor},\bp(s)):s\in
[0,1]\}\cup\{(v_*,\bp(s_*))\}\, ,$$ where $v_i$ is the vertex of $Q_n$
incident to the $i$-th corner $c_i$. It is elementary to see from the
uniform convergence of $D_{(n)}$ to $D^*$ that the distortion of
$\mathcal{R}_n$ with respect to the metrics $(9/8n)^{1/4}d_{Q_n}$ and
$D^*$ converges to $0$ as $n\to\infty$.

Recall that $[a,b]$ is an excursion interval of $X$ above
$\underline{X}$ if $a<b$ and $X_a=X_b=\underline{X}_b$. Let us arrange
the excursion intervals of $X$ above $\underline{X}$ as $[a_i,b_i]$,
$i\geq 1$ in decreasing order of length.  For a given $i$, the
excursion interval $[a_i,b_i]$ encodes a slice in the sense of Section~\ref{sec:known-scaling-limit}. Namely,  for $s$, $t\in
[a_i,b_i]$, let
$d_Z^i(s,t)=Z_s+Z_t-2\underline{Z}_{s\wedge t,s\vee t}$, and
$$\tilde{D}^i(s,t)=\inf\left\{\sum_{j=1}^kd^i_Z(s_j,t_j):\begin{array}{l}k\geq
    1\, ,\quad t_1,s_2,t_2,\ldots,s_k\in[a_i,b_i],\, s_1=s,\,t_k=t ,\\
d_X(t_j,s_{j+1})=0\, \mbox{ for every }  j\in \{1,\ldots, k-1\}\,
  \end{array}
\right\}\, .$$ By simple scaling properties and excursion theory,
conditionally given the excursion lengths $(b_i-a_i)$, $i\geq 1$, the
spaces $\tilde{S}^i=[a_i,b_i]/\{\tilde{D}^i=0\}$, equipped with the
induced distance, still called $\tilde{D}^i$, are independent versions
of the Brownian slices of Section~\ref{sec:known-scaling-limit}, with
distances rescaled by $(b_i-a_i)^{1/4}$ respectively.  The next key
lemma states that the distance $D$ can be identified as a metric
gluing of these slices along their boundaries. This guides the
intuition of its proof, which will partly consist in going back to the
discrete slices that compose the quadrangulations with a boundary of which we
took the limit.

\begin{lmm}\label{lem:Dfinite}
  Let $\tilde{D}$ be the pseudo-metric on $\bigcup_{i\geq 1} [a_i,b_i]$
  defined by $\tilde{D}(s,t)=\tilde{D}^i(s,t)$ if $s$, $t\in [a_i,b_i]$ for some
  $i\geq 1$, and $\tilde{D}(s,t)=\infty$ otherwise. 
Then for almost every $(s,t)\in [0,1]^2$ with respect to the Lebesgue measure, it holds that 
$$D(s,t)=\inf\left\{\sum_{j=1}^k\tilde{D}(s_j,t_j):\begin{array}{l}k\geq
    1\, ,\quad t_1,s_2,t_2,\ldots,s_k \in[0,1],\, s_1=s,\,t_k=t ,\\
d_Z(t_j,s_{j+1})=0\, \mbox{ for every }  j\in \{1,\ldots, k-1\}\,
  \end{array}
\right\}\, .$$
Moreover, the above infimum is attained. 
\end{lmm}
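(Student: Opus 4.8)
The plan is to prove the two inequalities $D\le\wh D$ and $D\ge\wh D$ separately — writing $\wh D(s,t)$ for the right-hand side of the asserted identity — and then to deduce attainment. Throughout one may restrict to pairs $(s,t)$ lying in the interiors of their excursion intervals of $X$ above $\underline X$ and having singleton $\bp$-fibres; the set of such pairs is of full Lebesgue measure, since $\{s:X_s=\underline X_s\}$ and $\{s:\bp^{-1}(\bp(s))\neq\{s\}\}$ are Lebesgue-negligible, the latter by the argument used in the proof of Lemma~\ref{sec:conv-brown-disk-2}.

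\textbf{The easy inequality $D\le\wh D$.} By Lemma~\ref{lemD} we have $D\in\mathcal D$, so $D\le d_Z$ and $\{d_X=0\}\subseteq\{D=0\}$. For $s,t$ in a common excursion interval $[a_i,b_i]$ one checks that $d_Z(s,t)\le d^i_Z(s,t)$, hence $D\le d^i_Z$ on $[a_i,b_i]$; combining this with $\{d_X=0\}\subseteq\{D=0\}$ and the triangle inequality yields $D\le\tilde D^i$ on each slice (where $\tilde D^i$ is the infimum formula of the statement; its coincidence with the actual scaling limit of the discrete slices is exactly what Theorem~\ref{sec:scaling-limit-slices-3} provides). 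A final application of the triangle inequality along an admissible chain, together with the fact that $D$ vanishes on $\{d_Z=0\}$ (since $D\le d_Z$), gives $D\le\wh D$.

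\textbf{The hard inequality $D\ge\wh D$.} This is carried out by descending to the discrete slices. The key structural input is the Bouttier--Guitter slice decomposition of $Q_n$: cutting $Q_n$ along the maximal geodesics (in the sense of Section~\ref{sec:orig-cori-vauq}) joining the boundary roots $\rho_1,\dots,\rho_{l_n}$ to $v_*$ splits it into slices in the sense of Section~\ref{sec:slices}, so that any path of $Q_n$ confined to one such slice, touching the cut-loci only at its endpoints, lifts to a path of that slice with the same length. Grouping the discrete pieces according to the continuum excursion intervals $[a_i,b_i]$ of $X$ above $\underline X$ into which they collapse, the results of Section~\ref{sec:known-scaling-limit} together with excursion theory and Brownian scaling give, after a further extraction, the convergence of the rescaled distance of the $i$-th discrete block to $\tilde D^i$ on $[a_i,b_i]$. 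The core step is then: take a discrete geodesic $g_n$ from $v_{i_n}$ to $v_{j_n}$ in $Q_n$, prune from it the ``microscopic'' forays that cross a cut-locus into an adjacent slice and return after a sublinear number of steps — replacing each, at no extra cost, by a segment of the cut-locus itself via the discrete counterpart of the local-isometry phenomenon of Lemma~\ref{sec:scaling-limit-slices-2} — and cut the pruned path at the vertices where it changes block. A length argument (the rescaled length of $g_n$ stays bounded) bounds, along the subsequence, the number of the resulting macroscopic pieces; each lies in a single block and meets the next one at a junction vertex on a cut-locus. Passing to the limit along a further extraction, the junction vertices converge to times $u_a$ whose images in $\mathbf D$ have two preimages lying in two distinct excursion intervals, so the identification of $\{D=0\}$ recalled in the proof of Lemma~\ref{sec:conv-brown-disk-2} forces $d_Z=0$ between the two lifts of each $u_a$, while the rescaled length of the $a$-th piece converges to a quantity bounded below by $\tilde D^{i_a}$ at its lifted endpoints. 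Summing over $a$ gives $D(s,t)\ge\wh D(s,t)$.

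\textbf{Attainment and the main obstacle.} Once $D=\wh D$ holds Lebesgue-a.e., an optimal chain is extracted from a geodesic of the compact geodesic space $(\mathbf D,D)$ (compactness and geodesy as in Lemma~\ref{sec:scaling-limit-slices-4}): the parameter set on which this geodesic meets the union of the slice-boundary curves is closed and Lebesgue-negligible, its complement is a countable union of open arcs each contained in a single slice, and a length argument leaves only finitely many arcs of positive length, which, joined by the $d_Z=0$ identifications at their endpoints, realize the infimum defining $\wh D(s,t)$. I expect the main obstacle to be the hard inequality, and specifically the control of the gluing of \emph{infinitely} many slices: one must show that discrete near-geodesics cross the cut-loci in an essentially bounded fashion after pruning the microscopic crossings, so that the limiting object is a genuine finite chain and not an uncontrolled infinite concatenation — this is the difficulty flagged in the introduction, and also the reason the statement is only asserted for Lebesgue-almost every pair $(s,t)$.
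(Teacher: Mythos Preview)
Your easy inequality $D\le\wh D$ matches the paper's. For the hard inequality you correctly flag the obstacle at the end, but your sketched resolution has a gap and is not how the paper proceeds. You propose to work discretely throughout: take a discrete geodesic $g_n$, prune its microscopic slice-crossings, and bound the number of remaining macroscopic pieces by a ``length argument''. But bounded rescaled length only bounds the number of pieces if each surviving piece has length bounded below by a fixed positive constant, and your pruning step does not supply this; a discrete geodesic could thread through many small slices, each contributing $o(n^{1/4})$, with no uniform bound on the number of pieces. Your attainment paragraph inherits the same gap (and the claim that the set where $\gamma$ meets the slice-boundary curves is Lebesgue-negligible is false: $\gamma$ may coincide with some $\gamma^i_l$ or $\gamma^i_r$ on an arc of positive length).

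The paper resolves finiteness in the \emph{continuum} first. It replaces $(s,t)$ by independent uniform $(U,V)$ and uses that the geodesic $\gamma$ from $\bp(U)$ to $\bp(V)$ in $(\mathbf D,D)$ is, by Lemmas~\ref{sec:conv-brown-disk-3} and~\ref{sec:conv-brown-disk-2}, a.s.\ unique and disjoint from $\partial\mathbf D$. A compactness argument then shows $\gamma$ visits only finitely many sets $\bp([a_i,b_i])$ (else an accumulation point would lie on $\partial\mathbf D$); uniqueness prevents $\gamma$ from leaving and re-entering any single slice-boundary geodesic; and the topological boundary of each $\bp([a_i,b_i])$ is identified with its two boundary geodesics. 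These three claims yield a finite decomposition of $\gamma$ into segments, each either lying on a slice-boundary geodesic (handled directly via $D(s_j,t_j)\ge|Z_{s_j}-Z_{t_j}|\ge\tilde D(s_j,t_j)$) or contained in a single $\bp([a_{i_j},b_{i_j}])$ and meeting its boundary only at the endpoints. Only for segments of the latter type does the paper go discrete, and only locally: a discrete geodesic between approximating endpoints is trimmed near its ends to stay inside the corresponding discrete slice, so in the limit its length matches the slice distance $\tilde D^{i_j}$. Attainment is then immediate from this finite continuum decomposition.
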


\begin{proof}
Clearly, $D(s,t)\le d_Z(s,t)\le d_Z^i(s,t)$ whenever $s$, $t\in [a_i,b_i]$, so that $D(s,t)\le \tilde D(s,t)$ for $s$, $t\in [0,1]$ and, as a consequence, the left-hand side is smaller than the right-hand side. We then only need to prove the converse inequality.

Let us first define the discrete analogs to the functions
$\tilde{D}^i$. We consider the $i$-th largest tree~$\bt$ of~$F_n$ and
we suppose that it is visited between times~$a_i^n$ and~$b_i^n$ in the
contour order of~$F_n$. For $j$, $k\in\{a_i^n,\ldots,b_i^n\}$, we let
$\tilde D_n(j,k)$ be the distance in the slice corresponding to~$\bt$
between the vertices~$v_j$ and~$v_k$ incident to the $j$-th and $k$-th
corner of~$F_n$. In other words, $\tilde D_n(j,k)$ is the length of a
shortest path linking~$v_j$ to~$v_k$ and that do not ``traverse'' the
images in~$Q_n$ of the maximal geodesic and shuttle of the
aforementioned slice. We then extend~$\tilde D_n$ to a continuous
function on $[a_i^n,b_i^n]^2$ by bilinear interpolation, and define
its renormalized version $\tilde{D}^i_{(n)}$ on a subsquare of
$[0,1]^2$ by the analog of~\eqref{eq:Dn}. We define
$\tilde{D}^i_{(n)}$ arbitrarily for $i>l_n$. 

As a simple consequence of the convergence \eqref{eq:21}, reformulated
in the context of the excursion intervals $[a_i^n,b_i^n]$, and of
Theorem~\ref{sec:scaling-limit-slices-3}, we have that
\begin{equation}\label{eq:slices}
\left(C_{(n)},\ell_{(n)},\big(\tilde D^i_{(n)}\big)_{i\geq
    1}\right)\ton \left(X,Z,\big(\tilde D^i\big)_{i\geq 1}\right)
\end{equation}
in distribution in the space $\mathcal{C}([0,1])\times\mathcal{C}([0,1])\times\mathcal{C}([0,1]^2)^N$.
Applying Skorokhod's representation theorem, we also assume from now on that this convergence holds a.s.

It suffices to prove the claimed formula for $D(s,t)$ when $s$, $t$
are replaced by two independent uniform random variables $U$, $V$,
independent of the other random variables considered so far. Let
$\gamma:[0, D(U,V)]\to\mathbf D$ be the geodesic in $(\mathbf{D},D)$
from $\bp(U)$ to $\bp(V)$, which by Lemmas~\ref{sec:conv-brown-disk-3}
and~\ref{sec:conv-brown-disk-2} is unique and does not intersect
$\partial \mathbf{D}$, a.s. Let also $\Img(\gamma)=\gamma([0,D(U,V)])$
be the image of $\gamma$ and define
$$I(U,V)=\big\{i\geq 1:\bp^{-1}(\Img(\gamma))\cap [a_i,b_i]\neq \varnothing\big\} .$$ 

\begin{clm}\label{claim1}
The set $I(U,V)$ is finite almost surely.
\end{clm}

\begin{proofc}
  Let us argue by contradiction, assuming that $I(U,V)$ is infinite
  with positive probability. Then it holds that, still with
  positive probability, there is an increasing integer sequence
  $(i_n)_{n\geq 1}$ and a sequence $(r_n)_{n\geq 1}$ with values in
  $[0,D(U,V)]$ such that $\gamma(r_n)\in
  \bp([a_{i_n},b_{i_n}])$. Then, up to extraction, the sequence
  $(r_n)$ converges to some limit $r$, and if $s_n\in
  [a_{i_n},b_{i_n}]$ is a choice of a given element in
  $\bp^{-1}(\gamma(r_n))$, then, again up to possibly further
  extraction, $(s_n)$ converges to a limit $s$ with
  $\bp(s)=\gamma(r)$. By construction, $s$ is not in $\bigcup_{i\geq
    1}(a_i,b_i)$, since the intervals in this union are pairwise
  disjoint. This implies that $X_s=\underline{X}_s$, meaning that
  $\gamma(r)=\bp(s)\in \partial \mathbf{D}$, which is the
  contradiction we were looking for.
\end{proofc}

Let $\gamma^i_l$, $\gamma^i_r$ be the left and right ``geodesic boundaries'' of the
space $([a_i,b_i]/d_Z^i,d_Z^i)$, defined by
$$\gamma^i_l(t)=\bp\big(\inf\{s\in [ a_i,b_i]:Z_s=Z_{a_i}-t\}\big)\, ,\quad
\gamma^i_r(t)=\bp\big(\sup\{s\in [a_i,b_i]:Z_s=Z_{a_i}-t\}\big)\, ,$$
where~$t$ ranges over $[0,Z_{a_i}-\underline{Z}_{a_i,b_i}]$ (recall
that $Z_{a_i}=Z_{b_i}$). Those are geodesic paths in $(\mathbf{D},D)$
from $\bp(a_i)$ to $\bp(s_i^*)$, where $s_i^*$ is the (a.s.\ unique
\cite{legweill}) time in $[a_i,b_i]$ at which $Z$ attains its infimum
on that same interval. Alternatively, these paths are parts of the
geodesics~$\Phi_{a_i}$ and~$\Phi_{b_i}$ introduced earlier. Note also
that $\Img(\gamma^i_l)\cap\Img(\gamma^i_r)$ is not necessarily reduced
to $\{\bp(s_i^*)\}$.

\begin{clm}\label{claim2}
It is not possible to find $r_1<r_2<r_3$ such that
$\gamma(r_1),\gamma(r_3)\in \Img(\gamma^i_l)$ and
$\gamma(r_2)\notin \Img(\gamma^i_l)$. The same
statement is valid for $\gamma^i_r$ instead of $\gamma^i_l$.
\end{clm}

\begin{proofc}
Indeed, such a situation
would clearly violate the uniqueness of the geodesic $\gamma$, since
we could replace it between times $r_1$ and $r_3$ by the arc of
$\gamma^i_l$ from $\gamma(r_1)$ to $\gamma(r_3)$, and still obtain a
geodesic from $\bp(U)$ to $\bp(V)$, distinct from $\gamma$.
\end{proofc}

\begin{clm}\label{claim3}
Almost surely, for every $i\geq 1$, the topological
  boundary of $\bp([a_i,b_i])$ in $(\mathbf{D},D)$ is included in
  $\Img(\gamma_l^i)\cup \Img(\gamma^i_r)$.
\end{clm}

\begin{proofc}
  This claim is relatively obvious with the interpretation that
  $([a_i,b_i]/\{\tilde{D}^i=0\},\tilde{D}^i)$ is a space with geodesic
  boundaries given by $\gamma_l^i$, $\gamma_r^i$, but since we are not
  referring explicitly to these spaces, let us give a complete proof
  for this. In fact, the topological boundary of $\bp([s,t])$ for any
  $a_i\le s\le t\le b_i$ is given by~\cite[Lemma~21]{bettinelli14gbs}
  but, as the proof is quite short, we restate the arguments
  here. Note that $\bp([a_i,b_i])$ is closed so that every point in
  $\partial \bp([a_i,b_i])$ is of the form~$\bp(s')$ for some
  $s'\in[a_i,b_i]$ and is a limit of a sequence of points of the form
  $\bp(s_n)$, $n\geq 1$, where $s_n\notin [a_i,b_i]$ for every $n\geq
  1$. Up to extraction, $(s_n)$ converges to a limit
  $s\notin(a_i,b_i)$ such that $D(s,s')=0$. If $s\in \{a_i,b_i\}$ then
  the claim follows immediately. Otherwise, $s\neq s'$ and, as
  mentioned during the proof of Lemma~\ref{sec:conv-brown-disk-2},
  this implies $d_X(s,s')=0$ (\ref{identification}\ref{idX}) or
  $d_Z(s,s')=0$ (\ref{identification}\ref{idZ}). It cannot
  hold that $d_X(s,s')=0$ because $s'\in [a_i,b_i]$ while $s\notin[a_i,b_i]$, so necessarily
  $d_Z(s,s')=0$. Assuming for instance that
  $Z_s=Z_{s'}=\underline{Z}_{s,s'}$, so that $Z_u\geq Z_{s'}$ for
  every $u\in [a_i,s']$, this implies that $d_Z(s,\inf\{u\in
  [a_i,b_i]:Z_u=Z_s)=0$.  Finally, we get that
  $\bp(s)=\gamma_l^i(Z_{a_i}-Z_s)\in \Img(\gamma^i_l)$. Similarly, if
  $Z_s=Z_{s'}=\underline{Z}_{s',s}$, we obtain that $\bp(s)\in
  \Img(\gamma^i_r)$.
\end{proofc}

From the three claims above, we obtain that there exists a finite number of points $x_1$, $x_2$, \ldots, $x_{k+1}\in\mathbf D$ and integers $i_1$, \ldots, $i_k$ with
$x_1=\bp(U)$, $x_{k+1}=\bp(V)$, such that~$\gamma$ visits the points~$x_1$, $x_2$, \ldots, $x_{k+1}$ in this order, and such that the segment of~$\gamma$ between~$x_j$ and~$x_{j+1}$ is 
\refstepcounter{equation}\label{slice}
\begin{enumerate}[(\theequation a)]
	\item either included in $\Img(\gamma^{i_j}_l)$ or included in $\Img(\gamma^{i_j}_r)$\label{bdry}
	\item or included in $\bp([a_{i_j},b_{i_j}])$ and such that its intersection with $\Img(\gamma^{i_j}_l)\cup\Img(\gamma^{i_j}_r)$ is a subset of $\{x_j,x_{j+1}\}$.\label{inside}
\end{enumerate}
Indeed, Claims~\ref{claim1} and~\ref{claim2} entail that $\Img(\gamma)\cap(\bigcup_i \Img(\gamma^{i}_l)\cup\Img(\gamma^{i}_r))$ is a finite union of segments satisfying~(\ref{slice}\ref{bdry}) and the parts of~$\gamma$ linking two successive such segments satisfy~(\ref{slice}\ref{inside}), by Claim~\ref{claim3}. Since the segment of~$\gamma$ between~$x_j$ and~$x_{j+1}$ is included in $\bp([a_{i_j},b_{i_j}])$ in both cases, we may choose~$s_j$, $t_j\in [a_{i_j},b_{i_j}]$ such that $x_j=\bp(s_j)$ and $x_{j+1}=\bp(t_j)$. For any such choice,
$$D(U,V)=\sum_{j=1}^{k} D(s_j,t_j)\, .$$

We will soon justify that we can choose $s_j$, $t_j$ satisfying the
extra property that $D(s_j,t_j)=\tilde{D}(s_j,t_j)$ on the event $\{\max(I(U,V))\le N\}$. Since, by definition, $\bp(t_j)=\bp(s_{j+1})$, one has $d_X(t_j,s_{j+1})=0$ or $d_Z(t_j,s_{j+1})=0$ so that $\tilde D(t_j,s_{j+1})=0$ or $d_Z(t_j,s_{j+1})\allowbreak=0$. Similarly, $\tilde D(U,s_{1})=0$ or $d_Z(U,s_{1})=0$ and $\tilde D(t_k,V)=0$ or $d_Z(t_k,V)=0$. As a result, up to potentially doubling some $s_j$'s and $t_j$'s, we wrote $D(U,V)$ in the desired form and we conclude the proof by letting $N\to\infty$, as $\max(I(U,V))<\infty$ almost surely by Claim~\ref{claim1}.

\bigskip Let us work from now on on the event $\{\max(I(U,V))\le N\}$
and justify the possibility of choosing $s_j$, $t_j$ as previously
claimed. If the segment of~$\gamma$ between~$x_j$ and~$x_{j+1}$
satisfies~(\ref{slice}\ref{bdry}), then the claim readily follows from
Lemma~\ref{lemD}, as $$D(s_j,t_j)\ge
|D(s_j,s_*)-D(t_j,s_*)|=|Z_{s_j}-Z_{t_j}|\ge d^i_Z(s_j,t_j) \ge\tilde
D(s_j,t_j)$$ (Recall that the converse inequality always holds.)  We
now suppose that the segment of~$\gamma$ between~$x_j$ and~$x_{j+1}$
satisfies~(\ref{slice}\ref{inside}) and we go back to the discrete
setting. For $u\in\{0,1/(2n+l_n),\ldots,1\}$, we denote by $c_n(u)$
the $(2n+l_n)u$-th corner of~$F_n$. We let~$a_{i_j}^n$ and~$b_{i_j}^n$
be such that $c_n(a_{i_j}^n)$ and $c_n(b_{i_j}^n)$ are the first and
last corners of the $i_j$-th largest tree of~$F_n$. Standard
properties of Brownian motion and the convergence $C_{(n)}\to X$
entail that $a_{i_j}^n\to a_{i_j}$ and~$b_{i_j}^n\to b_{i_j}$. Choose
two sequences $s^n_j$, $t^n_j\in[a_{i_j}^n,b_{i_j}^n]$ indexed by $n$
such that $s^n_j\to s_j$ and $t^n_j\to t_j$. We denote by~$u_j^n$
and~$v_j^n$ the vertices incident respectively to $c_n(s^n_j)$ and
$c_n(t^n_j)$ and we let~$\gamma_j^n$ be a geodesic in~$Q_n$
from~$u_j^n$ to~$v_j^n$.

We also let $\sV_j$ be the set of vertices of $i_j$-th largest tree
of~$F_n$ that do not belong to the maximal geodesic of the slice
corresponding to this tree, seen as a subset of $\sV(Q_n)$. We will see
that $\Img(\gamma_j^n)\setminus \sV_j$ is only constituted of vertices
``close'' to the extremities of~$\gamma_j^n$ in the
scale~$n^{-1/4}$. Notice first that the middle point
$\gamma_j^n\big(\lfloor D_{Q_n}(u_j^n,v_j^n)/2\rfloor\big)$
of~$\gamma_j^n$ necessarily belongs to~$\sV_j$ for large~$n$. Indeed,
let us assume otherwise. Then, for infinitely many values of $n$, we can
find real numbers $u_n\notin[a_{i_j}^n,b_{i_j}^n]$ such that
$c_n(u_n)$ is incident to the middle point of~$\gamma_j^n$. Up to
further extraction, we may suppose that $u_n\to u\notin
(a_{i_j},b_{i_j})$, so that $\bp(u)$ does not belong to the interior
of $\bp([a_{i_j},b_{i_j}])$. As $\bp(u)$ is at mid-distance
between~$x_j$ and~$x_{j+1}$, we obtain a contradiction
with~(\ref{slice}\ref{inside}).

We then let $\tilde s_j^n\in[a_{i_j}^n,b_{i_j}^n]$ be such that
$c_n(\tilde s_j^n)$ is incident to
$$\gamma_j^n\big(\min\big\{\alpha\le
D_{Q_n}(u_j^n,v_j^n)/2\,:\,\gamma_j^n(\beta)\in \sV_j \text{ for all }
\beta\in [\alpha, D_{Q_n}(u_j^n,v_j^n)/2]\big\}\big)$$ and,
symmetrically, $\tilde t_j^n\in[a_{i_j}^n,b_{i_j}^n]$ be such that
$c_n(\tilde t_j^n)$ is incident to
$$\gamma_j^n\big(\max\big\{\alpha\ge D_{Q_n}(u_j^n,v_j^n)/2\,:\,\gamma_j^n(\beta)\in \sV_j \text{ for all } \beta\in[D_{Q_n}(u_j^n,v_j^n)/2,\alpha]\big\}\big).$$
Up to further extraction, we may suppose that $\tilde s_j^n\to \tilde
s_j$ and $\tilde t_j^n\to \tilde t_j$. We necessarily have
$\bp(s_j)=\bp(\tilde s_j)$. Indeed, let us argue by contradiction and
suppose that $\bp(s_j)\neq\bp(\tilde s_j)$. The definition immediately
entails that $\bp(\tilde
s_j)\in\Img(\gamma^{i_j}_l)\cup\Img(\gamma^{i_j}_r)$. But, as
$\bp(\tilde s_j)\in \bp([a_{i_j},b_{i_j}])$, the
condition~(\ref{slice}\ref{inside}) yields $\bp(s_j)=\bp(\tilde s_j)$,
a contradiction. This implies $D(s_j,\tilde s_j)=0$, which also
implies $d_X(s_j,\tilde s_j)=0$ or $d_Z(s_j,\tilde s_j)=0$, so that
$\tilde D(s_j,\tilde s_j)=0$ as~$s_j$ and~$\tilde s_j$ both belong to
the same excursion interval $[a_{i_j},b_{i_j}]$. The same argument
shows that $D(t_j,\tilde t_j)=\tilde D(t_j,\tilde t_j)=0$. Finally,
$D_{(n)}(\tilde s^n_j,\tilde t^n_j)=\tilde D_{(n)}^{i_j}(\tilde
s^n_j,\tilde t^n_j)$ by construction and we obtain $D(\tilde
s_j,\tilde t_j)=\tilde D(\tilde s_j,\tilde t_j)$ by~\eqref{eq:slices},
and then $D(s_j,t_j)=\tilde{D}(s_j,t_j)$ by the previous discussion.
\end{proof}

Note that from the formula for $D(s,t)$ given in the statement of
Lemma \ref{lem:Dfinite} and the definition of $D^*$, it holds that
$D(s,t)\geq D^*(s,t)$ for Lebesgue-almost every $s$, $t\in [0,1]$, so
that equality holds since $D\leq D^*$ by Lemma~\ref{lemD}. Since
$D^*\leq d_Z$, which is continuous on $[0,1]^2$ and null on the
diagonal, we get immediately that the pseudo-metrics $D$, $D^*$ are
continuous when seen as functions on $[0,1]^2$, and by density we get
that $D=D^*$. This proves Theorem~\ref{sec:conv-brown-disk-1}.

\section{Boltzmann random maps and well-labeled mobiles}\label{sec:univ}

\subsection{The Bouttier--Di Francesco--Guitter
  bijection}\label{sec:boutt-di-franc}

There is a well-known extension of the Cori--Vauquelin--Schaeffer
bijection to general maps. This extension, due to Bouttier, Di
Francesco and Guitter \cite{BdFGmobiles}, can roughly be described in
the following way. Any {\em bipartite} map can be coded by an object
called a {\em well-labeled mobile}. Namely, a mobile is a rooted plane
tree $\bt$ (we usually call $e_0$ its root edge) together with a
bicoloration of its vertices into ``white vertices'' and ``black
vertices.'' We denote by $\sV_\circ(\bt)$, $\sV_\bullet(\bt)$ the
corresponding sets of vertices, and ask that any two neighboring
vertices carry different colors, and that $e_0^-\in\sV_\circ(\bt)$,
meaning that mobiles are rooted at a white vertex.

Moreover, the set $\sV_\circ(\bt)$
carries a label function $\ell:\sV_\circ(\bt)\to \Z$, that satisfies the
following property: if $v'\in \sV_\bullet(\bt)$ is a black vertex, and if
$v'_0$, $v'_1$, \ldots, $v'_{k-1}\in \sV_\circ(\bt)$ denote the neighbors of~$v'$
arranged in clockwise order around~$v'$ induced by the planar
structure of $\bt$ (so that $k=\deg_\bt(v')$), it holds that
$$\ell(v'_{i+1})-\ell(v'_{i})\geq -1\, ,\qquad \forall\, i\in
\{0,1,\ldots,k-1\}\, ,$$ with the convention that $v'_k=v'_0$. A
simple counting argument shows that, as soon as one of the labels, say
that of $\ell(v'_0)$, is fixed, there are exactly $\binom{2k-1}{k}$
possible choices for the other labels $\ell(v'_1)$, \ldots,
$\ell(v'_{k-1})$. At this point of the discussion, we do not insist
that the label of any given vertex is fixed, so we really view $\ell$
as a function defined up an additive constant, as we did in
Section~\ref{sec:scha-biject-first}. We will fix a normalization in
the next section.

In our context of maps with a boundary, we use the following
conventions. The objects encoding the bipartite maps with
perimeter~$2l$ (maps of~$\bB_l$) are forests $\bff=(\bt_1, \dots,
\bt_l)$ of~$l$ mobiles, together with a labeling function
$\ell:\sV_\circ(\bff)=\bigsqcup_i \sV_\circ(\bt_i)\to\Z$ satisfying
the following:
\begin{itemize}
\item for $1\le i \le l$, the mobile $\bt_i$ equipped with the
  restriction of~$\ell$ to $V_\circ(\bt_i)$ is a well-labeled mobile;
\item for $1\le i \le l$, we have $\ell(\rho_{i+1})\ge
  \ell(\rho_{i})-1$, where~$\rho_i$ denotes the root vertex of~$\bt_i$
  and $\ell(\rho_{l+1})=\ell(\rho_1)$.
\end{itemize}
\begin{rem}
  These forests are in simple bijection with the set of mobiles rooted (unusually)
  at a black vertex of degree~$l$. But since the external face really
  plays a different role from the other faces, we prefer indeed to
  view those as forests of individual mobiles, rather than one single
  mobile. 
\end{rem}

The BDG bijection is very similar to the construction presented in
Section~\ref{sec:bij_forests}. We consider a forest $\bff=(\bt_1,
\dots, \bt_l)$ of~$l$ mobiles, labeled by~$\ell$ as above and we set
$\ell_*=\min\{\ell(v):v\in \sV_\circ(\bff)\}-1$. We let $N^\sV$ be its
number of white vertices, $N^\sF$ be its number of black vertices, and
$N^\sE=N^\sV+N^\sF$ be its total number of vertices. (The reason for
this notation will become clear in a short moment.)

We identify~$\bff$ with the map obtained by adding~$l$ edges linking the roots $\rho_1$, $\rho_2$, \ldots, $\rho_l$ of the successive trees in a cycle. This map has one face of degree~$l$ incident to the~$l$ added edges and another face of degree $2N^\sE-l$, incident to the~$l$ added edges as well as all the mobiles. In the latter face, we let $c_0$, $c_1$, \ldots, $c_{2N^\sV-l-1}$ be the sequence of corners incident to white vertices, listed in contour order, starting from the root corner of~$\bt_1$. We extend this list by periodicity and add one corner~$c_\infty$ incident to a vertex~$v_*$ lying inside the face of degree $2N^\sE-l$, with label $\ell(c_\infty)=\ell(v_*)=\ell_*$. We define the successor functions by~\eqref{eq:succ} and draw arcs in a non-crossing fashion from~$c_i$ to~$s(c_i)$ for every $i\in\{0,1,\ldots,2N^\sV-l-1\}$.
We root the resulting map at the corner of the degree~$2l$-face that is incident to the root vertex of~$\bt_1$. We obtain a rooted bipartite map~$\bm$ with perimeter~$2l$, with vertex set $\sV_\circ(\bt)\cup\{v_*\}$, which is naturally pointed at~$v_*$, and such that the root edge points away from~$v_*$. 

As in Section~\ref{sec:bij_forests}, the fact that the root edge necessarily points away from~$v_*$ is a bit unfortunate and we use the same trick in order to overcome this technicality. More precisely, we consider the map obtained from~$\bm$ by forgetting its root and re-rooting it at a corner chosen uniformly at random among the~$2l$ corners of the root face.

\bigskip
A noticeable fact about the BDG bijection is that the black vertices of the forest are in bijection with the internal faces of the map. More precisely, if $v\in \sV_\bullet(\bff)$ corresponds to the face~$f$ of~$\bm$, then $\deg_\bm(f)=2\deg_\bff(v)$. Furthermore, the white vertices are bijectively associated with $\sV(\bm)\setminus\{v_*\}$ (so that we can naturally identify these two sets), in such a way that the label
function $\ell$ gives distances to~$v_*$ via the formula
\begin{equation}
   \label{eq:10}
d_\bm(v,v_*)=\ell(v)-\min_{\sV_\circ(\bt)}\ell +1\, .
\end{equation}
As a result (and with the help of the Euler characteristic formula),
note that $N^\sV+1$, $N^\sF$ and $N^\sE$ respectively correspond to
the number of vertices, internal faces, and edges of~$\bm$ --- this
explains the notation.

\subsection{Random mobiles}\label{sec:random-mobiles}

We now show how to represent the pointed Boltzmann measures
$\W^\bullet_l$ of Section~\ref{secZq} in terms of random trees, via the BDG bijection.  Let
$\mu_\circ$ be the geometric distribution with parameter $1/\CZ_q$,
given by
$$\mu_\circ(k)=\frac{1}{\CZ_q}\left(1-\frac{1}{\CZ_q}\right)^k\,
,\qquad k\geq 0\, .$$
Let also 
$$\mu_\bullet(k)=\frac{\CZ_q^k\binom{2k+1}{k}\,q_{k+1}}{f_q(\CZ_q)}\,
\qquad k\geq 0\, .$$
Let $\M_l$ be the law of a two-type
Bienaym\'e--Galton--Watson forest, with $l$ independent tree
components, and in which even generations (white vertices) use the offspring
distribution~$\mu_\circ$, while odd generations (black vertices) use the offspring
distribution~$\mu_\bullet$. Formally, we let $\M_l=(\M_1)^{\otimes l}$
where $\M_1$ is defined by 
$$\M_1(\{\bt\})=\prod_{u\in
  \sV_\circ(\bt)}\mu_\circ(k_u(\bt))\prod_{u\in
  \sV_\bullet(\bt)}\mu_\bullet(k_u(\bt))\, $$ 
for every tree $\bt$, where
$k_u(\bt)$ is the number of children of $u$ in 
$\bt$. 
Finally, given a forest with law $\M_l$, the white vertices carry
random integer labels with the following law. Let $\xi_1$, $\xi_2$,
\ldots be a sequence of i.i.d.\ random variables with shifted
geometric($1/2$) distributions
$$\P(\xi_i=l)=2^{-l-2}\, , \qquad l\geq -1\, ,\ i\ge 1\, ,$$
and let $(Y_1,\ldots,Y_k)$ be distributed as the partial sums
$(\xi_1,\xi_1+\xi_2,\ldots,\xi_1+\ldots+\xi_k)$ conditionally given
$\xi_1+\ldots+\xi_{k+1}=0$. We say that $(Y_1,\ldots,Y_k)$ is a
\emph{discrete bridge with shifted geometric steps}, and we let~$\nu_k$ be
the law of this random vector. It is simple to see that, if~$\nu_k^0$
is the uniform distribution on $$\left\{(x_1,\ldots,x_k,x_{k+1})\in
\{-1,0,1,2,\ldots\}^{k+1}\, :\, \sum_{i=1}^{k+1}x_i=0\right\}\, ,$$
then~$\nu_k$ is the image measure of~$\nu_k^0$ under 
$(x_1,\ldots,x_{k+1})\mapsto (\sum_{i=1}^jx_j,\,1\leq j\leq k)$. 

Conditionally given the tree, if~$u$ is a black vertex with parent
$u_0$ and children $u_1$, $u_2$, \ldots, $u_k$, then the law~$\nu_k$ of the
label differences $(\ell(u_i)-\ell(u_0),\,1\leq i\leq k)$ is given by
$\nu_k$, while those label differences are independent as $u$ ranges
over all black vertices. Finally, the labels of the roots
$\rho_1$, \ldots, $\rho_l$ of the forest 
have same law as $(0,Y_1,\ldots,Y_l)$, where
$(Y_1,\ldots,Y_l)$ has law $\nu_{l}$. These specify entirely
the law of the labels, and in fact, one sees that labels are uniform
among all admissible labelings of the forest, in which the root~$\rho_1$ of the
first tree carries label~$0$. For simplicity, we still denote by $\M_l$ the law of
forest of well-labeled mobiles thus obtained. 

For $\sS\in \{\sV,\sE,\sF\}$, we let $\bB^{\bullet,\sS}_{l,n}$ be the set of pointed maps $(\bm,v_*)\in
\bB^\bullet$ such that $\bm\in \bB^{\sS}_{l,n}$ and we define
\begin{equation}\label{WpointS}
\W^{\bullet,\sS}_{l,n}=W^\bullet\big(\cdot\, \big|\, \bB^{\bullet,\sS}_{l,n}\big)\,,
\end{equation}
where $W^\bullet$ was defined by~\eqref{Wpoint}.

\begin{prp}
  \label{sec:boutt-di-franc-1}
  Let $q$ be an admissible sequence, and $l\geq 1$. Then the image of~$\M_l$ under the Bouttier--Di Francesco--Guitter
  bijection is, after uniform re-rooting on the boundary, the probability measure~$\W^{\bullet}_l$.

For $\sS\in \{\sV,\sE,\sF\}$, the same statement holds if we replace both~$\M_l$ with $\M_l(\,\cdot\,|\, N^\sS=n)$) and~$\W^{\bullet}_l$ with~$\W^{\bullet,\sS}_{l,n}$.
\end{prp}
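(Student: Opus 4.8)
The plan is to verify that the Bouttier--Di Francesco--Guitter bijection transports the weights of $W^\bullet$ to the product law $\M_l$, and then identify the constraint $N^\sS = n$ on the tree side with the constraint that defines $\bB^{\bullet,\sS}_{l,n}$ on the map side. Since Section~\ref{sec:boutt-di-franc} has already established that the BDG construction (followed by uniform re-rooting on the boundary) is a bijection between labeled mobile forests $(\bff,\ell)$ with $\ell(\rho_1)=0$ and pointed rooted bipartite maps $(\bm,v_*) \in \bB^\bullet_l$, the whole content is a bookkeeping of weights.

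First I would compute $W^\bullet(\{(\bm,v_*)\}) = W(\bm) = \prod_{f \in \sF(\bm)\setminus\{f_*\}} q_{\deg(f)/2}$ and rewrite it using the face--black-vertex correspondence: if $\bm$ corresponds to $(\bff,\ell)$, then $\deg_\bm(f) = 2\deg_\bff(v)$ for the black vertex $v$ associated with the internal face $f$, so $W(\bm) = \prod_{v \in \sV_\bullet(\bff)} q_{\deg_\bff(v)+1}$. Here $\deg_\bff(v)$ is the total degree, which equals $k_v + 1$ for a non-root black vertex with $k_v$ children (root black vertices do not occur in our forests of mobiles, all mobiles being rooted at white vertices). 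I would then expand the definition of $\M_l$: a labeled forest receives weight $\prod_{u\in\sV_\circ}\mu_\circ(k_u)\prod_{u\in\sV_\bullet}\mu_\bullet(k_u)$ times the probability of the particular admissible labeling. Plugging in $\mu_\circ(k) = \CZ_q^{-1}(1-\CZ_q^{-1})^k$ and $\mu_\bullet(k) = \CZ_q^k \binom{2k+1}{k} q_{k+1}/f_q(\CZ_q)$, the binomial factors $\binom{2k_v+1}{k_v}$ exactly cancel against the number of admissible labelings around each black vertex (each contributing uniform weight $\binom{2k_v+1}{k_v}^{-1}$, by the counting argument recalled in Section~\ref{sec:boutt-di-franc}), while the root-label bridge $\nu_l$ together with the admissibility constraint $\ell(\rho_{i+1})\ge\ell(\rho_i)-1$ supplies the remaining combinatorial factor. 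Collecting the powers of $\CZ_q$, of $1-\CZ_q^{-1}$ and of $f_q(\CZ_q)$ over all vertices, and using the identities $f_q(\CZ_q) = 1 - \CZ_q^{-1}$ (admissibility, equation~\eqref{eq:2}) and the relations linking $N^\sV$, $N^\sF$, $N^\sE$, one finds that $\M_l(\{(\bff,\ell)\}) = c_l \cdot W(\bm)$ for a normalizing constant $c_l$ depending only on $l$; since $\M_l$ is a probability measure, $c_l = W^\bullet(\bB^\bullet_l)^{-1}$ and the image of $\M_l$ is exactly $\W^\bullet_l$.

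For the conditioned statement, I would note that under the identification of Section~\ref{sec:boutt-di-franc}, the quantities $N^\sV+1$, $N^\sF$, $N^\sE$ of a mobile forest equal the numbers of vertices, internal faces, and edges of the corresponding map $\bm$; hence $\bm \in \bB^\sS_{l,n}$ if and only if $N^\sS = n$ (with the $+1$ shift absorbed in the $\sV$ case, which is precisely why $\bB^\sV_{l,n}$ was defined with $n+1$ vertices). Since uniform re-rooting on the boundary does not change the map up to root, the event $\{N^\sS = n\}$ on the forest side corresponds bijectively to $\bB^{\bullet,\sS}_{l,n}$ on the pointed-map side, and conditioning both measures in the first part on this event yields that the image of $\M_l(\cdot \mid N^\sS = n)$ is $\W^{\bullet,\sS}_{l,n}$, which is $\W^\bullet(\cdot \mid \bB^{\bullet,\sS}_{l,n})$ by~\eqref{WpointS}.

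The main obstacle is the weight cancellation: getting every power of $\CZ_q$, every $1-\CZ_q^{-1}$ factor, and every binomial coefficient to account for correctly, with careful attention to the off-by-one coming from the total degree versus number of children for black vertices, from the $l$ boundary roots and the closing convention $\ell(\rho_{l+1}) = \ell(\rho_1)$, and from the marked vertex $v_*$ which is \emph{not} a white vertex of the forest but still contributes to $|\sV(\bm)|$. Since $W^\bullet$ weights a pointed map simply by $W(\bm)$ (the choice of $v_*$ carrying no extra weight) and $v_*$ is automatically reconstructed by the BDG bijection, this last point causes no real difficulty; once the counting of admissible labelings and the arithmetic of the exponents is organized — essentially as in~\cite{MaMi07} and \cite[Section~6.3]{stephenson14} — the identification is immediate. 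Everything else is routine.
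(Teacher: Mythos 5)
Your approach is essentially the same as the paper's (which simply refers to \cite[Proposition~7]{MaMi07} and the analog of Lemma~\ref{awayfrom}), and the weight-cancellation bookkeeping you describe is exactly what that reference carries out. Two small imprecisions are worth flagging. First, there is an off-by-one in your statement of the face weight: with the paper's convention $\deg_\bm(f) = 2\deg_\bff(v)$ and $\deg_\bff(v) = k_v+1$ (parent plus $k_v$ children), the correct expansion is $W(\bm) = \prod_{v\in\sV_\bullet} q_{\deg_\bff(v)} = \prod_v q_{k_v+1}$, not $q_{\deg_\bff(v)+1}$; fortunately this is what you then use implicitly when matching against $\mu_\bullet(k)\propto q_{k+1}$, so the cancellation still goes through as you say. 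Second, describing ``BDG followed by uniform re-rooting'' as a bijection onto $\bB^\bullet_l$ is not quite right: the BDG construction is a bijection onto the set of pointed maps whose root edge points away from $v_*$, and the re-rooting is a randomization step, not a bijective one. What actually makes it work is the $W^\bullet$-analog of Lemma~\ref{awayfrom}: because $W(\bm)$ depends only on the unrooted map and exactly half of the $2l$ boundary rootings point away from any given $v_*$, averaging the $W$-proportional law on away-pointing roots over a uniform choice of boundary corner recovers the $W$-proportional law $\W^\bullet_l$ on all of $\bB^\bullet_l$. With those two corrections your argument is complete and coincides with the paper's intended proof.
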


This is proved by following the same steps as in
\cite[Proposition~7]{MaMi07} and by applying a straightforward analog of Lemma~\ref{awayfrom}; we omit the details. 
At this point, we can prove Lemma \ref{sec:boltzm-rand-maps}, which
describes the set $\A^\sS(q)$ of pairs $(l,n)$ such that
$\W(\bB^\sS_{l,n})>0$ or, equivalently, such that $\M_l(N^\sS=n) >0$. 

\begin{proof}[Proof of Lemma \ref{sec:boltzm-rand-maps}]
  Let us fix the symbol $\sS$. By Proposition~2.2 in
  \cite{stephenson14}, under the law $\M_1$, there exist two constants
  $\alpha$, $h$ such that the support of $N^\sS$ is included in
  $\alpha+h\Z_+$, and moreover, for every $m$ large enough,
  $\M_1(N^\sS=\alpha+hm)>0$. In particular, there exists $\beta$ such
  that $\M_1(N^\sS=\beta+hm)>0$ for every $m\geq 0$. This means that
  the support of the law of~$N^\sS$ under~$\M_1$ is equal to
  $R\cup(\beta+h\Z_+)$, for some
  $R\subseteq\{0,1,\ldots,\beta-1\}$. From this, we immediately deduce
  the similar result for forests under the distribution~$\M_l$. 
  Namely, the support of~$N^\sS$ under~$\M_l$ is equal to
  $R_l\cup(\beta l+h\Z_+)$, for some $R_l\subseteq\{0,1,\ldots,\beta
  l-1\}$. From this observation and Proposition~\ref{sec:boutt-di-franc-1}, 
  using the remark at the end of the
  preceding section that the image of~$N^\sS$
  under the BDG bijection is $|\sS| - \ind_{\{\sS=\sV\}}$, we obtain that the support of
  the law of $|\sS(\bm)|-\ind_{\{\sS=\sV\}}$ under $\W_l^\bullet$ (or under $\W_l$ by
  the absolute continuity relation~\eqref{eq:16}) is equal to 
$$R_l\cup (\beta l+h\Z_+)\, .$$
The result follows immediately from this, since the explicit form of
$h$ was computed in Section~6.3.1 of~\cite{stephenson14}.
\end{proof}

Again, in all the following, when considering pairs $(l,n)$ where $l$
corresponds to the boundary length of a map, and~$n$ to its size (measured
with respect to the symbol~$\sS$), it will always be implicitly
assumed that $(l,n)\in \A^\sS(q)$, which by Lemma
\ref{sec:boltzm-rand-maps} means that, up to finitely many exceptions, 
$$n=\beta^\sS l\quad[\mathrm{mod }\,  h^\sS]\, .$$

\section{Convergence of the encoding processes}\label{sec:conv-encod-proc}

Let us now consider an infinite forest~$F$ with distribution
$\M_\infty=(\M_1)^{\otimes \N}$. With it, we associate several
exploration processes. Let $v_0$, $v_1$, $v_2$\ldots\ denote the
vertices of~$F$ (black or white), listed in depth-first order, tree by
tree. Let~$H$ be the so-called \emph{height process} associated
with~$F$, that is, $H(i)$ denotes the distance between the
vertex~$v_i$ and the root of the tree to which it belongs. For $i\ge
0$, we denote by $\hat\ell(i)$ the label of~$v_i$, as well as
$\hat\ell^{\mathbf{0}}(i)=\hat\ell(i)-\ell(\rho_{(i)})$, where 
$\rho_{(i)}$ is the root of the tree to which~$v_i$ belongs. Note that
this notion of label process differs from the one introduced during
Section~\ref{sec:conv-brown-disk}; we use the notation with a hat in
order to avoid confusion. Recall also that, under~$\M_\infty$, the
labels are normalized in such a way that the root of the first tree
gets label~$0$, so that the process~$\hat\ell$ is defined without
ambiguity. Finally, let $\Upsilon(i)$ be the number of fully explored
trees at time~$i$, that is, $\Upsilon(i)+1=p$ whenever $v_i$ belongs
to the $p$-th tree of~$F$, that is $\rho_{(i)}=\rho_p$. We also let
$$\tau_l=\inf\{i\geq 0:\Upsilon(i)=l\}$$
be the number of (black or white) vertices in the first~$l$ trees of
the forest. Note for instance that, with the notation of
Section~\ref{sec:boutt-di-franc}, one has $N^\sE=\tau_l$ under the
law~$\M_l$.

\subsection{Convergence for an infinite forest}\label{sec:conv-an-infin}

A key result is the following. Recall that $\CZ_q$ is given by~\eqref{eq:1} and $\rho_q=2+\CZ_q^3f_q''(\CZ_q)$. Define
$$\sigma_q^2=\frac{\CZ_q \rho_q}{4}\, ,\qquad \sie^2=\frac{\rho_q}{\CZ_q}\, .$$
\begin{prp}
  \label{sec:boutt-di-franc-2}
The following joint convergence holds in distribution in $\mathcal{C}(\R_+,\R)$ under
$\M_\infty$:
$$\left(\frac{H({m\,\cdot})}{\sqrt{m}},\frac{\Upsilon({m\,\cdot})}{\sqrt{m}},\frac{\hat\ell^{\mathbf{0}}({m\,\cdot})}{m^{1/4}}\right)\build\longrightarrow_{m\to\infty}^{(d)}\left(\frac{2}{\sigma_q}(X-\underline{X}),-\sie\,\underline{X},\sqrt{\frac{2\sie}{3}}\Zo\right)$$
where $(X_t,t\geq 0)$ is a standard Brownian motion,
$\underline{X}_t=\inf_{0\leq s\leq t}X_s$ and $\Zo$ is the Brownian snake with
driving process $X-\underline{X}$, introduced in Section~\ref{sec:snakes}.
\end{prp}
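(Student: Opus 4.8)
The plan is to deduce Proposition~\ref{sec:boutt-di-franc-2} from known invariance principles for multi-type Bienaym\'e--Galton--Watson forests together with a conditional Gaussian analysis of the labels, exactly as in the classical treatment of random mobiles. First I would recall that under $\M_1$ the two-type Bienaym\'e--Galton--Watson tree has offspring distributions $\mu_\circ$ (geometric with mean $\CZ_q-1$) and $\mu_\bullet$; both are critical because $q$ is regular critical, and $\mu_\bullet$ has finite variance because of the condition $f_q(\CZ_q+\eps)<\infty$ in the definition of regular criticality. A standard computation (as in \cite{MaMi07,miergwmulti}) identifies the variance of the associated ``one-type'' reduced offspring law, giving the constant $\sigma_q^2=\CZ_q\rho_q/4$; here one uses that, for the height/contour encoding of a two-type forest, only the white vertices matter for the spatial structure and the number of white vertices in a subtree behaves like a critical finite-variance Galton--Watson process. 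The invariance principle for height and contour processes of such forests (Marckert--Mokkadem, Miermont, or directly \cite{stephenson14}) then yields
$$\frac{H(m\cdot)}{\sqrt m}\build\longrightarrow_{m\to\infty}^{(d)}\frac{2}{\sigma_q}(X-\underline X)$$
jointly with the convergence of the ``tree-counting'' process $\Upsilon(m\cdot)/\sqrt m$ to $-\sie\,\underline X$: indeed $\Upsilon$ increases exactly when the walk coding the forest reaches a new minimum, so it is a local-time-type functional of the \L ukasiewicz path, and the constant $\sie$ is read off from the mean number of vertices per tree, which is $\CZ_q/(\CZ_q-1)$ adjusted by the white/black ratio — this matches $\sie^2=\rho_q/\CZ_q$ after the time-change by $m$.

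Next I would handle the label process. Conditionally on $F$, the increments $\hat\ell^{\mathbf 0}$ along the depth-first exploration form, by the description in Section~\ref{sec:random-mobiles}, a sum of independent centered bridges with shifted geometric steps, one per black vertex, with the crucial property that the variance contributed along a path from the root to $v_i$ is proportional to the number of black vertices on that path, i.e. (up to a factor) to $H(i)$. The shifted geometric($1/2$) law has mean $0$ and variance $2$, and the bridge conditioning introduces the factor that, combined with the $\binom{2k-1}{k}$-uniform structure, produces the covariance $\inf$ of $H$ on intervals. Thus conditionally on the rescaled height process converging to $\frac{2}{\sigma_q}(X-\underline X)$, a Gaussian functional CLT (Lindeberg, using the finite exponential moments of the geometric steps for tightness, plus the continuity of $\Zo$ established in Section~\ref{sec:snakes}) gives that $m^{-1/4}\hat\ell^{\mathbf 0}(m\cdot)$ converges to a centered Gaussian process whose conditional covariance is a constant multiple of $\inf$ of $(X-\underline X)$; the constant works out to $\sqrt{2\sie/3}$ after bookkeeping the factor $2$ from the step variance, the factor $\sigma_q^{-1}$ from the height rescaling, the $3$ in the snake normalization convention of Section~\ref{sec:snakes}, and the black-vertex density $1/\CZ_q$ giving the extra $\sie$. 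The joint convergence of all three coordinates follows because the labels are built on top of $(F,H,\Upsilon)$ and the conditional law converges continuously; one invokes a Skorokhod coupling for the discrete forests and then the conditional CLT above, or alternatively checks convergence of finite-dimensional distributions plus tightness directly.

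The main obstacle, I expect, is \emph{not} any single one of these limits in isolation — each is essentially in the literature — but rather assembling them into a \emph{joint} convergence with the precise constants, and in particular making rigorous the conditional Gaussian limit for the labels given the geometry. One delicate point is that $\Upsilon$ and the label bridge at the roots interact (the root labels $\rho_1,\ldots,\rho_l$ themselves perform a shifted-geometric bridge of length $l\sim$ the number of trees), so the spatial process has a ``floor'' contribution that must be shown to be asymptotically negligible at scale $m^{1/4}$ on compact time-sets — this is where one uses that by time $\lfloor m t\rfloor$ only $O(\sqrt m)$ trees have been explored, so the root-label walk has fluctuations $O(m^{1/4})$ but spread over the whole exploration, contributing only lower-order corrections to $\hat\ell^{\mathbf 0}$ since $\hat\ell^{\mathbf 0}$ by definition subtracts $\ell(\rho_{(i)})$. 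A second subtlety is the identification of $\sigma_q$, $\sie$ from the two-type structure: I would carry this out by the substitution reducing a two-type forest to a one-type forest whose vertices are the black vertices (or the white ones), a standard but computation-heavy step, citing \cite[Section~6]{stephenson14} and \cite[Section~3]{miergwmulti} for the bulk of it and only recording the final constants.
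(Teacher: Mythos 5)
Your proposal follows essentially the same route as the paper: invoke the invariance principle for labeled multi-type Galton--Watson forests and then compute the constants from the two-type structure. In the paper this is a two-line application of Theorems~1 and~3 of \cite{miergwmulti} (after checking the hypotheses: criticality and small exponential moments for $\mu_\circ,\mu_\bullet$, and centered polynomially growing moments for the displacement laws $\nu_k$), followed by the computation of $m_\circ$, $b_\circ$, $\sigma^2=\sigma_q^2$ and $\Sigma^2=\rho_q/6$ via the mean matrix, its left/right invariant vectors, and the quadratic functional $\bQ$. Your sketch expands the internals of those cited theorems (Łukasiewicz path, conditional Gaussian CLT) rather than merely citing them, which is a legitimate but much longer path to the same place.

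A few imprecisions worth noting, none fatal. First, you only assert that $\mu_\bullet$ has finite variance, whereas the cited theorems require small exponential moments for the offspring laws; this is exactly what $f_q(\CZ_q+\eps)<\infty$ supplies, so state the stronger property. Second, your worry about the root-label ``floor'' contribution to $\hat\ell^{\mathbf 0}$ is moot at this stage: $\hat\ell^{\mathbf 0}(i)=\hat\ell(i)-\ell(\rho_{(i)})$ subtracts it off by definition. That concern is genuine only when passing to $\hat\ell$, which the paper defers to the subsequent corollary where the root-label bridge reappears as the Brownian-bridge term $\sqrt{3}\,\mathrm b$. Third, the remark that ``only the white vertices matter for the spatial structure'' is misleading as stated: $H$ counts vertices of both colors; only the label process is supported on white vertices. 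Finally, the heuristic for $\sie$ (``mean number of vertices per tree'') is not how the constant is actually read off; it comes from $\sigma/b_\circ$ with $b_\circ=\CZ_q/2$ the normalized right eigenvector component, and it is cleaner simply to record that computation as in \cite[Section~3.2]{MaMi07}.
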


\begin{proof}
  We note that the two-type branching process with offspring
  distributions $\mu_\circ$, $\mu_\bullet$ and alternating types is a
  critical branching process, in which the offspring distributions
  have small exponential moments (this is the place where we use the
  fact that~$q$ is regular critical), as discussed in Proposition 7
  of \cite{MaMi07}. Furthermore, the spatial displacements with distribution~$\nu_k$ are
  centered and carried by $[-k,k]^k$ respectively. In particular,
  they have moments of all orders, which grow at most polynomially, in
  the sense that for every $a>0$,
$$\langle\nu_k,|\cdot|^a\rangle=O(k^{a})\, ,$$
where $|\cdot|$ is the Euclidean norm in $\R^k$. 
This is exactly what is needed to apply Theorems~1 and~3 in
  \cite{miergwmulti}, which in our particular context stipulate that 
$$\left(\frac{H({m\,\cdot})}{\sqrt{m}},\frac{\Upsilon({m\,\cdot})}{\sqrt{m}},\frac{\hat\ell^{\mathbf{0}}({m\,\cdot})}{m^{1/4}}\right)\build\longrightarrow_{m\to\infty}^{(d)}\left(\frac{2}{\sigma}(X-\underline{X}),-\frac{\sigma}{b_\circ}\,\underline{X},\Sigma\sqrt{\frac{2}{\sigma}}\,\Zo\right)\,
,$$
where the constants $\sigma$, $b_\circ$ and $\Sigma$ are defined in
the following way. The mean matrix of the two-type Galton--Watson
process under consideration is given by $$\left(
    \begin{array}{cc}
0 & m_\circ\\
m_\bullet & 0      
    \end{array}
  \right)\, ,$$ where $m_\circ$ is the mean of $\mu_\circ$, and
  $m_\bullet$ is the mean of $\mu_\bullet$. Note that $m_\bullet=m_\circ^{-1}$ as an immediate consequence of the fact that~$q$ is regular critical. This matrix
  admits a left invariant vector $\mathbf{a}=(a_\circ,a_\bullet)$
  normalized to be a probability, namely $a_\circ=(1+m_\circ)^{-1}$
  and $a_\bullet=(1+m_\bullet)^{-1}$, and a right invariant
  $\mathbf{b}=(b_\circ,b_\bullet)$ normalized in such a way that the scalar product
  $\mathbf{a}\cdot\mathbf{b}=1$, namely $b_\circ=(1+m_\circ)/2$ and
  $b_\bullet=(1+m_\bullet)/2$. Finally, with $(\mu_\circ,\mu_\bullet)$, one can
  associate a quadratic function $\bQ:\R^2\to\R^2$ given by
$$\bQ(x_\circ,x_\bullet)=((\sigma_\circ^2+m_\circ(m_\circ-1))\,x_\bullet^2\, 
,
(\sigma_\bullet^2+m_\bullet(m_\bullet-1))\,x_\circ^2)\,
,$$
where $\sigma_\circ^2$ and $\sigma_\bullet^2$ are the variances of
$\mu_\circ$ and $\mu_\bullet$. Then $\sigma^2$ is given by the scalar product
$$\sigma^2=\mathbf{a}\cdot\mathbf{Q}(\mathbf{b})\, .$$
Finally, $\Sigma^2$ is given by the formula
$$\Sigma^2=\frac{1}{2}\sum_{k\geq 1}\frac{\mu_\bullet(k)}{m_\bullet}(\Sigma_\bullet^k)^2$$
where $(\Sigma_\bullet^k)^2=\langle\nu_k,|\cdot|^2\rangle=k(k+1)/3$,
as can be checked in \cite{MaMi07}. After
computations, which have been performed in Section 3.2 of \cite{MaMi07},
one obtains in particular
$$m_\circ=\CZ_q-1\, ,\quad b_\circ=\frac{\CZ_q}{2}\, ,\quad\sigma^2=\frac{\CZ_q\rho_q}{4}\, ,\quad
\Sigma^2=\frac{\rho_q}{6}\, .$$
The conclusion follows. 
\end{proof}

We are also going to need the following fact. For every $m\geq 1$, let
\begin{equation}\label{lambd}
\begin{split}
\Lambda^\sV(m)=\big|\big\{i\in \{0,1,\ldots,m-1\}:v_i\in \sV_\circ(\bff)\big\}\big|\,,\\
\Lambda^\sF(m)=\big|\big\{i\in \{0,1,\ldots,m-1\}:v_i\in \sV_\bullet(\bff)\big\}\big|\,,
\end{split}
\end{equation}
be respectively the number of white vertices and the number of black vertices among the first~$m$
vertices of~$F$ in depth-first order. For convenience, we also let $\Lambda^\sV(m)=m$ (the number of vertices of either type), so that $\Lambda^\sS$ makes sense for every $\sS\in \{\sV,\sE,\sF\}$. Define
\begin{equation}\label{astar}
a_\sV=\CZ_q^{-1}\,,\qquad a_\sF=1-\CZ_q^{-1}\,,\qquad a_{\sE}=1\,.
\end{equation}
The first two quantities are the ones that appeared in the proof of Proposition~\ref{sec:boutt-di-franc-2}, under the notation $a_\sV=a_\circ$ and $a_\sF=a_\bullet$. (Recall that, through the BDG bijection, $\sV$ correspond essentially to white vertices, $\sF$ to black vertices and~$\sE$ to edges of the mobile, which are in direct bijection with the set of vertices of both colors.)

In the following
statement and later, the notation $\mathrm{oe}(n)$ stands for a
quantity that is bounded from above by $c\exp(-c'n^{c''})$ for three
positive constants $c$, $c'$, $c''$, uniformly in~$n$.

\begin{prp}
  \label{sec:conv-encod-proc-2}
Fix $\sS\in \{\sV,\sE,\sF\}$. Then it holds that
$$\left(\frac{\Lambda^\sS(m\,\cdot)}{m}\right)\build\longrightarrow_{m\to\infty}^{}
(a_\sS\, t,\, t\geq 0)$$ in probability under $\M_\infty$ for the
uniform topology over compact subintervals of $\R_+$. More precisely,
for every $K>0$, one has the concentration result
$$\M_\infty\left(\max_{0\leq k\leq Km}|\Lambda^\sS(k) - a_\sS\, k|>m^{3/4}\right)=\mathrm{oe}(m)\,.$$
\end{prp}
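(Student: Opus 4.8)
The plan is to reduce everything to a single statement about an i.i.d.\ sum along the depth-first traversal of the forest $F$, and then to invoke a standard large-deviations bound. First I would observe that the claim for $\sS=\sV$ (resp.\ $\sS=\sF$) is the essential one, since $\Lambda^\sE(m)=m$ is trivial and $\Lambda^\sV(m)+\Lambda^\sF(m)=m$, so the $\sV$ and $\sF$ statements are equivalent via $a_\sV+a_\sF=1$. So fix $\sS=\sV$. The key structural remark is that, when the vertices $v_0,v_1,v_2,\ldots$ of $F$ are listed in depth-first order, the \emph{types} (white/black) of these vertices do not form an i.i.d.\ sequence, but they are a deterministic function of the sequence of offspring numbers $(k_{v_0},k_{v_1},\ldots)$, which — because $F$ is a critical two-type Bienaym\'e--Galton--Watson forest with an infinite number of independent tree components — can be encoded by a nice Markovian mechanism. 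Concretely, following the classical depth-first encoding (as used in \cite{miergwmulti,MaMi07}), one can describe the exploration by a \emph{\L{}ukasiewicz-type walk}: there is an i.i.d.\ sequence of increments such that the type of $v_i$ and the partial counts $\Lambda^\sS(i)$ are read off from the path. In particular $\Lambda^\sV(m)=\sum_{i=0}^{m-1}\ind_{\{v_i\text{ white}\}}$, and the indicator variables, while not independent, satisfy: conditionally on the exploration being at a white (resp.\ black) vertex, the next vertex visited is black (resp.\ white) unless a subtree is completed, and over a macroscopic time scale the empirical frequency of white vertices converges to $a_\circ=a_\sV$ by the invariant-measure property of the alternating two-type chain (this is exactly the content of Proposition~2.2 of \cite{stephenson14}, already invoked in the proof of Lemma~\ref{sec:boltzm-rand-maps}, and of the ergodic theory behind \cite{miergwmulti}).

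Having set this up, the proof of the quantitative statement proceeds as follows. I would first handle the regime where we stay inside the first few trees: since each tree under $\M_1$ is a.s.\ finite but has heavy-ish tails, one cannot directly bound a single tree, so instead I work at the level of the forest. The cleanest route is to note that the number $\Lambda^\sV(m)$ differs from $a_\sV m$ by an amount controlled by two ingredients: (a) the fluctuation of $\Upsilon(m)$, the number of completed trees at time $m$, around its mean, which is $O(\sqrt m)$ with exponential tails by Proposition~\ref{sec:boutt-di-franc-2} together with standard exponential bounds for the random walk $\underline X$ (more precisely, for the \L{}ukasiewicz walk whose running infimum governs $\Upsilon$, which has centered increments with small exponential moments because $q$ is regular critical); and (b) within each completed tree, the number of white vertices equals (number of edges of that mobile going from black to white) $+\,1$, so summing over the $\Upsilon(m)$ completed trees plus the one partially explored tree, $\Lambda^\sV(m)$ is a sum of $\Upsilon(m)$ i.i.d.\ contributions (the white-vertex counts of i.i.d.\ mobiles) plus a boundary term of size at most the size of one tree. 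Each i.i.d.\ contribution has exponential moments — this again uses regular criticality, which forces the offspring laws $\mu_\circ,\mu_\bullet$ to have finite exponential moments, hence the total progeny of a single mobile does too (subcritical-type exponential tail for the critical GW tree conditioned\ldots\ actually for a critical tree the total size has only polynomial tails, so here one must be a little careful: one uses that the \emph{number} of trees $\Upsilon(m)\asymp\sqrt m$ is large and applies a large-deviation bound for a sum of $\asymp\sqrt m$ i.i.d.\ variables with stretched-exponential tails, which yields the $\mathrm{oe}(m)$ rate at the scale $m^{3/4}=(\sqrt m)^{3/2}$).

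Putting (a) and (b) together: on the event that $\Upsilon(k)\le C\sqrt m$ for all $k\le Km$ (whose complement has probability $\mathrm{oe}(m)$), $\Lambda^\sV(k)$ is, up to an error bounded by $\max_{j\le C\sqrt m}(\text{size of }j\text{-th tree})$, equal to a sum of at most $C\sqrt m$ i.i.d.\ terms each with mean $a_\sV\cdot(\text{mean tree size})$ and stretched-exponential tails, evaluated at a random index; a union bound over $k$ and a maximal inequality (Etemadi / reflection, or simply a union bound over $O(m)$ values of $k$, each costing $\mathrm{oe}(m)$ and $m\cdot\mathrm{oe}(m)=\mathrm{oe}(m)$) then gives
$$\M_\infty\Big(\max_{0\le k\le Km}\big|\Lambda^\sS(k)-a_\sS\,k\big|>m^{3/4}\Big)=\mathrm{oe}(m)\,,$$
and the convergence in probability on compacts is an immediate corollary. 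The main obstacle, and the point requiring genuine care rather than routine estimation, is step (b): a single critical GW tree has only polynomial-tailed size, so one cannot afford a naive union bound tree-by-tree; the argument must genuinely exploit that one is averaging over $\asymp\sqrt m$ independent trees simultaneously, and must track the index-$\Upsilon(k)$ randomness, so that the stretched-exponential deviation rate $\exp(-c m^{c''})$ emerges at scale $m^{3/4}$ — this is precisely the kind of concentration estimate established in \cite{miergwmulti} and \cite{stephenson14}, and the cleanest write-up is to quote the relevant exponential-tail bounds from there and assemble them as above.
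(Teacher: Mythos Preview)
Your decomposition via completed trees has a genuine gap. You write that the error from the partially explored tree is bounded by $\max_{j\le C\sqrt m}(\text{size of the }j\text{-th tree})$, and then proceed as if this is negligible at scale $m^{3/4}$. It is not. Under $\M_1$ the total progeny $N$ of a critical two-type Galton--Watson tree with finite-variance offspring satisfies $\M_1(N>n)\asymp n^{-1/2}$, so the maximum of $\Theta(\sqrt m)$ independent copies is of order $m$, not $o(m^{3/4})$; in particular $\M_\infty(\max_{j\le C\sqrt m}N_j>m^{3/4})$ does not even tend to $0$, let alone at rate $\mathrm{oe}(m)$. You flag the polynomial-tail issue yourself, but the patch you propose (``a large-deviation bound for a sum of $\asymp\sqrt m$ i.i.d.\ variables with stretched-exponential tails'') still invokes stretched-exponential tails that the tree sizes simply do not have, and in any case the patch addresses the \emph{sum} over completed trees, not the uncontrolled boundary term coming from the single partially explored tree. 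Without a separate argument controlling $|\Lambda^\sV(k)-a_\sV k|$ \emph{inside} one tree uniformly over its exploration, the decomposition does not close.

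The paper avoids this entirely by working with the inverse. It quotes \cite[Proposition~6(ii)]{miergwmulti}, which gives directly $\M_\infty(|G^\sS_k-a_\sS^{-1}k|>k^{3/4})=\mathrm{oe}(k)$, where $G^\sS_k$ is the time of the $k$-th visit to a vertex of type $\sS$. One then observes that $\{|\Lambda^\sS(k)-a_\sS k|>m^{3/4}\}\subseteq\{G^\sS_{a_\sS k+m^{3/4}}\le k\}\cup\{G^\sS_{(a_\sS k-m^{3/4})_+}\ge k\}$, and each of these is contained in a deviation event for $G^\sS$ at scale $m^{3/4}$; a union bound over $k\in[m^{1/2},Km]$ (the range $k<m^{1/2}$ being trivial since $\Lambda^\sS(k)\le k$) finishes. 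The point is that $G^\sS_k$, unlike your tree-by-tree decomposition of $\Lambda^\sS$, is directly amenable to the concentration machinery of \cite{miergwmulti} with no dangling boundary term. If you want to salvage your route, you would need an additional estimate of the form $\M_\infty(\max_{0\le i\le N}|\Lambda^\sV(i)-a_\sV i|>x)$ for a \emph{single} tree of size $N$, which is essentially the same difficulty as the original statement; the clean way is to invert as the paper does.
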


\begin{proof}
  The result is obvious for $\sS=\sE$, so that we suppose $\sS\in\{\sV,\sF\}$. We first note that, since $\Lambda^\sS(k)\leq k$, it suffices to
  prove the same bound with the maximum restricted over indices $k\in
  [m^{1/2},Km]$. Now \cite[Proposition 6 (ii)]{miergwmulti} shows
  that if $G_k^\sS$ is the number of vertices in depth-first
  order (of either type) that have been visited
  before the $k$-th vertex of type~$\sS$ (white if $\sS=\sV$, black if $\sS=\sF$), then $\M_\infty(|G_k^\sS - a_\sS^{-1}\,
  k|>k^{3/4})=\mathrm{oe}(k)$. Now $|\Lambda^\sS(k)-a_\sS\,
  k|>m^{3/4}$ implies that $G^\sS_{a_\sS k+m^{3/4}}\leq k$ or
  $G^\sS_{(a_\sS k-m^{3/4})_+}\geq k$, the probability of which is
  bounded from above by
  \begin{multline*}
   \M_\infty\big(|G^\sS_{a_\sS k+m^{3/4}}-a_\sS^{-1}(a_\sS\, k+m^{3/4})|\geq a_\sS^{-1}m^{3/4}\big)\\
	+\M_\infty\big(|G^\sS_{(a_\sS k-m^{3/4})_+}-a_\sS^{-1}(a_\sS\, k-m^{3/4})_+|\geq a_\sS^{-1}m^{3/4}\big)\, .
\end{multline*}
Taking the maximum over all $k\in
[m^{1/2},Km]\cap \Z$, we see that this quantity is $\mathrm{oe}(m)$,
as claimed. 
\end{proof}

\subsection{Convergence for a conditioned forest}\label{sec:conv-cond-forest}

We now want a conditioned version of Proposition~\ref{sec:conv-encod-proc-2}. We are
going to need the following estimates. Recall the definition~\eqref{hS} of~$h^\sS$, the definition~\eqref{jLA} of~$j_L(A)$, and define $Q^\sS(l,n)=\M_l(N^\sS=n)$. We will also need the notation
$$\tau^\sS_l=\Lambda^\sS(\tau_l)\,.$$
In words, $\tau^\sE_l=\tau_l$ is the number of vertices in the~$l$ first trees of the forest, while $\tau^\sV_l$ (resp.~$\tau^\sF_l$) is the number of white
(resp.\ black) vertices in these trees. 

\begin{lmm}
  \label{sec:conv-encod-proc-1}
Let $\sS\in \{\sV,\sE,\sF\}$. Then
$$\sup_{n \in \A_l^\sS}\left|\, l^2Q^\sS(l,n)-
h^\sS\, j_{1/\sis}\left(\frac{n}{l^2}\right)\right|\build\longrightarrow_{n\to\infty}^{}0\,
.$$
\end{lmm}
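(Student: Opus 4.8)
The plan is to prove a local limit theorem for the random variable $N^\sS$ under the measure $\M_l$, and then read off the stated uniform estimate. The starting point is the observation that $Q^\sS(l,n) = \M_l(N^\sS=n)$ is a convolution: since $\M_l = (\M_1)^{\otimes l}$ and $N^\sS$ is additive over the $l$ tree components, writing $\mathtt q^\sS(n)=\M_1(N^\sS=n)$ we have $Q^\sS(l,n) = \mathtt q^{\sS *l}(n)$, the $l$-fold convolution. So what is really needed is a local central limit theorem for sums of $l$ i.i.d.\ copies of $N^\sS$ under $\M_1$, in the regime $n \asymp l^2$ — precisely the regime in which a random walk with heavy (index $3/2$) tails, conditioned appropriately, produces the stable density $j_{1/\sis}$. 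First I would recall that under $\M_1$ the total progeny $N^\sE=\tau_1$ is the extinction time of a critical two-type Galton--Watson process with small exponential moments; by classical results (e.g. the local limit theorem for critical GW trees, or \cite{miergwmulti} and the estimates in \cite{stephenson14}) one has $\M_1(N^\sE = n) \sim c\, n^{-3/2}$ along the support lattice $\beta^\sE + h^\sE\Z_+$, for an explicit constant $c$. The analogous tail asymptotics for $N^\sV$ and $N^\sF$ follow from Proposition~\ref{sec:conv-encod-proc-2}: since $N^\sS \approx a_\sS N^\sE$ with exponentially small corrections (the $\mathrm{oe}$ bound), the variable $N^\sS/\sqrt{N^\sE}$ concentrates, and $N^\sS$ inherits a $n^{-3/2}$ tail with a modified constant, along the lattice $\beta^\sS + h^\sS \Z_+$.

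The next step is to turn this tail estimate into the stable local limit theorem. One way: $N^\sS$ under $\M_1$ lies in the domain of attraction of a one-sided stable law of index $1/2$, so by the Gnedenko--type local limit theorem for lattice distributions in the domain of a stable law, $h^\sS\!\cdot\! a_n\, Q^\sS(l,n) \to g(x)$ uniformly for $n/a_n^{-1} \to x$ in compacts, where $a_n$ is the stable normalization ($a_l \asymp l^{-2}$ here) and $g$ is the stable density. Matching the normalization: the correct scaling is dictated by the constant $\sigma_q^2 = \CZ_q\rho_q/4$ appearing in Proposition~\ref{sec:boutt-di-franc-2} and the relation $\sie^2 = \rho_q/\CZ_q$, $\sis^2 = a_\sS^{-1}\sie^2$ (one checks $\sis^2$ as in~\eqref{eq:3} is exactly $\rho_q/(\CZ_q a_\sS)$ for each $\sS$, using~\eqref{astar}). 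With this identification, the stable density with the right scale parameter is precisely $A\mapsto j_{1/\sis}(A)$, which has the closed form~\eqref{jLA}. So the computation reduces to: (a) compute the variance-type constant governing the lattice LLT for $N^\sS$ under $\M_1$ — this is where $\sis$ enters — and (b) substitute $n = l^2 (n/l^2)$ and invoke the LLT with $x = n/l^2$, obtaining $l^2 Q^\sS(l,n) \to h^\sS j_{1/\sis}(n/l^2)$, the $h^\sS$ being the lattice span correction. The supremum over $n\in\A^\sS_l$ becomes the uniform version of the LLT over compacts, plus a separate crude tail argument (Chebyshev / the $\mathrm{oe}$ bounds) to handle $n/l^2$ large or small, where $j_{1/\sis}(n/l^2)$ is itself tiny.

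The main obstacle I expect is bookkeeping the lattice structure and matching constants, rather than any deep probabilistic input. Specifically: (i) the support of $N^\sS$ under $\M_l$ is $R^\sS_l \cup (l\beta^\sS + h^\sS\Z_+)$ by Lemma~\ref{sec:boltzm-rand-maps}, so the LLT only holds along this progression and one must be careful that "$\sup_{n\in\A^\sS_l}$" is over exactly this set — the finitely many exceptional values in $R^\sS_l$ contribute $O(l^{-3/2})$ individually (again by an $n^{-3/2}$-type bound, or just boundedness) and $j_{1/\sis}(n/l^2)=O(l^{-3/2}\cdot\text{const})$ there too, so their difference is $o(1)$ uniformly; (ii) getting the constant in the $N^\sS$ tail exactly right for $\sS\in\{\sV,\sF\}$ requires combining the $N^\sE$ tail with the concentration $N^\sS\approx a_\sS N^\sE$ and checking that the scaling factor $a_\sS$ transforms the stable parameter from $1/\sie$ to $1/\sis$ correctly — this is the computation $\sis = \sie/\sqrt{a_\sS}$, consistent with \eqref{eq:3} and \eqref{astar}. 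I would organize the write-up as: (1) reduce to $\M_1$-tail asymptotics via convolution; (2) state the $N^\sE$ tail from \cite{stephenson14,miergwmulti}; (3) transfer to $N^\sV,N^\sF$ using Proposition~\ref{sec:conv-encod-proc-2}; (4) apply the lattice stable LLT, identify the limit as $h^\sS j_{1/\sis}$; (5) handle uniformity and the exceptional lattice points. Steps (1)–(3) and (5) are routine; step (4)'s constant-matching is the only place warranting explicit (short) computation.
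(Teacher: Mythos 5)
Your overall strategy coincides with the paper's: recognize $Q^\sS(l,n)$ as an $l$-fold convolution of i.i.d.\ copies of $N^\sS$ under $\M_1$, verify that $N^\sS$ lies in the domain of attraction of the one-sided stable law of index $1/2$ with the right scale parameter, and then invoke the stable local limit theorem for lattice distributions (the paper cites \cite[Theorem~8.4.1]{BGT}). Where you diverge is in how the domain-of-attraction membership is established. You propose to derive it from tail/local asymptotics $\M_1(N^\sE=n)\sim c\,n^{-3/2}$ for the total-progeny of a critical two-type GW tree, and then to \emph{transfer} these to $N^\sV,N^\sF$ via Proposition~\ref{sec:conv-encod-proc-2}. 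The paper takes a shorter path: the convergence of the second coordinate in Proposition~\ref{sec:boutt-di-franc-2} already gives $\tau_l/l^2\to T_{1/\sie}$ in distribution, and since $\tau_l$ is a sum of $l$ i.i.d.\ copies of $\tau_1$ this is exactly the statement that $\tau_1$ is in the stable-$1/2$ domain of attraction with the correct scale; no explicit tail asymptotics for $\M_1$ are needed. Likewise for $\sS\in\{\sV,\sF\}$, the paper applies Proposition~\ref{sec:conv-encod-proc-2} at the level of the \emph{sums} $\tau_l^\sS=\Lambda^\sS(\tau_l)=a_\sS\tau_l(1+o_\P(1))$, obtaining $\tau_l^\sS/l^2\to a_\sS T_{1/\sie}=T_{1/\sis}$ directly, so the constant $\sis$ comes out automatically and there is no constant-matching step.

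One cautionary remark on your transfer step for $\sS\in\{\sV,\sF\}$: you write that "$N^\sS$ inherits a $n^{-3/2}$ tail," which suggests you intend to transfer a \emph{local} (point-mass) asymptotic across the approximate relation $N^\sS\approx a_\sS N^\sE$. That transfer is genuinely delicate, because the error $|\Lambda^\sS(m)-a_\sS m|$ in Proposition~\ref{sec:conv-encod-proc-2} is of order $m^{3/4}$ on the typical event; this is far larger than the lattice spacing, so a point-mass estimate for $N^\sE$ does not carry over to $N^\sS$ by concentration alone. You do not actually need the local estimate: only the cumulative tail $\P(N^\sS>n)\sim c'n^{-1/2}$ (equivalently, domain-of-attraction membership) is required to apply the lattice stable LLT, and that transfer across $N^\sS\approx a_\sS N^\sE$ is elementary. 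Better still is to avoid the transfer at the $\M_1$ level entirely and argue at the sum level as the paper does. Your concerns (i) and (ii) (exceptional lattice points, uniformity for large/small $n/l^2$) are reasonable things to check, but they are already subsumed in the uniformity statement of \cite[Theorem~8.4.1]{BGT}, which is why the paper does not address them separately; also note that Lemma~\ref{sec:boltzm-rand-maps} guarantees the full support sits in a single congruence class mod $h^\sS$, so the lattice structure is clean.
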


\begin{proof}
  Suppose first that $\sS=\sE$. In this case, a consequence of the
  convergence of the second component in Proposition
  \ref{sec:boutt-di-franc-2} is that 
$$\frac{\tau_l}{l^2}\build\longrightarrow_{l\to\infty}^{(d)}T_{1/\sie}\,
,$$
where we recall that $T_{1/\sie}=\inf\{t\geq 0:\underline{X}_t=-1/\sie\}$ is a.s.\
a continuous function of $X$ under the Wiener measure, due to the fact
that $\underline{X}_{(T_{1/\sie}-\eps)_+}>-1/\sie>\underline{X}_{T_{1/\sie}+\eps}$ a.s.\ for
every $\eps>0$. Moreover, $\tau_l$ under $\M_\infty$ is the sum of~$l$
i.i.d.\ random variables with same law as~$\tau_1$: these are given by the
number of vertices of the first $l$ trees in the infinite forest of
independent random mobiles. Since it is well known that $T_{1/\sie}$ follows a stable distribution
with index $1/2$, with a density given by $j_{1/\sie}$, we conclude that
$\tau_1$ under $\M_\infty$ is in the domain of attraction of this
law. The statement is then a consequence of the local limit theorem
for stable random variables \cite[Theorem~8.4.1]{BGT}. 

The remaining two
cases $\sS\in \{\sV,\sF\}$ are now direct consequences of the case $\sS=\sE$ and of
Proposition \ref{sec:conv-encod-proc-2}, which together imply that we have
\begin{equation}
  \label{eq:4}
  \frac{\tau_l^\sS}{l^2}=\frac{\Lambda^\sS(\tau_l)}{l^2}=\frac{a_\sS\,\tau_l\,(1+o_\P(1))}{l^2}\,,
\end{equation}
where $o_\P(1)$ denotes a quantity that converges to $0$ in
probability. This yields that 
$$\frac{\tau_l^\sS}{l^2}\build\longrightarrow_{l\to\infty}^{(d)}a_\sS
T_{1/\sie}\build=_{}^{(d)}T_{\sqrt{a_\sS}/\sie}=T_{1/\sis}\,,$$
as can be checked using~\eqref{eq:3} and~\eqref{astar}. The conclusion follows by the same
arguments as in the case $\sS=\sE$. 
 \end{proof}

In this section, it is convenient to consider processes whose total
duration is not fixed. We let $\WW$ be the set of real-valued
continuous functions $f$ defined on an interval of the form
$[0,\zeta]$ for some $\zeta=\zeta(f)\in [0,\infty)$. This set is
endowed with the distance
$$\mathrm{dist}(f,g)=\|f(\cdot\wedge \zeta(f))-g(\cdot\wedge
\zeta(g))\|_\infty+|\zeta(f)-\zeta(g)|\,$$ which makes it a complete
separable metric space. For instance, the height process $H$ under the
law $\M_l$ is a function with duration $\zeta(H)=N^\sE$.

Recall the definition of~$\SSL$ given right after~\eqref{eq:3}.

\begin{prp}
  \label{sec:conv-brown-disk-5}
Let $\sS\in \{\sV,\sE,\sF\}$ and $(l_k,n_k)_{k\geq 0}\in \SSL$ for some $L>0$. Then, under
  $\M_{l_k}(\,\cdot\, |\, N^\sS=n_k)$, it holds that
$$\left(\frac{H(a_\sS^{-1}{n_k}\,\cdot)}{\sqrt{{n_k}}},\frac{\Upsilon(a_\sS^{-1}{n_k}\,\cdot)}{\sqrt{{n_k}}},\frac{\hat\ell^{\mathbf{0}}(a_\sS^{-1} {n_k}\,\cdot)}{{n_k}^{1/4}}\right)\build\longrightarrow_{k\to\infty}^{(d)}\left(\frac{2}{\sqrt{a_\sS}\,\sigma_q}(X-\underline{X}),-\sis\,\underline{X},\sqrt{\frac{2\sis}{3}}\Zo\right)\,
,$$ in distribution in the space $\WW^3$ where, in the limit, $X$, $\Zo$ are
understood under the law~$\mathbb{F}^1_L$ defined in Section~\ref{sec:snakes}.
\end{prp}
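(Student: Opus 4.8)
The plan is to derive the conditioned invariance principle from the unconditioned one, Proposition~\ref{sec:boutt-di-franc-2}, by an absolute-continuity argument: under the scaling of the statement, conditioning on $\{N^\sS=n_k\}$ turns out to reproduce exactly the Radon--Nikodym density appearing in the characterization~\eqref{eq:15} of the first-passage bridge law $\mathbb{F}^1_L$.

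First I would set up the key identity. Fix $y\in(0,L)$ and put $l'_k=\lfloor y\,\sis\sqrt{n_k}\rfloor$ (so $l'_k<l_k$ for $k$ large). Cutting the forest at the end of its first $l'_k$ complete trees produces a $\sigma$-field $\mathcal{H}_k$ with respect to which the restrictions of $H$, $\Upsilon$, $\hat\ell^{\mathbf{0}}$ to $[0,\tau_{l'_k}]$, as well as $\tau_{l'_k}$ and $\tau^\sS_{l'_k}=\Lambda^\sS(\tau_{l'_k})$, are measurable; and, since the tree components are i.i.d.\ under $\M_\infty$, conditionally on $\mathcal{H}_k$ the trees $l'_k+1,\dots,l_k$ form a fresh $\M_{l_k-l'_k}$-forest. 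Hence, for every bounded measurable functional $\Phi$ of the restricted processes (viewed as elements of $\WW^3$),
\begin{equation}\label{eq:discrete-ac}
  \E_{\M_{l_k}}\!\big[\Phi\,\big|\,N^\sS=n_k\big]=\E_{\M_\infty}\!\left[\Phi\cdot D_k\right],\qquad
  D_k=\frac{Q^\sS\big(l_k-l'_k,\;n_k-\tau^\sS_{l'_k}\big)}{Q^\sS(l_k,n_k)},
\end{equation}
which is the discrete analogue of~\eqref{eq:15}; summing over the possible values of $\tau^\sS_{l'_k}$ gives $\E_{\M_\infty}[D_k]=1$.

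Next I would pass to the limit in~\eqref{eq:discrete-ac}. Applying Proposition~\ref{sec:boutt-di-franc-2} with $m=a_\sS^{-1}n_k$ and using $\sis^2=\sie^2/a_\sS$ (which follows from~\eqref{eq:3} and~\eqref{astar}), one sees that, under $\M_\infty$, the triple $\big(H(a_\sS^{-1}n_k\cdot)/\sqrt{n_k},\,\Upsilon(a_\sS^{-1}n_k\cdot)/\sqrt{n_k},\,\hat\ell^{\mathbf{0}}(a_\sS^{-1}n_k\cdot)/n_k^{1/4}\big)$ converges to the very limit triple written in Proposition~\ref{sec:conv-brown-disk-5}, with the sole difference that $X$ is now a standard Brownian motion; in particular the restrictions to the first $l'_k$ trees converge to the restriction of that limit to $[0,T_y]$, where $T_y=\inf\{u:X_u=-y\}$, and $\tau_{l'_k}/(a_\sS^{-1}n_k)\to T_y$. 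Combining this with~\eqref{eq:4} (which gives $\tau^\sS_{l'_k}/n_k\to T_y$) and with the uniform local limit estimate of Lemma~\ref{sec:conv-encod-proc-1}, and using the elementary scaling identity $j_c(a)=c^{-2}j_1(a/c^2)$, a direct computation shows that $D_k$ converges, jointly with the processes, to $\ind_{\{T_y<1\}}\,j_{L-y}(1-T_y)/j_L(1)$: indeed
$$D_k=\frac{l_k^2}{(l_k-l'_k)^2}\cdot\frac{(l_k-l'_k)^2\,Q^\sS\big(l_k-l'_k,n_k-\tau^\sS_{l'_k}\big)}{l_k^2\,Q^\sS(l_k,n_k)}\longrightarrow\frac{L^2}{(L-y)^2}\cdot\frac{j_{1/\sis}\!\big((1-T_y)/((L-y)^2\sis^2)\big)}{j_{1/\sis}\!\big(1/(L^2\sis^2)\big)}=\frac{j_{L-y}(1-T_y)}{j_L(1)}\,,$$
the $h^\sS$ and the $\sis$ factors cancelling. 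Since $\E_{\M_\infty}[D_k]=1$ while the limit also has mass~$1$ (because $\mathbb{F}^1_L$ is a probability measure with $\mathbb{F}^1_L(T_y<1)=1$), the family $(D_k)$ is uniformly integrable, so passing to the limit in~\eqref{eq:discrete-ac} and invoking~\eqref{eq:15} shows that, under $\M_{l_k}(\cdot\,|\,N^\sS=n_k)$, the three rescaled processes restricted to $[0,\tau_{l'_k}]$ converge to the restriction to $[0,T_y]$ of the limit of Proposition~\ref{sec:conv-brown-disk-5}, now with $X,\Zo$ governed by $\mathbb{F}^1_L$ (which is meaningful thanks to the characterization in Proposition~\ref{sec:first-pass-bridg-1}).

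Finally I would let $y\uparrow L$. Since $T_y\uparrow T_L=1$, $\mathbb{F}^1_L$-a.s., this upgrades the previous partial convergence to convergence on the whole (rescaled) duration, which is the statement of Proposition~\ref{sec:conv-brown-disk-5}, provided one also knows that the total duration $\zeta=N^\sE$, rescaled by $a_\sS^{-1}n_k$, tends to~$1$ (immediate for $\sS=\sE$, and for $\sS\in\{\sV,\sF\}$ obtained again from Lemma~\ref{sec:conv-encod-proc-1} and Proposition~\ref{sec:conv-encod-proc-2}). The main obstacle is precisely this limit $y\uparrow L$: because the conditioning becomes singular near the terminal time, one must show, uniformly in $k$, that the contribution of the last $o_{y\uparrow L}(1)$-fraction of the forest to the oscillation of $H$, $\Upsilon$, $\hat\ell^{\mathbf{0}}$ and to the total duration is negligible. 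This endpoint tightness can be obtained by transferring the tightness of the unconditioned processes through the uniform bound of Lemma~\ref{sec:conv-encod-proc-1} (which controls the relevant conditional probabilities both from above and from below) together with standard tail estimates for the height process of a critical Galton--Watson forest, but it is the step that requires genuine work.
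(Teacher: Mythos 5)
Your identification of the limit law is exactly the paper's argument: the discrete absolute-continuity identity~\eqref{eq:5}, the unconditioned invariance principle of Proposition~\ref{sec:boutt-di-franc-2}, the local limit theorem of Lemma~\ref{sec:conv-encod-proc-1}, and the characterization of $\mathbb{F}^1_L$ from Proposition~\ref{sec:first-pass-bridg-1}. This establishes that any subsequential limit must be the claimed one, restricted to $\mathcal{G}_{T_y}$ for each $y<L$.

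However, you have left a genuine gap in the tightness step, and you acknowledge it yourself (``this endpoint tightness\dots\ is the step that requires genuine work''). Gesturing at ``standard tail estimates for the height process of a critical Galton--Watson forest'' transferred through the local limit bound is not a proof; moreover, the conditioning event $\{N^\sS=n_k\}$ has probability of order $n_k^{-1}$, so a naive transfer of unconditioned tail bounds through the density picks up a factor of order $n_k$ and is not obviously useful near the endpoint. The paper closes this gap with a short and clean argument based on exchangeability of the trees: under $\M_{l_k}(\cdot\,|\,N^\sS=n_k)$, the $l_k$ labeled tree components are exchangeable, so the modulus of continuity of any coordinate $Y_k$ of $\Xi_k$ on the time window corresponding to the last $l_k-l'_k$ trees has the \emph{same distribution} as that on the window corresponding to the first $l_k-l'_k$ trees. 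Choosing $l'_k\sim l_k/2$ (so $L'=L/2$), both windows are then controlled by the already-established convergence of $\Xi'_k$ to $\Xi'_\infty$, and $\limsup_k\M_{l_k}(\omega(Y_k,\delta)\geq\eps)\leq 2\,\mathbb{F}^1_L(\omega(Y_\infty,\delta,[0,T_{L/2}])\geq\eps/2)\to 0$ as $\delta\to 0$. This is precisely where your write-up needs a concrete argument, and the symmetry trick is the key missing idea.
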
 

\begin{proof}
  For simplicity, let $\Xi_k$ denote the triple appearing in the
  left-hand side of the convergence. Denote by $\mathcal{F}_{p}$ the
  $\sigma$-field generated by the~$p$ first trees of the generic
  (canonical process) infinite forest
  $\bff=(\bt_1,\bt_2,\ldots)$, together with their labels. Let~$G$ 
  be measurable with respect to $\mathcal{F}_{l_k'}$, with $l_k'<
  l_k$. Then we have
  \begin{equation}
    \label{eq:5}
    \M_{l_k}\left[G\, \big|\, N^\sS=n_k\right]=\M_\infty\left[G
  \, \frac{Q^\sS(l_k-l'_k,n_k-\tau^\sS_{l_k'})}{Q^\sS(l_k,n_k)}\right],
\end{equation}
where it should be understood that the quantity in the expectation is~$0$ whenever $\tau^\sS_{l_k'}> n_k$. 
Now, we impose that $G=\Phi(\Xi_k')$ is a continuous, bounded function
of the triple of processes
$$\Xi_k'=\left(\frac{H(a_\sS^{-1}n_k\cdot\wedge
    \tau_{l'_k})}{\sqrt{n_k}},\frac{\Upsilon({a_\sS^{-1}n_k\cdot\wedge
      \tau_{l'_k}})}{\sqrt{n_k}},\frac{{\hat\ell}^{\mathbf{0}}({a_\sS^{-1}n_k\cdot
    \wedge \tau_{l'_k}})}{n_k^{1/4}}\right)\, ,$$
where we assume that $l'_k\sim L'\sis\sqrt{n_k}$ for some $0<L'<L$.
Proposition \ref{sec:boutt-di-franc-2} shows the convergence in
distribution
$$\left(\frac{H({a_\sS^{-1}{n_k}\,\cdot})}{\sqrt{a_\sS^{-1}{n_k}}},\frac{\Upsilon({a_\sS^{-1}{n_k}\,\cdot})}{\sqrt{a_\sS^{-1}{n_k}}},\frac{\hat\ell^{\mathbf{0}}({a_\sS^{-1}{n_k}\,\cdot})}{\big(a_\sS^{-1}{n_k}\big)^{1/4}}\right)\build\longrightarrow_{{k}\to\infty}^{(d)}
\left(\frac{2}{\sigma_q}(X-\underline{X}),-\sie\,\underline{X},\sqrt{\frac{2\sie}{3}}\,\Zo\right)\,
,$$ where the limit is understood under the law $\P$. Using the
convergence of the second component, the asymptotic behavior of $l'_k$
and the fact that $a_\sS\sis^2=\sie^2$, it follows that
$$\frac{\tau_{l'_k}}{a_\sS^{-1}n_k}\build\longrightarrow_{k\to\infty}^{(d)}T_{L'}\,
,$$ and that this convergence holds jointly with the previous
one. From this, it follows that~$\Xi_k'$ converges in distribution
under $\M_\infty$ to the triple
$$\Xi_\infty'=\left(\frac{2}{\sqrt{a_\sS}\,\sigma_q}(X-\underline{X})_{\,\cdot\,\wedge
    T_{L'}},-\sis\,\underline{X}_{\,\cdot\,\wedge
    T_{L'}},\sqrt{\frac{2\sis}{3}}\Zo_{\,\cdot\,\wedge T_{L'}}\right)$$ and an
application of~\eqref{eq:4} implies that $\tau^\sS_{l_k'}/{n_k}\to T_{L'}$
jointly with the above convergence. By the Skorokhod representation
theorem, we may assume that the probability space is chosen in such a
way that these convergences hold in the almost-sure sense, and then~\eqref{eq:5} together with Lemma~\ref{sec:conv-encod-proc-1} implies
that
$$\M_{l_k}\left[\Phi(\Xi'_k)\, |\, N^\sS={n_k}\right]\build\longrightarrow_{k\to\infty}^{(d)}
\E\left[\Phi(\Xi'_\infty)\,\frac{L^2}{(L-{L'})^2}\frac{\, j_{1/\sis}\bigg(\dfrac{(1-T_{L'})}{\sis^2\,(L-{L'})^2}\bigg)}{\,j_{1/\sis}\bigg(\dfrac{1}{\sis^2 L^2}\bigg)}\right]$$
and the limit can be re-expressed as
$$\E\left[\Phi(\Xi'_\infty)\,\frac{j_{L-{L'}}(1-T_{L'})}{j_L(1)}\right]=\mathbb{F}^1_L[\Phi(\Xi'_\infty)]\,
.$$
By definition, $\Phi(\Xi'_\infty)$ is
$\mathcal{G}_{T_{L'}}$-measurable and, by Galmarino's test, we have
$\mathcal{G}_{T_{L'}}=\sigma(X_{\cdot\,\wedge T_{L'}},\Zo_{\,\cdot\,\wedge
  T_{L'}})$, so that, if it exits, the limit of the triple considered in
the statement of the proposition necessarily has the claimed law, by
virtue of Proposition~\ref{sec:first-pass-bridg-1}. 

To conclude the proof, it remains to prove that the laws of the
processes under consideration are tight in~$\WW^3$. We can argue as
follows. Let~$f$ be a continuous function defined on an interval $I$,
and $J\subseteq I$ be a subinterval of $I$. Denote by
$$\omega(f,\delta,J)=\sup_{s,t\in J,\,
  	|t-s|\leq \delta}|f(t)-f(s)|$$ the modulus of continuity of $f$
restricted to $J$, and let $\omega(f,\delta)=\omega(f,\delta,I)$.

Here, let $Y_k$ denote either of the components of $\Xi_k$. 
Then, under $\M_{l_k}(\,\cdot\, |\, N^\sS={n_k})$,
\begin{align*}
  \omega(Y_k,\delta)
\leq 
 \omega\big(Y_k,\delta,[0,a_\sS\tau_{l'_k}/{n_k}]\big)
 +  \omega\big(Y_k,\delta,[a_\sS\tau_{l'_k}/{n_k},a_\sS\tau_{l_k}/{n_k}]\big)
\end{align*}
while the second component has same distribution as
$$
\omega\big(Y_k,\delta,[0,a_\sS\tau_{l_k-l_k'}/{n_k}]\big)$$
by a symmetry argument (the $l_k$ trees of the labeled forest are
exchangeable). Choosing $l'_k\sim l_k/2$, we obtain from the
convergence of $\Xi_k'$ (for ${L'}=L/2$) that 
\begin{align*}
  \limsup_{n\to\infty}\M_{l_k}\big(\omega(Y_k,\delta)\geq \eps\big)\hspace{-40mm}\\
&\leq \limsup_{n\to\infty}
\M_{l_k}\big(\omega\big(Y_k,\delta,[0,a_\sS\tau_{l_k'}/{n_k}]\big)\geq
\eps/2\big)+\M_{l_k}\big(\omega\big(Y_k,\delta,[0,a_\sS\tau_{l_k-l_k'}/{n_k}]\big)\geq \eps/2\big)\\
&\leq 2\,\mathbb{F}^1_L\big(\omega\big(Y_\infty,\delta,[0,T_{L/2}]\big)\geq \eps/2\big)\, ,
\end{align*}
where $Y_\infty$ is the limit of $Y_k$ (for instance
$Y_\infty=2(X-\underline{X})/\sqrt{a_\sS}\,\sigma_q$ if $Y_k$ is the
first component of $\Xi_k$). This quantity
converges to $0$ as $\delta\to0$, for any fixed $\eps>0$. From this,
it is an immediate consequence of the Ascoli--Arzela theorem that the
laws of $\Xi_k$, $k\geq 1$ are relatively compact in~$\WW^3$.
\end{proof}

\begin{crl}
Let $\sS\in \{\sV,\sE,\sF\}$ and $(l_k,n_k)_{k\geq 0}\in \SSL$ for some $L>0$. Then, under
  $\M_{l_k}(\,\cdot\, |\, N^\sS=n_k)$, it holds that
$$\left(\frac{H(a_\sS^{-1}{n_k}\,\cdot)}{\sqrt{{n_k}}},\frac{\Upsilon(a_\sS^{-1}{n_k}\,\cdot)}{\sqrt{{n_k}}},\frac{\hat\ell(a_\sS^{-1} {n_k}\,\cdot)}{{n_k}^{1/4}}\right)\build\longrightarrow_{k\to\infty}^{(d)}\left(\frac{2}{\sqrt{a_\sS}\,\sigma_q}(X-\underline{X}),-\sis\,\underline{X},\sqrt{\frac{2\sis}{3}}Z\right)\,
,$$ in distribution in the space $\WW^3$ where, in the limit, $X$, $Z$ are understood under the law~$\mathbb{F}^1_L$, defined in Section~\ref{sec:snakes}.
\end{crl}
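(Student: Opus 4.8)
The plan is to deduce the corollary from Proposition~\ref{sec:conv-brown-disk-5} by handling the discrepancy between the two label processes. By definition $\hat\ell(i)-\hat\ell^{\mathbf{0}}(i)=\ell(\rho_{(i)})$ is the label of the root of the tree containing $v_i$, and since $v_i$ belongs to the $(\Upsilon(i)+1)$-th tree we have $\rho_{(i)}=\rho_{\Upsilon(i)+1}$; so the discrepancy is obtained by composing the root-label sequence $p\mapsto\ell(\rho_p)$ with the exploration process $\Upsilon$. The first step is to record that, under $\M_{l_k}$, the sequence $(\ell(\rho_p))_{1\le p\le l_k}$ is (up to the convention $\ell(\rho_1)=0$) a discrete bridge with shifted geometric steps in the sense of Section~\ref{sec:random-mobiles} — in particular its increments have mean $0$ and variance $2$ — and that it is built independently of the tree structures and of the within-tree label displacements. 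Since the event $\{N^\sS=n_k\}$ is measurable with respect to the tree structures alone, this independence persists under $\M_{l_k}(\cdot\mid N^\sS=n_k)$; in particular $(\ell(\rho_p))_{1\le p\le l_k}$ is independent of the triple of Proposition~\ref{sec:conv-brown-disk-5}, whose last coordinate is exactly the rescaling of $\hat\ell^{\mathbf{0}}$.

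Second, I would apply the functional invariance principle for bridges of i.i.d.\ sums; as the shifted geometric step has exponential moments, this is standard. Parametrizing the root labels by $y=p/(\sis\sqrt{n_k})\in[0,L]$ (recall $l_k\sim L\sis\sqrt{n_k}$), one gets $\ell(\rho_{\lfloor\sis\sqrt{n_k}\,y\rfloor})/n_k^{1/4}\to\sqrt{2\sis}\,\mathrm{b}_y$, $y\in[0,L]$, where $\mathrm{b}$ is a standard Brownian bridge of duration $L$; the constant $\sqrt{2\sis}$ is fixed by the step variance $2$, the relation $\sqrt{l_k}/n_k^{1/4}\to\sqrt{L\sis}$, and the scaling between standard Brownian bridges of durations $1$ and $L$. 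By the independence recorded above, this convergence holds jointly with the one of Proposition~\ref{sec:conv-brown-disk-5}, the limiting bridge $\mathrm{b}$ being independent of $(X,\Zo)$, exactly as in~\eqref{defZ}. Invoking Skorokhod's representation theorem, I would then assume all of these convergences are almost sure.

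Third, I would pass to the limit in the composition. The convergence of the second coordinate of Proposition~\ref{sec:conv-brown-disk-5} gives $(\Upsilon(a_\sS^{-1}n_k\,\cdot)+1)/(\sis\sqrt{n_k})\to-\underline{X}_\cdot$ uniformly, and since $X$ is a first-passage bridge under $\mathbb{F}^1_L$ one has $-\underline{X}_u=T^{-1}(u)$ for every $u\in[0,1]$; in particular $-\underline{X}$ takes values in $[0,L]$, where $\mathrm{b}$ is defined and (a.s.) continuous. A standard continuity-of-composition argument — the inner function being nondecreasing, the outer uniformly continuous — then yields $\ell(\rho_{\Upsilon(a_\sS^{-1}n_k\,\cdot)+1})/n_k^{1/4}\to\sqrt{2\sis}\,\mathrm{b}_{T^{-1}(\cdot)}$. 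Adding this to the convergence of the rescaled $\hat\ell^{\mathbf{0}}$ from Proposition~\ref{sec:conv-brown-disk-5}, and using $\sqrt{2\sis}=\sqrt{2\sis/3}\,\sqrt{3}$, gives
$$\frac{\hat\ell(a_\sS^{-1}n_k\,\cdot)}{n_k^{1/4}}\;\longrightarrow\;\sqrt{\tfrac{2\sis}{3}}\,\Zo+\sqrt{2\sis}\,\mathrm{b}_{T^{-1}(\cdot)}=\sqrt{\tfrac{2\sis}{3}}\Big(\Zo+\sqrt{3}\,\mathrm{b}_{T^{-1}(\cdot)}\Big)=\sqrt{\tfrac{2\sis}{3}}\,Z\,,$$
which is precisely the claimed last coordinate; the first two coordinates and the tightness in $\WW^3$ are inherited unchanged from Proposition~\ref{sec:conv-brown-disk-5}, and the off-by-one in $\Upsilon+1$ disappears after rescaling.

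The step I expect to be the most delicate is the last composition: one must check that the joint almost sure convergence of $(\Upsilon(a_\sS^{-1}n_k\,\cdot)+1)/(\sis\sqrt{n_k})$ and of the rescaled root-label bridge really does force the composed process to converge, with limit $\mathrm{b}\circ T^{-1}$ for the \emph{same} bridge $\mathrm{b}$ that appears in~\eqref{defZ}, independent of $(X,\Zo)$. All that is needed for this is elementary: the running infimum of a first-passage bridge is, almost surely, a continuous nondecreasing functional of $X$ realizing $T^{-1}$ and staying in $[0,L]$, together with the independence recorded in the first step; the rest is routine.
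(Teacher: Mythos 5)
Your proof is correct and follows essentially the same route as the paper's: both decompose $\hat\ell(i)=\hat\ell^{\mathbf{0}}(i)+\mathrm{B}(\Upsilon(i))$, where $\mathrm{B}$ is the discrete bridge of root labels, apply a functional invariance principle for the bridge (the paper cites \cite[Proposition~7]{bettinelli11b} for this step), combine with the convergence of the rescaled $\Upsilon$ to $-\underline{X}=T^{-1}(\cdot)$, and identify the limit with $Z$ via~\eqref{defZ}. You spell out more explicitly the independence of the root-label bridge from the tree structures and within-tree displacements (which justifies both the joint convergence and the independence of $\mathrm{b}$ from $(X,\Zo)$), and you give more detail on the continuity-of-composition step, but the argument is the same in substance.
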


\begin{proof}
  It suffices to apply the preceding proposition, noting that one can
  get the following representation for the label process $\hat\ell$ in terms
 of $\hat\ell^{\mathbf{0}}$: 
 $$\hat\ell(i)=\hat\ell^{\mathbf{0}}(i)+\mathrm{B}({\Upsilon(i)})\, ,$$ 
 where $\mathrm{B}$ is a discrete bridge with shifted geometric step,
 with law $\nu_{l_k}$ defined in Section~\ref{sec:random-mobiles}. It holds that, under our hypotheses,
 $\mathrm{B}({\sis\sqrt{n_k}\,\cdot})/\sqrt{2\sis}n_k^{1/4}$ converges in
 distribution to a Brownian bridge~$\mathrm{b}$
 with duration~$L$ (see \cite[Proposition~7]{bettinelli11b}). Putting things together, we obtain that
$$\frac{\hat\ell({a_\sS^{-1}{n_k}\,\cdot})}{{n_k}^{1/4}}=\frac{\hat\ell^{\mathbf{0}}({a_\sS^{-1}{n_k}\,\cdot})}{{n_k}^{1/4}}+\frac{\mathrm{B}\big({\sis\sqrt{{n_k}}\,
    (\Upsilon({a_\sS^{-1}{n_k}\,\cdot})/\sis\sqrt{{n_k}})}\big)}{{n_k}^{1/4}}\, ,$$
which converges in distribution to
$s\mapsto\sqrt{2\sis/3}\,\Zo_s+\sqrt{2\sis}\,\mathrm{b}_{-\underline{X}_s}$, jointly with the
rescaled processes $H$ and $\Upsilon$. By definition~\eqref{defZ}, this yields the result. 
\end{proof}

Finally, we note that the convergence of $\Lambda^\sS$ stated in
Proposition \ref{sec:conv-encod-proc-2} still holds under conditioned
forests. Indeed, since the conditioning event $\{N^\sS=n_k\}$ has a
probability $Q^\sS(l_k,n_k)=\Theta(l_k^{-2})=\Theta(n_k^{-1})$
by Lemma~\ref{sec:conv-encod-proc-1}, we obtain that for any $c'>0$, 
\begin{multline*}
\M_{l_k}\left(\max_{0\leq i\leq N^\sE}|\Lambda^\sS(i)-a_\sS\,i|>n_k^{3/4}\, |\,
N^\sS=n_k\right)\\
\leq cn_k\,\M_\infty\left(\max_{0\leq i\leq
  c'n_k}|\Lambda^\sS(i)-a_\sS\,i|>n_k^{3/4}\right)+\M_{l_k}(N^\sE>c'n_k\, |\, N^\sS=n_k)\, .
\end{multline*}
for some constant $c>0$. The first term is $\mathrm{oe}(n_k)$ by
Proposition~\ref{sec:conv-encod-proc-2}. The second term is
equal to~$0$ if $\sS=\sE$ and $c'>1$. If $\sS=\sV$, it can
be bounded by
$$cn_k\,\M_\infty(\Lambda^\sV(c'n_k)\leq n_k)=\mathrm{oe}(n_k)\, ,$$
as soon as $c'$ is chosen strictly larger than $a_\sV^{-1}$, again by Proposition~\ref{sec:conv-encod-proc-2}. The
argument is the same if $\sS=\sF$. In particular, as $\Lambda^\sS(N^\sE)=N^\sS$, this implies that,
under $\M_{l_k}(\,\cdot\, |\, N^\sS=n_k)$, one has 
$$\frac{N^\sS}{N^\sE}\longrightarrow a_\sS$$
in probability as $k\to\infty$. This implies the following
reformulation and refinement of the preceding corollary. 

\begin{crl}
  \label{sec:conv-encod-proc-3}
Let $\sS\in \{\sV,\sE,\sF\}$ and $(l_k,n_k)_{k\geq 0}\in \SSL$ for some $L>0$. Then, under
  $\M_{l_k}(\,\cdot\, |\, N^\sS=n_k)$, it holds that
$$\left(\frac{H(N^\sE\,\cdot)}{\sqrt{{n_k}}},\frac{\Upsilon(N^\sE\,\cdot)}{\sqrt{{n_k}}},\frac{\hat\ell(N^\sE\,\cdot)}{{n_k}^{1/4}}
\right)\build\longrightarrow_{k\to\infty}^{(d)}
\left(\frac{2}{\sqrt{a_\sS}\,\sigma_q}(X-\underline{X}),-\sis\,\underline{X},\sqrt{\frac{2\sis}{3}}Z\right)\,
,$$ in distribution in the space $\WW^3$, where, in the limit, $X$, $Z$ are understood under the law~$\mathbb{F}^1_L$. Moreover, one has, still under under
  $\M_{l_k}(\,\cdot\, |\, N^\sS=n_k)$, 
$$\left(\frac{\Lambda^\sS(N^\sE t)}{N^\sS},\,0\leq t\leq
  1\right)\build\longrightarrow_{k\to\infty}^{}\mathrm{Id}_{[0,1]}$$ 
in probability. 
\end{crl}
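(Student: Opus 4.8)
The plan is to obtain both assertions from the preceding corollary together with the two facts established immediately before it: the concentration bound $\M_{l_k}(\max_{0\leq i\leq N^\sE}|\Lambda^\sS(i)-a_\sS\,i|>n_k^{3/4}\mid N^\sS=n_k)=\mathrm{oe}(n_k)$ and its consequence $N^\sS/N^\sE\to a_\sS$ in probability under $\M_{l_k}(\,\cdot\,|\,N^\sS=n_k)$. Throughout I would set $\lambda_k=a_\sS N^\sE/N^\sS=a_\sS N^\sE/n_k$, so that $\lambda_k\to 1$ in probability; the whole content of the corollary is that replacing the deterministic time change $a_\sS^{-1}n_k\,\cdot$ of the preceding corollary by the random one $N^\sE\,\cdot=a_\sS^{-1}n_k\,\lambda_k\,\cdot$ changes nothing in the limit, while simultaneously pinning down the asymptotics of $\Lambda^\sS$ along the exploration.

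For the first convergence I would write $Y_k$ for any one of the three components of the triple in the statement and $Y_k'$ for the corresponding component of the preceding corollary, with (continuous) limit $Y_\infty$ on $[0,1]$; a direct substitution gives $Y_k(t)=Y_k'(\lambda_k t)$ for $t\in[0,1]$, since $a_\sS^{-1}n_k\,\lambda_k=N^\sE$. Using Skorokhod's representation theorem to make $Y_k'\to Y_\infty$ in $\WW$ almost sure --- and noting that the duration map is $1$-Lipschitz on $\WW$, so $\zeta(Y_k')=\lambda_k\to 1$ comes for free --- I would bound $|Y_k(t)-Y_\infty(t)|$ by $|Y_k'(\lambda_k t)-Y_\infty(\lambda_k t\wedge 1)|+|Y_\infty(\lambda_k t\wedge 1)-Y_\infty(t)|$: the first term is at most $\mathrm{dist}(Y_k',Y_\infty)$, the second tends to $0$ uniformly in $t$ by uniform continuity of $Y_\infty$ on $[0,1]$ together with $\sup_{t\in[0,1]}|\lambda_k t\wedge 1-t|\leq|\lambda_k-1|$. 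As $\zeta(Y_k)=1=\zeta(Y_\infty)$, this yields $Y_k\to Y_\infty$ in $\WW$, and carrying the argument out jointly for the three coordinates gives the first convergence in $\WW^3$. Alternatively one can avoid Skorokhod and simply invoke the control of the modulus of continuity proved inside the proof of Proposition~\ref{sec:conv-brown-disk-5}, which makes the random reparametrisation by $\lambda_k$ harmless.

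For the second convergence I would work on the event $\{\max_{0\leq i\leq N^\sE}|\Lambda^\sS(i)-a_\sS\,i|\leq n_k^{3/4}\}$, whose probability tends to $1$ by the concentration bound. On this event one has, uniformly in $t\in[0,1]$, $|\Lambda^\sS(N^\sE t)/N^\sS-\lambda_k t|\leq (n_k^{3/4}+1)/n_k\to 0$, using $N^\sS=n_k$ and $a_\sS N^\sE/N^\sS=\lambda_k$; combined with $\lambda_k\to 1$ in probability this gives $\sup_{t\in[0,1]}|\Lambda^\sS(N^\sE t)/N^\sS-t|\to 0$ in probability, which is the claim.

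The only mildly delicate point is the random time change by $\lambda_k$ in the first part: one must check it does not disturb convergence in the metric of $\WW$, which simultaneously sees the sup-norm after truncation at the respective durations and the difference of the durations. Both aspects are controlled by the continuity of the limiting processes and by $\lambda_k\to 1$ (equivalently, by the equicontinuity already established for Proposition~\ref{sec:conv-brown-disk-5}); everything else is routine bookkeeping, and the case $\sS=\sE$ is trivial since then $a_\sS=1$, $N^\sE=n_k$ and $\Lambda^\sE$ is the identity map.
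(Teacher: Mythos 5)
Your proposal is correct and follows exactly the route the paper intends: the paper derives this corollary from the preceding (unnumbered) corollary plus the conditioned concentration bound and the consequence $N^\sS/N^\sE \to a_\sS$, saying simply ``This implies the following reformulation,'' and you supply precisely the implicit reparametrisation argument (random time change by $\lambda_k = a_\sS N^\sE/n_k \to 1$, controlled via Skorokhod or equicontinuity) together with the direct estimate for $\Lambda^\sS$ on the concentration event. Nothing is missing and no step fails.
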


\subsection{Convergence of the white contour and label processes}\label{sec:conv-white-cont}

Finally, we consider a variant of the latest corollary where the height
process is replaced by the slightly more convenient contour
processes. First, given the forest~$F$ with possibly infinitely many trees,
we let~$\bar C$ be its contour process, defined as follows. We first add edges linking the roots of consecutive trees as we did before (see for instance the left part of Figure~\ref{coding}). We then let $c_0$, $c_1$\dots\ be the list of corners of the trees, arranged in contour order. The purpose of the added edges linking the roots is to ``split in two'' the root corners of the trees: a tree with~$p$ edges will thus have $2p$+1 corners. Finally, we let $\bar C(i)$ be the distance between~$c_i$ and the root of the tree to which it belongs. Note that there is slight difference with the contour
function~$C$ defined in Section~\ref{sec:subs}, where a downstep was separating the contours of successive trees, instead of the horizontal step we have here. It is a
standard fact, proved in \cite[Chapter~2.4]{duqleg02}, that the contour
and height process of a forest are asymptotically similar in the following
sense. First, let $f(i)+1$ be the number of distinct vertices incident
to the corners $c_0$, $c_1$, \ldots, $c_i$ (these vertices being $v_0$, \ldots, $v_{f(i)}$). Then it holds that 
$f(i)\leq i$ for every $i\geq 0$, and for every $m\geq 0$, one has 
$$\sup_{0\leq i\leq m}\big|\bar C(i)-H(f(i))\big|\leq 1+\sup_{0\leq i\leq
  m}\big|H(i+1)-H(i)\big|$$
and 
\begin{equation}
  \label{eq:17}
  \max_{0\leq i\leq m}\left|f(i)-\frac{i}{2}\right|\leq
  1+\frac{1}{2}\max_{0\leq i\leq m}H_i\, .
\end{equation}
From this, and the convergence of the rescaled height process stated
in Corollary~\ref{sec:conv-encod-proc-3}, it
follows easily that under the same hypotheses, 
$$\frac{\bar C(2N^\sE\,\cdot)}{\sqrt{n_k}}\build\longrightarrow_{k\to\infty}^{(d)}\frac{2}{\sqrt{a_\sS}\,
    \sigma_q}(X-\underline{X})\, ,$$ where $X$ is understood under
$\mathbb{F}^1_L$.  Now, we want to consider the \emph{white contour
process}~$\bar C^\circ$, defined as follows. We let $c^\circ_0$, $c^\circ_1$, \dots, $c^\circ_{N^\sE-1}$ be the list of corners that are incident to white vertices, arranged in contour order as above. Then 2$\bar C^\circ(i)$ is the distance between~$c^\circ_i$ and the root of the tree to which it belongs (note that this number is even). In the contour process, white vertices are visited once in every pair of steps, except at times when one of the trees has been fully
explored. The number of such exceptions is $l_k=O(\sqrt{n_k})$, so
clearly the preceding convergence implies
$$\frac{\bar C^\circ(N^\sE\cdot)}{\sqrt{n_k}}\build\longrightarrow_{k\to\infty}^{(d)}\frac{1}{\sqrt{a_\sS}\,
    \sigma_q}(X-\underline{X})\, ,$$
where the limit is understood
under $\mathbb{F}^1_L$. 
Define $\Upsilon^\circ(i)$ to be the number of completely explored
trees when visiting~$c^\circ_i$, as well as $\ell^\circ(i)$ to be the
label of~$c^\circ_i$. Beware that the definitions of~$\bar C^\circ$,
$\Upsilon^\circ$ and~$\ell^\circ$ involve corners listed in contour
order, instead of vertices listed in depth-first order, as in the
definitions of~$H$, $\Upsilon$ and~$\hat\ell$. Similar arguments
entail the following joint convergence.

\begin{crl}
  \label{corlo}
Let $\sS\in \{\sV,\sE,\sF\}$ and $(l_k,n_k)_{k\geq 0}\in \SSL$ for some $L>0$. Then, under
  $\M_{l_k}(\,\cdot\, |\, N^\sS=n_k)$, it holds that
$$\left(\frac{\bar C^\circ(N^\sE\,\cdot )}{\sqrt{{n_k}}},\frac{\Upsilon^\circ(N^\sE\,\cdot )}{\sqrt{{n_k}}}, \frac{\ell^\circ(N^\sE\,\cdot)}{{n_k}^{1/4}}\right)\build\longrightarrow_{k\to\infty}^{(d)}
\left(\frac{1}{\sqrt{a_\sS}\,\sigma_q}(X-\underline{X}),-\sis\,\underline{X},\sqrt{\frac{2\sis}{3}}Z\right)\,
,$$
in distribution in the space $\WW^3$, where, in the limit, $X$, $Z$ are understood under the law~$\mathbb{F}^1_L$.
\end{crl}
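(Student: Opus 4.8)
The plan is to deduce Corollary~\ref{corlo} from Corollary~\ref{sec:conv-encod-proc-3} by a single change of the time parameter, exactly along the lines of the classical comparison between the height and contour processes of a forest (\cite[Chapter~2.4]{duqleg02}) that was already used, just before the statement, to pass from~$H$ to~$\bar C$ and then to~$\bar C^\circ$.

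First I would introduce the relevant time-change. For each white-contour index~$i$, let $\psi(i)$ denote the index, in the depth-first ordering of \emph{all} vertices of~$F$, of the white vertex incident to the $i$-th white corner~$c^\circ_i$. Then~$\psi$ is non-decreasing, and one has the pathwise identities $2\bar C^\circ(i)=H(\psi(i))$, $\Upsilon^\circ(i)=\Upsilon(\psi(i))$ and $\ell^\circ(i)=\hat\ell(\psi(i))$: the first because the distance from~$c^\circ_i$ to the root of its tree is the height of its incident vertex, the second because $c^\circ_i$ and that vertex belong to the same tree, and the third because $\psi(i)$ indexes a white vertex, so that there is no ambiguity in~$\hat\ell$.

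Second, I would prove that $\psi(N^\sE\,\cdot)/N^\sE$ converges, uniformly on $[0,1]$ and in probability, to $\mathrm{Id}_{[0,1]}$. Two ingredients enter. Within each mobile of~$F$, the corners visited in contour order alternate between white and black vertices, so the number of white corners among the first~$m$ corners of the full contour of~$F$ differs from $m/2$ by at most the number of trees met so far, namely $O(l_k)=O(\sqrt{n_k})$; and the number $f(m)$ of distinct vertices incident to the first~$m$ full-contour corners satisfies $|f(m)-m/2|\le 1+\tfrac12\max_{0\le j\le m}H(j)$, as recalled above. Since Corollary~\ref{sec:conv-encod-proc-3} gives $\max_{0\le j\le N^\sE}H(j)=O_\P(\sqrt{n_k})=o_\P(N^\sE)$, using also $N^\sE\asymp n_k$, composing the two facts yields $\psi(i)=i+o_\P(N^\sE)$ uniformly in~$i$; the total number of white corners being $N^\sE(1+o(1))$, rescaling its index by~$N^\sE$ is harmless.

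Finally I would compose. By Corollary~\ref{sec:conv-encod-proc-3} and Skorokhod's representation theorem, one may assume that $\big(H(N^\sE\cdot)/\sqrt{n_k},\,\Upsilon(N^\sE\cdot)/\sqrt{n_k},\,\hat\ell(N^\sE\cdot)/n_k^{1/4}\big)$ converges almost surely in~$\WW^3$ to $\big(\tfrac{2}{\sqrt{a_\sS}\,\sigma_q}(X-\underline X),\,-\sis\,\underline X,\,\sqrt{2\sis/3}\,Z\big)$ under~$\mathbb{F}^1_L$, jointly with $\psi(N^\sE\cdot)/N^\sE\to\mathrm{Id}_{[0,1]}$. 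As the limiting processes are continuous, composition with a time-change converging uniformly to the identity preserves uniform convergence; plugging this into the three identities of the second paragraph, and dividing the contour component by~$2$, produces exactly the announced joint convergence. The main obstacle will be the second step --- checking that white/black alternation fails only at the $O(l_k)$ tree boundaries and turning this into the uniform estimate $\psi(i)=i+o_\P(N^\sE)$; once this is in hand, the remainder is the routine ``composition of convergences'' argument already used above, which I would keep brief.
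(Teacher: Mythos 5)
Your overall strategy---compare the white-contour data to the depth-first data of Corollary~\ref{sec:conv-encod-proc-3} through a time change, then compose---is the approach the paper has in mind, which it merely sketches by appeal to the Duquesne--Le Gall comparison. However, the specific time change~$\psi$ does not behave as you claim, and this is a genuine gap. First, $\psi$ is not non-decreasing: the contour revisits each internal vertex once for each of its children plus one, and every upward step sends~$\psi$ back to an ancestor's depth-first index. Second, $\sup_i|\psi(i)-i|$ is \emph{not} $o_\P(N^\sE)$: at the last white corner of tree~$j$, which is incident again to the root~$\rho_j$, one has $\psi=\tau_{j-1}$ while the white-corner index is roughly $\tau_j-1$, so the discrepancy is essentially the number of vertices of tree~$j$. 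Under $\mathbb{F}^1_L$ the largest excursion has macroscopic length almost surely, so this is $\Theta(N^\sE)$ with probability one, and the uniform convergence $\psi(N^\sE\cdot)/N^\sE\to\mathrm{Id}_{[0,1]}$ fails.

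The two ingredients you cite actually control a different quantity. Writing $\phi(i)$ for the full-contour index of $c^\circ_i$, the alternation bound gives $\phi(i)=2i+O(l_k)$, and~\eqref{eq:17} gives $|f(m)-m/2|\le 1+\tfrac12\max H$; composing yields $f(\phi(i))=i+o_\P(N^\sE)$ uniformly, but this does \emph{not} give the same for~$\psi$, since $\psi(i)\neq f(\phi(i))$ in general. Indeed $v_{f(\phi(i))}$ is the last \emph{newly} visited vertex and is a descendant of the current vertex $v_{\psi(i)}$, and the index gap $f(\phi(i))-\psi(i)$ equals the number of already-explored descendants of $v_{\psi(i)}$, again possibly macroscopic. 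This is precisely why the Duquesne--Le Gall estimate $\sup_i|\bar C(i)-H(f(i))|\le 1+\sup_j|H(j+1)-H(j)|$ compares the two heights \emph{in value} rather than asserting proximity of the indices: the extra height of $v_{f(\phi(i))}$ above $v_{\psi(i)}$ is bounded by the next single increment of~$H$, which is $o_\P(\sqrt{n_k})$ because the rescaled height process converges to a continuous limit. To repair the proof, replace the false claim $\psi(i)=i+o_\P(N^\sE)$ by the monotone time change $i\mapsto f(\phi(i))$ together with this value-level comparison, and argue similarly for $\ell^\circ$ (the label discrepancy between $v_{\psi(i)}$ and $v_{f(\phi(i))}$ is controlled by their $o_\P(\sqrt{n_k})$ tree-distance through the snake structure) and for $\Upsilon^\circ$ (which agrees at $\psi(i)$ and $f(\phi(i))$ since both vertices lie in the same tree).
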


\section{Proof of the invariance principle}\label{sec:conv-brown-disk-4}

In this section, we prove Theorem \ref{sec:admiss-regul-crit} and
Theorem \ref{thmboltz_nfree}. The arguments, originating in
\cite[Section~8]{legall11}, are now very standard, and have been
applied successfully in
\cite{BeLG,addarioalbenque2013simple,BeJaMi14,abr14} in
particular. Our approach is an easy adaptation of the arguments that
can be found in either of these papers, so here we will be a bit
sketchy.
Let~$q$ be a regular critical sequence as in the previous section.

\subsection{Convergence of conditioned pointed
  maps}\label{sec:conv-cond-point}

The goal of this subsection is to prove the following analog of Theorem~\ref{sec:admiss-regul-crit}
under the pointed laws $\W^{\bullet,\sS}_{l,n}$, defined by~\eqref{WpointS}.

\begin{thm}\label{sec:admiss-regul-crit-2}
Let~$\sS$ denote one of the symbols $\sV$, $\sE$, $\sF$, and $(l_k,n_k)_{k\geq 0}\in \SSL$ for some $L>0$. For $k\ge 0$, denote by~$M_k^\bullet$ a random map with distribution $\W^{\bullet,\sS}_{l_k,n_k}$. Then
$$\left(\frac{4\sis^2}{9}\, n_k\right)^{-1/4}M^\bullet_k\build\longrightarrow_{k\to\infty}^{(d)}\bd_{L}$$
in distribution for the Gromov--Hausdorff topology. 
\end{thm}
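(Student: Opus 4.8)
The plan is to mimic the proof of Theorem~\ref{THMDISK} (Sections~\ref{sec:subs}--\ref{sec:identification-limit}), replacing the combinatorial input on quadrangulations-with-a-boundary by the Bouttier--Di~Francesco--Guitter encoding of Boltzmann maps and the convergence of its encoding processes that was established in Corollary~\ref{corlo}. Fix $\sS$, $L>0$ and $(l_k,n_k)_{k\geq 0}\in\SSL$, and realise $M^\bullet_k$ via the BDG bijection from a forest of mobiles with law $\M_{l_k}(\,\cdot\,|\,N^\sS=n_k)$ (Proposition~\ref{sec:boutt-di-franc-1}), keeping the distinguished vertex $v_*$. As in Section~\ref{sec:subs}, encode $M^\bullet_k$ by its white contour process $\bar C^\circ$, the auxiliary process $\Upsilon^\circ$, the label process $\ell^\circ$, and the matrix of graph distances $D_k(i,j)=d_{M^\bullet_k}(v^\circ_i,v^\circ_j)$ between white corners, suitably rescaled by $(4\sis^2 n_k/9)^{-1/4}$. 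Corollary~\ref{corlo} gives the joint convergence of the first three processes (rescaled) to $((X-\underline X)/(\sqrt{a_\sS}\sigma_q),-\sis\underline X,\sqrt{2\sis/3}\,Z)$ under $\mathbb F^1_L$; after a change of time-scale sending $N^\sE$ to $n_k$ via $\Lambda^\sS$ and using $a_\sS\sis^2=\sie^2$, $\sigma_q^2=\CZ_q\rho_q/4$, one checks the limit process is exactly a first-passage bridge $X$ of duration $1$ attaining $-L$, together with its snake $Z$, matching the definition in Section~\ref{sec:defin-main-prop}. The Bouttier--Di~Francesco--Guitter analogue of the Schaeffer upper bound~\eqref{eq:20}--\eqref{eq:23} (label-type bounds for maps with faces of even degree, cf.\ \cite{BdFGmobiles}) yields $D_{(k)}\leq d_Z$ and $\{d_X=0\}\subseteq\{D=0\}$ in the limit, hence tightness of $(D_{(k)})$ and existence of a subsequential limit $D$ with $D\le D^*$, exactly as in \cite{bettinelli11b} and Section~\ref{sec:subs}.

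Next I would transfer wholesale the structural facts of Section~\ref{sec:subs} to this setting: the limiting space $\mathbf D=[0,1]/\{D=0\}$ is a topological disk with $\bp^{-1}(\partial\mathbf D)=\{s:X_s=\underline X_s\}$ (the analogue of~\eqref{eq:bdry}), the identification of $\{D=0\}$ as (\ref{identification}\ref{idX})--(\ref{identification}\ref{idZ}), invariance under re-rooting at a uniform vertex (Lemma~\ref{sec:conv-brown-disk-3}, whose proof only uses that $v_*$ is uniform given the map — true here because the BDG bijection together with Proposition~\ref{sec:boutt-di-franc-1} realises $\W^{\bullet,\sS}_{l_k,n_k}$), the existence of geodesics to $x_*$ avoiding the boundary and their uniqueness $\mu$-a.e.\ (Lemma~\ref{sec:conv-brown-disk-2}), and $D(s,s_*)=Z_s-Z_{s_*}$ with $D\in\mathcal D$ (Lemma~\ref{lemD}). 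Several of these are quoted from \cite{bettinelli11b,bettinelli14gbs} and hold verbatim since they are statements about the limiting processes $(X,Z,D)$ under $\mathbb F^1_L$, which are the same as in Theorem~\ref{THMDISK}; the only genuinely new point is to re-derive the upper-bound input from the BDG encoding rather than the quadrangulation encoding, which I expect to be routine given the discussion in Section~\ref{sec:boutt-di-franc}.

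Then the core of the argument is to reprove Theorem~\ref{sec:conv-brown-disk-1}, i.e.\ $D=D^*$, in this context; the cleanest route is to reprove the slice-gluing identity of Lemma~\ref{lem:Dfinite}. For this one organises the forest of mobiles according to the excursions of $X$ above $\underline X$: each such excursion encodes a single mobile, and by the same scaling/excursion-theory argument its rescaled intrinsic distance converges to a Brownian slice $\tilde D^i$ (Theorem~\ref{sec:scaling-limit-slices-3}), the convergence holding jointly over $i$ and jointly with $(\bar C^\circ,\Upsilon^\circ,\ell^\circ)$ as in~\eqref{eq:slices}. Here I need the BDG analogue of the statement that a geodesic in $M^\bullet_k$ restricted to one mobile can be ``straightened'' to avoid traversing the images of that mobile's maximal geodesic and shuttle — this is exactly the kind of surgery done in the proof of Lemma~\ref{lem:Dfinite}, using the geodesic study of \cite{bettinelli14gbs}, and it applies because for $\sS$-Boltzmann maps the mobile still decomposes, after cutting along the maximal geodesic, into a slice with the same combinatorial structure. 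With Claims~\ref{claim1}--\ref{claim3} re-established verbatim (they are statements about $(\mathbf D,D)$ and the geodesic $\gamma$, hence formally unchanged), one gets $D(s,t)=\inf\sum_j\tilde D(s_j,t_j)$ over chains with $d_Z(t_j,s_{j+1})=0$ for a.e.\ $(s,t)$, hence $D\ge D^*$ a.e., hence $D=D^*$ everywhere by continuity, hence the unique subsequential limit is $(\mathbf D,D^*)=\bd_L$; tightness then upgrades subsequential convergence to full convergence, exactly as after Theorem~\ref{sec:conv-brown-disk-1}. Finally, the Gromov--Hausdorff convergence $(4\sis^2 n_k/9)^{-1/4}M^\bullet_k\to\bd_L$ follows by the correspondence argument of Section~\ref{sec:identification-limit}, using the uniform convergence of $D_{(k)}$.

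The main obstacle I anticipate is not any single estimate but making the slice-decomposition argument (Lemma~\ref{lem:Dfinite}) genuinely watertight in the BDG setting: one must check that the notion of ``slice coded by a mobile'' (maximal geodesic, shuttle, the projection $p$ analogous to Section~\ref{sec:slices}) makes sense for mobiles with black vertices of arbitrary degree, that the scaling limit of a uniform such slice is again the Brownian slice of Section~\ref{sec:known-scaling-limit} (this should follow from Corollary~\ref{corlo} applied to a single tree, i.e.\ $l=1$, noting the limit processes coincide up to the deterministic constants), and that the re-rooting/geodesic surgeries of the proof of Lemma~\ref{lem:Dfinite} — in particular the use of \cite[Propositions~17, 23, Lemmas~18, 21]{bettinelli14gbs} — have exact analogues for Boltzmann-distributed maps. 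Since \cite{bettinelli14gbs} already works in the generality of bipartite maps with a boundary, I expect this to go through, but it is where the real work lies; the authors will likely, as they do elsewhere, invoke the relevant statements and be ``a bit sketchy.''
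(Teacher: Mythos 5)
Your proposal takes a genuinely different route from the paper, and there is a gap in it. The paper does \emph{not} re-derive the slice machinery of Sections~\ref{sec:known-scaling-limit}--\ref{sec:conv-brown-disk} in the BDG setting; instead, it uses the Le~Gall re-rooting argument. Concretely, after establishing tightness of the rescaled distance functions $D'_{(k)}$ (via the BDG bound~\eqref{eq:7}) and extracting a subsequential limit $D'$ satisfying $D'\in\mathcal D$ and $D'(s,s_*)=Z_s-Z_{s_*}$ (so $D'\le D^*$), the paper proves a re-rooting invariance lemma (Lemma~\ref{sec:proof-invar-princ}): $D'(U,V)\stackrel{(d)}{=}D'(s_*,U)$, established by coupling the continuum times $U,V$ with uniformly chosen white vertices in the forest, and exploiting that $v_*$ is uniform among vertices of $M_k^\bullet$ under the BDG/pointed-Boltzmann correspondence. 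Then the chain
$E[D'(U,V)]=E[D'(s_*,U)]=E[Z_U-\inf Z]=E[D^*(s_*,U)]=E[D^*(U,V)]$
together with $D'\le D^*$ forces $D'=D^*$. Crucially, the equality $E[D^*(s_*,U)]=E[D^*(U,V)]$ is imported wholesale from the quadrangulation case: it is exactly the combination of Theorem~\ref{sec:conv-brown-disk-1} ($D=D^*$ for quadrangulations) and Lemma~\ref{sec:conv-brown-disk-3} (re-rooting invariance of the quadrangulation limit), both statements about the continuum law $\mathbb F^1_L$. The paper explicitly remarks that this is the \emph{only} place where Sections~\ref{sec:scha-biject-first} and~\ref{sec:conv-brown-disk} enter.

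Your plan to reprove Lemma~\ref{lem:Dfinite} and Theorem~\ref{sec:conv-brown-disk-1} directly from the BDG encoding is where the real problem lies, and you underestimate it: the proof of Lemma~\ref{lem:Dfinite} in the paper is driven by the convergence~\eqref{eq:slices} of \emph{quadrangulation} slices, which in turn rests on Theorem~\ref{sec:scaling-limit-slices-3}, the modified Schaeffer bijection of Section~\ref{sec:slices} (maximal geodesic, shuttle, projection $p$, bounds \eqref{eq:20}--\eqref{eq:23}), and the geodesic study in \cite{bettinelli14gbs}. None of this is developed for mobiles in the paper, and a BDG analogue of the slice bijection would be a genuinely new construction, not a cosmetic change. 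You acknowledge this obstacle and guess the authors ``invoke the relevant statements and be a bit sketchy,'' but that guess is wrong: they sidestep it entirely. The lesson is that once the continuum identity $D=D^*$ under $\mathbb F^1_L$ is known (from the quadrangulation case), re-rooting invariance plus the one-sided bound $D'\le D^*$ suffice to pin down the Boltzmann limit — there is no need to touch the discrete slice surgery again. If you want to salvage your plan, you should replace your last two paragraphs by the re-rooting lemma and the expectation-matching argument, and prove the re-rooting lemma by the coupling-with-uniform-vertices argument (which requires the depth-first/contour index conversions $K$, $K'$, $g$, and the estimate~\eqref{eq:19}).
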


Fix $\sS\in \{\sV,\sE,\sF\}$ and $(l_k,n_k)_{k\geq 0}\in \SSL$ for some $L>0$. For every $k\geq 0$, consider a forest $(F,\ell)$ of labeled mobiles with law
$\M_{l_k}(\,\cdot\, |\, N^\sS=n_k)$, and let~$M_k^\bullet$ be the random map with
distribution~$\W^{\bullet,\sS}_{l_k,n_k}$ obtained by the process of Proposition~\ref{sec:boutt-di-franc-1}.

The proof of the convergence of~$M_k^\bullet$ to the Brownian disk
follows closely in spirit that of \cite[Section~8]{legall11}. Let
$$D'_k(i,j)=d_{M_k^\bullet}(c^\circ_i,c^\circ_j)\, ,\qquad 0\leq i,j\leq
N^\sE$$
(with the convention that $c^\circ_{N^\sE}=c^\circ_0$) 
and extend $D'_k$ to a continuous function on $[0,N^\sE]^2$ by a
formula similar to~\eqref{eq:6}. In this way, $D'_k$ satisfies the
triangle inequality, and one has 
\begin{equation}\label{eq:7}
  D'_k(i,j)\leq\ell^\circ(i)+\ell^\circ(j)-2\max\left(\min_{[i\wedge
    j,i\vee j]} \ell^\circ,\min_{[i\vee j,N^\sE]\cup[0,i\wedge
    j]}\ell^\circ\right)+2\, ,
\end{equation}
see for instance \cite[Lemma~3.1]{legall06} for the special case of
$p$-mobiles (in which black vertices all have degree~$p$,
which corresponds to $2p$-angulations via the BDG bijection), but
the proof in this general context is the same. Clearly, it also holds
that, if~$c^\circ_i$ and~$c^\circ_j$ are incident to the same vertex, which means that $\bar C^\circ(i)=\bar C^\circ(j)=\min_{r\in
  [i\wedge j,i\vee j]} \bar C^\circ(r)$ and
$\Upsilon^\circ(i)=\Upsilon^\circ(j)$ then $D'_k(i,j)=0$. This
generalizes to all $s$, $t\in [0,N^\sE]$ rather than just integer values.

Now for $s$, $t\in [0,1]$ let $D'_{(k)}(s,t)=(4\sis^2
n_k/9)^{-1/4}D'_k(N^\sE s,N^\sE t)$. The same proof as \cite[Proposition~3.2]{legall06} 
(the key ingredients being~\eqref{eq:7} and the convergence of the
rescaled labeled process~$\ell^\circ$, established in Corollary~\ref{corlo}) shows
that, under $\M_{l_k}$, the laws of $D'_{(k)}$ are tight in the space
$\mathcal{C}([0,1]^2,\R)$. Therefore, from any extraction, one can further extract a
subsequence along which one has the following joint convergence in
distribution under $\M_{l_k}(\cdot\, |\, N^\sS=n_k)$,
\begin{equation}
  \label{eq:8}
  \left(\frac{\bar C^\circ(N^\sE\,
    \cdot)}{\sqrt{{n_k}}/(\sqrt{a_\sS}\sigma_q)},\frac{\Upsilon^\circ(N^\sE\,
    \cdot)}{\sis\sqrt{{n_k}}},\frac{\ell^\circ(N^\sE\, \cdot)}{(4\sis^2
    {n_k}/9)^{1/4}},D'_{(k)}\right)\build\longrightarrow_{k\to\infty}^{(d)}\left(X-\un{X},-
  \un{X},Z,D'\right)
\end{equation}
where $D'$ is some random continuous function on $[0,1]^2$, and
$(X,Z)$ is the snake process under the law $\mathbb{F}^1_L$ introduced in Section~\ref{sec:snakes}. 
Recall the definition of the set $\mathcal{D}$ in Section~\ref{sec:br-disks}, as well as the definition of~$s_*$, the a.s.\
unique time in $[0,1]$ such that $Z_{s_*}=\inf Z$. We have the
following result, which follows from a simple limiting argument, based
on~\eqref{eq:7} and the discussion below, as well as \eqref{eq:10} for
the last point. 

\begin{lmm}
Almost-surely, it holds that 
\begin{itemize}
\item 
the random function $D'$ is a pseudo-metric on $[0,1]$, such that
$D'\in \mathcal{D}$, and 
\item
for every
$s\in [0,1]$, $D'(s,s_*)=Z_s-Z_{s_*}$. 
\end{itemize} 
\end{lmm}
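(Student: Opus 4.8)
The plan is to establish the two bullet points by passing to the limit in the corresponding discrete facts, exactly as in \cite[Section~8]{legall11}.

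\medskip

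\textbf{First bullet: $D'\in\mathcal D$.} First I would note that $D'$ is a pseudo-metric: each $D'_k$ satisfies the triangle inequality and is symmetric and non-negative (being a bilinear interpolation of a genuine graph metric), and these properties are preserved under the uniform limit~\eqref{eq:8}. Next I would check the two defining conditions of $\mathcal D$. For the inequality $D'\leq d_Z$: take $s,t\in[0,1]$ and integers $i_k,j_k$ with $i_k/N^\sE\to s$, $j_k/N^\sE\to t$; the bound~\eqref{eq:7}, after rescaling by $(4\sis^2 n_k/9)^{-1/4}$ and using the joint convergence of $\ell^\circ$ (Corollary~\ref{corlo}) together with the explicit constant matching $(4\sis^2/9)^{1/4}=(\text{label normalization})$, converges to $Z_s+Z_t-2\max(\underline Z_{s,t},\underline Z_{t,s})=d_Z(s,t)$. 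Hence $D'(s,t)\leq d_Z(s,t)$. For the inclusion $\{d_X=0\}\subseteq\{D'=0\}$: if $d_X(s,t)=0$, then as recalled in the excerpt this forces $X_s=X_t=\underline X_{s,t}$ (using~\eqref{identification}); choosing discrete approximants so that $\bar C^\circ(i_k)=\bar C^\circ(j_k)=\min_{[i_k\wedge j_k,i_k\vee j_k]}\bar C^\circ$ and $\Upsilon^\circ(i_k)=\Upsilon^\circ(j_k)$ — which is possible by the convergence of $\bar C^\circ$ to $X-\underline X$ and of $\Upsilon^\circ$ to $-\underline X$, exactly as in the argument producing the $v_{j_n(m)}=v_{i_n(m+1)}$ identifications in the proof of Lemma~\ref{sec:scaling-limit-slices-2} — the corners $c^\circ_{i_k}$ and $c^\circ_{j_k}$ are incident to the same vertex of $M_k^\bullet$, so $D'_k(i_k,j_k)=0$, and the limit gives $D'(s,t)=0$.

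\medskip

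\textbf{Second bullet: $D'(s,s_*)=Z_s-Z_{s_*}$.} This is the analogue of Lemma~\ref{lemD} and of \cite[Section~8]{legall11}. The upper bound $D'(s,s_*)\leq Z_s-Z_{s_*}$ is immediate from $D'\leq d_Z$ and $d_Z(s,s_*)=Z_s+Z_{s_*}-2\underline Z_{\cdots}=Z_s-Z_{s_*}$ (since $Z_{s_*}=\inf Z$, whichever directed arc is used the minimum is $Z_{s_*}$). For the matching lower bound I would use the discrete identity~\eqref{eq:10}: in $M_k^\bullet$ one has $d_{M_k^\bullet}(v,v_*)=\ell(v)-\min\ell+1$, so for a corner $c^\circ_i$ incident to a white vertex $v$, $D'_k$ between $c^\circ_i$ and a corner realizing the minimum label differs from $\ell^\circ(i)-\min\ell^\circ$ by $O(1)$. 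After rescaling by $(4\sis^2 n_k/9)^{-1/4}$ and using that the (rescaled) minimum label converges to $Z_{s_*}$ and the rescaled position of the minimizing corner converges to $s_*$ (by Corollary~\ref{corlo} and a.s.\ uniqueness of the argmin, as in \cite[Lemma~11]{bettinelli11b}), the limit yields $D'(s,s_*)=Z_s-Z_{s_*}$.

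\medskip

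\textbf{Main obstacle.} The routine points are genuinely routine; the one place requiring care is the lower bound in the second bullet, i.e.\ identifying the limit position of the minimum-label corner. One must make sure that the minimizer of the rescaled $\ell^\circ$ converges (along the subsequence) to the minimizer $s_*$ of $Z$, which uses the a.s.\ uniqueness of $\operatorname{argmin} Z$ under $\mathbb F^1_L$ together with the uniform convergence $\ell^\circ(N^\sE\cdot)/(4\sis^2 n_k/9)^{1/4}\to Z$; this is standard but is the only step where a soft continuity argument is not enough and uniqueness of the argmin must be invoked. I would simply cite \cite[Lemma~11]{bettinelli11b} (uniqueness of $s_*$) and \cite[Section~8]{legall11} for the packaging of this limiting argument, since the statement explicitly says the lemma follows from ``a simple limiting argument.''
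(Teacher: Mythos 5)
Your proof is correct and follows exactly the "simple limiting argument" the paper points to, filling in the details via the discrete bound~\eqref{eq:7}, the observation that corners incident to the same white vertex have $D'_k=0$, the identity~\eqref{eq:10}, and the scaling convergences of Corollary~\ref{corlo}. The only cosmetic slip is attributing $d_X(s,t)=0\Rightarrow X_s=X_t=\underline X_{s,t}$ to the characterization~\eqref{identification} of $\{D=0\}$; in fact this follows immediately from the definition~\eqref{dX} of $d_X$, but this does not affect the argument.
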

 
Let $D^*$ be the Brownian disk distance defined from the process
$(X,Z)$ by~\eqref{dstar}. By definition, it holds that $D'\leq D^*$. 
The conclusion will follow from the following re-rooting property. 

\begin{lmm}
  \label{sec:proof-invar-princ}
Let $U$, $V$ be two independent uniform random variables in $[0,1]$,
independent of the other random variables under consideration. Then 
$D'(U,V)$ and $D'(s_*,U)$ have the same distribution. 
\end{lmm}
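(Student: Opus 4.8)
The plan is to follow the re-rooting argument of \cite[Section~8]{legall11} (see also \cite{abr14,BeLG}), whose core in our setting is an exchangeability property of the distinguished vertex. Recall that $M_k^\bullet$ is built, through the Bouttier--Di~Francesco--Guitter bijection, from a labeled mobile forest of law $\M_{l_k}(\,\cdot\,|\,N^\sS=n_k)$, and that $M_k^\bullet$ has law $\W^{\bullet,\sS}_{l_k,n_k}$. Since $W^\bullet(\bm,v_*)=W(\bm)$ does not depend on $v_*$, conditionally on the underlying unpointed map $M_k$ the distinguished vertex $v_*$ is uniformly distributed over $\sV(M_k)$. Hence, if $W$ and $W'$ are two further vertices chosen independently and uniformly in $\sV(M_k)$ given $M_k$, then, conditionally on $M_k$ and outside an event of vanishing probability, the triple $(v_*,W,W')$ consists of three i.i.d.\ uniform vertices; in particular $(M_k,v_*,W')$ and $(M_k,W,W')$ have the same law, so that $d_{M_k}(v_*,W')\law d_{M_k}(W,W')$.

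It then remains to pass to the limit in this identity after rescaling by $(4\sis^2 n_k/9)^{-1/4}$. For the right-hand side, I would use the standard fact that independent uniform vertices converge, jointly with the encoding processes, to independent points sampled from the limiting area measure --- equivalently, that their rescaled positions along the white contour converge to independent uniform random variables $U$, $V$ on $[0,1]$ --- so that, combined with the convergence \eqref{eq:8} and the uniform continuity of $D'$, one gets $(4\sis^2 n_k/9)^{-1/4}\,d_{M_k}(W,W')\to D'(U,V)$ in distribution. For the left-hand side, formula \eqref{eq:10} gives $d_{M_k}(v_*,W')=\ell(W')-\min_{\sV_\circ}\ell+1$; using the convergence of the rescaled white label process from Corollary~\ref{corlo} and the same convergence of uniform vertices, this rescaled quantity converges in distribution to $Z_{V'}-\inf Z$ for a uniform $V'$. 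By the displayed identity $D'(s,s_*)=Z_s-Z_{s_*}$ established just above (and $Z_{s_*}=\inf Z$), this limit equals $D'(s_*,V')$. Comparing the two limits yields $D'(U,V)\law D'(s_*,V')$, which is the assertion since $V'\law U$.

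Conceptually, the lemma records the familiar fact that in the scaling limit the distinguished point $s_*$ --- characterised by $D'(s_*,\cdot)=Z_\cdot-\inf Z$ --- is indistinguishable in law from a point sampled from the area measure, which is precisely why $v_*$, a uniform vertex in the discrete picture, may be treated on the same footing as the generic marked vertices $W$, $W'$. The one genuinely technical input, and the step most deserving of care, is the joint convergence of uniformly sampled vertices towards area-measure points, together with all the encoding processes of Corollary~\ref{corlo}; this is classical and is handled exactly as in \cite{legall11,abr14}. The exchangeability input and the passage to the limit are otherwise immediate, in keeping with the "easy adaptation'' character of this part of the argument, so I do not anticipate a substantial obstacle beyond bookkeeping of the various joint convergences.
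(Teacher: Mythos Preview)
Your proposal is correct and follows essentially the same approach as the paper: both exploit that $v_*$ is uniform on $\sV(M_k)$ conditionally on the unpointed map, deduce the discrete identity $d_{M_k}(v_*,W')\law d_{M_k}(W,W')$, and pass to the limit using \eqref{eq:10} on one side and the convergence of $D'_{(k)}$ on the other. The paper spells out the ``classical'' step you defer to --- coupling uniform vertices with uniform times in $[0,1]$ --- by constructing an explicit map $g$ from white-vertex indices to white-contour times and proving $g(N^\sV\cdot)/N^\sE\to\mathrm{Id}_{[0,1]}$ in probability, but this is precisely the bookkeeping you anticipated.
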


\begin{proof}
  This is again obtained by a limiting argument. The idea is to couple
  the random variables~$U$ and~$V$ with two uniformly chosen vertices
  of~$M_k^\bullet$. 

  For $i\in \{1,2,\ldots,N^\sV\}$, let $g(i)$ be the first time~$j$
  such that $c^\circ_j$ is the $i$-th white vertex in depth-first
  order. We also let $g(0)=0$ and extend by linear interpolation~$g$
  into a continuous increasing function on $[0,N^\sV]$. 
Recall the definition~\eqref{lambd} of
  $\Lambda^\sV(i)$ (for $0\leq i\leq N^\sE$), the number of white
  vertices among $v_0$, \ldots, $v_i$, where $v_0$, $v_1$\ldots\ are listed in 
  depth-first order. For $i\in \{1,2,\ldots,N^\sV\}$, let
$$K(i)=\inf\{j\in\{0,1,\ldots,N^\sE\}: \Lambda^\sV(j)=i\}\, ,$$
so by definition, $v_{K(i)}$ is the $i$-th visited white vertex in
depth-first order. 
Recall also from Section~\ref{sec:conv-white-cont} that $f(j)+1$ is
the number of distinct vertices incident to the corners
$c_0$, \ldots, $c_j$, so if we let
$$K'(i)=\inf\{j\ge 0:f(j)=i\}\, ,$$
then $c_{K'(i)}$ is the first time of visit of the vertex $v_i$ in
contour order. Consequently, $K'(K(i))$ is the first time of visit, in
the contour sequence $c_0$, $c_1$\ldots, of $v_{K(i)}$. Finally, since
$g(i)$ is the first index~$j$ such that $c^\circ_j$ is incident to
$v_{K(i)}$, we have that
\begin{equation}
  \label{eq:18}
  g(i)=K'(K(i))/2+R_k(i)
\end{equation}
where the error term satisfies $\sup_{1\leq i\leq N^\sV}|R_k(i)|\leq
l_k=O(\sqrt{n_k})$, recalling the discussion of Section~\ref{sec:conv-white-cont}. 

It follows from the last part
of Corollary~\ref{sec:conv-encod-proc-3} and from~\eqref{eq:17}
respectively  that
$$\frac{K(N^\sV \cdot)}{N^\sE}\build\longrightarrow_{k\to\infty}^{}\mathrm{Id}_{[0,1]}\,
,\qquad
\frac{K'(N^\sE\,\cdot)}{2N^\sE}\build\longrightarrow_{k\to\infty}^{}\mathrm{Id}_{[0,1]}\, ,$$
in probability under
$\M_{l_k}(\,\cdot\, |\, N^\sS=n_k)$. 
From
this and~\eqref{eq:18}, we conclude that 
\begin{equation}
  \label{eq:19}
  \frac{g(N^\sV
  \cdot)}{N^\sE}\build\longrightarrow_{k\to\infty}^{}\mathrm{Id}_{[0,1]}\,
,
\end{equation}
still in the same sense. 

Now, if $I$ is a uniform random variable on
$\{1,2,\ldots,N^\sV\}$ independent of the rest, then $v_{K(I)}$ is
uniformly distributed among the white vertices of the forest, that is,
among the vertices of the map $M_k^\bullet$ distinct from the distinguished
vertex~$v_*$. Therefore, if we let $v_{(1)}=v_{K(\lceil N^\sV
  U\rceil)}$ and $v_{(2)}=v_{K(\lceil N^\sV V\rceil)}$, then
$v_{(1)}$, $v_{(2)}$ can be coupled with two independent uniform
vertices $v'_{(1)}$, $v'_{(2)}$ of~$M_k^\bullet$ in such a way that the
conditional probability given $M_k^\bullet$ that $v_{(i)}\neq v'_{(i)}$,
$i\in\{1,2\}$, is at most $1/|\sV(M_k^\bullet)|$. The latter quantity, also
equal to $1/(N^\sV+1)$, converges to~$0$ in probability under
$\M_{l_k}(\,\cdot\, |\, N^\sS=n_k)$ as $k\to\infty$.

Since $v_*$ is a uniform random vertex of $M_k^\bullet$, we obtain that 
$$d_{M_k^\bullet}(v_*,v'_{(1)})\build=_{}^{(d)}d_{M_k^\bullet}(v'_{(1)},v'_{(2)})\,
.$$
Due to the above discussion, outside a set of vanishing probability,
we may assume that $v'_{(1)}=v_{(1)}$ and $v'_{(2)}=v_{(2)}$. 

Now note that, by~\eqref{eq:10},
$$d_{M_k^\bullet}(v_*,v_{(1)})=\ell^\circ\big(g(\lceil N^\sV U\rceil)\big)-\inf
\ell^\circ+1\, ,$$
and, by definition of~$D'_k$,
$$d_{M_k^\bullet}(v_{(1)},v_{(2)})=D'_k\big(g(\lceil N^\sV
U\rceil),g(\lceil N^\sV V\rceil)\big)\, .$$ 
Using \eqref{eq:19}, we conclude that 
$$\left(\frac{4\sis^2{n_k}}{9}\right)^{-1/4}d_{M_k^\bullet}(v_*,v_{(1)})\build\longrightarrow_{k\to\infty}^{(d)}Z_U-\inf
Z=D'(s_*,U)\, ,$$ 
while 
$$\left(\frac{4\sis^2{n_k}}{9}\right)^{-1/4}d_{M_k^\bullet}(v_{(1)},v_{(2)})\build\longrightarrow_{k\to\infty}^{(d)}D'(U,V)\,
.$$
It follows that $D'(U,V)$ and $D'(s_*,U)$ have same distribution, as
claimed.  
\end{proof}

To conclude the proof of Theorem~\ref{sec:admiss-regul-crit-2}, we note that, by Lemma~\ref{sec:proof-invar-princ},
$$E[D'(U,V)]=E[D'(s_*,U)]=E[Z_U-\inf
Z]=E[D^*(s_*,U)]=E[D^*(U,V)]\, ,$$ whence it follows that
$D'(U,V)=D^*(U,V)$ a.s.\ since $D'\leq D^*$. Note that we have used
the fact that $D^*(s_*,U)$ and $D^*(U,V)$ have same distribution, a
fact that follows from Theorem~\ref{sec:conv-brown-disk-1} and
Lemma~\ref{sec:conv-brown-disk-3}. This is in fact the only place where we
use the specific study of Sections \ref{sec:scha-biject-first} and
\ref{sec:conv-brown-disk}. 

This implies, by Fubini's theorem, that a.s.\ $D'(s,t)=D^*(s,t)$ for
a.e.\ $s$, $t\in [0,1]$, so that $D'=D^*$ a.s.\ by a density
argument. This identifies $D'$ uniquely, and shows that the
convergence of $D'_{(k)}$ to $D'=D^*$ holds without having to pass to
a subsequence. From there, showing the Gromov--Hausdorff convergence of
$(4\sis^2n_k/9)^{-1/4}M_k^\bullet$ under $\W^{\bullet, \sS}_{l_k,n_k}$ to
$\bd_{L}$ is routine, see e.g.\ \cite[Section~3.2]{bettinelli11b}.

\subsection{De-pointing}\label{sec:de-pointing}

Here we show how to dispose of the pointing that intervenes in Theorem~\ref{sec:admiss-regul-crit-2}. The argument closely follows
the last section of~\cite{abr14}, see also \cite{BeJaMi14} for a similar
situation.

Similarly to the absolute continuity relation~\eqref{eq:16}, for
$\sS\in \{\sV,\sE,\sF\}$, we have
\begin{equation}
  \label{eq:9}
  \d\W^\sS_{l,n}(\bm)=\frac{K^\sS_{l,n}}{|\sV|}\d(\phi_*\W^{\bullet,\sS}_{l,n})(\bm)
\end{equation}
where $K^{\sS}_{l,n}=\W^{\bullet,\sS}_{l,n}[1/|\sV|]^{-1}$. In particular,
$\W^{\sV}_{l,n}=\phi_*\W^{\bullet,\sV}_{l,n}$ for every $l$, $n$, so there is nothing more to prove for $\sS=\sV$; Theorem~\ref{sec:admiss-regul-crit} is equivalent to Theorem~\ref{sec:admiss-regul-crit-2} in this case.

\bigskip
Now suppose that $\sS\in \{\sE,\sF\}$. Then, by Propositions~\ref{sec:boutt-di-franc-1} and~\ref{sec:conv-encod-proc-2}, it
holds that, for every $\eps>0$,
$$\W^{\bullet,\sS}_{l_k,{n_k}}\left(\Big||\sV|-\frac{a_\sV}{a_\sS}\, {n_k}\,\Big|>\eps
  {n_k}\right)=\M_{l_k}\left(\Big|N^\sV+1-\frac{a_\sV}{a_\sS}\,
  {n_k}\,\Big|>\eps {n_k}\,\middle|\,
  N^\sS={n_k}\right)=\mathrm{oe}({n_k})\,.$$ (Note that a bound of the
form $o(1/{n_k})$ would suffice for the argument to work.) From this
and the fact that $1/|\sV|\leq 1$, we obtain that, for every
$\delta>0$,
$$\W^{\bullet,\sS}_{l_k,{n_k}}\left[\left|\frac{{a_\sV}\,
      {n_k}}{{a_\sS}\,|\sV|}-1\right|\right]\leq \delta +\Big(\frac{a_\sV}{a_\sS}\,
{n_k}+1\Big)\W^{\bullet,\sS}_{l_k,{n_k}}\Big(\Big|\frac{a_\sV}{a_\sS}\,
{n_k}-|\sV|\,\Big|>\delta\,|\sV|\Big)=\delta + \mathrm{oe}({n_k})\, ,$$ 
since 
\begin{align*}
  \W^{\bullet,\sS}_{l_k,{n_k}}\Big(\Big|\frac{a_\sV}{a_\sS}\,
{n_k}-|\sV|\,\Big|>\delta\,|\sV|\Big)\leq &
\W^{\bullet,\sS}_{l_k,{n_k}}\Big(|\sV|\leq
\frac{a_\sV}{a_\sS}\frac{{n_k}}{2} \Big)\\
&+
\W^{\bullet,\sS}_{l_k,{n_k}}\Big(\Big|\frac{a_\sV}{a_\sS}\,
{n_k}-|\sV|\,\Big|>\delta \frac{a_\sV}{a_\sS}\frac{{n_k}}{2}
\Big)=\mathrm{oe}({n_k})\, .
\end{align*}
From this it follows that
$K^\sS_{l_k,{n_k}}= \frac{a_\sV}{a_\sS} {n_k} (1+o(1))$ as $k\to\infty$, and then that 
$$\W^{\bullet,\sS}_{l_k,{n_k}}\left[\left|\frac{K^\sS_{l_k,{n_k}}}{|\sV|}-1\right|\right]\build\longrightarrow_{k\to\infty}^{
}0\, .$$
From this and~\eqref{eq:9}, we obtain the following result. 

\begin{lmm}
 \label{sec:de-pointing-2}
For every $\sS\in \{\sE,\sF\}$, and every $(l_k,n_k)_{k\geq 0}\in \SSL$, one has
  $$\big\|\W^{\sS}_{l_k,n_k}-\phi_*\W^{\bullet,\sS}_{l_k,n_k}\big\|\build\longrightarrow_{k\to\infty}^{}0\,
  ,$$ where $\|\cdot\|$ is the total variation norm.
\end{lmm}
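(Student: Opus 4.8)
The plan is to prove Lemma~\ref{sec:de-pointing-2} by exploiting the absolute continuity relation~\eqref{eq:9}, which expresses $\W^\sS_{l_k,n_k}$ as a reweighting of $\phi_*\W^{\bullet,\sS}_{l_k,n_k}$ with Radon--Nikodym density $K^\sS_{l_k,n_k}/|\sV|$. Concretely, for any event $A\subseteq \bB^\sS_{l_k,n_k}$, we have
$$\big|\W^\sS_{l_k,n_k}(A)-\phi_*\W^{\bullet,\sS}_{l_k,n_k}(A)\big|=\left|\W^{\bullet,\sS}_{l_k,n_k}\!\left[\ind_A\left(\frac{K^\sS_{l_k,n_k}}{|\sV|}-1\right)\right]\right|\le \W^{\bullet,\sS}_{l_k,n_k}\!\left[\left|\frac{K^\sS_{l_k,n_k}}{|\sV|}-1\right|\right],$$
and taking the supremum over $A$ gives $\big\|\W^\sS_{l_k,n_k}-\phi_*\W^{\bullet,\sS}_{l_k,n_k}\big\|\le \W^{\bullet,\sS}_{l_k,n_k}\big[|K^\sS_{l_k,n_k}/|\sV|-1|\big]$. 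So the entire statement reduces to showing that the right-hand side tends to~$0$ as $k\to\infty$, which is exactly the last displayed convergence established in the paragraph preceding the lemma.

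First I would recall the concentration estimate: combining Proposition~\ref{sec:boutt-di-franc-1} (which identifies $\M_{l_k}(\,\cdot\,|\,N^\sS=n_k)$ with the mobile encoding of $\W^{\bullet,\sS}_{l_k,n_k}$, with $|\sV|=N^\sV+1$) and Proposition~\ref{sec:conv-encod-proc-2} in its conditioned form (as derived just before Corollary~\ref{sec:conv-encod-proc-3}), one gets that for every $\eps>0$,
$$\W^{\bullet,\sS}_{l_k,n_k}\!\left(\Big||\sV|-\tfrac{a_\sV}{a_\sS}n_k\Big|>\eps n_k\right)=\mathrm{oe}(n_k).$$
Next, using the trivial bound $1/|\sV|\le 1$, I would split $\W^{\bullet,\sS}_{l_k,n_k}[|\tfrac{a_\sV n_k}{a_\sS|\sV|}-1|]$ over the good event $\{||\sV|-\tfrac{a_\sV}{a_\sS}n_k|\le \delta|\sV|\}$ and its complement; on the good event the integrand is at most $\delta$, and the complement has probability $\mathrm{oe}(n_k)$ (after further splitting to control $\{|\sV|\le \tfrac{a_\sV n_k}{2a_\sS}\}$ separately so that the prefactor $\tfrac{a_\sV}{a_\sS}n_k+1$ is absorbed). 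This yields $\W^{\bullet,\sS}_{l_k,n_k}[|\tfrac{a_\sV n_k}{a_\sS|\sV|}-1|]\to 0$, hence $\E_{\W^{\bullet,\sS}_{l_k,n_k}}[1/|\sV|]=\tfrac{a_\sS}{a_\sV n_k}(1+o(1))$, so $K^\sS_{l_k,n_k}=\tfrac{a_\sV}{a_\sS}n_k(1+o(1))$; plugging this back and using the same decomposition once more gives $\W^{\bullet,\sS}_{l_k,n_k}[|K^\sS_{l_k,n_k}/|\sV|-1|]\to 0$, which is the desired conclusion.

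The only genuinely delicate point — and the one I would be most careful about — is handling the $L^1$ (not merely in-probability) convergence of $K^\sS_{l_k,n_k}/|\sV|$: since $|\sV|$ appears in the denominator, a naive argument would need a lower bound on $|\sV|$, which is why one must peel off the event $\{|\sV|\le \tfrac{a_\sV n_k}{2a_\sS}\}$ and use its $\mathrm{oe}(n_k)$ probability (this is where one really needs the exponential-type concentration of Proposition~\ref{sec:conv-encod-proc-2}, not just a law of large numbers: an $o(1/n_k)$ bound on this probability is what kills the $O(n_k)$ prefactor). Everything else is bookkeeping. Note also that for $\sS=\sV$ there is nothing to prove, since $\W^\sV_{l,n}=\phi_*\W^{\bullet,\sV}_{l,n}$ by~\eqref{eq:9}, so the lemma is only asserted for $\sS\in\{\sE,\sF\}$, and the argument above is carried out in exactly that range.

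Finally, I would remark that once Lemma~\ref{sec:de-pointing-2} is in hand, Theorem~\ref{sec:admiss-regul-crit} follows immediately from Theorem~\ref{sec:admiss-regul-crit-2}: the Gromov--Hausdorff distance is a measurable functional of the map, the pushforward $\phi$ does not affect the underlying metric space $\bm$ (it only forgets the marked point), and convergence in distribution is preserved under a vanishing total-variation perturbation of the laws. Thus $(4\sis^2 n_k/9)^{-1/4}M_k$ under $\W^\sS_{l_k,n_k}$ has the same Gromov--Hausdorff scaling limit $\bd_L$ as under $\phi_*\W^{\bullet,\sS}_{l_k,n_k}$, completing the proof of the theorem for all three choices of $\sS$.
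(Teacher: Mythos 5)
Your proof is correct and follows the paper's argument essentially step by step: you use the absolute continuity relation~\eqref{eq:9} to reduce total variation to $\W^{\bullet,\sS}_{l_k,n_k}\big[|K^\sS_{l_k,n_k}/|\sV|-1|\big]$, invoke the same concentration estimate from Propositions~\ref{sec:boutt-di-franc-1} and~\ref{sec:conv-encod-proc-2}, and perform the same $\delta$-splitting (peeling off $\{|\sV|\le\tfrac{a_\sV}{2a_\sS}n_k\}$ so the $O(n_k)$ prefactor is beaten by the $\mathrm{oe}(n_k)$ bound) to get $K^\sS_{l_k,n_k}\sim\tfrac{a_\sV}{a_\sS}n_k$ and conclude. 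You also correctly identify, as the paper does implicitly, that the exponential-type concentration is genuinely needed here and that a mere law of large numbers would not suffice.
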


Theorem \ref{sec:admiss-regul-crit} is now a direct consequence of
this statement combined with Theorem \ref{sec:admiss-regul-crit-2}.

\subsection{Proof of the convergence of Boltzmann
  maps}\label{sec:conv-boltzm-maps-1}

This section is dedicated to the proof of Theorem~\ref{thmboltz_nfree}. 
As in the proof of Theorem~\ref{sec:admiss-regul-crit}, we first focus on random maps with
distribution~$\W^\bullet_l$, which are easier to handle, since they are
directly related by Proposition~\ref{sec:boutt-di-franc-1} to random labeled forests with law~$\M_l$.

The result follows from the fact that, for any measurable and
bounded function~$\Phi$, 
\begin{align}
\W_l^\bullet[\Phi]&=\sum_{n\in \A^\sV_l}\W_l^\bullet\big(|\sV|=n+1\big)\,\W_{l,n}^{\bullet,\sV}[\Phi]\label{eq:12}\\
&=\sum_{n\in \A^\sV_l}\M_l\big(N^\sV=n\big)\,\W_{l,n}^{\bullet,\sV}[\Phi].\nonumber 
\end{align}
At this point, recall from Lemma~\ref{sec:boltzm-rand-maps} that
$\A^\sV_l= R^\sV_l  \cup (\beta^\sV
l+h^\sV\Z_+)$, where $\beta^\sV\geq 1$ and
$R^\sV_l\subseteq\{0,1,\ldots,\beta^\sV l-1\}$. 
For simplicity, let us use the notation $\beta=\beta^\sV$
and $h=h^\sV$. Therefore,
\begin{align*}
\W_l^\bullet[\Phi]&=\sum_{n\geq 0}\M_l\big(N^\sV=\beta l+h
n\big)\,\W_{l,\beta  l+h n}^{\bullet,\sV}[\Phi] +\mathcal{R}_l\\
&=\int_{\R_+}l^2Q^\sV\big(l,\beta l+h\lfloor l^2A\rfloor\big)\, \W_{l,\beta l+h\lfloor
  l^2 A\rfloor}^{\bullet,\sV}[\Phi]\,\d A +\mathcal{R}_l
\end{align*}
where $|\mathcal{R}_l|\leq \|\Phi\|_\infty \W_l^\bullet(|\sV|\leq
\beta l)$.
Recall that, under $\W_l^\bullet$, the random variable $|\sV|-1$ has
same distribution as a sum of $l$ i.i.d.\ random variables with
distribution $Q^\sV(1,\cdot)$. The proof of Lemma~\ref{sec:conv-encod-proc-1} yields 
that, under $\W_l^\bullet$
$$\frac{|\sV|}{l^2}\build\longrightarrow_{l\to\infty}^{(d)}\frac{1}{\siv^2}\mathcal{A}^\bullet$$
where $\mathcal{A}^\bullet$ is a stable random variable with density $j_1$. 
Clearly, this implies that $\mathcal{R}_l \to 0$ as $l\to\infty$.
Now assume that $\Phi=\varphi((2l/3)^{-1/2}M)$ where~$\varphi$ is a
continuous and bounded function on the Gromov--Hausdorff space. Then
one has, by Theorem~\ref{sec:admiss-regul-crit-2},
$$\W_{l,\beta l+h\lfloor
  l^2A\rfloor}^{\bullet,\sV}\big[\varphi\big((2l/3)^{-1/2}M\big)\big]\build\longrightarrow_{l\to\infty}^{}\E\Big[\varphi\big((h\siv^2 A)^{1/4}\,\bd_{(h\siv^2 A)^{-1/2}}\big)\Big]=
\E\big[\varphi\big(\bd_{1,h\siv^2 A}\big)\big]\, ,$$
where the lase equality follows from Remark~\ref{sec:brownian-disks}. 
At this point, we apply Lemma~\ref{sec:conv-encod-proc-1}, 
which implies that for every $A>0$, 
$$l^2Q^\sV\big(l,\beta l+h\lfloor l^2A\rfloor\big)\build\longrightarrow_{l\to\infty}^{} h j_{1/\siv}(hA)=h\siv^2 j_1\big(h\siv^2 A\big)\, .$$
Since we are dealing with
probability densities, the Scheff\'e Lemma implies that the latter
convergence holds in fact in $L^1(\d A)$, and we conclude that 
\begin{align*}
  \lim_{l\to\infty}\W_l^\bullet\left[\varphi\big((2l/3)^{-1/2}M\big)\right]
&= \int_{\R_+}h\siv^2j_1\big(h\siv^2 A\big)\, \E\big[\varphi\big(\bd_{1,h\siv^2 A}\big)\big]\, \d A\\
&= \int_{\R_+} j_1(A)\E[\varphi(\bd_{1,A})]\,\d A
\end{align*}
and this is equal to $\E[\varphi(\mathrm{FBD}^\bullet_1)]$.  
The second part of Theorem~\ref{thmboltz_nfree} follows. 

\bigskip
To obtain the result under $\W_l$ instead of $\W_l^\bullet$, note that~\eqref{eq:16} implies
\begin{equation}
  \label{eq:13}
  \W_l(|\sV|=n+1)=K_l\,  \frac{\W^\bullet_l(|\sV|=n+1)}{n+1}
\end{equation}
where $K_l=\W^\bullet_l[1/|\sV|]^{-1}$.
We then use the following lemma, which is certainly known, but for which we did not
find a proper reference. 

\begin{lmm}\label{sec:proof-conv-boltzm}
  Let $X_1$, $X_2$\ldots be a sequence of i.i.d.\ r.v.s with
  values in $\{1,2,3,\ldots\}$, and such that 
$$\P(X_1>k)\build\sim_{k\to\infty}^{} \frac{c}{k^\alpha}$$
for some constants $c\in (0,\infty)$ and $\alpha\in(0,1)$. Then
$$\E\left[\frac{l^{1/\alpha}}{X_1+\ldots+X_l}\right]\build\longrightarrow_{l\to\infty}^{}\E\left[\frac{1}{S}\right]$$
where~$S$ is the limit in distribution of $(X_1+\ldots+X_l)/l^{1/\alpha}$
as $l\to\infty$ (so that $S$ is a stable distribution of index
$\alpha$). 
\end{lmm}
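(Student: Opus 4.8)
The plan is to prove this as a standard consequence of the local limit theorem for stable laws, combined with a uniform integrability argument that allows passage to the limit inside the expectation. First I would recall that, since $\P(X_1 > k) \sim c/k^\alpha$ with $\alpha \in (0,1)$, the random variable $X_1$ is in the domain of attraction of a stable law of index $\alpha$, so that $S_l := X_1 + \cdots + X_l$ satisfies $S_l/l^{1/\alpha} \xrightarrow{(d)} S$ for a positive $\alpha$-stable random variable $S$ (with an appropriate normalization constant absorbed into $S$, which we may do since only the statement about $S$ as ``the limit in distribution'' is asserted). Consequently $l^{1/\alpha}/S_l \xrightarrow{(d)} 1/S$, and the only remaining point is to upgrade this convergence in distribution to convergence of expectations, i.e.\ to establish uniform integrability of the family $(l^{1/\alpha}/S_l)_{l \geq 1}$.

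The key step is therefore a uniform bound of the form $\E[(l^{1/\alpha}/S_l)^p] \leq C_p$ for some $p > 1$, uniformly in $l$. Since $S_l \geq l$ deterministically (each $X_i \geq 1$), the variable $l^{1/\alpha}/S_l$ is bounded by $l^{1/\alpha - 1}$, which blows up, so the crude bound is not enough and one genuinely needs to control the lower tail of $S_l$. I would write
$$\E\left[\left(\frac{l^{1/\alpha}}{S_l}\right)^p\right] = l^{p/\alpha}\int_0^\infty p\,x^{p-1}\,\P(S_l < 1/x)\,\d x = l^{p/\alpha}\int_0^\infty \frac{p}{t^{p+1}}\,\P(S_l < t)\,\d t,$$
and split the $t$-integral at $t = \varepsilon l^{1/\alpha}$ for small $\varepsilon$. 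On the range $t \geq \varepsilon l^{1/\alpha}$ one bounds $\P(S_l < t) \leq 1$ and the integral contributes $O(l^{p/\alpha} \cdot (\varepsilon l^{1/\alpha})^{-p}) = O(\varepsilon^{-p})$, which is fine. On the range $t < \varepsilon l^{1/\alpha}$, equivalently $t/l^{1/\alpha}$ small, I would invoke a small-deviations estimate: for a sum of $l$ i.i.d.\ positive integer-valued variables in the domain of attraction of a stable law of index $\alpha < 1$, one has $\P(S_l \leq t) \leq \exp(-c\,l\,(t/l^{1/\alpha})^{-\alpha/(1-\alpha)}) $-type decay, or more simply the bound $\P(S_l \leq t) \le (\P(X_1 \le t/\lfloor\theta\rfloor))^{\lfloor\theta\rfloor}$ obtained by a pigeonhole argument (if $S_l\le t$ then at least $l - \lfloor\theta\rfloor$ of the $X_i$ are $\le t/\lfloor\theta\rfloor$ cannot all be large), which since $\P(X_1 \le s) = 1 - c s^{-\alpha}(1+o(1)) < 1$ for $s$ large gives geometric-in-$l$ decay once $t/l^{1/\alpha}$ is bounded; a cleaner route is to use the Chernoff/Laplace transform: $\P(S_l \le t) \le e^{\lambda t}\E[e^{-\lambda X_1}]^l$ and optimize over $\lambda > 0$ using $\E[e^{-\lambda X_1}] \le 1 - c'\lambda^\alpha$ for small $\lambda$, which yields $\P(S_l\le t)\le \exp(\lambda t - c' l \lambda^\alpha)$ and, optimizing, decay in $l$ fast enough that $l^{p/\alpha}$ times the $t$-integral over this range stays bounded. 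Either way, combining the two ranges gives the desired uniform $L^p$ bound.

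Having established uniform integrability, the conclusion is immediate: $l^{1/\alpha}/S_l \xrightarrow{(d)} 1/S$ together with boundedness in $L^p$ for some $p>1$ implies $\E[l^{1/\alpha}/S_l] \to \E[1/S]$, which is finite (indeed $\E[1/S] < \infty$ since for a positive $\alpha$-stable law the density near $0$ decays faster than any polynomial, so $1/S$ has moments of all orders). I expect the main obstacle to be making the small-deviations/Laplace-transform estimate fully rigorous and uniform in $l$ — in particular verifying $\E[e^{-\lambda X_1}] \le 1 - c'\lambda^\alpha + o(\lambda^\alpha)$ as $\lambda \downarrow 0$ from the tail assumption $\P(X_1 > k) \sim c k^{-\alpha}$ (this is a standard Tauberian/Karamata computation, $1 - \E[e^{-\lambda X_1}] = \lambda\int_0^\infty e^{-\lambda u}\P(X_1 > u)\,\d u \sim c\,\Gamma(1-\alpha)\lambda^\alpha$), and then carrying the optimization through cleanly. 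Everything else is bookkeeping.
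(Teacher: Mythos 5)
Your proposal is correct and follows essentially the same route as the paper's proof: a Tauberian computation turning the tail hypothesis into $\E[e^{-\lambda X_1}] \le 1 - c'\lambda^\alpha$, a Chernoff bound $\P(S_l \le x\,l^{1/\alpha}) \le e\,\E\big[e^{-X_1/(x l^{1/\alpha})}\big]^l \le e\exp(-c''x^{-\alpha})$ giving a stretched-exponential lower-tail estimate uniform in $l$, and then uniform integrability. The paper simply plugs in $\lambda = 1/(x l^{1/\alpha})$ without optimizing (the factor $l$ cancels at once), and --- a detail you should not skip --- it explicitly extends the bound $\E[e^{-\lambda X_1}] \le e^{-c''\lambda^\alpha}$ from small $\lambda$ to all $\lambda \ge 0$ using $X_1 \ge 1$ and $\alpha < 1$, since for small $x$ the exponent $1/(x l^{1/\alpha})$ need not be small.
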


\begin{proof}
By
hypothesis and standard facts on stable domains of attractions
\cite[Chapter~8]{BGT}, our hypotheses imply that as $s\uparrow 1$, 
$$\E[s^{X_1}]=1-c'(1-s)^\alpha(1+o(1))$$
for some constant $c'\in (0,\infty)$ depending only on $c$ and
$\alpha$. Applying this to $s=\exp(-\lambda)$ for $\lambda\geq 0$ implies that 
$$\E[\exp(-\lambda X_1)]=1-c'\lambda^\alpha(1+o(1))$$
as $\lambda\downarrow 0$, so that there exists $c''\in (0,\infty)$
such that for every $\lambda\in [0,1]$, one has 
\begin{align}
  \E[\exp(-\lambda
X_1)]&\leq 1-c''\lambda^\alpha \nonumber\\
	&\leq \exp(-c''\lambda^\alpha)\, .  \label{eq:11}
\end{align}
On the other hand, the assumption that $X_1\geq 1$ a.s.\ implies that 
$\E[\exp(-\lambda X_1)]\leq
\exp(-\lambda)$ for every $\lambda\geq 0$. This implies that, possibly
by choosing $c''$ smaller, one can assume that~\eqref{eq:11} is valid
for every $\lambda\geq 0$, as we supposed $\alpha \le 1$.

Now note that, for every $x>0$, one has (using the 
inequality $\ind_{[0,1]}(u)\leq e\, \exp(-u)$ in the first step)
\begin{align*}
  \P\big(X_1+\ldots+X_l \leq xl^{1/\alpha}\big)&\leq e\, 
\E\big[\exp\big(-(X_1+\ldots+X_l)/(xl^{1/\alpha})\big)\big]\\
&\leq e\, \E\big[\exp\big(-X_1/(xl^{1/\alpha})\big)\big]^l\\
&\leq e\, \exp(-c''x^{-\alpha})
\, ,
\end{align*} 
where we used the version of~\eqref{eq:11} valid for all $\lambda$ at
the last step. 
This stretched-exponential tail bound is uniform in $l$ and clearly implies the convergence
of all negative moments. 
\end{proof}

Since, as we observed, $|\sV|-1$ under $\W^\bullet_l$ is distributed
as a sum of~$l$ i.i.d.\ random variables satisfying the hypotheses of
Lemma~\ref{sec:proof-conv-boltzm} with $\alpha=1/2$ (by Lemma~\ref{sec:conv-encod-proc-1}), this entails that
$$\lim_{l\to\infty}\frac{K_l}{l^2}=\E\left[\frac{\siv^2}{\mathcal{A}^\bullet}\right]^{-1}=\frac{1}{\siv^2}\,
.$$ Repeating the previous argument, only changing
$\W_l^\bullet(|\sV|=n+1)$ by $\W_l(|\sV|=n+1)$ in \eqref{eq:12} and
applying \eqref{eq:13}, and then performing the same steps using the
equivalent we obtained for $K_l$, we obtain
$$\lim_{l\to\infty}\W_l\left[\varphi\big((2l/3)^{-1/2}M\big)\right]
= \int_{\R_+}\frac{h\siv^2}{h\siv^2 A} j_1\big(h\siv^2 A\big)\, \E\big[\varphi\big(\bd_{1,h\siv^2 A}\big)\big]\, \d A$$
and this is $\E[\varphi(\mathrm{FBD_1})]$, as wanted.


\bibliographystyle{abbrv}
\bibliography{biblio}

\end{document}